\newtheorem{theorem}{Theorem}
\newtheorem{proposition}{Proposition}
\newtheorem{lemma}[theorem]{Lemma}
\newtheorem{corollary}[theorem]{Corollary}
\newtheorem{definition}{Definition}
\newtheorem{notation}{Notation}
\newtheorem{obs}{Observation}
\newtheorem{claim}{Claim}
\newcommand{\E}{\ensuremath{\mathbb E}}
\newcommand{\R}{\ensuremath{\mathbb R}}
\newcommand{\F}{\ensuremath{\mathcal F}}
\newcommand{\FF}{\ensuremath{\mathcal F}}
\newcommand{\reach}{\mathrm{reach}}
\newcommand{\lab}{\label} \newcommand{\M}{\ensuremath{\mathcal M}} \newcommand{\ra}{\ensuremath{\rightarrow}}  \def\a{{\mathbf{\alpha}}} \def\de{{\mathbf{\delta}}}   
  \def\beq{\begin{eqnarray}} \def\eeq{\end{eqnarray}} \def\ben{\begin{enumerate}}
\def\een{\end{enumerate}}
 \def\bit{\begin{itemize}}
\def\eit{\end{itemize}}
 \def\beqs{\begin{eqnarray*}} \def\eeqs{\end{eqnarray*}} \def\bel{\begin{lemma}} \def\eel{\end{lemma}}
\newcommand{\N}{\mathbb{N}} \newcommand{\Z}{\mathbb{Z}}  \newcommand{\C}{\mathcal{C}} 
 \newcommand{\A}{\mathbb{A}}   
   \newcommand{\EE}{\mathbb{E}} \newcommand{\p}{\mathbb{P}}
\newcommand{\PP}{\mathcal P}  \newcommand{\HH}{\mathcal H}  \newcommand{\one}{\mathrm{1}}
\newcommand{\LL}{\mathcal L} \newcommand{\MM}{\mathcal M}\newcommand{\NN}{\mathcal N} \newcommand{\la}{\lambda}  
 \def\a{{\mathbf{\alpha}}}  \def\eps{{\epsilon}} \def\A{{\mathcal{A}}} \def\ie{i.\,e.\,} 
\def\vol{\mathrm{vol}}\newcommand{\tc}{{\tilde{c}}}
\renewcommand{\a}{\alpha}
\newcommand{\tix}{\tilde{x}}
\newcommand{\tiv}{\tilde{v}}
\newcommand{\tZ}{\hat{Z}}
\newcommand{\La}{\Lambda}
\newcommand{\cc}{\mathbf{c}}
\newcommand{\RR}{\mathbb{R}}
\newcommand{\dist}{\mathbf{d}}
\newcommand{\dhaus}{\mathbf{d}_{\mathtt{haus}}}
\newcommand{\G}{\mathcal{G}}
\newcommand{\TG}{\mathcal{\tilde{G}}}
\newcommand{\fat}{\mathrm{fat}}
\newcommand{\cuf}{\textsf{CurvFit}}
\renewcommand{\H}{\mathbb{H}}
\newcommand{\cscs}{{\textsf{Coreset Conditions}\,}}
\newcommand{\csone}{{\textsf{Case 1}\,}}
\newcommand{\cstwo}{{\textsf{Case 2}\,}}
\newcommand{\cyl}{{\texttt{cyl}}}
\newcommand{\tran}{{\texttt{tran}}}
\newcommand{\norm}{{\texttt{norm}}}
\newcommand{\D}{\mathcal{\bar{D}}}
\newcommand{\base}{\texttt{base}}
\newcommand{\stalk}{\texttt{stalk}}
\newcommand{\se}{\mathrm{s}}
\newcommand{\htau}{\hat{\tau}}
\newcommand{\str}{\texttt{strt}}
\newcommand{\pare}{p} \newcommand{\chil}{h}
\newcommand{\hV}{\hat V}
\newcommand{\ty}{\tilde{y}}
\newcommand{\tx}{\tilde{x}}
\newcommand{\hN}{\hat{N}}
\newcommand{\oc}{\overline{c}}
\newcommand{\oC}{\overline{C}}
\newcommand{\iN}{\mathit{N}}
\newcommand{\hess}{\,\texttt{Hess}\,}
\newcommand{\beps}{\bar{\eps}}
\newcommand{\wh}{\texttt{Wh}}
\newcommand{\bn}{\bar{n}}
\newcommand{\bN}{\bar{N}}
\newcommand{\obj}{\zeta}
\newcommand{\bmp}{\theta}
\newcommand{\hak}{\hat{k}}
\newcommand{\asdf}{{asdf}}
\newcommand{\beqn}{\begin{equation}}
\newcommand{\eeqn}{\end{equation}}
\begin{document}

\title{Testing the manifold hypothesis}

\author{Charles Fefferman}
\address{Department of Mathematics, Princeton University}
\email{cf@math.princeton.edu}

\author{Sanjoy Mitter}
\address{Laboratory for Information and Decision Systems, MIT}%
\email{mitter@mit.edu}

\author{Hariharan Narayanan}
%\thanks{HN gratefully acknowledges support from grant %CCF-0836720}
\address{Department of Statistics and Department of Mathematics, University of Washington}
\email{harin@uw.edu}

\maketitle

\begin{abstract}
The hypothesis that high dimensional data tend to lie in the vicinity of a low dimensional manifold is the basis of manifold learning.
The goal of this paper is to  develop an algorithm (with accompanying complexity guarantees) for testing the existence of a manifold that fits a probability distribution supported  in a separable Hilbert space, only using i.i.d samples from that distribution. More precisely, our setting is the following.
Suppose that data are drawn independently at random from a probability distribution $\PP$ supported on the
unit ball of a separable Hilbert space $\HH$.    Let $\G(d,  V, \tau)$ be  the set of submanifolds of the unit ball of $\HH$ whose volume is
at most $V$
 and reach (which is the supremum of all $r$ such that any point at a distance less than $r$ has a unique nearest point on the manifold) is at least $\tau$.  Let $\LL(\MM, \PP)$ denote
 mean-squared distance of a random point from the probability distribution $\PP$ to $\MM$.
\begin{comment} We provide an algorithm, which for given $\de, \eps \in (0, 1)$, $V > 0$ and $\tau > 0$,  provides with probability at least $1-\de$, a description of  a $d-$dimensional
submanifold $\MM \in \G(d, V, \tau)$ with the following characteristic.
There is a constant $C$ depending only on $d$ such that for every manifold $\MM' \in \G(d,  V/C, C\tau)$, we have
$\LL(\MM, \PP) \leq C\LL(\MM', \PP) + C\eps.$
\end{comment}
We obtain an algorithm that tests the manifold hypothesis in the following sense.

 The algorithm takes i.i.d random samples from $\PP$ as input, and determines which of the following two is true (at least one must be):
 \ben
 \item There exists $\MM \in \G(d,  CV, \frac{\tau}{C})$ such that $\LL(\MM, \PP) \leq  C \eps.$
 \item There exists no $\MM \in \G(d,  V/C, C\tau)$ such that $\LL(\MM, \PP) \leq   \frac{\eps}{C}.$
 \een
The answer is correct with probability at least $1-\delta$.
\end{abstract}

\tableofcontents
\newpage
\section{Introduction}

We are increasingly confronted with very high dimensional data from speech, images,  and genomes and other sources.
A collection of methodologies for analyzing high dimensional data based on the hypothesis that data tend to  lie near a low dimensional manifold is now called "Manifold Learning".
(see Figure~\ref{fig:torus}) We refer to the underlying  hypothesis as the "manifold hypothesis."  Manifold Learning has been an area of intense activity over the past two decades.  We refer the interested reader to a limited set of papers associated with this field; see \cite{misha, Carlsson, Wasserman, Dasgupta2, donoho,  hastie, Kamb93, kegl, NarNiy, NSW, Marina, LLE, PrincipalManifolds, ISOMAP, MaximumVariance} and the references therein.

%No algorithm with provable guarantees currently exists  that can fit a nonlinear manifold to arbitrary data.
 The goal of this paper is to  develop an algorithm that tests the manifold hypothesis.

 %finds  a ``nearly-optimal" manifold  of a specified dimension to fit the data in a least-squares %sense.
Examples of low-dimensional manifolds embedded in high-dimensional spaces include: image vectors representing 3D objects under different illumination conditions, and camera views and phonemes in speech signals. The low-dimensional structure typically arises due to constraints arising from physical laws.  A recent empirical study~\cite{Carlsson} of a large number of 3 $\times$ 3 images represented as points in $\mathbb{R}^9$ revealed that they approximately lie on a two-dimensional manifold knows as the Klein bottle.

One of the characteristics of high-dimensional data of the type mentioned earlier is that the number of dimensions is comparable, or larger than, the number of samples.  This has the consequence that the sample complexity of function approximation can grow exponentially.  On the positive side, the data exhibits the phenomenon of ``concentration of measure''~\cite{bb1,bb2} and asymptotic analysis of statistical techniques is possible.  %\underline{(Elaborate?)}
 Standard dimensional reduction techniques such as Principal Component Analysis and Factor Analysis, work well when the data lies near a linear subspace of high-dimensional space.  They do not work well when the data lies near a nonlinear manifold embedded in the high-dimensional space.

Recently, there has been considerable interest in fitting low-dimensional nonlinear manifolds from sampled data points in high-dimensional spaces.  These problems have been viewed as optimization problems generalizing the projection theorem in Hilbert Space.   One line of research starts with  principal curves/surfaces~\cite{hastie} and topology preserving networks~\cite{ee}.  The main ideas is that information about the global structure of a manifold can be obtained by analyzing the ``interactions'' between overlapping local linear structures.  The so-called Local Linear Embedding method (local PCA) constructs a local geometric structure that is invariant to translation and rotation in the neighborhood of each data point~\cite{ff}.
 %and then projects the data points into a low-dimensional manifold which best preserves these local geometries~\cite{ff}.

In another line of investigation~\cite{gg}, pairwise geodesic distances of data points with respect to the underlying manifold are estimated and multi-dimensional scaling is used to project the data points on a low-dimensional space which best preserves the estimated geodesics.  The tangent space in the neighborhood of a data point can be used to represent the local geometry and then these local tangent spaces can be aligned to construct the global coordinate system of the nonlinear manifold~\cite{hh}.

A comprehensive review of Manifold Learning can be found in the recent book \cite{MaFu}.  In this paper, we take a ``worst case'' viewpoint of the Manifold Learning problem.  Let ${\mathcal H}$ be a separable Hilbert space, and let $P$ be a probability measure supported on the unit ball $B_{\mathcal H}$ of ${\mathcal H}$.  Let $| \, \cdot \, |$ denote the Hilbert space norm of ${\mathcal H}$ and for any $x,y \in {\mathcal H}$ let $d(x,y) = |x-y|$.  For any $x \in B_{\mathcal H}$ and any ${\mathcal M} \subset B_{\mathcal H}$, a closed subset, let
$d (x, {\mathcal M}) = \inf_{y \in {\mathcal M}} |x-y|$ and ${\mathcal L} ({\mathcal M},\PP) = \int d(x, {\mathcal M})^2 d\PP(x)$.  We assume that i.i.d data is generated from sampling $\PP$, which is fixed but unknown.  This is a worst-case view in the sense that no prior information about the data generating mechanism is assumed to be available or used for the subsequent development.  This is the viewpoint of modern Statistical Learning Theory~\cite{ii}.

In order to state the problem more precisely, we need to describe the class of manifolds within which we will search for the existence of a manifold which satisfies the manifold hypothesis.

Let ${\mathcal M}$ be a submanifold of $\mathcal H$.  The reach $\tau >0$ of ${\mathcal M}$ is the largest number such that for any $0 < r < \tau$, any point at a distance $r$ of ${\mathcal M}$ has a unique nearest point on ${\mathcal M}$.

Let ${\mathcal G}_e = {\mathcal G}_e (d, V, \tau)$ be the family of $d$-dimensional $\mathcal{C}^2-$submanifolds of the unit ball in $\mathcal H$ with volume $\le V$ and reach $\ge \tau$.

Let $\PP$ be an unknown probability distribution supported in the unit ball of a separable (possibly infinite-dimensional) Hilbert space and let $(x_1, x_2, \ldots)$ be i.i.d random samples sampled from $\PP$.

The test for the Manifold Hypothesis answers the following affirmatively:  Given error $\varepsilon$, dimension $d$, volume $V$, reach $\tau$ and confidence $1-\delta$, is there an algorithm that takes a number of samples depending on these parameters and with probability $1- \delta$ distinguishes between the following two cases (as least one must hold):\\ (a) Whether  there is a
$${\mathcal M} \in {\mathcal G}_e = {\mathcal G}_e (d, CV, \tau/C)$$
such that
$$
\int d (M,x)^2 dP(x) < C\varepsilon \ .
$$

(b) Whether there is no manifold $${\mathcal M} \in {\mathcal G}_e (d, V/C, C\tau)$$
such that
$$
\int d (M,x)^2 dP(x) < \varepsilon/C \ .$$
Here $d(M,x)$ is the distance from a random point $x$ to the manifold ${\mathcal M}$, $C$ is a constant depending only on $d$.

The basic statistical question is:

What is the number of samples needed for testing the hypothesis that data lie near a low-dimensional manifold?

The desired result is that the sample complexity of the task depends only on the ``intrinsic'' dimension, volume and reach, but not the ``ambient'' dimension.

We approach this by considering the Empirical Risk Minimization problem.

Let
$$
{\mathcal L} (M,P) = \int d(x,M)^2 dP (X) \ ,
$$
and define the Empirical Loss
$$
L_{\rm emp} (M) = \frac{1}{s} \sum^s_{i=1} d(x_i, M)^2
$$
where $(x_1, \ldots, x_s)$ are the data points.  The sample complexity is defined to be the smallest $s$ such that there exists a rule ${\mathcal A}$ which assigns to given $(x_1, \ldots, x_s)$  a manifold  ${\mathcal M}_{\mathcal A}$ with the property that if $x_1, \dots, x_s$ are generated i.i.d from $\PP$, then
$$
\mathbb{P} \left[ {\mathcal L} (\MM_A, \PP) - \inf_{\MM\in {\mathcal G}_e} {\mathcal L} (M,\PP) > \varepsilon \right] < \delta.
$$
We need to determine how large $s$ needs to be so that
$$
\mathbb{P} \left[  \sup_{{\mathcal G}_e} \Big| \frac{1}{s} \sum^s_{i=1} d(x_i, \MM)^2 - {\mathcal L} (\MM,\PP) \Big| < \varepsilon\right] >1- \delta.
$$
The answer to this question is given by Theorem 1 in the paper.

The proof of the theorem proceeds by approximating manifolds using point clouds and then using uniform bounds for $k-$means (Lemma~\ref{lem:key} of the paper).

%\underline{(Note: this part could be replaced by Section 2 of the paper)}

%\underline{Say something about bounding the Fat Shattering Dimension of the function class ${\mathcal F}$.}
The uniform bounds for $k-$means are proven by getting an upper bound on the Fat Shattering Dimension of a certain function class and then using an integral related to Dudley's entropy integral. The bound on the Fat Shattering Dimension is obtained using a random projection and the Sauer-Shelah Lemma. The use of random projections in this context appears  in  Chapter 4, \cite{MaFu} and \cite{NarMit}, however due to the absence of chaining,  the bounds derived there are weaker.

%\underline{Key element in the proof.}

The Algorithmic question can be stated as follows:

Given $N$ points $x_1, \ldots, x_N$ in the unit ball in $\mathbb{R}^n$, distinguish between the following two cases (at least one must be true):\\
 (a) Whether there is a manifold ${\mathcal M} \in {\mathcal G}_e = {\mathcal G}_e (d, CV, C^{-1}\tau)$ such that
$$
\frac{1}{N} \sum^N_{i=1} d(x_i, M)^2 \le C \varepsilon
$$
where $C$ is some constant depending only on d.\\
(b) Whether there is no manifold ${\mathcal M} \in {\mathcal G}_e = {\mathcal G}_e (d, V/C, C\tau)$ such that
$$
\frac{1}{N} \sum^N_{i=1} d(x_i, M)^2 \le  \varepsilon/C
$$
where $C$ is some constant depending only on d.\\

The key step to solving this problem is to translate the question of optimizing the squared-loss over a family of manifolds to that of optimizing over sections of a disc bundle. The former involves an optimization over a non-parameterized infinite dimensional space, while the latter involves an optimization over a parameterized (albeit infinite dimensional) set.

 We introduce the notion of a cylinder packet in order to define a disc bundle. A cylinder packet  is  a finite collection of cylinders satisfying certain alignment constraints. An ideal cylinder packet corresponding to a $d-$manifold $\MM$ of reach $\tau$ (see Definition~\ref{def:reach}) in $\R^n$ is obtained by taking a net (see Definition~\ref{def:net}) of the manifold and for every point $p$ in the net, throwing in a cylinder  centered at $p$ isometric to $2\bar \tau (B_d \times B_{n-d})$ whose $d-$dimensional central cross-section is tangent to $\MM$. Here $\bar \tau = c\tau$ for some appropriate constant $c$ depending only on $d$, $B_d$ and $B_{n-d}$ are $d-$dimensional and $(n-d)-$dimensional balls respectively.

 For every cylinder $\cyl_i$ in the packet, we define a function $f_i$ that is the squared distance to
 the $d-$ dimensional central cross section of $\cyl_i$. These functions are put together using a partition of unity defined on $\cup_i \cyl_i$. The resulting function $f$ is an ``approximate-squared-distance-function" (see Definition~\ref{def:12}).
 The base manifold is the set of points $x$ at which the gradient $\nabla f$ is orthogonal to every eigenvector corresponding to values in $[c, C]$ of the Hessian $\hess f (x)$. Here $c$ and $C$ are constants depending only on the dimension $d$ of the manifold. The fiber of the disc bundle at a point $x$ on the base manifold is defined to be the $(n-d)-$dimensional Euclidean ball centered at $x$ contained in the span of the aforementioned eigenvectors of the Hessian. The base manifold and its fibers together define the disc bundle.

 The optimization over sections of the disc bundle proceeds as follows. We fix a cylinder $\cyl_i$ of the cylinder packet. We optimize the squared loss over local sections corresponding to jets whose $C^2-$ norm is bounded above by $\frac{c_1}{\bar \tau}$, where $c_1$ is a controlled constant. The corresponding graphs  are each contained inside $\cyl_i$. The optimization over local sections is performed by minimizing squared loss over a space of $C^2-$jets (see Definition~\ref{def:crnorm}) constrained by inequalities developed in \cite{Feff_jets2}. The resulting local sections corresponding to various $i$ are then patched together  using the disc bundle and a partition of unity supported on the base manifold. The last step is performed implicitly, since we do not actually need to produce a manifold, but only need to certify the existence or non-existence a manifold possessing certain properties. The results of this paper together with those of  \cite{Feff_jets2} lead to an algorithm fitting a manifold to the data as well; the main additional is to construct local sections from jets, rather than settling for the existence of good local sections as we do here.

 Such optimizations are performed over a large ensemble of cylinder packets. Indeed the the size of this ensemble is the chief contribution in the complexity bound.

\subsection{Definitions}
\begin{definition}[reach]\lab{def:reach}
Let $\M$ be a subset of $\HH$. The
reach of $\MM$  is the largest number $\tau$ to have the property that any point at a distance  $r <
\tau$ from $\M$ has a unique nearest point in $\M$.
\end{definition}

\begin{definition}[Tangent Space]
Let $\HH$ be a separable Hilbert space.
For a closed $A \subseteq \HH$, and $a \in A$, let the ``tangent space" $Tan^0(a, A)$ denote the set of all vectors $v$ such that for all $\eps > 0$, there exists $b \in A$ such that $0 < |a - b| < \eps$ and $\big|v/{|v|} - \frac{b-a}{|b-a|}\big| < \eps$. For a set $X\subseteq \HH$ and a point $a\in \HH$  let $\dist(a, X)$ denote the Euclidean distance of the nearest point in $X$ to $a$.
% Let the ``normal space" $Nor(a, A)$ denote the set of all $v$ such that for all $w \in Tan^0(a, A)$, we have $\langle v, %w\rangle = 0$.
 Let $Tan(a, A)$ denote the set of all $x$ such that $ x -a \in Tan^0(a, A)$.
\end{definition}

The following result of Federer (Theorem 4.18, \cite{federer_paper}), gives an alternate characterization of the reach.
 \begin{proposition}\label{thm:federer} Let $A$ be a closed subset of $\RR^n$. Then,
 \beq \reach(A)^{-1} = \sup\left\{2|b-a|^{-2}\dist(b, Tan(a, A))\big| \, a, b \in A\right\}.\eeq \end{proposition}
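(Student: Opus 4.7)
I would establish the claimed identity by proving the two inequalities separately; both are contained in Federer's original work \cite{federer_paper} on sets of positive reach, but I will outline the mechanism rather than merely cite it. Throughout, let $\tau := \reach(A)$ and note that, by the definition in the excerpt, $\dist(b,\mathrm{Tan}(a,A)) = \dist(b-a,\mathrm{Tan}^{0}(a,A))$.

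\emph{The inequality $\reach(A)^{-1}\ge\sup\{\cdots\}$.} Fix $a,b\in A$ with $a\ne b$, and assume $\tau>0$ (otherwise the bound is vacuous). The structural input I would cite from Federer is that, for a set of positive reach, the closed cone $N(a)$ of vectors $v$ such that $a$ is the unique nearest point of $A$ to $a+v$ coincides with the polar cone of $\mathrm{Tan}^{0}(a,A)$, and that $\mathrm{Tan}^{0}(a,A)$ itself is a closed convex cone. For any unit $n\in N(a)$, approximating by $a+tn$ with $t\uparrow\tau$, the defining property of reach yields $|b-(a+\tau n)|\ge\tau$, which expands to
\[
\langle b-a,n\rangle\ \le\ \frac{|b-a|^{2}}{2\tau}.
\]
Taking the supremum over unit $n\in N(a)$ and invoking the bipolar theorem for the closed convex cone $\mathrm{Tan}^{0}(a,A)$ produces
\[
\dist(b-a,\mathrm{Tan}^{0}(a,A))\ =\ \sup_{n\in N(a),\,|n|\le 1}\langle b-a,n\rangle\ \le\ \frac{|b-a|^{2}}{2\tau},
\]
which is the desired bound.

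\emph{The inequality $\reach(A)^{-1}\le\sup\{\cdots\}$.} Let $R$ be the reciprocal of the supremum, so $\dist(b,\mathrm{Tan}(a,A))\le|b-a|^{2}/(2R)$ for all $a,b\in A$. Suppose for contradiction that $\reach(A)<R$. Then there exists $x\notin A$ with $r:=\dist(x,A)<R$ and two distinct nearest points $a,b\in A$, so $|x-a|=|x-b|=r$. Expanding $|x-b|^{2}=|(x-a)-(b-a)|^{2}$ and using $|x-a|=|x-b|$ yields
\[
\langle b-a,\,x-a\rangle\ =\ \tfrac{1}{2}|b-a|^{2}.
\]
Since $a$ minimizes $|y-x|$ over $y\in A$, for any $v\in\mathrm{Tan}^{0}(a,A)$ written as $v/|v|=\lim(a_{k}-a)/|a_{k}-a|$ with $a_{k}\in A$, $a_{k}\to a$, the inequality $|x-a_{k}|\ge|x-a|$ passes to the limit to give $\langle v,x-a\rangle\le 0$. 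Hence $u:=(x-a)/|x-a|$ is a unit vector in the polar cone of $\mathrm{Tan}^{0}(a,A)$, and the elementary duality estimate $\dist(y,K)\ge\langle y,u\rangle$ (valid for any cone $K$ and any unit $u$ in its polar) applied to $y=b-a$ gives
\[
\dist(b,\mathrm{Tan}(a,A))\ \ge\ \langle b-a,u\rangle\ =\ \frac{|b-a|^{2}}{2r}\ >\ \frac{|b-a|^{2}}{2R},
\]
contradicting the defining property of $R$.

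\emph{Main obstacle.} The technically heavy ingredient is the first inequality's appeal to Federer's identification of the normal cone $N(a)$ with the polar of $\mathrm{Tan}^{0}(a,A)$ and to the convexity of $\mathrm{Tan}^{0}(a,A)$ under positive reach; these I would simply cite from \cite{federer_paper}. The second inequality, by contrast, uses only the trivial polar-cone estimate $\dist(y,K)\ge\langle y,u\rangle$, so it requires no convexity and only the first-order obstruction of $a$ being the nearest point to $x$.
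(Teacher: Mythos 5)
The paper gives no proof of this proposition at all: it is quoted verbatim from Federer (Theorem 4.18 of \cite{federer_paper}), so your argument is by construction a different route from the text, and it is essentially a correct reconstruction of Federer's. The direction $\reach(A)^{-1}\le\sup\{\cdots\}$ is complete and genuinely elementary as you present it: the equal-distance identity $\langle b-a,\,x-a\rangle=\tfrac12|b-a|^2$, the first-order observation that $x-a$ is polar to $Tan^0(a,A)$ when $a$ is a nearest point to $x$, and the estimate $\dist(y,K)\ge\langle y,u\rangle$ for a unit $u$ polar to $K$ require no convexity, and the degenerate cases ($a=b$, empty tangent cone, supremum $0$ or $\infty$) are harmless and inherited from the statement itself. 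The direction $\reach(A)^{-1}\ge\sup\{\cdots\}$ is correct modulo the cited structure theory, but you should be precise about what has to be cited: besides the closed convexity of $Tan^0(a,A)$ (needed for the Moreau-type identity $\dist(y,K)=\sup\{\langle y,u\rangle: u\in K^{\circ},\,|u|\le 1\}$) and the identification of the normal cone with the polar of $Tan^0(a,A)$, your step ``approximating by $a+tn$ with $t\uparrow\tau$'' silently uses that for a unit normal $n$ and every $t<\tau$ the point $a+tn$ still has $a$ as its nearest point, so that $|b-(a+tn)|\ge \dist(a+tn,A)=t$; this is not ``the defining property of reach'' (which only guarantees uniqueness of nearest points at distance $<\tau$, not that $a$ is the nearest point to $a+tn$ for $t$ beyond the range where this holds by star-shapedness), but is Federer's Theorem 4.8(12), one of the nontrivial inputs, and it belongs in the citation alongside the convexity of the tangent cone. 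With that adjustment your sketch is a faithful outline of Federer's proof; what it buys over the paper's bare citation is that it isolates which half of the equivalence is soft (the lower bound on the supremum, i.e.\ two nearest points force a large tangent-cone distance) and which half genuinely needs the structure theory of sets of positive reach (the upper bound used later in the paper, $2\tau\,\dist(b,Tan(a,\MM))\le|a-b|^2$).
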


 \begin{definition}[$C^r-$submanifold]\lab{def:man}
We say that a closed subset $\MM$ of $\HH$ is a $d-$dimensional $\C^r-$submanifold of $\HH$ if the following is true.
 For every point $p \in \MM$ there exists a chart $(U \subseteq \HH, \phi : U \ra \HH)$, where $U$ is an open subset of $\HH$ containing $p$ such that $\phi$ possesses $k$ continuous derivatives and $\phi(\MM \cap U)$ is the intersection of a $d$-dimensional affine subspace with $\phi(U).$
Let $B_{\HH}$ be the unit ball in $\HH$.
 Let $\G = \G(d, V, \tau)$ be the family of  boundaryless $C^r-$submanifolds of $B_\HH$  having dimension $d$, volume less or equal to $ V$, reach greater or equal to $\tau$. We assume that $\tau < 1$ and $r = 2$.
 \end{definition}

 Let $\HH$ be a separable Hilbert space and $\PP$ be a probability distribution supported on its unit ball $B_\HH$.  Let $|\cdot|$ denote the Hilbert space norm on $\HH$. For $x, y \in \HH$, let $\dist(x, y) := |x-y|$.
 For any $x \in B_\HH$ and any $\M \subseteq B_{\HH}$, let
 $\dist(x, \MM) := \inf_{y\in \MM} |x - y|,$ and
 $$\LL(\M, \PP) := \int \dist(x, \M)^2 d\PP(x).$$

Let $\mathcal{B}$ be a black-box function which when given two vectors $v, w \in \HH$ outputs the inner product $\mathcal{B}(u, v) = <v, w>$ .
We develop an algorithm which for given $\de, \eps \in (0, 1)$, $V > 0$, integer $d$ and $\tau > 0$ does the following. %provides with probability at least $1-\de$, a description of  a $d-$dimensional
%submanifold $\MM \in \G(d, CV, \frac{\tau}{C})$ with the following characteristic.
%There is a constant $C$ depending only on $d$ such that for every manifold $\MM' \in \G(d,  V/C, %C\tau)$, we have
%$\LL(\MM, \PP) \leq C\LL(\MM', \PP) + C \eps.$

%As a consequence,
We obtain an algorithm that tests the manifold hypothesis in the following sense.

 The algorithm takes i.i.d random samples from $\PP$ as input, and determines which of the following two is true (at least one must be):
 \ben
 \item There exists $\MM \in \G(d,  CV, \frac{\tau}{C})$ such that $\LL(\MM, \PP) \leq  C \eps.$
 \item There exists no $\MM \in \G(d,  V/C, C\tau)$ such that $\LL(\MM, \PP) \leq   \frac{\eps}{C}.$
 \een
The answer is correct with probability at least $1-\delta$.

The number of data points required is of the order of
\beqs n:= \frac{N_p \ln^4 \left(\frac{N_p}{\eps}\right) + \ln \de^{-1}}{\eps^2} \eeqs
where \beqs N_p := V\left(\frac{1}{\tau^d} + \frac{1}{\eps^{d/2}\tau^{d/2}}\right),\eeqs
and the number of arithmetic operations is
\beqs \exp\left(C\left(\frac{V}{\tau^d}\right)n \ln \tau^{-1}\right). \eeqs
The number of calls made to $\mathcal{B}$ is $O(n^2)$.

%such that the expected squared error of a random point from $\PP$ to $\MM$ is  $O(\eps)$. The constants implicit in $O(\cdot)$ and %$\Omega(\cdot)$ depend only on the intrinsic dimension $d$. Both the number of samples drawn and the number of arithmetic operations  %performed by the algorithm and the number of calls to $\mathcal{B}$ are functions of $V, d, \frac{1}{\de},$ and $\frac{1}{\eps}$.
%Further, this algorithm outputs a description of a manifold that satisfies these conditions, if such a manifold exists.

\begin{figure}\label{fig:torus}
\begin{center}
\includegraphics[height=2.0in]{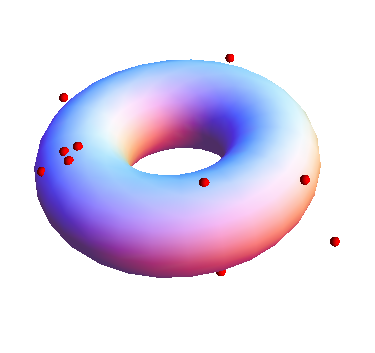}
\caption{Data lying in the vicinity of a two dimensional torus.}
\end{center}
\end{figure}

\subsection{A note on controlled constants}
In this section, and the following sections, we will make frequent use of constants $c, C, C_1, C_2, \oc_1, \dots, \oc_{11}$ and $c_{12}$ etc. These constants are "controlled constants" in the sense that their value is entirely determined by the dimension $d$ unless explicitly specified otherwise (as for example in Lemma~\ref{lem:charlie}). Also, the value of a constant can depend on the values of constants defined before it, but not those defined after it. This convention clearly eliminates the possibility of loops.

\section{Sample complexity of manifold fitting}

In this section, we show that if instead of estimating a least-square optimal manifold using the probability measure, we randomly sample sufficiently many points and then find the least square fit manifold to this data, we obtain an almost optimal manifold.
%\begin{definition}
%Given a set of i.i.d points $X = \{x_1, \dots, x_s\}$ from $\PP$,  a tolerance $\eps$ and a class of manifolds $\F$, any algorithm, that outputs a manifold  in $ \M_{erm}(x) \in \F$ such that $$\sum_{i = 1}^s \dist(x_i, \M_{erm})^2 \leq \eps/2 +  \inf_{\NN \in \F}\dist(x_i, \NN)^2 $$ will constitute for us an instance of ($\eps-$approximate) Empirical Risk Minimization.
% \end{definition}

\begin{definition}[Sample Complexity]
 Given error  parameters $\eps, \de$, a space $X$ and a set of functions (henceforth function class)  $\F$ of functions $f:X \ra \RR$,  we define the sample complexity $s = s(\eps, \de, \F)$  to be the least number such that the following is true. There exists a function $\A:X^s \ra \FF$ such that, for any probability distribution $\PP$ supported on $X$,  if $(x_1, \dots, x_s) \in X^s$ is sequence of i.i.d draws from $\PP$, then  $f_{out} := \A((x_1, \dots, x_s))$  satisfies
$$\p\left[\E_{x \dashv \PP}  f_{out}(x) < (\inf_{f \in \F} \E_{x \dashv \PP} f) + \eps\right] > 1 - \de.$$
\end{definition}
 We state below, a sample complexity bound when mean-squared error is minimized over $\G(d, V, \tau)$.
\begin{theorem}\lab{thm:ext_manifold}
For $r > 0$, let $$U_\G(1/r) = CV\left(\frac{1}{\tau^d} + \frac{1}{(\tau r)^{d/2}}\right).$$
Let
$$s_\G(\eps, \de) :=   C\left(\frac{U_\G(1/\eps)}{\eps^2} \left(\log^4 \left(\frac{U_\G(1/\eps)}{\eps}\right)\right) + \frac{1}{\eps^2} \log \frac{1}{\de}\right).$$ Suppose $s \geq s_\G(\eps, \de)$
%$$s \geq   C\left(V^{1 + o(1)}\left(\frac{1}{{\eps}} + \frac{1}{\tau}\right)^{d + o(d)} + \frac{\log 1/\de}{\eps^2}\right)$$
%C\left( \min\left(\log^4 \left(\frac{U_{ext}}{\eps}\right), U_{ext}\right)\frac{U_{ext}}{\eps^2} + \frac{1}{\eps^2} \log \frac{1}{\de}\right),$$
and $x = \{x_1, \dots, x_s\}$ be a set of i.i.d points from $\PP$ and $\PP_X$ is the uniform probability measure over $X$. Let $\MM_{erm}$ denote a manifold in $\G(d, V, \tau)$ that approximately minimizes the quantity  $$\sum_{i = 1}^s \dist(x_i, \M)^2 $$ in that
\beqs  \LL(\M_{erm}(x), \PP_X) -  \inf_{\M \in \G(d, V, \tau)} \LL(\M, \PP_X) < \frac{\eps}{2}. \eeqs Then,
 \beqs  \p\left[ \LL(\M_{erm}(x), \PP) -  \inf_{\M \in \G(d, V, \tau)} \LL(\M, \PP) < {\eps}\right] > 1 - \de.\eeqs
\end{theorem}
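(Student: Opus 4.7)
The plan is to prove a uniform convergence bound for the squared-distance loss over the family $\G(d, V, \tau)$ and then apply the standard empirical risk minimization argument. Specifically, once one establishes
$$\p\left[\sup_{\M \in \G(d, V, \tau)} \bigl|\LL(\M, \PP_X) - \LL(\M, \PP)\bigr| > \eps/4\right] < \de,$$
the theorem follows by the routine chain: if $\M^*$ is near-optimal for $\LL(\cdot, \PP)$, then
$$\LL(\M_{erm}, \PP) \le \LL(\M_{erm}, \PP_X) + \tfrac{\eps}{4} \le \LL(\M^*, \PP_X) + \tfrac{3\eps}{4} \le \LL(\M^*, \PP) + \eps.$$
So the whole content of the theorem is in the uniform convergence statement.

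The first main step is to reduce the optimization from the infinite-dimensional, non-parameterized family $\G(d,V,\tau)$ to an optimization over $N_p$-point subsets of $B_\HH$. For every $\M \in \G(d, V, \tau)$ I would construct a finite net $N_\M$ of size at most $N_p = CV(\tau^{-d} + (\eps\tau)^{-d/2})$ such that $\bigl|\dist(x,\M)^2 - \dist(x, N_\M)^2\bigr| \le \eps/16$ for every $x \in B_\HH$. The two terms in $N_p$ correspond to the two distance regimes: close to $\M$, tangent-plane deviation is controlled by $1/\tau$, so a net of scale $\sim\sqrt{\eps\tau}$ is required and its volume-based cardinality contributes $V/(\eps\tau)^{d/2}$; far from $\M$, the squared distance is $\sqrt{\eps}$-insensitive, so a coarser net of scale $\sim\tau$ of cardinality $V/\tau^d$ suffices. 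Combining the two gives the $N_p$ appearing in the statement.

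The second step is to apply Lemma~\ref{lem:key} (uniform bounds for $k$-means) with $k = N_p$, which controls
$$\sup_{|N| \le N_p} \Bigl|\tfrac{1}{s}\sum_{i=1}^s \dist(x_i, N)^2 - \E\,\dist(x, N)^2\Bigr|$$
with probability $1-\de$ provided $s \ge C(N_p \log^4(N_p/\eps) + \log \de^{-1})/\eps^2$. Composing this with the approximation from Step 1 gives uniform convergence over $\G(d,V,\tau)$ at resolution $\eps/4$, which is exactly the bound $s_\G(\eps,\de)$ in the theorem. Lemma~\ref{lem:key} is itself proved, as the introduction outlines, by bounding the fat-shattering dimension of the class of squared-distance functions to $N_p$-point sets via a Johnson--Lindenstrauss-type random projection together with the Sauer--Shelah lemma, and then feeding this fat-shattering bound into Dudley's chaining integral. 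The $\log^4$ factor in $s_\G$ is precisely the chaining overhead.

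The main obstacle is the first step: producing a net $N_\M$ whose squared-distance function uniformly tracks that of $\M$ with the quantitatively sharp size $N_p$. One has to use the reach $\tau$ to bound how far $\M$ can deviate from its tangent planes at any scale below $\tau$, and then allocate net resolution to distance scale so the total cardinality does not exceed $V(\tau^{-d} + (\eps\tau)^{-d/2})$; the argument is essentially a covering-number calculation for manifolds of controlled reach, plus a separate coarse covering used outside the tubular neighborhood of $\M$. Once this approximation is in place, the reduction to Lemma~\ref{lem:key} and the ERM-to-uniform-convergence step are mechanical.
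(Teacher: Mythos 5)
Your overall route is the same as the paper's: the theorem is obtained by (i) replacing each $\MM\in\G(d,V,\tau)$ by a finite point set whose squared-distance function uniformly approximates that of $\MM$ up to an additive $O(\eps)$, (ii) invoking the $k$-means-type uniform bound of Lemma~\ref{lem:key} (fat-shattering via Johnson--Lindenstrauss and Sauer--Shelah, then chaining), and (iii) the standard ERM chain; this is precisely Corollary~\ref{cor:r-net} combined with Lemma~\ref{lem:main1}, which is all the paper's proof of Theorem~\ref{thm:ext_manifold} consists of.

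However, your sketch of step (i) --- which you yourself flag as the main obstacle --- would not work as stated. You propose to split into distance regimes and to serve points far from $\MM$ with a coarse net of scale $\sim\tau$, on the grounds that the squared distance there is insensitive; but replacing $\MM$ by a $\tau$-net perturbs the squared distance of a far point by an amount of order $\tau$ (even after exploiting normality, the cross term is only bounded by $C|y-z|^2/\tau\sim\tau$), which is not $O(\eps)$ when $\eps\ll\tau$. The paper instead uses a \emph{single} net at scale $\sqrt{\eps\tau}/16$ for all $x\in B_\HH$: with $y$ the closest point of $\MM$ to $x$ and $z$ a net point with $|y-z|\le\sqrt{\eps\tau}/16$, the expansion $|x-z|^2=|x-y|^2+2\langle y-z,x-y\rangle+|y-z|^2$ together with Federer's characterization of reach (Proposition~\ref{thm:federer}) bounds the cross term by $2|x-y|\,|z-z'|\le\eps/128$, where $z'$ is the projection of $z$ onto $Tan(y,\MM)$ and $x-y$ is normal to $\MM$ at $y$; this gives a uniform additive error $\eps/64$ irrespective of $\dist(x,\MM)$. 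Correspondingly, the two terms in $N_p=U_\G(1/\eps)$ do not index near/far regimes: the $(\eps\tau)^{-d/2}$ term is the covering number of $\MM$ at scale $\sqrt{\eps\tau}$ when $\eps\le\tau$, and the $\tau^{-d}$ term covers the case $\eps>\tau$, where the net scale is capped at $\tau$ (Corollary~\ref{cor:r-net}, whose volume estimate rests on the local graph description of Claim~\ref{cl:g1sept}). With step (i) repaired in this way, the remainder of your argument coincides with the paper's.
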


%\subsection{Manifolds having a bounded covering number}\lab{ssec:bounded_covering}

Let $\MM \in \G(d, V, \tau)$. For $x \in \MM$ denote  the orthogonal projection from $\HH$
 to the affine subspace $Tan(x, \MM)$ by $\Pi_{x}$. We will need the following claim to  prove Theorem~\ref{thm:ext_manifold}.

\begin{claim} \lab{cl:g1sept}
Suppose that $\MM \in \G(d, V, \tau)$. Let \beqs U:= \{y\big||y-\Pi_xy| \leq \tau/C\} \cap  \{y\big||x-\Pi_xy| \leq \tau/C\},\eeqs for a sufficiently large controlled constant $C$.
There exists a $C^{1, 1}$ function $F_{x, U}$ from $\Pi_x( U)$ to $\Pi_x^{-1}(\Pi_x(0))$ such that
\beqs \MM \cap U = \{ y + F_{x, U}(y) \big | y \in \Pi_x(U)\}\eeqs such that the Lipschitz constant of the gradient of $F_{x, U}$ is bounded above by $C$.
\end{claim}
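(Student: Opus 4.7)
The plan is to translate so that $x = 0$ and identify $Tan(x, \MM)$ with a $d$-dimensional subspace $T \subset \HH$. Then $\Pi_x$ becomes orthogonal projection onto $T$, the cylinder $U$ becomes the product of a tangential disc $B_T(\tau/C) \subset T$ and a normal disc $B_{T^\perp}(\tau/C) \subset T^\perp$, and the goal is to realize $\MM \cap U$ as the graph $\{p + F(p) : p \in B_T(\tau/C)\}$ of a $C^{1,1}$ map $F : B_T(\tau/C) \to T^\perp$ with $F(0) = 0$, $\nabla F(0) = 0$, and $\nabla F$ Lipschitz with the stated constant.

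I would start by extracting two consequences of the reach hypothesis via Proposition~\ref{thm:federer}. Applying the bound $\dist(y, Tan(x, \MM)) \leq |y-x|^2/(2\tau)$ to an arbitrary $y \in \MM \cap U$ (where $|y-x| \leq \sqrt{2}\,\tau/C$ by Pythagoras) yields the \emph{normal-deviation bound} $|y - \Pi_x(y)| \leq \tau/C^2$; in particular $\MM \cap U$ lies in a thin slab about $T$, strictly interior to the normal disc of $U$. A standard consequence of Federer's inequality (cf.\ \cite{NSW}) is the \emph{tangent-alignment bound} $\|P_{Tan^0(y,\MM)} - P_T\|_{\mathrm{op}} \leq C'|y-x|/\tau$, which for $y \in \MM \cap U$ is at most $C'\sqrt{2}/C$ and hence less than $1/2$ for $C$ sufficiently large. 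The alignment bound makes $\Pi_x$ restricted to $Tan^0(y,\MM)$ an isomorphism onto $T$ whose inverse has operator norm at most $2$, so by the inverse function theorem every $y \in \MM \cap U$ has a neighborhood $V$ in which $\MM \cap V = \{q + F_V(q) : q \in \Pi_x(V)\}$ for some $C^1$ map $F_V$ into $T^\perp$.

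These local graphs are patched into a global graph by continuation. Let $D \subseteq B_T(\tau/C)$ be the largest open connected set containing $0$ on which there is a $C^1$ function $F : D \to T^\perp$ with $F(0) = 0$, $\{p + F(p) : p \in D\} \subseteq \MM \cap U$, and $F$ locally coinciding with some $F_V$. The normal-deviation bound keeps $|F(p)| \leq \tau/C^2$ throughout $D$, so $\mathrm{graph}(F)$ stays strictly inside the normal disc of $U$; combined with tangent alignment, every limit point of $D$ in $B_T(\tau/C)$ can be extended through a local $F_V$, so $D$ is open and relatively closed, giving $D = B_T(\tau/C)$. For the reverse containment $\MM \cap U \subseteq \mathrm{graph}(F)$, I would take $y \in \MM \cap U$ with $\Pi_x(y) = p$ and lift the segment $t \mapsto tp$ from $y$ at $t = 1$ backward into $\MM$ by concatenating local graphs; the normal-deviation bound keeps the lift inside $U$, and its endpoint $y_0$ at $t = 0$ satisfies $\Pi_x(y_0) = 0$, so Federer's bound at $y_0$ gives $|y_0| = \dist(y_0, T) \leq |y_0|^2/(2\tau)$, forcing $y_0 = 0$. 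By uniqueness of the lift with endpoint $0$, the lift coincides with $t \mapsto tp + F(tp)$, hence $y = p + F(p)$.

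The Lipschitz bound on $\nabla F$ is then immediate: $\nabla F(p)$ is determined by the tilt of $Tan^0(p + F(p), \MM)$ relative to $T$, so the tangent-alignment bound yields $\|\nabla F(p_1) - \nabla F(p_2)\| \leq C''|p_1 - p_2|/\tau$, which is a bound of the form stated in the claim (with the $\tau^{-1}$ factor absorbed into the controlled constant). The step I expect to be the main obstacle is the global-injectivity part of the previous paragraph: a priori $\MM \cap U$ could contain a second sheet lying over the same tangential base, and ruling this out requires the quadratic decay of $\dist(y, T)$ in $|y-x|$ provided by Federer --- this confinement, together with the small tangential tilt, is what propagates the local injectivity from the inverse function theorem to global injectivity over $B_T(\tau/C)$.
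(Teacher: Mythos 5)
Your proposal is correct in substance, but it takes a genuinely different route at the crucial global step. Locally the two arguments are the same (implicit function theorem plus a continuation/patch-growing argument: your open-closed maximal domain $D$ plays the role of Lemma~\ref{lem:GP} and its corollary, and your local-graph step is Lemma~\ref{lem:smiley}); where you diverge is in ruling out a second sheet of $\MM\cap U$ over the same tangential base. The paper does this with a soft argument that uses only the defining property of reach: if some piece of $\MM$ near $x$ were not on the grown patch, the function $y\mapsto \dist(y,\Gamma_{no})-\dist(y,\Gamma_{yes})$ changes sign on a small ball, so an intermediate point $y_{Ham}$ would have two distinct nearest points of $\MM$ while lying well within distance $\tau$ of $\MM$ --- contradicting uniqueness of nearest points; no quantitative tangent comparison is needed there. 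You instead run a monodromy argument: lift the segment $t\mapsto t\,\Pi_x(y)$ backwards through local graphs, use the normal-deviation bound to keep the lift in $U$, force the endpoint over $0$ to be $x$ itself via Proposition~\ref{thm:federer} ($|y_0|\le |y_0|^2/2\tau$ with $|y_0|<2\tau$), and conclude by uniqueness of lifts. That works, and it has the advantage of being fully quantitative and of delivering the graph over the whole disc $\Pi_x(U)$ with explicit constants; its cost is that both the local step and the $C^{1,1}$ bound lean on the tangent-alignment estimate $\|P_{Tan^0(y,\MM)}-P_{Tan^0(x,\MM)}\|\le C'|y-x|/\tau$, which you import from the literature rather than derive from Proposition~\ref{thm:federer} --- this is the one ingredient you should either prove or cite precisely, though in fairness the paper makes a comparable unproved assertion (that reach $\ge 1$ implies infinitesimal reach $\ge 1$, i.e.\ local patches with $C^{1,1}$ norm $\le 1$). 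Two small housekeeping points: the "largest open connected set $D$" should be set up so that maximality is well defined (e.g.\ grow the radius in fixed increments as the paper does, or take the supremum of radii over which a graph exists), and your final Lipschitz constant is really of size $C/\tau$, which matches the paper's convention of proving the claim after rescaling the reach to $1$ rather than being a constant independent of $\tau$ as the claim's wording literally suggests.
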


\section{Proof of Claim~\ref{cl:g1sept}}

\subsection{Constants:}
$D$ is a fixed integer. Constants $c, C, C'$ etc depend only on $D$. These symbols may denote different constants in different occurrences, but $D$ always stays fixed.
\subsection{$D-$planes:}
$\HH$ denotes a fixed Hilbert space, possibly infinite-dimensional, but in any case of dimension $> D$. A $D-$plane is a $D-$dimensional vector subspace of $\HH$. We write $\Pi$ to denote a $D-$plane and we write $DPL$ to denote the space of all $D-$planes. If $\Pi, \Pi' \in DPL$, then we write $dist(\Pi, \Pi')$ to denote the infimum of $\|T - I\|$ over all orthogonal linear transformations $T:\HH \ra \HH$ that carry $\Pi$ to $\Pi'$. Here, the norm $\|A\|$ of a linear map $A:\HH \ra \HH$ is defined as $$
\sup_{v \in \HH\setminus \{0\}} \frac{\|Av\|_\HH}{\|v\|_\HH}.$$ One checks easily that
$(DPL, dist)$ is a metric space. We write $\Pi^\perp$ to denote the orthocomplement of $\Pi$ in $\HH$.
\subsection{Patches:}
Suppose $B_{\Pi}(0, r)$ is the ball of radius $r$ about the origin in a $D-$plane $\Pi$, and suppose

$$\Psi:B_{\Pi}(0, r) \ra \Pi^\perp$$ is a
$C^{1, 1}-$map, with $\Psi(0) = 0$. Then we call
$$\Gamma = \{x + \Psi(x): x \in B_\Pi(0, r)\} \subset \HH$$ \underline{a patch of radius $r$  over $\Pi$ centered at $0$.} We define
$$\|\Gamma\|_{\dot{C}^{1, 1}(B_{\Pi}(0, r))} := \sup_{\text{distinct}\, x, y\in B_{\Pi}(0, r)}\frac{\|\nabla \Psi(x) - \nabla \Psi(y)\|}{\|x - y\|};$$
Here, $$\nabla \Psi(x): \Pi \ra \Pi^\perp$$ is a linear map, and for linear maps
 $A:\Pi \ra \Pi^\perp$, we define $\|A\|$ as
$$\sup_{v \in \Pi \setminus \{0\}} \frac{\|Av\|}{\|v\|}.$$
If also $$\nabla \Psi(0) = 0 $$ then we call $\Gamma$ \underline{a patch of radius $r$ tangent to $\Pi$ at its center $0$.} If $\Gamma_0$ is a patch of radius $r$ over $\Pi$ centered at $0$ and if $z \in \HH$, then we call the translate $\Gamma = \Gamma_0 + z \subset \HH$ \underline{a patch of radius $r$ over $\Pi$, centered at $z$.} If $\Gamma_0$ is tangent to $\Pi$ at its center $0$, then we say that $\Gamma$ is tangent to $\Pi$ at its center $z$.

The following is an easy consequence of the implicit function theorem in fixed dimension ($D$ or $2D$).

\begin{lemma}\lab{lem:smiley}
Let $\Gamma_1$ be a patch of radius $r_1$ over $\Pi_1$ centered at $z_1$ and tangent to $\Pi_1$ at $z_1$. Let $z_2$ belong to $\Gamma_1$ and suppose $\|z_2 - z_1\| < c_0 r_1.$ Assume $$\|\Gamma_1\|_{\dot{C}^{1, 1}(B_{\Pi}(z_1, r_1))} \leq \frac{c_0}{r_1}.$$ Let $\Pi_2 \in DPL$ with $dist(\Pi_2, \Pi_1) < c_0.$ Then there exists a patch $\Gamma_2$ of radius $c_1r_1$ over $\Pi_2$ centered at $z_2$ with
$$\|\Gamma_2\|_{\dot{C}^{1, 1}(B_{\Pi}(0, c_1r_1))} \leq \frac{200 c_0}{r_1},$$ and $$\Gamma_2 \cap B_{\HH}\left(z_2, \frac{c_1 r_1}{2}\right) = \Gamma_1 \cap B_{\HH}\left(z_2, \frac{c_1 r_1}{2}\right).$$
\end{lemma}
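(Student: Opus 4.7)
After translating so that $z_1 = 0$, write $z_2 = x_2 + \Psi_1(x_2)$ with $x_2 \in \Pi_1$, and recenter the parametrization of $\Gamma_1$ at $x_2$ by setting $\tilde\Psi_1(u) := \Psi_1(u + x_2) - \Psi_1(x_2)$ on the translated domain. Then $\tilde\Psi_1(0) = 0$, $\nabla\tilde\Psi_1$ is still Lipschitz with constant $c_0/r_1$, and
\[
\|\nabla\tilde\Psi_1(0)\| = \|\nabla\Psi_1(x_2)\| \leq (c_0/r_1)\|x_2\| \leq c_0^2,
\]
since $\|x_2\| \leq \|z_2\| < c_0 r_1$ and $\nabla\Psi_1(0) = 0$. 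So on a neighborhood of $0$ of size a controlled fraction of $r_1$, the gradient $\nabla \tilde\Psi_1$ is controlled by a constant multiple of $c_0$.

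\textbf{Inverting the projection.} Let $P_2 : \HH \to \Pi_2$ denote orthogonal projection and define $\phi(u) := P_2\bigl(u + \tilde\Psi_1(u)\bigr)$. Since $\mathrm{dist}(\Pi_1,\Pi_2) < c_0$, an orthogonal map $T:\HH \to \HH$ with $T(\Pi_1)=\Pi_2$ and $\|T-I\|<c_0$ witnesses that $P_2|_{\Pi_1}:\Pi_1\to\Pi_2$ lies within $c_0$ in operator norm of the isometry $T|_{\Pi_1}$, hence is an isomorphism with $\|(P_2|_{\Pi_1})^{-1}\| \leq (1-c_0)^{-1}$. Thus $d\phi(u) = P_2|_{\Pi_1} + P_2\circ \nabla\tilde\Psi_1(u)$ remains an isomorphism on a ball $B_{\Pi_1}(0, 2 c_1 r_1)$ for a small controlled $c_1$. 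A quantitative (contraction-mapping) inverse function theorem, applied in the $D$-dimensional space $\Pi_1$, produces a $C^{1,1}$ inverse $\phi^{-1}:B_{\Pi_2}(0, c_1 r_1) \to B_{\Pi_1}(0, 2 c_1 r_1)$ with $\phi^{-1}(0)=0$ and $\nabla(\phi^{-1})$ Lipschitz with constant bounded by a controlled multiple of $c_0/r_1$. This is the ``fixed dimension $D$'' step anticipated by the authors.

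\textbf{Construction of $\Gamma_2$ and verification.} Set
\[
\tilde\Psi_2(w) := \phi^{-1}(w) + \tilde\Psi_1(\phi^{-1}(w)) - w, \qquad w \in B_{\Pi_2}(0, c_1 r_1),
\]
and $\Gamma_2 := z_2 + \{ w + \tilde\Psi_2(w) : w \in B_{\Pi_2}(0, c_1 r_1)\}$. By construction $\tilde\Psi_2(w) \in \Pi_2^\perp$ (it is the $\Pi_2^\perp$-component of $\phi^{-1}(w) + \tilde\Psi_1(\phi^{-1}(w))$) and $\tilde\Psi_2(0) = 0$, so $\Gamma_2$ is a patch of radius $c_1 r_1$ over $\Pi_2$ centered at $z_2$. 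The chain rule yields $I + \nabla\tilde\Psi_2(w) = \bigl(I + \nabla\tilde\Psi_1(\phi^{-1}(w))\bigr) \circ d\phi^{-1}(w)$, and combining the $(c_0/r_1)$-Lipschitz bound on $\nabla\tilde\Psi_1$ with the analogous bound on $d\phi^{-1}$ gives $\|\Gamma_2\|_{\dot{C}^{1,1}(B_{\Pi_2}(0, c_1 r_1))} \leq 200\,c_0/r_1$ once $c_0$ is fixed small enough. The set equality $\Gamma_2 \cap B_\HH(z_2, c_1 r_1/2) = \Gamma_1 \cap B_\HH(z_2, c_1 r_1/2)$ is now bookkeeping: each point of $\Gamma_2$ equals $z_2 + \phi^{-1}(w) + \tilde\Psi_1(\phi^{-1}(w)) = (x_2 + \phi^{-1}(w)) + \Psi_1(x_2 + \phi^{-1}(w)) \in \Gamma_1$; conversely, any $z = z_2 + u + \tilde\Psi_1(u) \in \Gamma_1$ with $\|z - z_2\| < c_1 r_1/2$ satisfies $\|u\| = \|P_{\Pi_1}(z-z_2)\| \leq \|z - z_2\|$, so $w := \phi(u)$ lies in $B_{\Pi_2}(0, c_1 r_1)$ and $z = z_2 + w + \tilde\Psi_2(w) \in \Gamma_2$.

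\textbf{Expected obstacle.} The only non-routine piece is extracting the explicit constant $200\,c_0/r_1$ from the chain-rule expansion of $\nabla \tilde\Psi_2$; one must track the $O(c_0^2)$ contribution from $\nabla\tilde\Psi_1$ and the $O(c_0)$ deviation of $P_2|_{\Pi_1}$ from the isometry $T|_{\Pi_1}$, and then choose the controlled constant $c_0$ small enough (and $c_1$ smaller still) so that all cross-terms are absorbed into the factor $200$. Everything else is routine finite-dimensional IFT bookkeeping.
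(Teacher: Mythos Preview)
Your proposal is correct and follows exactly the route the paper indicates: the paper does not give a detailed proof but declares the lemma ``an easy consequence of the implicit function theorem in fixed dimension ($D$ or $2D$)'' and notes that one may rescale to $r_1=1$; your recentering at $x_2$ and inversion of the projection map $\phi(u)=P_2(u+\tilde\Psi_1(u))$ via a quantitative inverse function theorem between the $D$-dimensional planes $\Pi_1$ and $\Pi_2$ is precisely this argument carried out in full. The only point where you go beyond the paper is in tracking constants to land on $200\,c_0/r_1$, which the paper does not attempt.
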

Here $c_0$ and $c_1$ are small constants depending only on $D$, and by rescaling, we may assume without loss of generality that $r_1 = 1$ when we prove Lemma~\ref{lem:smiley}.

\begin{figure}\label{fig:claim}
\begin{center}
\includegraphics[height=2.4in]{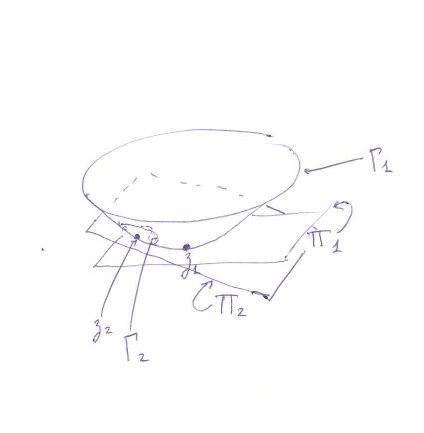}
\caption{}
\end{center}
\end{figure}

The meaning of Lemma~\ref{lem:smiley} is that if $\Gamma$ is the graph of a map
$$\Psi:B_{\Pi_1}(0, 1) \ra \Pi_1^\perp$$ with $\Psi(0)= 0$ and $\nabla \Psi(0) = 0$ and the $C^{1, 1}-$norm of $\Psi$ is small then at any point $z_2 \in \Gamma$ close to $0$, and for any $D-$plane $\Pi_2$ close to $\Pi_1$, we may regard $\Gamma$ near $z_2$ as the graph $\Gamma_2$ of a map
$$\tilde{\Psi}: B_{\Pi_2}(0, c) \ra \Pi_2^\perp;$$ here $\Gamma_2$ is centered at $z_2$ and the $C^{1, 1}-$norm of $\tilde{\psi}$ is not much bigger than that of $\Psi$.
\subsection{Imbedded manifolds:}
Let $\MM \subset \HH$ be a "compact imbedded $D-$manifold" (for short, just a "manifold") if the following hold:
\begin{itemize}
\item $\MM$ is compact.
\item There exists an $r_1 > r_2 > 0$ such that for every $z \in \MM$, there exists $T_z\MM \in DPL$ such that $\MM \cap B_\HH(z, r_2) = \Gamma \cap B_\HH(z, r_2)$ for some patch $\Gamma$ over $T_z(\MM)$ of radius $r_1$, centered at $z$ and tangent to $T_z(\MM)$ at $z$. We call $T_z(\MM)$ the \underline{tangent space} to $\MM$ at $z$.
\end{itemize}

We say that $\MM$ has \underline{infinitesimal reach} $ \leq \rho $ if for every $\rho' < \rho$, there is a choice of $r_1 > r_2 > 0$  such that for every $z \in \MM$ there is a patch $\Gamma$ over $T_z(\MM)$ of radius $r_1$, centered at $z$ and tangent to $T_z(\MM)$ at $z$ which has $C^{1, 1}-$norm at most $\frac{1}{\rho'}$.

\subsection{Growing a Patch}
\begin{lemma}["Growing Patch"]\lab{lem:GP}
Let $\MM$ be a manifold and let $r_1, r_2$ be as in the definition of a manifold. Suppose $\MM$ has infinitesimal reach $\geq 1$. Let $\Gamma \subset \MM$ be a patch of radius $r$ centered at $0$, over $T_0\MM$.
Suppose $r$ is less than a small enough constant $\hat{c}$ determined by $D$. Then there exists a patch $\Gamma^+$ of radius $ r + c r_2$ over $T_0\MM$, centered at $0$ such that $\Gamma \subset \Gamma^+ \subset \MM.$
\end{lemma}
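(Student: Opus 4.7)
The plan is to extend $\Gamma$ past its boundary in every direction by invoking the infinitesimal reach at suitably chosen points near the boundary and then re-expressing the resulting fresh patches as graphs over $T_0\MM$ via Lemma~\ref{lem:smiley}. The starting point is a quantitative Gauss map estimate: for every $p \in \Gamma$, $dist(T_p\MM, T_0\MM) \leq C\|p\|$. I would prove this by chaining the pointwise $C^{1,1}$ patches provided by infinitesimal reach along a path from $0$ to $p$ inside $\Gamma$; since $\|p\| \leq 2r < 2\hat c$, choosing $\hat c$ small yields $dist(T_p\MM, T_0\MM) < c_0$, making Lemma~\ref{lem:smiley} applicable with $\Pi_1 = T_p\MM$, $\Pi_2 = T_0\MM$.

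Given a point $p \in \Gamma$ whose projection $\Pi_{T_0\MM}(p)$ lies close to $\partial B_{T_0\MM}(0,r)$, infinitesimal reach furnishes a patch $\Gamma_p$ of radius $r_1$ over $T_p\MM$, tangent at $p$, with $C^{1,1}$ norm at most $2$, satisfying $\Gamma_p \cap B_\HH(p, r_2) = \MM \cap B_\HH(p, r_2)$. Applying Lemma~\ref{lem:smiley} with $z_1 = z_2 = p$ then produces a patch $\tilde\Gamma_p$ of radius $c_1 r_1$ \emph{over $T_0\MM$} centered at $p$, of controlled $C^{1,1}$ norm, coinciding with $\Gamma_p$ and hence with $\MM$ inside $B_\HH(p, c_1 r_1/2)$. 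The projection of $\tilde\Gamma_p$ to $T_0\MM$ is a Euclidean ball of radius $c_1 r_1 \geq c_1 r_2$ about $\Pi_{T_0\MM}(p)$, which extends the domain of the graph representation past $\partial B_{T_0\MM}(0, r)$ in the direction of $p$.

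To finish, I take a $(cr_2)$-net $\{p_\alpha\}$ on $\partial B_{T_0\MM}(0, r)$ with $c = c_1/2$, lift each $p_\alpha$ to a point of $\Gamma$, and perform the above construction at each lifted point. The local graphs $\tilde\Psi_{p_\alpha}$ over $T_0\MM$ agree pairwise and agree with the original graph $\Psi$ on their overlaps: near any common point, all parametrizations coincide with $\MM$, and because all relevant tangent planes stay within $c_0$ of $T_0\MM$, $\MM$ is locally a \emph{unique} graph over $T_0\MM$ there; a standard connected open-closed argument then propagates this pointwise agreement throughout the overlap. Gluing yields the desired extension $\Psi^+$ on $B_{T_0\MM}(0, r + c r_2)$, and the associated patch $\Gamma^+$ satisfies $\Gamma \subset \Gamma^+ \subset \MM$.

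The main obstacle is the quantitative bound $dist(T_p\MM, T_0\MM) \leq C\|p\|$ from the first paragraph: infinitesimal reach supplies only pointwise patches, so we must paste their Lipschitz estimates together along $\Gamma$ to obtain a uniform drift bound on the tangent plane. Once this is in place, the two-step application of Lemma~\ref{lem:smiley} (first via infinitesimal reach to obtain $\Gamma_p$, then to convert $\Gamma_p$ into a graph over $T_0\MM$) together with the graph-uniqueness argument underlying the gluing step is routine.
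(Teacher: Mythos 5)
Your proposal is correct and follows essentially the same route as the paper's proof: at points of $\Gamma$ one invokes the infinitesimal-reach patch over the local tangent plane, converts it via Lemma~\ref{lem:smiley} into a graph over $T_0\MM$ that coincides with $\MM$ on a ball of radius $\sim r_2$, uses that local identity to force agreement of overlapping graphs, and glues to enlarge the domain by $c\,r_2$. The only cosmetic differences are that the paper performs this at every $y\in B_{\RR^D}(0,r)$ rather than only at a net near the boundary, and it obtains the tangent-plane closeness needed for Lemma~\ref{lem:smiley} from the bound $\|\Psi\|_{\dot{C}^{1,1}}\leq C_0$ together with $\nabla\Psi(0)=0$, whereas you derive the same drift estimate by chaining the infinitesimal-reach patches along paths in $\Gamma$.
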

\begin{corollary}\lab{cor:GP}
Let $\MM$ be a manifold with infinitesimal reach $\geq 1$ and suppose $0 \in \MM$. Then there exists a patch $\Gamma$ of radius $\hat{c}$ over $T_0\MM$ such that $\Gamma \subset \MM$.
\end{corollary}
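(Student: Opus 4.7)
The strategy is to iterate Lemma~\ref{lem:GP} (Growing Patch) starting from a small patch at the origin: each application enlarges the radius by $c r_2$, and after finitely many applications the radius exceeds $\hat c$, at which point restriction yields the desired patch $\Gamma \subset \MM$ of radius $\hat c$ over $T_0\MM$.

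First, I would produce the initial patch $\Gamma_0$. By the manifold definition at $z = 0$ together with infinitesimal reach $\geq 1$ (taking, say, $\rho' = 1/2$), there exist $r_1 > r_2 > 0$ and a patch $\Gamma^{\mathrm{loc}}$ of radius $r_1$ over $T_0\MM$, centered at $0$ and tangent to $T_0\MM$ at $0$, with $\|\Gamma^{\mathrm{loc}}\|_{\dot{C}^{1,1}} \leq 2$ and $\Gamma^{\mathrm{loc}} \cap B_{\HH}(0, r_2) = \MM \cap B_{\HH}(0, r_2)$. Because $\Gamma^{\mathrm{loc}}$ is tangent at $0$ with small $C^{1,1}$-norm, its sub-patch over $B_{T_0\MM}(0, r_0)$ with $r_0 := c' r_2$ (for a small controlled constant $c'$) lies entirely inside $B_{\HH}(0, r_2)$, hence inside $\MM$. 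Set $\Gamma_0$ to be this sub-patch.

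Then I would iterate: as long as $r_i < \hat c$, Lemma~\ref{lem:GP} applied to $\Gamma_i \subset \MM$ produces $\Gamma_{i+1} \subset \MM$ of radius $r_{i+1} = r_i + c r_2$ over $T_0\MM$, centered at $0$. After at most $\lceil (\hat c - r_0)/(c r_2) \rceil$ iterations, $r_N \geq \hat c$, and truncating $\Gamma_N$ to radius $\hat c$ over $T_0\MM$ yields the promised $\Gamma$.

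The main obstacle is ensuring that the hypotheses required by Lemma~\ref{lem:GP} propagate through the iteration---in particular, keeping a uniform $C^{1,1}$-bound on each $\Gamma_i$, which its proof (through Lemma~\ref{lem:smiley}) implicitly uses. This should follow from infinitesimal reach $\geq 1$: every point $z \in \Gamma_i \subset \MM$ admits a patch over $T_z\MM$ with $\dot{C}^{1,1}$-norm $\leq 1/\rho'$, and Lemma~\ref{lem:smiley} lets us convert between $T_z\MM$-patches and $T_0\MM$-patches near $z$ at the cost of only a constant factor in $\dot{C}^{1,1}$-norm. Choosing $\rho'$ and the common radii $r_1, r_2$ once at the start (valid for all $z \in \MM$ by the definition) yields an absolute bound on $\|\Gamma_i\|_{\dot{C}^{1,1}}$ preserved by the iteration, so the hypotheses of Lemma~\ref{lem:GP} remain satisfied at every step until $r_i$ first reaches $\hat c$.
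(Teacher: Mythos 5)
Your proposal is correct and follows essentially the same route as the paper: start from a tiny patch inside $\MM$ over $T_0\MM$ (which exists by the definition of a manifold), then repeatedly apply Lemma~\ref{lem:GP} to grow the radius by the fixed increment $cr_2$ until it reaches $\hat{c}$. The extra bookkeeping you do on uniform $C^{1,1}$-bounds is not needed for the corollary itself, since the hypotheses of Lemma~\ref{lem:GP} as stated (infinitesimal reach $\geq 1$, $\Gamma \subset \MM$ a patch over $T_0\MM$ of radius $< \hat{c}$) propagate automatically from one iteration to the next.
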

\underline{Lemma~\ref{lem:GP} implies Corollary~\ref{cor:GP}.} Indeed, we can start with a tiny patch $\Gamma$ (centered at $0$) over $T_0\MM$, with $\Gamma \subset \MM$. Such $\Gamma$ exists because $\MM$ is a manifold. By repeatedly applying the Lemma, we can repeatedly increase the radius of our patch by a fixed amount $cr_2$; we can continue doing so until we arrive at a patch of radius $\geq \hat{c}$.
\begin{proof}[Proof of Lemma~\ref{lem:GP}] Without loss of generality, we can take $\HH = \RR^D \oplus \HH'$ for a Hilbert space $\HH'$; and we may assume that $$T_0\MM = \R^D\times \{0\} \subset \R^D\oplus\HH'.$$ Our patch $\Gamma$ is then a graph $$\Gamma = \{(x, \Psi(x)):x \in B_{\RR^D}(0, r)\} \subseteq \R^D \oplus \HH'$$
for a $C^{1, 1}$ map $$\Psi:B_{\RR^D}(0, r) \ra \HH',$$ with
$\Psi(0) = 0,$ $\nabla \Psi(0) = 0,$ and $$\|\Psi\|_{\dot{C}^{1, 1}(B_{\RR^D}(0, r))}\leq C_0.$$ For $y \in B_{\RR^D}(0, r)$, we therefore have $|\nabla \psi(y)|\leq C_0$. If $r$ is less than a small enough $\hat{c}$ then Lemma~\ref{lem:smiley} together with the fact that $\MM$ agrees with a patch of radius $r_1$ in $B_{\RR^D\oplus\HH'}((y, \Psi(y)), r_2)$ (because $\MM$ is a manifold) tells us that there exists a $C^{1, 1}$ map $$\Psi_y:B_{\RR^D}(y, c'r_2) \ra \HH'$$ such that
$$\MM\cap B_{\RR^D\oplus\HH'}((y, \Psi(y)), c''r_2) = \{(z, \Psi_y(z)):z \in B_{\RR^D}(y, c'r_2)\}\cap B_{\RR^D\oplus \HH'}((y, \Psi(y)), c''r_2).$$ Also, we have a priori bounds on $\|\nabla_z \Psi_y(z)\|$ and on $\|\Psi_y\|_{\dot{C}^{1, 1}}.$ It follows that whenever $y_1, y_2 \in B_{\RR^D}(0, r)$ and
$z \in B_{\RR^D}(y_1, c'''r_2) \cap B_{\RR^D}(y_2, c'''r_2)$, we have $\Psi_{y_1}(z) = \Psi_{y_2}(z).$

This allows us to define a global $C^{1, 1}$ function
$$\Psi^+:B_{\RR^D}(0, r + c'''r_2) \ra \HH';$$ the graph of $\Psi^+$ is simply the union of the graphs of
$$\Psi_y|_{B_{\RR^D}(y, c'''r_2)}$$ as $y$ varies over $B_{\RR^D}(0, r).$ Since the graph of each $\Psi_y|_{B_{\RR^D}(y, c'''r_2)}$ is contained in $\MM$, it follows that the graph of $\Psi^+$ is contained in $\MM$. Also, by definition, $\Psi^+$ agrees on $B_{\RR^D}(y, c'''r_2)$ with a $C^{1, 1}$ function, for each $y \in B_{\RR^D}(0, r)$. It follows that
$$\|\Psi^+\|_{\dot{C}^{1, 1}(0, rc'''r_2)} \leq C.$$ Also, for each $y \in B_{\RR^D}(0, r)$, the point $(y, \Psi(y))$ belongs to $$\MM \cap B_{\RR^D\oplus\HH'}((y, \Psi(y)), \frac{c'''r_2}{1000}),$$ hence it belongs to the graph of
$\Psi_y|_{B_{\RR^D}(y, c'''r_2)}$ and therefore it belongs to the graph of $\Psi^+$. Thus $ \Gamma^+ = \text{\,graph of \,} \Psi^+$ satisfies $\Gamma \subset \Gamma^+ \subset \MM,$ and $\Gamma^+$ is a patch of radius $r + c'''r_2$ over $T_0\MM$ centered at $0$. That proves the lemma.
\end{proof}

\subsection{Global Reach}
For a real number $\tau > 0$,
A manifold $\MM$ has \underline{reach} $\geq \tau$ if and only if every $x \in \HH$ such that $\dist(x, \MM) < \tau$ has a unique closest point of $\MM$.
By Federer's characterization of the reach in Proposition~\ref{thm:federer}, if the reach is greater than one, the infinitesimal reach is greater than $1$ as well.

\begin{lemma}
Let $\MM$ be a manifold of reach $\geq 1$, with $0 \in \MM$. Then, there exists a patch $\Gamma$ of radius $\hat{c}$ over $T_0\MM$ centered at $0$, such that
$$\Gamma \cap B_\HH(0, \check{c}) = \MM\cap B_{\HH}(0, \check{c}).$$
\end{lemma}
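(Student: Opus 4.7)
The plan is to combine Corollary~\ref{cor:GP}, which already produces a patch $\Gamma$ of radius $\hat c$ over $T_0\MM$ centered at $0$ with $\Gamma\subset\MM$, with a reach-based rigidity argument that forces the reverse inclusion $\MM\cap B_\HH(0,\check c)\subset\Gamma$ once $\check c$ is a small enough controlled constant. Writing the patch as $\Gamma=\{x+\Psi(x):x\in B_{T_0\MM}(0,\hat c)\}$ for a $C^{1,1}$ map $\Psi$ into $T_0\MM^{\perp}$ with $\Psi(0)=0$, $\nabla\Psi(0)=0$, and Lipschitz constant at most some controlled $C_0$ for $\nabla\Psi$, I would fix an arbitrary $y\in\MM\cap B_\HH(0,\check c)$ and let $x:=\pi(y)$, where $\pi:\HH\to T_0\MM$ is the orthogonal projection. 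Assuming $\check c\leq\hat c/2$, the point $x':=x+\Psi(x)$ lies in $\Gamma\subset\MM$, and by construction $w:=y-x'$ is orthogonal to $T_0\MM$. The task reduces to showing $w=0$.

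The argument pits two inequalities against one another. Federer's characterization (Proposition~\ref{thm:federer}) applied to $a=x'$, $b=y$ with $\reach(\MM)\geq 1$ gives the upper bound $\dist(y,x'+T_{x'}\MM)\leq \tfrac12\|y-x'\|^2=\tfrac12\|w\|^2$. For the matching lower bound, parametrize $T_{x'}\MM$ as the graph $\{(v,Lv):v\in T_0\MM\}$ with $L:=\nabla\Psi(x)\in\mathrm{Hom}(T_0\MM,T_0\MM^{\perp})$; a direct computation shows that the orthogonal projection of $w\in T_0\MM^{\perp}$ onto $T_{x'}\MM$ is $(v^*,Lv^*)$ with $v^*=(I+L^*L)^{-1}L^*w$, whose norm is at most $\sqrt{2}\,\|L\|\,\|w\|$ whenever $\|L\|\leq 1$. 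Since $\nabla\Psi(0)=0$ and $\nabla\Psi$ is $C_0$-Lipschitz, $\|L\|\leq C_0|x|\leq C_0\check c$, so
\[\dist(y,x'+T_{x'}\MM)\;\geq\; \|w\|\bigl(1-\sqrt{2}\,C_0\,\check c\bigr).\]

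Combining the two inequalities and dividing by $\|w\|$ (supposing $w\neq 0$) yields $\|w\|\geq 2(1-\sqrt{2}\,C_0\,\check c)$, while trivially $\|w\|\leq\|y\|+\|x'\|\leq C'\check c$ for a controlled constant $C'$. These are incompatible once $\check c$ is chosen smaller than a suitable controlled constant depending on $C_0$ and $C'$, which forces $w=0$ and hence $y=x'\in\Gamma$. The one delicate step is the tangent-plane perturbation estimate in the second paragraph; I do not anticipate a serious obstacle, since it amounts to a single line of linear algebra applied to the graph parametrization of $T_{x'}\MM$, using only the quantitative smallness of $\nabla\Psi(x)$ inherited from the $\dot C^{1,1}$ bound on $\Psi$ and the vanishing of $\nabla\Psi$ at the origin.
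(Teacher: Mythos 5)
Your proposal is correct, but it proves the lemma by a genuinely different mechanism than the paper. The paper's proof never quantifies anything: it takes the patch $\Gamma\subset\MM$ from Lemma~\ref{lem:GP}/Corollary~\ref{cor:GP}, splits $\MM\cap B_\HH(0,c^\sharp/2)$ into the on-patch part $\Gamma_{yes}$ and the off-patch part $\Gamma_{no}$, notes (using local agreement of $\Gamma$ and $\MM$ plus compactness) that these are a positive distance apart, and then runs an intermediate-value argument on $y\mapsto \dist(y,\Gamma_{no})-\dist(y,\Gamma_{yes})$ to manufacture a point $y_{Ham}$ close to $\MM$ with two distinct nearest points, contradicting the unique-nearest-point definition of reach. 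You instead argue pointwise and quantitatively: for each $y\in\MM\cap B_\HH(0,\check c)$ you identify its foot point $x'=x+\Psi(x)$ on the patch, use Federer's characterization (Proposition~\ref{thm:federer}) to get $\dist(y,Tan(x',\MM))\leq\tfrac12\|y-x'\|^2$, and play this against the first-order lower bound $\dist(y,Tan(x',\MM))\geq\|y-x'\|(1-\sqrt{2}\,C_0\check c)$ coming from $\nabla\Psi(0)=0$ and the $\dot C^{1,1}$ bound, forcing $y=x'$. The estimates check out (the projection formula $v^*=(I+L^*L)^{-1}L^*w$ and the bound $\sqrt2\|L\|\|w\|$ are right, and $\|L\|\leq C_0\check c$, $\|y-x'\|\leq C'\check c$ are available), and the trivial inclusion $\Gamma\cap B_\HH(0,\check c)\subseteq\MM\cap B_\HH(0,\check c)$ is immediate from $\Gamma\subset\MM$. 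Two small points worth making explicit: justify that $Tan(x',\MM)$ coincides with the graph of $L=\nabla\Psi(x)$ (the tangent plane of $\Gamma$ at $x'$ is a $D$-plane contained in the $D$-plane $Tan^0(x',\MM)$, hence equal to it — this is needed because Federer's bound involves the tangent to $\MM$, not to $\Gamma$), and record that the patch produced by Corollary~\ref{cor:GP} indeed comes with $\Psi(0)=0$, $\nabla\Psi(0)=0$ and a controlled Lipschitz constant for $\nabla\Psi$, which is what the proof of Lemma~\ref{lem:GP} supplies. What each route buys: the paper's argument uses only the unique-nearest-point property and no curvature-type estimates, while yours is more quantitative and local — it gives an explicit admissible size for $\check c$ in terms of the $C^{1,1}$ constant and avoids both the positive-separation claim for $\Gamma_{yes},\Gamma_{no}$ and the existence of the equidistant point $y_{Ham}$, at the cost of invoking Federer's theorem in the Hilbert-space setting (which the paper also does elsewhere, so this is consistent with its conventions).
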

\begin{proof}
There is a patch $\Gamma$ of radius $\hat{c}$ over $T_0\MM$ centered at $0$ such that
$$\Gamma \cap B_\HH(0, c^{\sharp}) \subseteq \MM\cap B_\HH(0, c^{\sharp}).$$ (See Lemma~\ref{lem:GP}.) For any $x \in \Gamma \cap B_\HH(0, c^{\sharp})$, there exists a tiny ball $B_x$ (in $\HH$) centered at $x$ such that $\Gamma \cap B_x = \MM \cap B_x$; that's because $\MM$ is a manifold.

It follows that the distance from

$$\Gamma_{yes}:= \Gamma \cap B_\HH(0, \frac{c^{\sharp}}{2})$$ to $$\Gamma_{no} := \left[\MM\cap B_\HH(0, \frac{c^{\sharp}}{2})\right]\setminus \left[\Gamma\cap B_\HH(0, \frac{c^{\sharp}}{2})\right].$$ is strictly positive.

Suppose $\Gamma_{no}$ intersects $B_H(0, \frac{c^\sharp}{100})$; say $y_{no} \in B_\HH(0, \frac{c^\sharp}{100}) \cap \Gamma_{no}$. Also,  $0 \in B_\HH(0, \frac{c^\sharp}{100})\cap \Gamma_{yes}.$

The continuous function $B_\HH(0, \frac{c^\sharp}{100}) \ni y \mapsto \dist(y, \Gamma_{no}) - \dist(y, \Gamma_{yes})$ is positive at $y=0$ and negative at $y = y_{no}$. Hence at some point,
$$y_{Ham} \in B_\HH(0, \frac{c^\sharp}{100})$$
we have $$\dist(y_{Ham}, \Gamma_{yes}) = \dist(y_{Ham}, \Gamma_{no}).$$ It follows that $y_{Ham}$  has two distinct closest points in $\MM$ and yet $$\dist(y_{Ham}, \MM) \leq \frac{c^\sharp}{100}$$ since $0\in \MM$ and $y_{Ham} \in B_\HH(0, \frac{c^\sharp}{100}).$ That contradicts our assumption that $\MM$ has reach $\geq 1$. Hence our assumption that $\Gamma_{no}$ intersects $B_\HH(0, \frac{c^\sharp}{100})$ must be false. Therefore, by definition of $\Gamma_{no}$ we have $$\MM\cap B_\HH(0, \frac{c^\sharp}{100})\subset \Gamma \cap B_\HH(0, \frac{c^\sharp}{100}).$$ Since also $$\Gamma \cap B_\HH(0, c^\sharp) \subset \MM\cap B_\HH(0, c^\sharp),$$
it follows that
$$\Gamma \cap B_\HH(0, \frac{c^\sharp}{100}) = \MM\cap B_\HH(0, \frac{c^\sharp}{100}),$$ proving the lemma.
\end{proof}

This completes the proof of Claim~\ref{cl:g1sept}.

\section{A bound on the size of an $\eps-$net}

\begin{definition}\lab{def:net}
Let $(X, \dist)$ be a metric space,  and $r>0$. We say that $Y$ is an $r-$net of $X$ if  $Y \subseteq X$ and for every $x \in X$, there  is a point $y \in Y$ such that $\dist(x, y) < r$.
\end{definition}

\begin{corollary}\lab{cor:r-net}
Let \beqs U_\G:\RR^+ \ra \RR\eeqs be given by \beqs U_\G(1/r) = CV\left(\frac{1}{\tau^d} + \frac{1}{(\tau r)^{d/2}}\right).\eeqs
Let $\MM \in \G$, and $\MM$ be equipped with the metric $\dist_\HH$ of the $\HH$. Then, for any $r > 0$, there is an $\sqrt{\tau r}-$net of $\MM$ consisting of no more than $U_\G(1/r)$ points.
\end{corollary}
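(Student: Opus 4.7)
The plan is a standard greedy packing argument, reduced to a volume lower bound supplied by Claim~\ref{cl:g1sept}. Take $Y\subseteq\MM$ to be any maximal $\sqrt{\tau r}$-separated subset of $\MM$ with respect to the Hilbert metric $\dist_\HH$; that is, $|y-y'|\ge\sqrt{\tau r}$ for distinct $y,y'\in Y$, and $Y$ admits no proper extension with this property. By maximality every $x\in\MM$ lies within $\HH$-distance $\sqrt{\tau r}$ of some point of $Y$, so $Y$ is automatically a $\sqrt{\tau r}$-net. It remains only to bound $|Y|$.

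Let $C_0$ denote the constant in Claim~\ref{cl:g1sept}, and set
$$\rho := \tfrac12\min\bigl(\sqrt{\tau r},\,\tau/C_0\bigr).$$
Because $\rho\le\sqrt{\tau r}/2$, the balls $\{B_\HH(y,\rho)\}_{y\in Y}$ are pairwise disjoint, so
$$\sum_{y\in Y}\vol_d\bigl(\MM\cap B_\HH(y,\rho)\bigr)\le\vol_d(\MM)\le V.$$
I then claim $\vol_d(\MM\cap B_\HH(y,\rho))\ge c_d\rho^d$ for every $y\in\MM$, with $c_d>0$ depending only on $d$. To see this, apply Claim~\ref{cl:g1sept} at $y$: because $\rho\le\tau/(2C_0)$, the ball $B_\HH(y,\rho)$ sits inside the cylindrical neighborhood $U$, and $\MM\cap U$ is the graph of a $C^{1,1}$ map $F_{y,U}$ on $\Pi_y(U)$ into $\Pi_y^{-1}(\Pi_y y)$ with $\nabla F_{y,U}(\Pi_y y)=0$ and $\mathrm{Lip}(\nabla F_{y,U})\le C$. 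Since the gradient vanishes at the basepoint, $|\nabla F_{y,U}(z)|\le C|z-\Pi_y y|$; integrating gives $|F_{y,U}(z)-F_{y,U}(\Pi_y y)|\le C|z-\Pi_y y|^2$. Hence for $z\in B_{\Pi_y}(\Pi_y y,\,c\rho)$ with $c$ small enough, the graph point $z+F_{y,U}(z)$ lies in $B_\HH(y,\rho)$, and the restriction of $\Pi_y$ to that portion of $\MM$ is bi-Lipschitz with constants depending only on $d$. Comparing to the Lebesgue measure of a Euclidean $d$-ball of radius $c\rho$ yields the lower bound.

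Combining the two bounds gives $|Y|\le C_d V/\rho^d$. If $\sqrt{\tau r}\le\tau/C_0$ then $\rho$ is comparable to $\sqrt{\tau r}$, giving $|Y|\le CV/(\tau r)^{d/2}$; otherwise $\rho$ is comparable to $\tau$, giving $|Y|\le CV/\tau^d$. Either way $|Y|\le CV(\tau^{-d}+(\tau r)^{-d/2})=U_\G(1/r)$, which is the desired cardinality bound.

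The main obstacle is the volume lower bound; the packing and case split are routine once it is in place. The work is in extracting a quantitative flat disc from Claim~\ref{cl:g1sept}, which amounts to translating the $C^{1,1}$ graph description on a cylindrical neighborhood into a comparable graph over a Euclidean tangent-plane ball sitting inside $B_\HH(y,\rho)$, using that the vanishing of $\nabla F_{y,U}$ at the basepoint makes the graph hug its tangent plane on the scale $\rho\ll\tau$.
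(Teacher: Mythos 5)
Your proposal is correct and follows essentially the same route as the paper: a maximal (equivalently, greedily constructed) separated set, disjoint balls of radius comparable to $\min(\sqrt{\tau r},\tau)$, and a volume lower bound $\gtrsim \rho^d$ for $\MM\cap B_\HH(y,\rho)$ extracted from the local graph description in Claim~\ref{cl:g1sept}, followed by the trivial case split between $r\leq\tau$ and $r>\tau$. The only difference is that you spell out the volume lower bound (using that the graph over the tangent plane has vanishing gradient at the basepoint, which Claim~\ref{cl:g1sept} does not state explicitly but which holds since $\Pi_y$ projects onto $Tan(y,\MM)$), whereas the paper simply asserts it as a consequence of the claim.
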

\begin{proof}
It suffices to prove that for any $r\leq \tau$, there is an $r-$net of $\MM$ consisting of no more than $CV\left(\frac{1}{\tau^d} +  \frac{1}{r^d}\right)$, since if $r > \tau$, a  $\tau-$net is also an $r-$net.
Suppose $Y = \{y_1, y_2, \dots\}$ is constructed by the following greedy procedure. Let $y_1 \in \MM$ be chosen arbitrarily. Suppose $y_1, \dots y_k$ have been chosen. If the set of all $y$ such that $\min_{1\leq i\leq k} |y - y_i|) \geq r$ is non-empty,  let $y_{k+1}$ be an arbitrary member of this set. Else declare the construction of $Y$ to be complete.

 We see that that $Y$ is an $r-$net of $\MM$. Secondly, we see that the  the distance between any two distinct points $y_i, y_j \in Y$ is greater or equal to $r$. Therefore the two balls $\MM \cap B_\HH(y_i, r/2)$ and $\MM \cap B_\HH(y_j, r/2)$ do not intersect.

By Claim~\ref{cl:g1sept} for each $y \in Y$,  there are controlled constants $0< c < 1/2$ and $0 < c'$ such that for any $r \in (0, \tau]$, the volume of $\MM \cap B_\HH(y, cr)$ is greater than $c' r^d$.

Since the volume of $$\{z\in \MM|\dist(z, Y) \leq r/2\}$$ is less or equal to $V$ the cardinality of $Y$  is less or equal to $\frac{V}{c'r^d}$ for all $r\in (0, \tau].$ The corollary follows.
\end{proof}

\subsection{Fitting $k$ affine subspaces of dimension $d$}
%\begin{figure}\label{fig:aff}
%\begin{center}
%\includegraphics[height=1.8in]{kplanes.jpg}
%\caption{Fitting a union of $d$ dimensional affine subspaces.}
%\end{center}
%\end{figure}
A natural generalization of k-means was proposed in
\cite{Bradley00} wherein one fits  $k$ $d-$dimensional planes to data in a manner that
minimizes the average squared distance of a data point to the nearest $d-$dimensional plane. For more recent results on this kind of model, with the average $p^{th}$ powers rather than squares, see \cite{Lerman}.
We can view $k-$means as a $0-$dimensional special case of $k-$planes.

In this section, we derive an upper bound for the generalization error of fitting $k-$planes. Unlike the earlier bounds
for fitting manifolds, the bounds here are linear in the dimension $d$ rather than exponential in it. The dependence on
$k$ is linear up to logarithmic factors, as before. In the section, we assume for notation convenience that the dimension $m$ of the Hilbert space is finite, though the results can be proved for any separable Hilbert space.
%Since these bounds are independent of dimension, by a straightforward limiting argument, the result holds %when the ambient space is any  separable Hilbert space.
%The limiting argument is the following. Suppose $B$ is the %unit ball in a separable Hilbert space $\HH$ and $\PP$ is a %probability distribution supported in $\PP$.

%\subsection{The case of finite ambient dimension}\label{ssec: %k-planeFinAmb}
Let $\PP$ be a probability distribution supported on $B :=\{x \in \RR^m \big|\, \|x\| \leq 1\}$.
Let $\H := \H_{k, d}$ be the set whose elements are unions of not more than $k$ affine subspaces of dimension $\leq d$, each of which  intersects $B$.
%Let $\Q := Q_R$ be the family of non-negative quadratic functions whose magnitude on $B_R$ is bounded above by $M$.
Let $\FF_{k, d}$ be the set of all loss functions $F(x) =  \dist(x, H)^2$ for some $H \in \H$
(where $\dist(x, S) := \inf_{y \in S} \|x - y\|$) .

We wish to obtain a probabilistic upper bound on

\beq \lab{eq1}\sup_{F \in \FF_{k, d}} \Bigg | \frac{\sum_{i=1}^s F(x_i)}{s} - \E_\PP F(x)\Bigg |, \eeq
where $\{x_i\}_1^s$ is the train set and $\E_\PP F(x)$ is the expected value of $F$ with respect to $\PP$.
Due to issues of measurability, (\ref{eq1}) need not be random variable for arbitrary $\FF$. However, in our situation, this is the case because $\FF$ is a family of bounded piecewise quadratic functions, smoothly parameterized by $\HH_b^{\times k}$, which has a countable dense subset, for example, the subset of elements specified using rational data.
%(see Subsection~\ref{ssec:bounded_covering} for more discussion).
We obtain a bound that is independent of $m$, the ambient dimension.

\begin{theorem}
 Let $x_1, \dots, x_s$ be i.i.d samples from $\PP$, a distribution supported on the ball of radius $1$ in $\R^m$.
 If
$$s \geq  C\left(\frac{dk}{\eps^2} \log^4 \left(\frac{dk}{\eps}\right)  + \frac{d}{\eps^2} \log \frac{1}{\de}\right),$$  then
$\p\left[\sup\limits_{F \in \F_{k, d}} \Bigg | \frac{\sum_{i=1}^s F(x_i)}{s} - \E_\PP F(x)\Bigg | < \eps\right] > 1 - \de. $
\end{theorem}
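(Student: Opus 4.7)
The plan is to follow exactly the template sketched in the paper for the $k$-means result Lemma~\ref{lem:key}: first establish an upper bound on the fat-shattering dimension $\fat_\gamma(\FF_{k,d})$ that is linear in $d$ and $k$ and independent of the ambient dimension $m$, and then feed that bound into a Dudley-type entropy integral. The target estimate is
\begin{equation*}
\fat_\gamma(\FF_{k,d}) \;\le\; C\,\frac{dk}{\gamma^{2}}\,\log^{2}\!\frac{dk}{\gamma},
\end{equation*}
which, combined with the uniform envelope $|F|\le 4$ on $B$ and the standard chaining calculation used in Lemma~\ref{lem:key}, yields the announced sample complexity $s \ge C\bigl(dk\eps^{-2}\log^{4}(dk/\eps)+d\eps^{-2}\log\de^{-1}\bigr)$.

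For the fat-shattering bound, I would suppose $x_{1},\dots,x_{n} \in B$ are $\gamma$-shattered by $\FF_{k,d}$ with witnessing levels $r_{1},\dots,r_{n}$. Each $F \in \FF_{k,d}$ is a pointwise minimum of $k$ quadratics $F_{j}(x)=\|x-a_{j}\|^{2}-\sum_{\ell=1}^{d}\langle x-a_{j},v_{j,\ell}\rangle^{2}$, where $a_{j}\in B$ and $\{v_{j,\ell}\}_{\ell}$ is an orthonormal basis of a $d$-plane $V_{j}$. Since the component of $a_{j}$ orthogonal to $W:=\mathrm{span}\{x_{i}\}_{i=1}^{n}$ affects no $F_{j}(x_{i})$, the parameters may be assumed to live in an $O(n+d)$-dimensional subspace. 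I would then apply a Gaussian random projection $\Phi$ of this subspace into $\RR^{N}$ with $N=C\gamma^{-2}\log(nkd)$; Johnson--Lindenstrauss preserves each value $F_{j}(x_{i})-r_{i}$ to additive error $\gamma/4$ uniformly over a $\gamma$-net of admissible parameter tuples (the cardinality of the net is absorbed in the logarithm inside $N$). Thus $\gamma$-shattering in $\RR^{m}$ forces $(\gamma/2)$-shattering of $(\Phi x_{i})$ in $\RR^{N}$.

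Now I observe that the sign vector $(\mathrm{sign}(F(x_{i})-r_{i}))_{i}$ is determined \emph{monotonically} by the $nk$ signs $\{\mathrm{sign}(F_{j}(x_{i})-r_{i})\}_{i,j}$ (take the logical AND in $j$), so bounding the sign-pattern count of the latter bounds that of the former without incurring a $k^{n}$ factor. As the tuple $(a_{j},V_{j})_{j=1}^{k}$ varies over an $O(kNd)$-dimensional parameter space, each $F_{j}(x_{i})-r_{i}$ is a polynomial of bounded degree in these parameters. Warren's theorem on sign patterns of real polynomials then bounds the number of achievable sign vectors by $(CnkN)^{O(kNd)}$, which must dominate $2^{n}$ for shattering. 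Substituting $N=C\gamma^{-2}\log(nkd)$ and solving the resulting inequality $n\le CkNd\log(CnkN)$ yields the target $n \le Cdk\gamma^{-2}\log^{2}(dk/\gamma)$.

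With the fat-shattering bound in hand, the proof concludes via the Dudley entropy integral exactly as in Lemma~\ref{lem:key}: symmetrization reduces the uniform deviation to a Rademacher average, chaining against $\int\sqrt{\fat_\gamma/s}\,d\gamma$ produces the extra $\log^{4}$ factor and the $\eps^{-2}$ rate, and McDiarmid's inequality supplies the $\log\de^{-1}$ tail. The main obstacle is the uniformity required in the Johnson--Lindenstrauss reduction: standard JL controls inner products for a \emph{fixed finite} set of directions, whereas here the scalars $F_{j}(x_{i})-r_{i}$ must be preserved for a continuum of parameter tuples $(a_{j},V_{j})$. The observation that the optimal parameters lie in the $O(n+d)$-dimensional data subspace $W$ reduces the problem to a finite-dimensional setting, and a $\gamma$-net of parameters of polynomial cardinality is then absorbed into the logarithm inside $N$; this is the key trick that keeps the final bound independent of the ambient dimension $m$.
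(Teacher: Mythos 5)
Your overall architecture (bound $\fat_\gamma(\FF_{k,d})$ independently of $m$, then chain via the entropy integral) is reasonable, but the step that makes it dimension-free is exactly the step that fails as written. You invoke Johnson--Lindenstrauss ``uniformly over a $\gamma$-net of admissible parameter tuples'' and assert that ``the cardinality of the net is absorbed in the logarithm inside $N$.'' Even after restricting the parameters to the span $W$ of the shattered points, the tuple $(a_j, v_{j,\ell})_{j\le k,\ell\le d}$ ranges over a ball in a space of dimension $\Theta(nkd)$ (the span has dimension up to $n$, and there are $k(d+1)$ vectors), so any $\gamma$-net has cardinality $(C/\gamma)^{\Theta(nkd)}$ and its logarithm is $\Theta(nkd\log(1/\gamma))$, not $O(\log(nkd))$. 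Plugging this into $N=C\gamma^{-2}\log(\mbox{net size})$ makes $N$ grow linearly in $n$, and then the Warren-type self-bounding inequality $n\le CkNd\log(CnkN)$ is satisfied by every $n$ and yields no bound at all. (A secondary inaccuracy: the components of $a_j$ and $v_{j,\ell}$ orthogonal to $W$ do affect $F_j(x_i)$ through $\|a_j^\perp\|^2$ and $\langle a_j^\perp, v_{j,\ell}^\perp\rangle$; this is repairable by carrying an extra scalar per subspace, but it needs to be said.) The standard repair is the one the paper uses inside Lemma~\ref{lem:key}: do not ask for uniformity over the continuum at all. For each of the $2^n$ dichotomies fix its witnessing tuple, apply a random projection to the \emph{finite} set consisting of the $n$ shattered points together with the $O(kd)$ witness vectors (so $N=C\gamma^{-2}\log(n+kd)$ suffices, with success probability $1/2$ per dichotomy), and conclude by an expectation/averaging argument that a single projection realizes at least $2^{n-1}$ dichotomies in $\RR^N$; only then count sign patterns (your Warren step, or the paper's lifting to halfspaces plus Sauer--Shelah) in the low-dimensional space. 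With that replacement your counting does close and gives $\fat_\gamma(\FF_{k,d})\le C\,dk\gamma^{-2}\log^2(dk/\gamma)$, and the chaining step is then fine (in fact it gives $\eps^{-2}\log(1/\de)$ rather than $d\,\eps^{-2}\log(1/\de)$, which is stronger than required).

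You should also be aware that the paper's own proof avoids this machinery for $\FF_{k,d}$ entirely: it rewrites $\dist(x,H_i)^2$ via the quadratic feature maps $\Phi(H_i)$ and $\Psi(x)$, so that every $F$ equals $\|x\|^2+\sqrt{3(d+5)}\,\min_i \Phi(H_i)\cdot\Psi(x)$ with both feature vectors in the unit ball of a Hilbert space, and then applies the already-proved Lemma~\ref{lem:key} (minima of linear functions) at accuracy $\eps/\sqrt{3(d+5)}$; the factor $d$ in the sample bound comes purely from this rescaling. So the random-projection-plus-combinatorics work you are redoing for quadratics has already been done once, at the linear level, and the intended route for this theorem is reduction to it rather than a fresh shattering argument.
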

\begin{proof}
%\beq \la \dist(x, c)^2 + (1 - \la) \dist(x, H)^2 & = & \la \dist(x, c)^2 + (1 - \la)\left( \dist(x, c)^2 - \dist(x, H^\perp_c)^2\right)\\
% & = & \dist(x, c)^2 - (1 - \la) \dist(x, H^\perp_c)^2, \eeq
Any $F \in \FF_{k, d}$ can be expressed as $F(x) = \min_{1 \leq i \leq k} \dist(x, H_i)^2$ where each $H_i$ is an affine subspace of dimension less or equal to $d$ that intersects the unit ball. In turn, $\min_{1 \leq i \leq k} \dist(x, H_i)^2$ can be expressed as
\beqs \min\limits_{1 \leq i \leq k} \left(\|x - c_i\|^2 - (x - c_i)^{\dag} A_i^{\dag} A_i(x - c_i)\right), \eeqs
where $A_i$ is defined by the condition that for any vector $z$, $\left(z - (A_i z)\right)^{\dag}  $ and $A_i z$ are the components of $z$ parallel and perpendicular to $H_i$, and $c_i$ is the point on $H_i$ that is the nearest to the origin (it could have been any point on $H_i$). Thus \beqs F(x) := \min_i \left(\|x\|^2 - 2 c_i^{\dag}  x + \|c_i\|^2 - x^{\dag}   A_i^{\dag}  A_i x + 2c_i^{\dag}  A_i^{\dag}  A_i x - c_i^{\dag}  A_i^{\dag}  A_i c_i\right). \eeqs

Now, define vector valued maps $\Phi$ and $\Psi$ whose respective domains are the space of $d$ dimensional affine subspaces and $\HH$ respectively.
\beqs \Phi(H_i) := \left(\frac{1}{\sqrt{d+5}}\right)\left(\|c_i\|^2, A_i^{\dag}  A_i, (2A_i^{\dag} A_ic_i - 2c_i)^{\dag} \right)\eeqs
and \beqs \Psi(x) :=  \left(\frac{1}{\sqrt{3}}\right) (1, x x^{\dag} , x^{\dag} ),\eeqs
where $A_i^{\dag}  A_i$ and $xx^{\dag} $ are interpreted as rows of $m^2$ real entries.

Thus, $$\min_i \left(\|x\|^2 - 2 c_i^{\dag}  x + \|c_i\|^2 -  x^{\dag}  A_i^{\dag}  A_ix + 2c_i^{\dag}  A_i^{\dag}  A_i x - c_i^{\dag}  A_i^{\dag}  A_i c_i\right)$$ is equal to   $$\|x\|^2 + \sqrt{3(d+5)} \min_i \Phi(H_i) \cdot \Psi(x).$$
We see that since $\|x\| \leq 1$, $\|\Psi(x)\|\leq 1$. The Frobenius norm  $\|A_i^{\dag}  A_i\|^2$ is equal to $Tr(A_iA_i^{\dag} A_iA_i^{\dag} ),$ which is the rank of $A_i$ since $A_i$ is a projection. Therefore, $$(d+5) \|\Phi(H_i)\|^2 \leq \|c_i\|^4 + \|A_i^{\dag}  A_i\|^2 + \|(2(I - A_i^{\dag} A_i)c_i\|^2,$$ which,  is less or equal to $d+5$.

Uniform bounds for classes of  functions of the form  $\min_i \Phi(H_i) \cdot \Psi(x)$ follow from Lemma~\ref{lem:key}. We infer from Lemma~\ref{lem:key} that
if $$s \geq  C\left(\frac{k}{\eps^2} \log^4 \left(\frac{k}{\eps}\right)  + \frac{1}{\eps^2} \log \frac{1}{\de}\right),$$  then
$\p\left[\sup\limits_{F \in \F_{k, d}} \Bigg | \frac{\sum_{i=1}^s F(x_i)}{s} - \E_\PP F(x)\Bigg | < \sqrt{3(d+5)}\eps\right] > 1 - \de. $
The last statement can be rephrased as follows. If
$$s \geq  C\left(\frac{dk}{\eps^2} \log^4 \left(\frac{dk}{\eps}\right) + \frac{d}{\eps^2} \log \frac{1}{\de}\right),$$  then
$\p\left[\sup\limits_{F \in \F_{k, d}} \Bigg | \frac{\sum_{i=1}^s F(x_i)}{s} - \E_\PP F(x)\Bigg | < \eps\right] > 1 - \de.$
\end{proof}

\section{Tools from empirical processes}
\begin{figure}\label{fig:unif_bound}
\begin{center}
\includegraphics[height=1.8in]{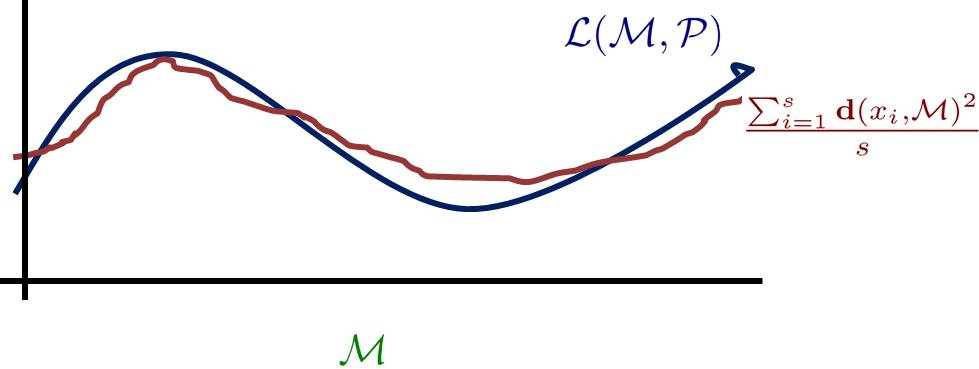}
\caption{A uniform bound (over $\G$) on the difference between the empirical and true loss.}
\end{center}
\end{figure}
In order to prove a uniform bound of the form
\beq \lab{eq2}\p\left[\sup_{F \in \F} \Bigg | \frac{\sum_{i=1}^s F(x_i)}{s} - \E_\PP F(x)\Bigg | < \eps\right] > 1 - \de, \eeq it suffices to bound a measure of the complexity of $\F$ known as the Fat-Shattering dimension  of the function class $\F$. The metric entropy (defined below) of $\F$ can be bounded using the Fat-Shattering dimension, leading to a uniform bound of the form of (\ref{eq2}).

\begin{definition}[metric entropy]
Given a metric space $(Y, \rho)$, we call $Z \subseteq Y$ an $\eta-$net of $Y$ if for every $y \in Y$, there is a $z \in Z$ such that $\rho(y, z) < \eta.$
Given a measure $\PP$ supported on a metric space $X$, and $\FF$ a class of functions from $X$ to $\R$. Let $N(\eta, \F, \LL_2(\PP))$ denote the minimum number of elements that an $\eta-$net of $\F$ could have, with respect to the metric imposed by the Hilbert space $\LL_2(\PP)$, wherein the distance between $f_1:X \ra \R$ and $f_2:X \ra \R$ is $$\|f_1 - f_2\|_{\LL_2(\PP)} = \sqrt{\int (f_1(x) - f_2(x))^2 d\mu}.$$ We call $\ln N(\eta, \F, \LL_2(\PP))$ the metric entropy of $\FF$ at scale $\eta$ with respect to $\LL_2(\PP)$.
%Let the $\LL_2-$covering number $N(\eta, \F, \LL_2(\mu))$ denote the minimum number of
\end{definition}

 \begin{definition} [Fat-shattering dimension] \lab{def:fat} Let $\F$ be a set of real valued functions. We say that a set of points $x_1, \dots, x_k$ is $\gamma-$shattered by $\F$ if there is a vector of real numbers $t = (t_1, \dots, t_k)$ such that for all binary vectors $\mathbf{b} = (b_1, \dots, b_k)$  and each $ i \in [s] = \{1, \dots, s\}$, there is a function $f_{\mathbf{b}, t}$ satisfying,
\beq\lab{eq:deffat} f_{\mathbf{b}, t}(x_i) = \left\{ \begin{array}{ll}
    > t_i + \gamma, & \hbox{if $ b_i = 1$;} \\
    < t_i - \gamma, & \hbox{if $ b_i = 0$.}
  \end{array} \right. \eeq
More generally, the supremum taken over $(t_1, \dots, t_k)$ of  the number of binary vectors $\mathbf{b}$ for which there is a function $f_{\mathbf{b}, t} \in \FF$ which satisfies (\ref{eq:deffat}), is called the $\gamma-$shatter coefficient.
For each $\gamma > 0$, the {\it Fat-Shattering dimension} $\fat_\gamma(\F)$ of the set $\F$ is defined to be the size of the largest $\gamma-$shattered set if this is finite; otherwise $\fat_\gamma(\F)$ is declared to be infinite.
\end{definition}

%\begin{theorem}\lab{lem:fat_to_gen}  For any permissible class of functions $\F$ from $B$ into $[-1, 1]$, an arbitrary probability distribution $\PP$ supported on $B$ $\eps, \de > 0 $, if $$s \geq  C\left(\frac{\fat_\F(\frac{\eps}{24})}{\eps^2} \log^2 \frac{\fat_\F(\frac{\eps}{24})}{\eps} + \frac{1}{\eps^2} \log \frac{1}{\de}\right)$$ and samples $x_1, \dots, x_s$ are independently drawn from $\PP$, then
%\beq \lab{eq3.2}\p\left[\sup_{F \in \F} \Bigg | \frac{\sum_{i=1}^s F(x_i)}{s} - \E_\PP F(x)\Bigg | < \eps\right] > 1 - \de. \eeq
%\end{theorem}
%Any countable family of functions is known to be permissible.
%We now proceed to the proof of Lemma~\ref{lem:key}.
We will also need to use the notion of VC dimension, and some of its properties. These appear below.
\begin{definition}
Let $\La$ be a collection of measurable subsets of $\R^m$. For $x_1, \dots, x_k \in \R^m$, let the number of different sets in $\{\{x_1, \dots, x_k\} \cap H; H \in \La\}$ be denoted the shatter coefficient $N_\La (x_1, \dots, x_k)$. The VC dimension $VC_\La$ of $\La$ is the largest integer $k$ such that there exist $x_1, \dots x_k$ such that $N_\La (x_1, \dots, x_k) = 2^k$.
\end{definition}
The following result concerning the VC dimension of halfspaces is well known (Corollary 13.1, \cite{devroye}).
\begin{theorem}\lab{thm:steele}
Let $\La$ be the class of halfspaces in $\R^g$. Then
$VC_\La = g + 1$.\end{theorem}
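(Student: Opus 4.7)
The plan is to establish the two matching inequalities $VC_\La \geq g+1$ and $VC_\La \leq g+1$ separately; combined they give the theorem.

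First, for the lower bound, I would exhibit an explicit shattered set of size $g+1$. A natural choice is $\{0, e_1, \dots, e_g\} \subset \R^g$, the origin together with the standard basis vectors, which are in affine general position. Given any subset $S$ of these $g+1$ points, I would construct a linear functional $\ell(x) = \langle a, x\rangle - b$ by prescribing its value on each point to be $+1$ if the point lies in $S$ and $-1$ otherwise; because the $g+1$ points are affinely independent, this prescription determines an $(a, b) \in \R^{g+1}$ via a nonsingular linear system, and then the halfspace $\{x : \ell(x) > 0\}$ picks out exactly $S$. Thus $\{0, e_1, \dots, e_g\}$ is shattered by $\La$ and so $VC_\La \geq g+1$.

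Next, for the upper bound, the key tool is Radon's theorem: any $g+2$ points in $\R^g$ admit a partition into disjoint nonempty subsets $A$ and $B$ whose convex hulls share a common point, writable as $p = \sum_{i \in A} \alpha_i x_i = \sum_{j \in B} \beta_j x_j$ with nonnegative weights summing to $1$ on each side. Suppose for contradiction that some configuration $x_1, \dots, x_{g+2}$ were shattered by halfspaces. Choose a Radon partition $A, B$. By shattering, there is a halfspace $H = \{x : \langle a, x\rangle \geq c\}$ containing exactly the points indexed by $A$. Applying $\langle a, \cdot\rangle$ to the two expressions for $p$ gives
\[
c \;\leq\; \sum_{i \in A}\alpha_i \langle a, x_i\rangle \;=\; \langle a, p\rangle \;=\; \sum_{j \in B}\beta_j \langle a, x_j\rangle \;<\; c,
\]
a contradiction. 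Hence no set of $g+2$ points is shattered, giving $VC_\La \leq g+1$.

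The only subtlety I anticipate is the strict-versus-weak-inequality distinction in the separation step (closed versus open halfspaces on the boundary of $H$). This is resolved by a small translation: since only finitely many points $x_1, \dots, x_{g+2}$ are involved and none lies on the boundary of both sides simultaneously for a generic $c$, one may slide $c$ by an arbitrarily small amount so that $\langle a, x_i\rangle > c$ strictly for $i \in A$ and $\langle a, x_j\rangle < c$ strictly for $j \in B$, preserving the shattering witness while sharpening both inequalities. Combined with the lower bound this yields the exact equality $VC_\La = g+1$.
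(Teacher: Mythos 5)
Your proof is correct. Note, however, that the paper does not actually prove this statement: it is quoted as a known fact (Corollary 13.1 of the cited book of Devroye, Gy\"orfi and Lugosi), so there is no in-paper argument to match. Your self-contained route is the standard geometric one: the lower bound by shattering the affinely independent set $\{0, e_1, \dots, e_g\}$, where affine independence makes the interpolation system for $(a,b)$ nonsingular, and the upper bound by Radon's theorem, where the common point of the two convex hulls yields the contradiction $c \leq \langle a, p\rangle < c$. The source the paper cites instead derives the bound from a linear-algebra argument about classes of sets of the form $\{x : \sum_i a_i\psi_i(x) \geq 0\}$ spanned by finitely many functions (Dudley/Steele), which generalizes more readily to other parametric classes; your Radon argument is more elementary and geometric but specific to halfspaces. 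Two small remarks: your final paragraph about perturbing $c$ is unnecessary --- whichever convention (open or closed halfspaces) is fixed, one side of the Radon partition satisfies the weak inequality and the other the strict one, and the convex-combination computation already goes through verbatim, so no genericity or translation argument is needed; and in the lower bound it is worth saying explicitly that the prescribed values $\pm 1$ are bounded away from zero, which is what makes the construction indifferent to the open/closed convention.
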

We state the Sauer-Shelah Lemma below.
\begin{lemma}[Theorem 13.2, \cite{devroye}]\lab{lem:ssh}
For any $x_1, \dots, x_k \in \R^g$, $N_\La(x_1, \dots, x_k) \leq \sum_{i=0}^{VC_\La} {k \choose i}$.
\end{lemma}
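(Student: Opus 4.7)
The plan is to prove Lemma~\ref{lem:ssh} as a purely combinatorial statement by induction on $k + d$, where $d := VC_\La$, using Pascal's identity $\binom{k}{i}=\binom{k-1}{i}+\binom{k-1}{i-1}$. First I would pass to the trace $\F := \{H \cap \{x_1,\dots,x_k\} : H \in \La\}$, a set system on the finite ground set $S := \{x_1,\dots,x_k\}$. Since $N_\La(x_1,\dots,x_k) = |\F|$ and the VC dimension of $\F$ viewed as a set system on $S$ is at most $d$, it suffices to show that any set system on $k$ points of VC dimension $\leq d$ has at most $\sum_{i=0}^{d} \binom{k}{i}$ members.

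The base cases are immediate: if $k=0$, then $|\F|\leq 1 = \binom{0}{0}$; and if $d=0$, then no singleton is shattered, which forces all sets in $\F$ to agree on every coordinate, so $|\F|\leq 1$. For the inductive step I would single out the point $x_k$ and split $\F$ into two subfamilies on $S' := \{x_1,\dots,x_{k-1}\}$:
\[
\F_1 := \{A \setminus \{x_k\} : A \in \F\}, \qquad \F_2 := \{B \subseteq S' : B \in \F \text{ and } B \cup \{x_k\} \in \F\}.
\]
A routine count gives $|\F| = |\F_1| + |\F_2|$, since each trace $B \in \F_1$ is the projection of either one or two elements of $\F$ and $|\F_2|$ counts the doubly-represented $B$'s.

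The main obstacle, and the combinatorial heart of the argument, is the VC reduction $VC_{\F_2}\leq d-1$: if some $T \subseteq S'$ were shattered by $\F_2$, then for every $U \subseteq T$ both $U$ and $U\cup\{x_k\}$ would lie in $\F$, so $\F$ would shatter $T\cup\{x_k\}$, forcing $|T|+1 \leq d$. Meanwhile $VC_{\F_1}\leq d$ trivially, since any $T\subseteq S'$ shattered by $\F_1$ is also shattered by $\F$. Applying the inductive hypothesis to both subfamilies on the $(k-1)$-point ground set $S'$ and summing via Pascal's rule yields
\[
|\F| \;\leq\; \sum_{i=0}^{d}\binom{k-1}{i} + \sum_{i=0}^{d-1}\binom{k-1}{i} \;=\; \sum_{i=0}^{d}\binom{k}{i},
\]
which closes the induction and proves the lemma.
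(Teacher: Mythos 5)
Your proof is correct: the trace decomposition into $\F_1$ and $\F_2$, the counting identity $|\F|=|\F_1|+|\F_2|$, the reduction $VC_{\F_2}\leq d-1$, and the Pascal-identity bookkeeping are all sound, and this is the standard inductive proof of the Sauer--Shelah lemma. Note, however, that the paper does not prove this statement at all --- it simply cites it as Theorem 13.2 of \cite{devroye} --- so there is no in-paper argument to compare against; your write-up essentially reproduces the classical argument found in that reference, and it would serve as a self-contained substitute for the citation.
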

For $VC_\La > 2$, $\sum_{i=0}^{VC_\La} {k \choose i} \leq k^{VC_\La}$.

The  lemma below follows from existing results from  the theory of Empirical Processes in a straightforward manner, but does not seem to have appeared in print before. We have provided a proof in the appendix.
\begin{lemma}\lab{lem:fat_to_gen}
 Let $\mu$ be a measure supported on $X$,   $\F$  be a class of functions $f:X \ra \R$. Let $x_1, \dots, x_s$ be independent random variables drawn
from $\mu$ and $\mu_s$ be the uniform measure on $x:= \{x_1, \dots, x_s\}$.
%Then,
%\beq\lab{eq:fat_to_gen3}  \ln N(\eps, \FF, \LL_2(\mu_n)) \leq C \fat_{c\eps}(\F)\log\left(\frac{1}{\eps}\right). \eeq,
If $$s \geq \frac{C}{\eps^2}\left(\left(\int_{{c\eps}}^\infty \sqrt{\fat_\gamma(\F)}d\gamma\right)^2 + \log 1/\de\right) ,$$
then,
$$\p\left[\sup_{f \in {\F}} \bigg| \E_{\mu_s} f(x_i) - \E_\mu  f\bigg| \geq \eps\right] \leq 1 - \de.$$
\end{lemma}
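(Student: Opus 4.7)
The plan is to assemble three classical ingredients from empirical process theory: a bounded-differences concentration inequality, Rademacher symmetrization, and a generic chaining bound that controls the Rademacher complexity of $\F$ by its metric entropy, which in turn is bounded via the Fat-Shattering dimension. Let $g(x_1,\dots,x_s) := \sup_{f\in\F}|\E_{\mu_s}f - \E_\mu f|$. Assuming (as is implicit, since otherwise the Dudley integral is infinite) that $\F$ is uniformly bounded, replacing any single coordinate $x_i$ changes $g$ by $O(1/s)$, so McDiarmid's inequality gives $\p[g > \E g + t] \leq \exp(-cst^2)$; taking $t$ of order $\sqrt{\log(1/\de)/s}$ accounts for the $\log(1/\de)/\eps^2$ summand in the sample complexity.

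Next I would symmetrize: introducing an independent ghost sample $x_1',\dots,x_s'$ and Rademacher signs $\sigma_1,\dots,\sigma_s$, the standard symmetrization lemma gives $\E g \leq 2\,\E\sup_{f\in\F}\bigl|\tfrac{1}{s}\sum_i \sigma_i f(x_i)\bigr|$. Conditionally on $(x_1,\dots,x_s)$, the process $f\mapsto \tfrac{1}{s}\sum_i\sigma_i f(x_i)$ is subgaussian on the pseudometric space $(\F, L_2(\mu_s))$, so Dudley's entropy integral yields
$$\E_\sigma \sup_{f\in\F}\Bigl|\tfrac{1}{s}\sum_i \sigma_i f(x_i)\Bigr| \;\leq\; \frac{C}{\sqrt{s}}\int_0^{\infty}\sqrt{\log N(\gamma, \F, L_2(\mu_s))}\,d\gamma.$$
The decisive move is to bound the covering number by the Fat-Shattering dimension via the Mendelson--Vershynin theorem, which asserts $\log N(\gamma, \F, L_2(\mu)) \leq C\,\fat_{c\gamma}(\F)\,\log(2/\gamma)$ uniformly in $\mu$ for uniformly bounded $\F$. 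Truncating the Dudley integral at scale $c\eps$ (contributions at scales below $\eps$ are absorbed into the target error) and soaking the $\sqrt{\log(2/\gamma)}$ factor into the constant produces a bound of the form $\tfrac{C}{\sqrt{s}}\int_{c\eps}^\infty \sqrt{\fat_\gamma(\F)}\,d\gamma$. Demanding that both this Rademacher term and the McDiarmid deviation be at most $\eps/2$ yields the stated lower bound on $s$.

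The main obstacle is the Mendelson--Vershynin step, whose combinatorial heart is a random coordinate projection combined with the Sauer--Shelah lemma --- exactly the route previewed in the introduction. I would cite it as a black box rather than reprove it, and concentrate instead on the careful accounting of constants so that the symmetrization factor, the chaining constant, and the truncation error combine cleanly into the single $C$ appearing in the statement. The remaining issue is measurability of the supremum, which is handled, as in the $k$-planes argument above, by the existence of a countable dense subfamily parametrising $\F$.
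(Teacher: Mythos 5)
Your skeleton coincides with the paper's own proof: the paper also combines a concentration-plus-symmetrization bound (it cites Theorem 3.2 of Bousquet--Boucheron--Lugosi, which packages exactly your McDiarmid and ghost-sample steps into $\sup_f|\E_\mu f-\E_{\mu_s}f|\leq 2R_s(\F,x)+\sqrt{2\log(2/\de)/s}$ with probability $1-\de$) with a truncated Dudley chaining bound (Claim~\ref{cl:chaining}, proved via Massart's finite-class lemma following Sridharan--Srebro), and then passes from metric entropy to fat-shattering dimension. So structurally you are on the paper's route, and the measurability remark is handled the same way.

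The one step that fails as written is ``soaking the $\sqrt{\log(2/\gamma)}$ factor into the constant.'' The pointwise Mendelson--Vershynin covering bound $\log N(\gamma,\F,\LL_2(\mu_s))\leq C\,\fat_{c\gamma}(\F)\log(2/\gamma)$ has a factor that is \emph{not} uniformly bounded on the truncated range $\gamma\in[c\eps,\,\mathrm{diam}]$: at the lower limit it is of order $\sqrt{\log(1/\eps)}$, which blows up as $\eps\to 0$ and hence cannot be absorbed into the universal constant $C$ of the lemma. With only the pointwise bound you would prove the statement with $\fat_\gamma(\F)$ replaced by $\fat_\gamma(\F)\log(2/\gamma)$ inside the square root, which is strictly weaker than the claim. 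The missing ingredient is the \emph{integrated} comparison of Rudelson and Vershynin (Theorem 6.1 of their paper), which the paper invokes precisely here: it bounds the truncated entropy integral directly,
$$\int_{\eps}^{\infty}\sqrt{\ln N(\eta,\F,\LL_2(\mu_s))}\,d\eta \;\leq\; C\int_{\eps}^{\infty}\sqrt{\fat_{c\eta}(\F)}\,d\eta,$$
with no logarithmic loss. Replace your pointwise black box by this integrated one and the rest of your accounting goes through; as written, the proposal does not deliver the lemma in the stated form (though the weaker, log-inflated version would still suffice, with slightly worse logarithmic powers, for the paper's application in Lemma~\ref{lem:key}).
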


 A key component in the proof of the uniform bound in Theorem~\ref{thm:ext_manifold} is an upper bound on the fat-shattering dimension of functions given by the maximum of a set of minima of collections of linear functions on a ball in $\HH$. We will use the Johnson-Lindenstrauss Lemma \cite{JL} in its proof.\\\\
\begin{figure}\label{fig:proj}
\begin{center}
\includegraphics[height=2.4in]{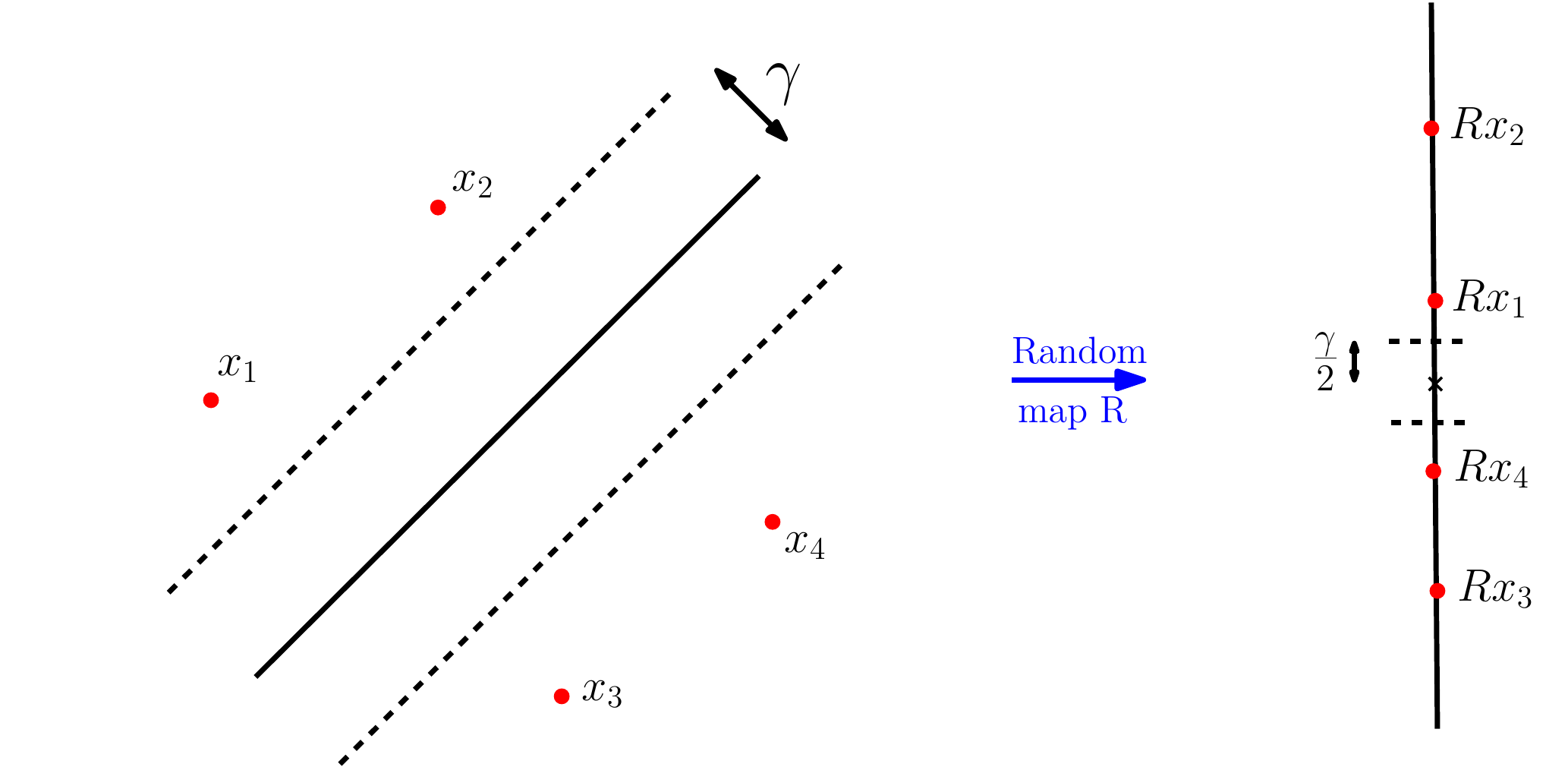}
\caption{Random projections are likely to preserve linear separations.}
\end{center}
\end{figure}
Let $J$ be a finite dimensional vectorspace of dimension greater or equal to $g$. In what follows, by "uniformly random $g-$dimensional subspace in $J$," we mean a random variable taking taking values in the set of $g-$dimensional subspaces of $J$, possessing the following property. Its distribution is invariant under the action of the orthogonal group acting on $J$.

\noindent\underline{Johnson-Lindenstrauss Lemma}:
Let $y_1, \dots, y_{\bar \ell}$ be points in the unit ball in $\R^m$ for some finite $m$. Let $R$ be an orthogonal projection onto a random $g-$dimensional subspace (where $g = C \frac{\log \bar{\ell}}{\gamma^2}$ for some $\gamma > 0$, and an absolute constant $C$).
Then, \beqs \p\left[\sup\limits_{i, j \in \{1, \dots, g\}} \bigg|\left(\frac{m}{g}\right)(R y_i) \cdot (R y_j) -  y_i \cdot y_j \bigg| > \frac{\gamma}{2}\right] < \frac{1}{2}.\eeqs

\begin{lemma}\lab{lem:key} Let $\PP$ be a probability distribution supported on $B_\HH$.
Let $\FF_{k,\ell}$ be the set of all functions $f$ from $B_\HH := \{x \in \HH: \|x\| \leq 1\}$ to $\RR$, such that for some $k \ell$ vectors $v_{11}, \dots, v_{k\ell} \in B$, $$f(x) = \max_j \min_i (v_{ij} \cdot x). $$
\ben
\item $\fat_\gamma(\F_{k, \ell}) \leq \frac{Ck\ell}{\gamma^2}\log^2\frac{Ck\ell}{\gamma^2}.$
%\item  $N(\eps, \F_{k, \ell}, \LL_2(\mu)) \leq  C \left(\frac{ %k\ell}{\eps^2}\right) \log^2 %\left(\frac{k\ell}{\eps}\right)\log\left(\frac{1}{\eps}\right).$
\item If $s \geq \frac{C}{\eps^2}\left(k\ell\ln^4(k\ell/\eps^2) + \ln 1/\de\right)$,
then $\p\left[\sup_{f \in \F_{k, \ell}} \big| \E_{\mu_s} f(x_i) - \E_\mu  f\big| \geq \eps\right] \leq 1 - \de.$
\een
\end{lemma}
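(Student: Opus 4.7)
The proof of part (1) bounds the fat-shattering dimension via a Johnson-Lindenstrauss projection into a low-dimensional space where Sauer-Shelah can be applied. Suppose $x_1,\dots,x_s \in B_\HH$ are $\gamma$-shattered by $\F_{k,\ell}$ with common threshold vector $t=(t_r)$, and for each $b\in\{0,1\}^s$ let $f_b(x)=\max_j\min_i v_{ij}^b \cdot x$ realize pattern $b$. Restrict to the finite-dimensional subspace $W$ spanned by all the $x_r$ and the $v_{ij}^b$, let $m=\dim W$, fix $g = C\log(s+k\ell)/\gamma^2$, and let $R$ be orthogonal projection onto a uniformly random $g$-dimensional subspace of $W$. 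For each \emph{fixed} $b$, the Johnson-Lindenstrauss lemma applied to the at most $s+k\ell$ vectors $\{x_r\}\cup\{v_{ij}^b\}$ gives probability at least $1/2$ that $|\tfrac{m}{g}(Rv_{ij}^b)\cdot(Rx_r) - v_{ij}^b\cdot x_r| \leq \gamma/4$ for all $i,j,r$ simultaneously; call such $b$ \emph{good}.

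The key obstacle is that a union bound over all $2^s$ values of $b$ would force $g$ to be at least of order $s/\gamma^2$, making the subsequent Sauer-Shelah step vacuous. I circumvent this with a Fubini averaging argument: since each $b$ is good with probability at least $1/2$, some single realization of $R$ makes at least $2^{s-1}$ of the $b$ good. Setting $\tilde v_{ij}^b := \sqrt{m/g}\,Rv_{ij}^b$ and $\tilde x_r := \sqrt{m/g}\,Rx_r$ in $\R^g$ and using that max and min are $1$-Lipschitz, for every good $b$ the projected function $\tilde f_b(y) := \max_j\min_i \tilde v_{ij}^b\cdot y$ satisfies $|\tilde f_b(\tilde x_r)-f_b(x_r)|\leq \gamma/4$. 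Hence the class $\{y\mapsto \max_j\min_i w_{ij}\cdot y\}$ on $\R^g$ realizes at least $2^{s-1}$ distinct sign patterns at the points $\tilde x_r$ with respect to the thresholds $t_r$, each with margin $\gamma/2$. The superlevel set $\{(y,t)\in\R^{g+1} : \max_j\min_i w_{ij}\cdot y > t\}$ is a $k$-fold union of $\ell$-fold intersections of affine halfspaces in $\R^{g+1}$; these halfspaces form a VC class of dimension $g+2$ by Theorem~\ref{thm:steele}, so Sauer-Shelah (Lemma~\ref{lem:ssh}) combined with the standard shatter-coefficient bound under Boolean operations yields at most $s^{k\ell(g+2)}$ distinct patterns on $s$ points. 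Therefore $2^{s-1}\leq s^{k\ell(g+2)}$, and substituting $g=C\log(s+k\ell)/\gamma^2$ and solving the resulting fixed-point inequality in $s$ yields $\fat_\gamma(\F_{k,\ell})\leq Ck\ell\log^2(Ck\ell/\gamma^2)/\gamma^2$, proving part (1).

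Part (2) is obtained by feeding this fat-shattering bound into Lemma~\ref{lem:fat_to_gen}. Since $|f|\leq 1$ on $B_\HH$ for every $f\in\F_{k,\ell}$, the integrand $\sqrt{\fat_\gamma(\F_{k,\ell})}$ vanishes for $\gamma\geq 2$ and is of order $\sqrt{k\ell}\,\gamma^{-1}\log(k\ell/\gamma^2)$ below that; evaluating $\int_{c\eps}^{2}\sqrt{k\ell}\,\gamma^{-1}\log(k\ell/\gamma^2)\,d\gamma$ via $u=\log(1/\gamma)$ produces a bound of order $\sqrt{k\ell}\,\log^2(k\ell/\eps)$. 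Squaring and combining with the additive $\log(1/\delta)$ term in Lemma~\ref{lem:fat_to_gen} gives the sample-size bound of part (2).

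In summary, the main obstacle is the tension between the projection dimension $g$ and the number of witnessing functions $f_b$ that a single random projection must preserve. The Fubini averaging sacrifices half of the $2^s$ witnesses in exchange for a projected dimension that is only logarithmic in $s+k\ell$, and this is exactly what lets the subsequent Sauer-Shelah step close the fixed-point inequality at $s = O(k\ell\log^2(k\ell/\gamma)/\gamma^2)$ rather than trivially.
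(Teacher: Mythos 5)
Your proposal is correct and takes essentially the same route as the paper: a Johnson--Lindenstrauss projection to dimension $g=O(\gamma^{-2}\log(s+k\ell))$, the averaging step showing that a single realization of the random projection preserves at least $2^{s-1}$ of the shattering patterns (this is exactly the paper's Observation~\ref{obs:11} together with the expectation argument following Claim~\ref{cl:min}, not a new workaround), then halfspace VC dimension plus Sauer--Shelah for the $k\ell$-fold Boolean combination to get $2^{s-1}\le s^{k\ell(g+2)}$ and the same fixed-point inequality, with part (2) obtained by feeding the resulting fat-shattering bound into Lemma~\ref{lem:fat_to_gen} and evaluating the same entropy integral. The only differences are bookkeeping: you track explicit margins $\gamma/4$ and encode thresholds by working with the points $(\tilde x_r, t_r)$ in $\R^{g+1}$, where the paper uses the threshold function $t^J$ and augmented vectors.
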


\begin{proof}
%The set $\FF_\Q$ of all  $f$ specified by rational data  is dense in  $\FF$, with respect to the $\mathcal{L}_\infty(B, \RR)$ topology, so it suffices to prove a uniform bound over $\FF_\Q$.
%$\FF_\Q$ is countable and hence permissible, so a bound on the fat-shattering dimension would give a uniform bound over $\FF_\Q$.
We proceed to obtain an upper bound on the fat shattering dimension  $\fat_\gamma({\FF_{k, \ell}})$.
Let $x_1, \dots, x_s$ be $s$ points such that $$\forall \A \subseteq X := \{x_1, \dots, x_s\},$$ there exists $V := \{ v_{11}, \dots, v_{k\ell}\} \subseteq B$ and $ f \in {\FF_{k, \ell}}$ where $f(x) = \max_j \min_i v_{ij} \cdot x$   such that for some $\mathbf{t} = (t_1, \dots, t_s)$, for all

\beq\lab{eq:11}x_r \in \A, \forall\, j \in [\ell],\, \text{\,there\, exists\,} \, i \in [k] \,\,\,\,\,\,\, v_{ij} \cdot x_r < t_r - \gamma\eeq and \beq\lab{eq:22}\forall x_r \not\in \A, \exists\, j \in [\ell],\, \forall i \in [k] \,\,\,\,\,\,\, v_{ij} \cdot x_r > t_r +  \gamma.\eeq

We will obtain an upper bound on $s$.
Let $g:=C_1\left({\gamma}^{-2} \log(s + k\ell)\right)$ for a sufficiently large universal constant $C_1$.
Consider a particular $\A \in X$ and $f(x) := \max_j \min_i v_{ij} \cdot x$ that satisfies (\ref{eq:11}) and (\ref{eq:22}).

Let $R$ be an orthogonal  projection onto a uniformly random $g-$dimensional subspace of $span(X \cup V)$; we denote the family of all such linear maps $\Re$. Let $RX$ denote the set $\{Rx_1, \dots, Rx_s\}$ and likewise, $RV$ denote the set $\{Rv_{11}, \dots, Rv_{kl}\}$.
Since all vectors in $X \cup V$ belong to the unit ball $B_\HH$, by the Johnson-Lindenstrauss Lemma,   with probability greater than ${1/2}$, the inner product of every pair of vectors in $RX \cup RV$ multiplied by $\frac{m}{g}$ is within $\gamma $ of the inner product of the corresponding vectors in $X \cup V$.

Therefore,  we have the following.
 \begin{obs}\lab{obs:11} With probability at least $\frac{1}{2}$  the following statements are true.
\beq\lab{eq:11R}\forall x_r \in \A, \forall\, j \in [\ell],\, \exists \, i \in [k] \,\,\,\,\,\,\, \left(\frac{m}{g}\right)R v_{ij} \cdot R x_r < t_r \eeq and \beq\lab{eq:22R}\forall x_r \not\in \A, \exists\, j \in [\ell],\, \forall i \in [k] \,\,\,\,\,\,\, \left(\frac{m}{g}\right)R v_{ij} \cdot R x_r > t_r.\eeq
%\beq\lab{eq:11R}\forall x \in \A, \exists(v_i \in V) \,\,\,\,\,\,\,(R v_i) \cdot ( R x) < t\eeq and \beq\lab{eq:22R}\forall x \not\in \A, \forall (v_i %\in V) \,\,\,\,\,\,\,(R v_i) \cdot (R x) >  t.\eeq
\end{obs}

Let $R \in \Re$ be a projection onto a uniformly random $g-$dimensional subspace in $span(X\cup V)$.  Let $J := \mathrm{span}(RX)$ and let $t^J :J \ra \RR$ be the function given by
\beqs t^J(y) := \left\{
                \begin{array}{ll}
                  t_i, & \hbox{\text{if}\, $y = Rx_i$ for some $i \in [s]$;} \\
                  0, & \hbox{otherwise.}
                \end{array}
              \right.\eeqs
%for each $i \in [s]$, $t^J(Rx_i) := t_i$, and for all other $z \in J$, $t^J(z) = 0$.
Let $\FF_{J, k, \ell}$ be the concept class consisting of all subsets of $J$ of the form $$\left\{z: \max_j \min_i \left(\begin{array}{c}
                                 w_{ij} \\
                                 1
                               \end{array}\right)
 \cdot \left(\begin{array}{c}
                                 z \\
                                 - t^J(z)
                               \end{array}\right)\leq 0\right\},$$ where $w_{11}, \dots w_{k\ell}$ are arbitrary vectors in $J$.
\begin{claim}\lab{cl:min}
 Let $y_1, \dots, y_s \in J$. Then, the number $W(s, \FF_{J, k, \ell})$  of distinct sets  $\{y_1, \dots, y_s\} \cap \imath, \,\,\imath \in \FF_{J, k, \ell}$ is less or equal to  $s^{O((g+2)k\ell)}$.
\end{claim}
\begin{proof}[Proof of Claim~\ref{cl:min}]
Classical VC theory (Theorem~\ref{thm:steele}) tells us that the VC dimension of Halfspaces in the span of all vectors of the form $(z;- t^J(z))$ is at most $ g+2$. Therefore, by the Sauer-Shelah Lemma (Lemma~\ref{lem:ssh}),
the number $W(s, \FF_{J, 1, 1})$ of distinct sets  $\{y_1, \dots, y_s\} \cap \jmath, \,\, \jmath \in \FF_{J, 1, 1}$ is less or equal to $ \sum_{i=0}^{g+2} {s \choose i} $, which is less or equal to $ s^{g+2}$.
%Consider the family $\Lambda$ of all functions $\l$ from $\{x_1, \dots, x_s\}$ to $\{0, 1\}^k$ that  are expressible as
%$\l(x) := (I[w_1 \cdot x \leq 0], \dots, I[w_k \cdot x \leq 0])$, where $I[w_i \cdot x \leq 0]$ represents the indicator of the event that $w_i \cdot %x \leq 0$. There is a surjective map from
Every set of the form $\{y_1, \dots, y_s\} \cap \imath, \,\,\imath \in  \FF_{J, k, \ell}$ can be expressed as an intersection of a union of sets  of the form $\{y_1, \dots, y_s\} \cap \jmath, \,\,\jmath \in \FF_{J, 1, 1}$, in which the total number of sets participating is $k\ell$.
Therefore, the number $W(s, \FF_{J, k, \ell})$  of distinct sets  $\{y_1, \dots, y_s\} \cap \imath, \,\,\imath \in \FF_{J, 1, 1}$ is less or equal to $ W(s, \FF_{J, 1, 1})^{k\ell}$, which is in turn less or equal to $s^{(g+2)k\ell}$.

\end{proof}

 By  Observation~\ref{obs:11}, for a random $R \in \Re$, the expected number of sets of the form $R X \cap \imath, \,\,\imath \in  \FF_{J, k, \ell}$ is greater or equal to $2^{s-1}$. Therefore, there exists an $R \in \Re$ such that
the number of sets of the form $R X \cap \imath, \,\,\imath \in  \FF_{J, k, \ell}$ is greater or equal to $2^{s-1}$. Fix such an $R$ and set  $J := \mathrm{span}(RX)$.
By Claim~\ref{cl:min},
\beq \lab{eq:2s} 2^{s-1} \leq  s^{k\ell(g+2)}.\eeq
Therefore $s - 1 \leq k\ell(g+2) \log s.$
Assuming without loss of generality that $s \geq k \ell$, and substituting $C_1\left({\gamma}^{-2} \log(s + k\ell)\right)$ for $g$, we see that
\beqs s \leq  O\left(k\ell {\gamma}^{-2} \log^2 s \right),\eeqs and hence
\beqs \frac{ s}{\log^2(s)} \leq O\left( \frac{k \ell}{\gamma^2}\right),\eeqs
implying that \beqs s \leq O\left(\left(\frac{k \ell}{\gamma^2}\right) \log^2 \left(\frac{k \ell}{\gamma}\right)\right). \eeqs
Thus, the fat shattering dimension  $\fat_\gamma({\FF_{k, \ell}})$ is $O\left(\left(\frac{k \ell}{\gamma^2}\right) \log^2 \left(\frac{k \ell}{\gamma}\right)\right).$ We independently know that $\fat_\gamma({\FF_{k, \ell}})$ is $0$ for $\gamma > 2$.

Therefore by  %\beq N(\eps, \F, \LL_2(\mu)) \leq  C \left(\frac{ k\ell}{\eps^2}\right) \log^2 \left(\frac{ k\ell}{\eps}\right)\log\left(\frac{1}{\epsilon}\right). \eeq
%Substituting this  into
 Lemma~\ref{lem:fat_to_gen},
%and noting that  \beq\int_{{c\eps}}^2 \sqrt{\frac{ k}{\gamma^2} \log^2 \left(\frac{ k}{\gamma}\right)} \log \left(\frac{ k}{\gamma^2} \log^2 \left(\frac{ k}{\gamma}\right)\right) d\gamma \leq C\sqrt{k} \log^4 \frac{k}{\eps}\eeq
 if \beq\lab{eq:integral} s \geq \frac{C}{\eps^2}\left(\left(\int_{{c\eps}}^2 \frac{\sqrt{k\ell \log^2(k\ell/\gamma^2)}}{\gamma}d\gamma\right)^2 + \log 1/\de\right) ,\eeq
then,
$$\p\left[\sup_{f \in {\F}} \bigg| \E_{\mu_s} f(x_i) - \E_\mu  f\bigg| \geq \eps\right] \leq 1 - \de.$$
Let $t = \ln \left(\frac{\sqrt{k \ell}}{\gamma}\right)$. Then the integral in (\ref{eq:integral}) equals
\beqs \sqrt{k\ell} \int_{\ln (Ck\ell/\eps^2)}^{\ln (\sqrt{kl}/2)} {-t} dt < C \sqrt{kl} \left({\ln (Ck\ell/\eps^2) }\right)^{2}, \eeqs
and so if
\beqs s \geq \frac{C}{\eps^2}\left(k\ell\ln^4\left(k\ell/\eps^2\right) + \log 1/\de\right),\eeqs
then
$$\p\left[\sup_{f \in {\F}} \bigg| \E_{\mu_s} f(x_i) - \E_\mu  f\bigg| \geq \eps\right] \leq 1 - \de.$$
\end{proof}

In order to prove Theorem~\ref{thm:ext_manifold}, we  relate the empirical squared loss $s^{-1}\sum_{i=1}^s  \dist(x_i, \M)^2$ and the expected squared loss over  a class of manifolds whose covering numbers at a  scale $\eps$ have a specified upper bound.
Let $U:\RR^+ \ra \Z^+$ be a real-valued function. Let $\TG$ be any
family of subsets  of the unit ball  $B_\HH$ in a Hilbert space $\HH$ such that for all $r>0$ every element $\M \in \TG$ can be covered  using $U(\frac{1}{r})$ open Euclidean balls.

A priori, it is unclear if
\beq\lab{eq:rv?} \sup\limits_{\M \in \TG} \bigg|\frac{\sum_{i=1}^s \dist(x_i, \M)^2}{s} - \E_\PP \dist(x, \M)^2 \bigg|,\eeq is a random variable, since the supremum of a set of random variables is not always a random variable (although if the set is countable this is true).  Let  $\dhaus$ represent Hausdorff distance. For each $n \geq 1$, $\TG_n$ be a countable set of finite subsets of $\HH$, such that for each $\MM \in \TG$, there exists $\MM' \in \TG_n$ such that $\dhaus(\MM, \MM') \leq 1/n,$ and for each $\MM' \in \TG_n$, there is an $\MM'' \in \TG$ such that $\dhaus(\MM'', \MM') \leq 1/n$. For each $n$, such a $\TG_n$ exists because $\HH$ is separable.
%As observed in (page 3, \cite{KarVid}), as long as this is a random variable, it
Now (\ref{eq:rv?}) is equal to \beqs \lim_{n \ra \infty} \sup\limits_{\M' \in \TG_n} \bigg|\frac{\sum_{i=1}^s \dist(x_i, \M_n)^2}{s} - \E_\PP \dist(x, M_n)^2 \bigg|,\eeqs and for each $n$, the supremum in the limits is over a countable set; thus, for a fixed $n$, the quantity in the limits is a random variable. Since the pointwise limit of a sequence of measurable functions (random variables) is a measurable function (random variable), this proves that \beqs \sup\limits_{\M \in \TG} \bigg|\frac{\sum_{i=1}^s \dist(x_i, \M)^2}{s} - \E_\PP \dist(x, \M)^2 \bigg|,\eeqs is a random variable.
%The proof of the following lemma uses a covering argument and embedding in a larger hilbert space in conjunction with %Lemma~\ref{lem:key}.
%%We formally state our main technical tool below.
\begin{lemma}\lab{lem:main1}
 Let $\eps$ and $\de$  be  error parameters. Let $U_\G:\RR^+ \ra \R^+$ be a function taking values in the positive reals. Suppose every  $\MM \in \G(d, V, \tau)$ can be covered  by the union of some $U_\G(\frac{1}{r})$ open Euclidean balls of radius $\frac{\sqrt{r\tau}}{16}$, for every $r > 0$.  If
$$s \geq   C\left(\frac{U_\G(1/{\eps})}{\eps^2} \left(\log^4 \left(\frac{U_\G(1/{\eps})}{\eps}\right)\right) + \frac{1}{\eps^2} \log \frac{1}{\de}\right),$$
Then,
\beqs \p\left[\sup\limits_{\MM \in \G(d, V, \tau)} \bigg|\frac{\sum_{i=1}^s \dist(x_i, \MM)^2}{s} - \E_\PP \dist(x, \MM)^2 \bigg| <\eps\right] > 1 - \de.\eeqs
\end{lemma}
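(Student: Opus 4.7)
The plan is to reduce the uniform-deviation problem to Lemma~\ref{lem:key} via a point-cloud surrogate whose pointwise approximation error is controlled using Federer's characterization of the reach.

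First I would invoke the covering hypothesis with $r = \eps$: for each $\MM \in \G(d, V, \tau)$, select $p_1(\MM), \ldots, p_k(\MM) \in \MM$ with $k = U_\G(1/\eps)$ so that every $q \in \MM$ lies within $\rho := C\sqrt{\eps\tau}$ of some $p_i(\MM)$ (if the cover centers furnished by the hypothesis do not lie on $\MM$, replace each by a nearby point of $\MM$ and absorb the factor of $2$ into $C$; see Corollary~\ref{cor:r-net}). Define the surrogate loss $g_\MM(x) := \min_i \|x - p_i(\MM)\|^2$, so that $g_\MM(x) \geq \dist(x, \MM)^2$ automatically on $B_\HH$.

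The crux is the sharper upper bound $g_\MM(x) - \dist(x, \MM)^2 \leq C\eps$, uniformly in $x \in B_\HH$ and $\MM$. Fix $\MM$ and $x$, and let $y^* \in \MM$ be any global minimizer of $\|x - y\|$ on the compact set $\MM$; the first-order optimality condition, which is insensitive to the reach, forces $x - y^* \in N_{y^*}\MM$. Choose $p_i \in \MM$ with $\|y^* - p_i\| \leq \rho$ and split $p_i - y^* = u_T + u_N$ along $T_{y^*}\MM \oplus N_{y^*}\MM$. Proposition~\ref{thm:federer} yields $\|u_N\| \leq \|p_i - y^*\|^2/(2\tau) \leq \rho^2/(2\tau)$, and since $x - y^*$ is orthogonal to $u_T$,
$$|(x - y^*)\cdot(p_i - y^*)| = |(x - y^*)\cdot u_N| \leq \|x - y^*\|\,\frac{\rho^2}{2\tau} \leq \frac{\rho^2}{\tau}.$$
Expanding the square then gives
$$\|x - p_i\|^2 - \|x - y^*\|^2 = \|p_i - y^*\|^2 - 2(x - y^*)\cdot(p_i - y^*) \leq \rho^2 + \frac{2\rho^2}{\tau} = O(\eps),$$
using $\tau \leq 1$ and $\rho^2 = O(\eps\tau)$. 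A naive covering argument alone would give only the weaker $O(\rho) = O(\sqrt{\eps\tau})$, insufficient when $\tau \gg \eps$; the improvement from perpendicularity plus Federer's normal-component bound is the main technical obstacle, and is exactly where the reach enters quantitatively.

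Finally I would rewrite
$$g_\MM(x) = \|x\|^2 - \max_i\bigl(2 p_i(\MM)\cdot x - \|p_i(\MM)\|^2\bigr) = \|x\|^2 - \max_i v_i(\MM)\cdot\tilde{x},$$
where $\tilde{x} := (x, 1) \in \HH \oplus \RR$ and $v_i(\MM) := (2 p_i(\MM),\, -\|p_i(\MM)\|^2)$; both $\tilde{x}$ and the $v_i(\MM)$ have norm bounded by absolute constants. After rescaling into unit-ball vectors the family $\{\max_i v_i(\MM)\cdot\tilde{x} : \MM \in \G(d,V,\tau)\}$ embeds into the class $\FF_{1,k}$ of Lemma~\ref{lem:key} with $k = U_\G(1/\eps)$; that lemma, applied at error $\eps/C$, gives a uniform deviation bound of $\eps/3$ for $g_\MM$ with probability $\geq 1 - \de$ under the stated sample size. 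The leftover $\|x\|^2$ is a single bounded function whose empirical deviation is absorbed by Hoeffding into the same budget. Combining this with the pointwise $O(\eps)$ approximation of the previous paragraph via the triangle inequality delivers the claim. Measurability of the supremum over $\G(d, V, \tau)$ is handled by the countable-dense approximation $\TG_n$ set up immediately before the lemma.
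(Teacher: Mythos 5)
Your proposal is correct and follows essentially the same route as the paper: cover each $\MM$ by a $\sqrt{\eps\tau}$-scale net of points on the manifold, use orthogonality of $x-y^*$ to the tangent space plus Federer's reach bound (Proposition~\ref{thm:federer}) to get the pointwise $O(\eps)$ (rather than $O(\sqrt{\eps\tau})$) comparison with the $k$-means surrogate, lift to max/min of linear functionals on $\HH\oplus\RR$, and invoke Lemma~\ref{lem:key}. The only cosmetic differences are writing the surrogate as a max rather than a min and dispatching the common $\|x\|^2$ term by Hoeffding instead of the paper's metric-entropy translation remark, both of which are fine.
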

\begin{proof}

Given a collection $\cc := \{c_1, \dots, c_k\}$  of points in $\HH$, let \beqs f_\cc(x) := \min_{c_j \in \cc} |x - c_j|^2.\eeqs Let $\F_{k}$ denote the set of all such functions for \beqs\cc = \{c_1, \dots, c_k\} \subseteq B_\HH,\eeqs $B_\HH$ being the unit ball in the Hilbert space.

Consider  $\MM \in \G := \G(d, V, \tau)$. Let $\cc(\MM, \eps) = \{\hat{c}_1, \dots, \hat{c}_{\hat{k}}\}$ be a set of $\hat{k} := U_\G(1/\eps)$ points in $\MM$, such that $\MM$ is contained in the union of Euclidean balls of radius $\sqrt{\tau \eps}/16$ centered at these points. Suppose $x \in B_{\HH}$.
Since $c(\MM, \eps) \subseteq \MM$,  we have $\dist(x, \MM) \leq \dist(x, c(\MM, \eps))$. To obtain a bound in the reverse direction,  let $y \in \MM$ be a point such that $|x-y| = \dist(x, \MM)$, and let $z \in \cc(\MM, \eps)$ be a point such that $|y - z| < \sqrt{\tau \eps}/16$.
 Let $z'$ be the point on $Tan(y, \MM)$ that is closest to $z$.
 By the reach condition, and Proposition~\ref{thm:federer},
 \begin{eqnarray*}
 |z-z'| & = &  \dist(z, Tan(y, \MM))\\
 & \leq & \frac{|y-z|^2}{2\tau}\\
 & \leq & \frac{\eps}{512}.
 \end{eqnarray*}
 Therefore,
 \begin{eqnarray*} 2\langle y-z, x-y\rangle & = &   2\langle y - z' + z'-z, x-y\rangle\\
 & = & 2 \langle z'-z, x-y\rangle\\
 & \leq & 2 |z-z'||x-y|\\
 & \leq & \frac{\eps}{128}. \end{eqnarray*}
 Thus \begin{eqnarray*} \dist(x, \cc(\MM, \eps))^2 & \leq & |x- z|^2\\
 & \leq & |x-y|^2 + 2\langle y-z, x-y\rangle + |y-z|^2\\
 & \leq \dist(x, \MM)^2 + \frac{\eps}{128} + \frac{\eps\tau}{256}.
  \end{eqnarray*}
  Since $\tau < 1$, this shows that
  $$\dist^2(x, \MM) \leq \dist^2(x, \cc(\MM, \eps)) \leq \dist^2(x, \MM) + \frac{\eps}{64}.$$
%Since $\min_{i \in [\hat{k}]} \|x-\hat{c}_i\|$ is less or equal to $2$,
%\beq \dist(x, \cc(g, \eps))^2 \leq \dist(x, g)^2 \leq \frac{\eps}{3} +  \dist(x, \cc(g, \eps))^2.\eeq
Therefore,
\beq \lab{eq:one_j} \p\left[\sup\limits_{\MM \in \G} \bigg|\frac{\sum_{i=1}^s \dist(x_i, \MM)^2}{s} - \E_\PP \dist(x, \MM)^2 \bigg| < {\eps}\right] > \p\left[\sup\limits_{f_\cc(x) \in \F_{ \hat{k}}} \bigg|\frac{\sum_{i=1}^s f_\cc(x_i)}{s} - \E_\PP f_\cc(x_i) \bigg| < \frac{\eps}{3}\right].\eeq
Inequality (\ref{eq:one_j}) reduces the problem of deriving uniform bounds over a space of manifolds to a problem of deriving uniform bounds for $k-$means. (For the best previously known bound for $k-$means, see \cite{MaurerPontil}.)
%For any set of $s$ samples $x_1, \dots, x_s$,
%\beq \sup\limits_{f_\cc \in \F_{\cc,k}} \bigg|\frac{\sum_{i=1}^s f_\cc(x_i)}{s} - \E_\PP f_\cc(x) \bigg| \eeq is less or equal to \beqs \bigg|\frac{\sum_{i=1}^s \|x_i\|^2}{s} - \E_\PP \|x\|^2 \bigg|    +   4 \sup\limits_{f_\cc \in \F_{\cc,k}} \bigg|\frac{\sum_{i=1}^s \min\limits_i \Phi(x_i)\cdot \tc_i}{s} - \E_\PP \min\limits_{i\in[k]} \Phi(x) \cdot \tc_i \bigg|.\eeqs

Let \beqs\Phi:\mathbf{x} \mapsto 2^{-1/2}(\mathbf{x}, 1)\eeqs map a point $x \in \HH$ to one in $\HH\oplus\R$, which we equip with the natural Hilbert space structure. For each $i$, let
%let \beq c_i := (c_{i1}, \dots, c_{im})^{\dag}$, and
\beq \lab{eq:tci}\tc_i := 2^{-1/2}(c_i, \frac{\|c_i\|^2}{2}).\eeq The factor of $2^{-1/2}$ (which could have been replaced by a slightly larger constant) is present because we want $\tc_i$ to belong to to the unit ball.
Then,
$$f_\cc(x) = |x|^2 + 4 \min(\langle \Phi(x), \tc_1\rangle, \dots, \langle\Phi(x), \tc_k\rangle).$$

 Let $\F_{\Phi}$ be the set of functions of the form  $4\min_{i= 1}^k \Phi(x)\cdot \tc_i$ where $\tc_i$ is given by (\ref{eq:tci}) and \beqs \cc = \{c_1, \dots, c_k\} \subseteq B_\HH.\eeqs
%By Lemma~\ref{lem:key}, we obtain an upper bound of
%\beq  C \left(\frac{ k}{\eps^2}\right) \log^2 \left(\frac{k}{\eps}\right)\log\left(\frac{1}{\eps}\right) \eeq on the metric entropy of $\F_\Phi$.
The metric entropy of the function class obtained by translating  $\F_\Phi$   by adding $|x|^2$ to every function in it is the same as the metric entropy of $\F_\Phi$. Therefore the integral of the square root of the metric entropy of functions in ${\F}_{\cc,k}$ can be bounded above,
%
% \beq \intN(\eps, \tilde{\F}_{\cc,k}, \LL_2(\mu)) \leq   C \left(\frac{ k}{\eps^2}\right) \log^2 \left(\frac{k}{\eps}\right)\log\left(\frac{1}{\eps}\right) \eeq
and  by Lemma~\ref{lem:key}, if
\beqs s\geq  C\left(\frac{k}{\eps^2} \left(\log^4 \left(\frac{k}{\eps}\right)\right) + \frac{1}{\eps^2} \log \frac{1}{\de}\right),\eeqs then
\beqs  \p\left[\sup\limits_{\MM \in \G} \bigg|\frac{\sum_{i=1}^s \dist(x_i, \MM)^2}{s} - \E_\PP \dist(x, \MM)^2 \bigg| < \eps\right] > 1 - \de.\eeqs
\end{proof}

\begin{proof}[Proof of Theorem~\ref{thm:ext_manifold}]
This follows immediately from Corollary~\ref{cor:r-net} and Lemma~\ref{lem:main1}.
\end{proof}

\section{Dimension reduction}\lab{sec:dim_red}
%In  this section, we assume that $\eps$ is less than $c\tau$.
  Suppose that $X= \{x_1, \dots, x_s\}$ is a set of  i.i.d random points drawn from $\PP$, a probability measure supported in the unit ball $B_\HH$ of a separable Hilbert space $\HH$.
 Let $\MM_{erm}(X)$ denote a manifold in $\G(d, V, \tau)$ that (approximately) minimizes  $$\sum_{i = 1}^s \dist(x_i, \M)^2 $$ over all $\MM \in \G(d, V, \tau)$ and  denote by $\PP_X$ the probability distribution on $X$ that assigns a probability of $1/s$ to each point.
More precisely, we know from  Theorem~\ref{thm:ext_manifold} that there is some function  $s_\G(\eps, \de)$  of $\epsilon, \delta, d, V$ and $\tau$ such that if
 $$s \geq   s_\G(\eps, \de)$$
%C\left( \min\left(\log^4 \left(\frac{U_{ext}}{\eps}\right), U_{ext}\right)\frac{U_{ext}}{\eps^2} + \frac{1}{\eps^2} \log \frac{1}{\de}\right),$$
 then,
 \beq \lab{eq:one_i_repeat1} \p\left[ \LL(\M_{erm}(X), \PP_X) -  \inf_{\M \in \G} \LL(\M, \PP) < {\eps}\right] > 1 - \de.\eeq

 \begin{lemma}\lab{lem:dimred}
Suppose $\eps < c\tau$. Let $W$ denote an arbitrary $2s_\G(\eps, \de)$ dimensional linear subspace of $\HH$ containing $X$. Then
 \beq \inf\limits_{\G(d, V, \tau(1 - c))\ni \MM \subseteq W} \LL(\MM, \PP_X) \leq C \eps + \inf_{\MM \in \G(d, V, \tau)} \LL(\MM, \PP_X).\eeq
 \end{lemma}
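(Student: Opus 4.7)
I will prove the lemma by starting from a near-minimizer $\MM^* \in \G(d, V, \tau)$ with $\LL(\MM^*, \PP_X) \leq \inf_{\G(d,V,\tau)} \LL(\cdot, \PP_X) + \eps$, and constructing a $\tilde\MM \subseteq W$ with the required properties.

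The naive strategy is to project: set $\tilde\MM := \Pi_W(\MM^*)$, where $\Pi_W$ is orthogonal projection onto $W$. Since $X \subseteq W$, Pythagoras gives for each $x_i$ with nearest point $p_i^* \in \MM^*$ the bound $|x_i - \Pi_W p_i^*|^2 = |x_i - p_i^*|^2 - |p_i^* - \Pi_W p_i^*|^2 \leq \dist(x_i, \MM^*)^2$, so $\LL(\Pi_W(\MM^*), \PP_X) \leq \LL(\MM^*, \PP_X)$. Unfortunately $\Pi_W(\MM^*)$ need not be a submanifold when tangent planes of $\MM^*$ make large angle with $W$ (e.g.\ a circle in $\HH$ perpendicular to $W$ projects to a segment with boundary, which has zero reach at its endpoints), so a more subtle construction is required.

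The plan is to build $\tilde\MM$ from local patches in $W$ that mirror the local structure of $\MM^*$. First, discard any connected component of $\MM^*$ at distance greater than $\sqrt{c\tau}$ from every $x_i$; this does not affect the empirical loss and only improves volume and reach. Then take an $(\sqrt{c\tau}/2)$-net $\{p_j\}$ of the remaining $\MM^*$, and for each $j$ choose a $d$-plane $\tilde T_j \subset W$ through $\Pi_W(p_j)$ that is an isometric image of $T_{p_j}\MM^*$; such $\tilde T_j$ exist because $\dim W \gg d$ leaves ample room to fit any $d$-plane through any prescribed point. By Claim~\ref{cl:g1sept}, the piece of $\MM^*$ in $B_\HH(p_j, \tau/C)$ is a $C^{1,1}$-graph $\Psi_j$ over $T_{p_j}\MM^*$ with second-derivative norm $O(1/\tau)$; transport $\Psi_j$ to a graph $\tilde\Psi_j$ over $\tilde T_j$ via the chosen isometry, and assemble $\tilde\MM \subset W$ by patching these graphs using a partition of unity subordinate to $\{\Pi_W(p_j)\}$ on $W$.

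The technical steps to verify are: (i) the $\tilde T_j$'s can be chosen consistently so that neighboring local frames agree up to error $O(\sqrt{\eps/\tau})$, using that tangent planes of $\MM^*$ vary slowly at the net scale by Proposition~\ref{thm:federer}; (ii) the patched $\tilde\MM$ is a $C^2$-submanifold with reach $\geq \tau(1-c)$, via a Federer-type computation on the assembly analogous to the proof of Claim~\ref{cl:g1sept}; (iii) $\vol(\tilde\MM) \leq V$, since each local patch is essentially isometric to the corresponding patch of $\MM^*$; and (iv) $\LL(\tilde\MM, \PP_X) \leq \LL(\MM^*, \PP_X) + C\eps$, by comparing $|x_i - q|$ to $|x_i - p_i^*|$ for $q \in \tilde\MM$ corresponding to $p_i^* \in \MM^*$ under the patching map, and using the net scale plus the alignment error to bound the difference. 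The hard part is (i), maintaining global consistency of the local frames in $W$ without introducing distortions that ruin the reach; this is precisely where the hypothesis $\eps < c\tau$ enters, ensuring the net is fine enough that alignment errors accumulate to at most a multiplicative factor $(1-c)$ on the reach.
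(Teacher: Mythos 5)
Your opening diagnosis is right (naive projection fails because $\Pi_W$ restricted to $\MM^*$ can be degenerate), but the construction you substitute for it does not close the gap; it reintroduces exactly that degeneracy and leaves the crucial step unproved. You anchor each transported patch at $\Pi_W(p_j)$, yet nothing controls $\Pi_W$ on $\MM^*$: two net points $p_j, p_{j'}$ at distance of order $1$ can project to the same point of $W$ (your own example of a circle orthogonal to $W$), the displacement $|p_j - \Pi_W(p_j)|$ can be of order $1$, and the configuration $\{\Pi_W(p_j)\}$ need not resemble the net at all. Consequently your steps (ii)--(iv) have no basis: the patched object can self-intersect or have tiny reach, and the loss comparison in (iv) compares $x_i$ to points that may have moved by $\Theta(1)$, not $O(\sqrt{\eps\tau})$. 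Step (i), which you yourself flag as the hard part, is precisely the global-consistency problem (choosing isometric frames $\tilde T_j\subset W$ whose overlaps agree, i.e.\ a cocycle condition over $\MM^*$), and no argument is given; pairwise alignment of neighboring frames does not yield global consistency. A smaller but real error: discarding components of $\MM^*$ farther than $\sqrt{c\tau}$ from all data points \emph{can} change the empirical loss by much more than $C\eps$ (a point whose nearest component is discarded may go from squared distance slightly above $c\tau$ to nearly $4$), so the claim that this "does not affect the empirical loss" is false, and with it the additive $C\eps$ bound.

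The paper resolves all of this with a single global move that your local construction is trying to imitate piecemeal: take a near-optimal $\MM_2$, an $\eps$-net $N_\eps\subset\MM_2$ of cardinality at most $s_\G(\eps,\de)$, and a unitary $O$ of $\HH$ that fixes every point of $X$ and carries $N_\eps$ into $W$ (possible because $\dim W = 2s_\G(\eps,\de)$ leaves room beyond the span of $X$ --- this is where the factor $2$ in the dimension, which your argument never uses, enters). Then $\LL(O\MM_2,\PP_X)=\LL(\MM_2,\PP_X)$, every point of $O\MM_2$ lies within $\eps$ of $W$, so projecting changes the loss by at most $4\eps$, and Lemma~\ref{lem:manifold_approx} (via Claim~\ref{cl:g2sept10}, $|\Pi(x-y)|\geq(1-C\sqrt{\eps})|x-y|$) shows that projection onto a subspace containing a fine net of the manifold is nearly isometric on it, so the image is a genuine submanifold in $\G(d,V,\tau(1-c))$. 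If you want to salvage your approach, the frames $\tilde T_j$ and base points must come from a single near-isometry of a neighborhood of $\MM^*$ into $W$ fixing $X$ --- at which point you have rederived the paper's unitary-plus-projection argument and the patching machinery becomes unnecessary.
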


 \begin{proof}
 Let $\MM_2 \in \G := \G(d, V, \tau)$ achieve
 \beq \LL(\MM_2, \PP_X) \leq \inf\limits_{\MM \subseteq \G} \LL(\MM, \PP_X) + \eps.\eeq
 Let $N_\eps$ denote a set of no more than $s_G(\eps, \de)$ points contained in $\MM_2$ that is an $\eps-$net of $\MM_2$. Thus for every $x \in \MM_2$, there is $y \in N_\eps$ such that $|x - y| < \eps$.
  Let $O$ denote a unitary transformation from $\HH$ to $\HH$ that fixes each point in $X$ and maps every point in $N_\eps$ to some point in $W$. Let $\Pi_W$ denote the map from $\HH$ to $W$ that maps $x$ to the point in $W$ nearest to $x$. Let $\MM_3 := O \MM_2.$ Since $O$ is an isometry that fixes $X$,
 \beq \LL(\MM_3, \PP_X) = \LL(\MM_2, \PP_X) \leq \inf\limits_{\MM \subseteq \G} \LL(\MM, \PP_X) + \eps.\eeq
 Since $\PP_X$ is supported in the unit ball and the Hausdorff distance between $\Pi_W\MM_3$ and $\MM_3$ is at most $\eps$,
 \begin{eqnarray*}
 \big|\LL(\Pi_W\MM_3, \PP_X) - \LL(\MM_3, \PP_X)\big| & \leq & \E_{x \dashv \PP_X} \big|\dist(x, \Pi_W\MM_3)^2 - \dist(x, \Pi_W\MM_3)^2\big|\\
 & \leq & \E_{x \dashv \PP_X} 4 \big|\dist(x, \Pi_W\MM_3) - \dist(x, \Pi_W\MM_3)\big|\\
 & \leq & 4\eps.
 \end{eqnarray*}
 By Lemma~\ref{lem:manifold_approx}, we see that  $\Pi_W\MM_3$ belongs to $\G(d, V, \tau(1-c))$, thus proving the lemma.
 \end{proof}

By Lemma~\ref{lem:dimred}, it suffices to find a manifold $\G(d, V, \tau) \ni \tilde{M}_{erm}(X) \subseteq V$ such that $$\LL(\tilde{M}_{erm}(X), \PP_X) \leq C\eps + \inf\limits_{V\supseteq\MM\in \G(d, V, \tau)} \LL(\MM, \PP_X).$$

\begin{lemma}\lab{lem:manifold_approx}
Let $\MM \in \G(d, V, \tau)$, and let $\Pi$ be a map that projects $\HH$ orthogonally onto a subspace containing the linear span of a ${c\eps}\tau-$net $\bar S$ of $\MM$. Then, the image of $\MM$, is a $d-$dimensional submanifold of $\HH$ and
$$\Pi(\MM) \in \G(d, V, \tau(1 - C\sqrt{\eps})).$$
\end{lemma}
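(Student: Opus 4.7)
The plan is to verify three things about $\Pi(\MM)$: that it is a $C^{1,1}$-embedded submanifold of $B_\HH$ of dimension $d$, that its $d$-dimensional volume is at most $V$, and that its reach is at least $\tau(1 - C\sqrt{\eps})$. The volume bound will be immediate from the fact that $\Pi$ is a contraction once $\Pi|_\MM$ is shown to be an injective immersion, so the real work lies in the manifold structure and the reach estimate. Both rest on a single geometric estimate that I will prove first: for every $a \in \MM$ the tangent space $T_a\MM$ makes angle at most $O(\sqrt{\eps})$ with $W := \Pi(\HH)$.

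To prove this tangent-angle estimate, given $a \in \MM$ I pick $q_0 \in \bar S$ with $|a - q_0| \leq c\eps\tau$. Claim~\ref{cl:g1sept} represents $\MM$ near $q_0$ as a graph $y \mapsto q_0 + y + F(y)$ over $T_{q_0}\MM$ with $F(0) = 0$, $\nabla F(0) = 0$, and gradient Lipschitz constant $O(1/\tau)$. For each unit vector $\hat v \in T_{q_0}\MM$, setting $\rho := \sqrt{\eps}\tau$ and $p' := q_0 + \rho\hat v + F(\rho\hat v) \in \MM$, I pick $q' \in \bar S$ with $|q' - p'| \leq c\eps\tau$; then $q' - q_0 \in W$ (since $q_0, q' \in \bar S \subseteq W$) and
\[
\bigl| (q' - q_0) - \rho\hat v \bigr| \;\leq\; |F(\rho\hat v)| + |q' - p'| \;=\; O(\rho^2/\tau) + O(\eps\tau) \;=\; O(\eps\tau).
\]
Dividing by $\rho = \sqrt{\eps}\tau$ gives $\dist(\hat v, W) = O(\sqrt{\eps})$; applying this to an orthonormal basis of $T_{q_0}\MM$ yields $\angle(T_{q_0}\MM, W) = O(\sqrt{\eps})$, and the bounded Hessian from Claim~\ref{cl:g1sept} transfers this bound to $T_a\MM$ at additional cost $O(\eps)$. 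Consequently $\Pi|_{T_a\MM}$ has singular values in $[1 - O(\eps), 1]$, so $\Pi|_\MM$ is an immersion.

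My next step is injectivity. Locally, for $p \in \MM$, Claim~\ref{cl:g1sept} gives a graph parametrization $x \mapsto x + F(x)$ of $\MM$ over $T_p\MM$ on a ball of radius $\tau/C$; the immersion bound combined with the quadratic estimate $|F(x_1) - F(x_2)| = O(\max(|x_1|,|x_2|)/\tau)|x_1 - x_2|$ shows $\Pi|_\MM$ is injective on this ball. For global injectivity, if $\Pi(a) = \Pi(b)$ then $b - a \in W^\perp$; choosing $p, q \in \bar S$ within $c\eps\tau$ of $a, b$ respectively, the vector $(b - a) - (q - p)$ has norm at most $2c\eps\tau$ and (since $q - p \in W$) its $W^\perp$-component equals $b - a$, so $|b - a| \leq 2c\eps\tau$. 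For $\eps$ small this forces $a = b$ by local injectivity. Thus $\Pi(\MM)$ is a $d$-dimensional $C^{1,1}$ submanifold of $B_\HH$, and its volume is at most $V$ because $\Pi$ is $1$-Lipschitz.

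Finally I bound the reach using Proposition~\ref{thm:federer} applied to $\Pi(\MM)$. For $a, b \in \MM$, decompose $b - a = u + v$ with $u \in T_a\MM$ and $v \in T_a\MM^\perp$; Federer applied to $\MM$ gives $|v| \leq |b-a|^2/(2\tau)$. Since $\Pi(u) \in \Pi(T_a\MM)$, the tangent space of $\Pi(\MM)$ at $\Pi(a)$,
\[
\dist\bigl(\Pi(b),\; \Pi(a) + \Pi(T_a\MM)\bigr) \;\leq\; |\Pi(v)| \;\leq\; |v| \;\leq\; \frac{|b-a|^2}{2\tau}.
\]
For the denominator $|\Pi(b) - \Pi(a)|^2 = |b - a|^2 - |(b-a)_{W^\perp}|^2$, I bound $|(b-a)_{W^\perp}|^2 \leq C\sqrt{\eps}|b-a|^2$ by splitting into two regimes: when $|b - a| \geq \sqrt{\eps}\tau$, the crude net bound $|(b-a)_{W^\perp}| \leq |b - q| + |p - a| \leq 2c\eps\tau$ gives ratio $\leq 4c^2\eps$; when $|b - a| < \sqrt{\eps}\tau$, the tangent-angle estimate gives $|u_{W^\perp}| \leq O(\sqrt{\eps})|u| \leq O(\sqrt{\eps})|b-a|$ while Federer gives $|v_{W^\perp}| \leq |v| \leq O(\sqrt{\eps})|b - a|$. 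Combining, Federer's ratio is at most $1/[\tau(1 - C\sqrt{\eps})]$, yielding $\reach(\Pi(\MM)) \geq \tau(1 - C\sqrt{\eps})$. The main obstacle is the tangent-angle estimate: the choice $\rho = \sqrt{\eps}\tau$ is forced by balancing the $O(\rho^2/\tau)$ curvature error against the $O(\eps\tau)$ net error, and this $\sqrt{\eps}$ factor propagates unavoidably to the final reach loss.
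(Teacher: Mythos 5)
Your proposal is correct and takes essentially the same route as the paper: the same interplay of the $c\eps\tau$-net with the curvature control of Claim~\ref{cl:g1sept} at the probe scale $\sqrt{\eps}\tau$ (balancing the $O(\rho^2/\tau)$ graph error against the $O(\eps\tau)$ net error), the same split of chords at $|b-a|=\sqrt{\eps}\tau$ with the identical crude net bound in the far regime, and the same appeal to Proposition~\ref{thm:federer} for both the input reach bound and the output reach estimate. The only difference is packaging: you first prove a tangent-angle bound $\angle(T_a\MM,\Pi(\HH))=O(\sqrt{\eps})$ and handle the normal component of a chord via Federer, whereas the paper proves the chord-wise almost-isometry $|\Pi(x-y)|\geq(1-C\sqrt{\eps})|x-y|$ (Claim~\ref{cl:g2sept10}) directly and derives everything from it; your immersion/local-plus-global injectivity discussion is a somewhat more explicit version of the step the paper treats tersely.
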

\begin{proof}
The volume of $\Pi(\MM)$ is no more than the volume of $\MM$ because $\Pi$ is a contraction. Since $\MM$ is contained in the unit ball, $\Pi(\MM)$ is contained in the unit ball.

\begin{claim}\lab{cl:g2sept10}For any   ${x}, {y}\in \MM$, \beqs |\Pi(x - y)| \geq (1- {C\sqrt{\eps}}) |x - y|.\eeqs
\end{claim}
\begin{proof}
First suppose that $|{x}- y| < \sqrt{\eps}\tau$.
Choose $\tx \in \bar S$ that satisfies \beqs|\tilde{x} - x| < C_1 \eps\tau.\eeqs
Let $z := x + \frac{(y - x)\sqrt{\eps}\tau}{|y - x|}$.
By linearity and Proposition~\ref{thm:federer}, \begin{eqnarray} \dist(z, Tan(x, \MM)) &  = & \dist(y, Tan(x, \MM))\left(\frac{\sqrt{\eps}\tau}{|y-x|}\right)\\ &   \leq &  \frac{|x-y|^2}{2\tau}\left(\frac{\sqrt{\eps}\tau}{|y-x|}\right)\\
& \leq & \frac{\eps \tau}{2}.\end{eqnarray}
     Therefore, there is a point $\hat{y} \in Tan(x, \MM)$ such that  \beqs \bigg  | {\hat y} - \left(\tx + \frac{(y - x)\sqrt{\eps}\tau}{|y - x|}\right)\bigg | \leq C_2{\eps}\tau. \eeqs
By  Claim~\ref{cl:g1sept}, there is a point $\bar y \in \MM$ such that  \beqs \bigg  | {\bar y} - \hat y\bigg | \leq C_3{\eps}\tau. \eeqs
%Therefore,
%\beqs { \langle x - y, \tx - \bar y \rangle} \geq (1 - %{c\eps})|x - y| |\tx - \ty|.\eeqs
Let $\ty\in \bar S$ satisfy \beqs | \ty -  \bar y | < {c\eps}\tau.\eeqs Then,
\beqs \bigg  | {\ty } - \left(\tx + \frac{(y - x)\sqrt{\eps}\tau}{|y - x|}\right)\bigg | \leq C_4{\eps}\tau, \eeqs \ie
\beqs \bigg  | \left(\frac{\ty - \tx}{\sqrt{\epsilon}\tau}\right) - \frac{(y - x)}{|y - x|}\bigg | \leq C_4{\sqrt{\eps}}. \eeqs
Consequently,
\beq\lab{eq:norm:1} \bigg  | \left(\frac{\ty - \tx}{\sqrt{\epsilon}\tau}\right)\bigg | - 1 \leq C_4{\sqrt{\eps}}. \eeq

We now have  \beq  \left\langle \frac{y - x}{|y - x|},  \frac{\ty - \tx}{\sqrt{\epsilon}\tau} \right\rangle & =& \left\langle \frac{y - x}{|y - x|},  \frac{y - x}{|y - x|} \right\rangle +  \left\langle \frac{y - x}{|y - x|},  \left(\frac{\ty - \tx}{\sqrt{\epsilon}\tau} - \frac{y - x}{|y - x|}\right)\right\rangle\\ & = & 1 + \left\langle \frac{y - x}{|y - x|}, \left( \frac{\ty - \tx}{\sqrt{\epsilon}\tau} - \frac{y - x}{|y - x|}\right)\right\rangle \\ & \geq &
1 - C_4 \sqrt{\eps}.\lab{eq:norm:2}\eeq
Since $\tx$ and $\ty$ belong to the range of $\Pi$, it follows from (\ref{eq:norm:1}) and (\ref{eq:norm:2}) that
\beqs |\Pi(x - y)| \geq (1- {C\sqrt{\eps}}) |x - y|.\eeqs

Next, suppose that  $|{x}- y| \geq \sqrt{\eps}\tau$,
Choose $\tx, \ty \in \bar S$ such that $|x - \tx| + |y - \ty| < 2c \eps\tau$.
Then,
\beqs \left\langle \frac{x - y}{|x-y|}, \frac{\tx - \ty}{|\tx - \ty|} \right\rangle & =  &  \left\langle \frac{x - y}{|x-y|}, \frac{x - y}{|\tx - \ty|} \right\rangle + \left(|\tx - \ty|^{-1}\right) \left\langle\frac{x - y}{|x-y|}, (\tx - x) - (\ty - y) \right\rangle\\
& \geq & 1 - C\sqrt{\eps},\eeqs and the claim follows since $\tx$ and $\ty$ belong to the range of $\Pi$.
\end{proof}
 By Claim~\ref{cl:g2sept10}, we see that \beq\lab{eq:g4sept10}\forall \,x \in \MM,\,\, Tan^0(x, \MM) \cap \ker(\Pi) = \{0\}.
\eeq
Moreover, by Claim~\ref{cl:g2sept10}, we see that if $x, y \in \MM$ and $\Pi(x)$ is close to $\Pi(y)$ then $x$ is close to $y$.  Therefore, to examine all $\Pi(x)$ in a neighborhood of $\Pi(y)$, it is enough to examine all $x$ in a neighborhood of $y$.
So by Definition~\ref{def:man},  it follows that $\Pi(\MM)$ is a submanifold of $\HH$.
Finally, in view of Claim~\ref{cl:g2sept10} and the fact that $\Pi$ is a contraction, we see that
\begin{eqnarray}\reach(\Pi(\MM)) & = & \sup_{x, y \in \MM} \frac{|\Pi(x) - \Pi(y)|^2}{2 \dist(\Pi(x), Tan(\Pi(y), \Pi(\MM)))}\\
& \geq & (1 - C\sqrt{\eps})  \sup_{x, y \in \MM} \frac{|x - y|^2}{2 \dist(x, Tan(y, \MM))}\\
& = & (1 - C\sqrt{\eps})\, \reach(\MM),\end{eqnarray}
the lemma follows.
\end{proof}

\section{Overview of the algorithm}\lab{sec:alg-overview}
%Suppose that $x$ at a distance $r < \tau$ from $\MM$, and $y \in \MM$ is the nearest point, then the open ball of radius $r$ centered at $x$ must be disjoint from $\MM$ and its closure   intersects $\MM$ tangentially. From this, we see that the intersection of $\MM$ with the open ball of radius $\tau$ centered at $y$  can be expressed as the graph of a function, whose domain is a subset of the tangent space at $y$ and range is the fiber of the normal bundle at $y$. Let this function $f_{y, \MM}$
% Furthermore, for any $v$ normal to $\MM$ at $y$, the eigenvalues of the Hessian of $g_{y, \MM, v}(p) := \langle f_{y, \MM}(p), v\rangle$ at $p$ are all less or equal to $\tau^{-2}$.
%
%Now using Proposition~\ref{thm:federer}, we see that $\reach(\Pi(\MM)) \geq \tau - \eps $.
%Thus, instead of searching over manifolds in the Hilbert space, we may draw $s$ random points, and (assuming without loss of generality that $s \geq u$), then merely search among submanifolds contained in an arbitrary $2s-$dimensional vector space containing the points $\{x_1, \dots, x_s\}.$
Given a set $X:= \{x_1, \dots, x_s\}$ of points in $\R^n$, we give an overview of the algorithm that finds a nearly optimal interpolating manifold.
%answering whether or not there exists a $d-$submanifold $\MM$ with a prescribed lower bound $O(\tau)$ on its reach and a prescribed upper bound $V$ on its volume such that the average value of $\dist(x, \MM)^2$ for $x \in X$ is $O(\eps)$.
\begin{definition}
Let $\MM \in \G(d, V, \tau)$ be called an $\eps-$optimal interpolant if
 \beq\lab{eq:h:1} \sum_{i = 1}^s \dist(x_i, \MM)^2 \leq s{\eps} + \inf\limits_{\MM' \in \G(d,  V/C,  C\tau)}  \sum_{i=1}^s \dist(x_i, \MM')^2, \eeq where $C$ is some  constant depending only on $d$.
\end{definition}
 Given $d, \tau, V, \eps$ and $\de$, our goal is to output an implicit representation of a manifold $\MM$ and an estimated  error $\beps\geq 0$ such that \ben\item With probability greater than $1-\de$, $\MM$ is an $\eps-$optimal interpolant and
  \item
 \beqs \lab{eq:h:2}{s\beps} \leq  \sum\limits_{x \in X} \dist(x, \MM)^2 \leq {s\left(\frac{\eps}{2} +\beps\right) }.\eeqs\een

\begin{figure}\label{fig:bundle}
\begin{center}
\includegraphics[height=2.4in]{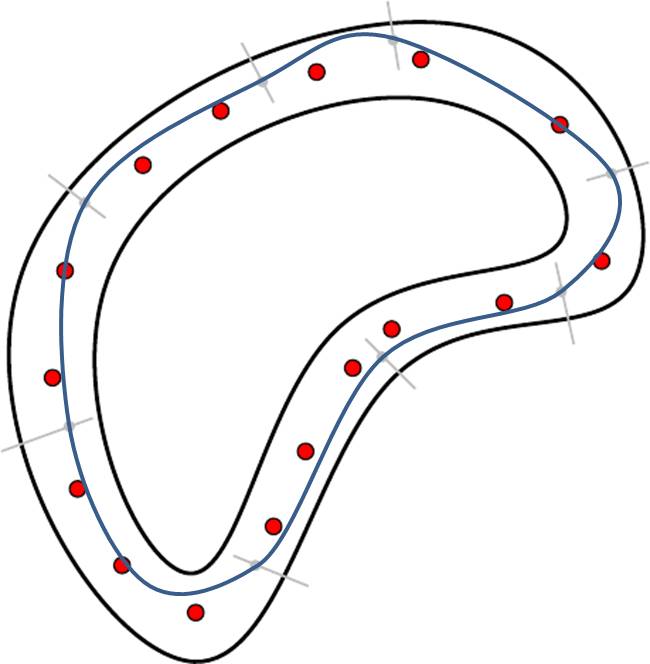}
\caption{A disc bundle $D^\norm \in \D^\norm$}
\end{center}
\end{figure}
Thus, we are required to perform an optimization over the set of manifolds  $\G = \G(d, \tau, V)$. This set $\G$ can be viewed as  a metric space $(G, \dhaus)$ by defining the distance between
two manifolds $\MM, \MM'$ in $\G$ to be the Hausdorff distance between $\MM$ and $\MM'$. The resulting metric space contains a large family of manifolds that are mutually non-homeomorphic. Our strategy for producing an approximately optimal manifold will be to execute the following steps. First identify a $O(\tau)-$net $S_{\G}$ of $(\G, \dhaus)$. Next, for each $\MM' \in S_\G$, construct a disc bundle $D'$ that approximates its normal bundle. The  fiber of $D'$ at a point $z \in \MM'$ is a $n-d-$dimensional disc of radius $O(\tau)$, that is roughly orthogonal to $Tan(z, \MM')$ (this is formalized in Definitions \ref{def:disc_bundle:1} and \ref{def:disc_bundle:2}). Suppose that $\MM$ is a manifold in $\G$ such that \beq\lab{eq:dist-MM'} \dhaus(\MM, \MM') < O(\tau).\eeq  As a consequence of  (\ref{eq:dist-MM'}) and the lower bounds on the reaches of $\MM$ and $\MM'$, it follows (as has been shown in Lemma~\ref{lem:17}) that $\MM$ must be the graph of a section of $D'$. In other words $\MM$ intersects each fiber of $D'$ in a unique point.
% (which is proven in Lemma~\ref{lem:section}).
We use convex optimization to find good local sections, and patch them up to find a good global section. Thus, our algorithm involves two main phases:
\ben
\item Construct a set $\D^\norm$ of disc bundles over manifolds in $\G(d,  CV, \tau/C)$ is rich enough that every $\eps-$optimal interpolant is a section of some member of $\D^\norm$.
\item  Given $D^\norm \in \D^\norm$, use convex optimization to find a minimal $\hat{\eps}$ such that  $D^\norm$ has a section (\ie a small transverse perturbation of the base manifold of $D^{\norm}$) which is a $\hat{\eps}-$optimal interpolant. This is achieved by finding the right manifold in the vicinity of the base manifold of $D^\norm$ by finding good local sections (using  results from \cite{Feff_ext04, Feff_jets2}) and then patching these up using a gentle partition of unity supported on the base manifold of $D^\norm.$
    %and LeGruyer \cite{Legruyer}.
\een

\section{Disc Bundles}
The following definition specifies the kind of bundles we will be interested in. The constants have been named so as to be consistent with their appearance in  (\ref{eq:ch49}) and Observation~\ref{obs:3}. Recall the parameter $r$ from Definition~\ref{def:man}.
\begin{definition}\lab{def:disc_bundle:1}
 Let  $D$ be an open subset of $\R^n$ and $\MM$ be a submanifold of $D$ that belongs to $\G(d, \tau, V)$ for some choice of parameters $d, \tau, V$. Let $\pi$ be a $\C^{4}$  map $\pi:D \ra \MM$ such that for any $z \in \MM$, $\pi(z) = z$ and $\pi^{-1}(z)$ is isometric to a Euclidean disc of dimension $n-d$, of some radius independent of $z$. We then say $D \stackrel{\pi}{\longrightarrow} \MM$ is a disc bundle. When $\MM$ is clear from context, we will simply refer to the bundle as $D$. We refer to $D_z := \pi^{-1}(z)$ as the fiber of $D$ at $z$. We call  $s:\MM \ra D$ a section of $D$ if for any $z \in \MM$, $s(z) \in D_z$ and for some $\htau, \hV > 0$,  $s(\MM) \in \G(d, \htau, \hV)$. Let $U$ be an open subset of $\MM$. We call a given $C^2-$map $s_{loc}:U \ra D$ a local section of $D$ if for any $z \in U$, $s(z) \in D_z$ and $\{(z, s_{loc}(z))|z \in U\}$ can  locally be expressed as the graph of a $\C^2-$function.
\end{definition}
\begin{definition}\lab{def:disc_bundle:2}

For reals $\htau, \hV >0$, let  $\D( d, \htau, \hV)$ denote the set of all disc bundles $D^\norm \stackrel{\pi}{\longrightarrow}  \MM$ with the following properties.

\ben
\item $D^\norm$ is a  disc bundle over the manifold $\MM \in \G(d, \htau, \hV)$.
\item Let  $z_0 \in \MM$. For $z_0 \in \MM$, let $D^\norm_{z_0} := \pi^{-1}(z_0)$ denote the fiber over $z_0$, and $\Pi_{z_0}$ denote the projection of $\R^n$ onto the affine span of $D^\norm_{z_0}$. Without loss of generality assume after rotation (if necessary) that $Tan(z_0, \MM) = \R^d\oplus\{0\}$ and $Nor_{z_0, \MM} = \{0\}\oplus\R^{n-d}$. Then, $D^\norm \cap B(z_0, \oc_{11}\htau)$ is a bundle over a graph $\{(z, \Psi(z))\}_{z \in \Omega_{z_0}}$ where the domain $\Omega_{z_0}$ is an open subset of $Tan(z_0, \MM)$.
\item Any $z \in B_n(z_0, \oc_{11})$ may be expressed uniquely in the form
 $(x, \Psi(x)) + v$ with $x \in B_d(z_0, \oc_{10}\htau), v \in \Pi_{(x, \Psi(x))} B_{n-d}(x, \frac{\oc_{10}\htau}{2}).$
  Moreover, $x$ and $v$ here are $\C^{{k-2}}-$smooth functions of $z \in B_n(x, \oc_{11}\htau)$, with derivatives up to order ${k-2}$ bounded by $C$ in absolute value.
\item Let $x\in B_d(z_0, \oc_{10}\htau)$, and let $v \in \Pi_{(x, \Psi(x))}\R^n.$ Then, we can express $v$ in the form \beq v = \Pi_{(x, \Psi(x))}v^{\#}\eeq  where $ v^\#\in \{0\}\oplus\R^{n-d}$ and $|v^\#| \leq 2 |v|$.
\een
\end{definition}

\begin{definition}
For any $D^\norm \ra \MM_\base \in \D(d, \htau, \hV)$, and $\a \in (0, 1),$ let $\a\D(d,\htau, \hV)$ denote a bundle over $\MM_\base$, whose every fiber is a scaling by $\a$ of the corresponding fiber of $D^\norm$.
\end{definition}

\section{A key lemma}\lab{sec:charlie}
Given a function with prescribed smoothness, the following key lemma allows us to  construct a bundle satisfying certain conditions, as well as assert that the base manifold has controlled reach. We decompose $\R^n$ as $\R^d \oplus \R^{n-d}$. When we write $(x, y) \in \R^n$, we mean $x \in \R^d$ and $y \in \R^{n-d}$.
\begin{lemma}\lab{lem:charlie}
Let the following conditions hold.
\ben \item Suppose $F:B_n(0, 1) \ra \R$ is   $\C^k-$smooth.
 \item
\beq \lab{eq:C0} \partial^\a_{x, y}F(x, y) \leq C_0 \lab{eq:ch1}\eeq for $(x, y) \in B_n(0, 1)$ and $|\a| \leq k$.
\item For $x \in \R^d$ and $y \in \R^{n-d}$ and $(x, y) \in B_n(0, 1)$, suppose also that
\beq  c_1[|y|^2 + \rho^2] \leq [F(x, y) + \rho^2] \leq C_1[|y|^2 + \rho^2], \lab{eq:ch2}\eeq
where \beq\lab{eq:ch3}0 < \rho < c\eeq where $c$ is a small enough constant determined by $C_0, c_1, C_1, k, n$.
\een
Then there exist constants $c_2, \dots, c_7$ and $C$ determined by $C_0, c_1, C_1, k, n$, such that the following hold.
\ben
\item For $z \in B_n(0, c_2),$ let $\iN(z)$ be the subspace of $\R^n$ spanned by the eigenvectors of the Hessian $\partial^2F(z)$ corresponding to the $(n-d)$ largest eigenvalues. Let $\Pi_{hi}(z):\R^n \ra \iN(z)$ be the orthogonal projection from $\R^n$ onto $N(z)$. Then $|\partial^\a\Pi_{hi}(z)| \leq C$ for $z \in B_n(0, c_2), |\a| \leq {k-2}.$ Thus, $\iN(z)$ depends $\C^{{k-2}}-$smoothly on $z$.
    \item There is a $\C^{{k-2}}-$smooth map \beq\lab{eq:c4c5} \Psi:B_d(0, c_4) \ra B_{n-d}(0, c_3),\eeq with the following properties
        \beq \lab{eq:chnew} |\Psi(0)| \leq C\rho; |\partial^\a\Psi| \leq C^{|\a|} \eeq on $B_d(0, c_4),$ for $1 \leq |\a|\leq {k-2}$. Then, the set of all $z = (x, y) \in B_d(0, c_4) \times B_{n-d}(0, c_3),$ such that \beqs \big\{z|\Pi_{hi}(z)\partial F(z) = 0\} = \{(x, \Psi(x))\big | x \in B_d(0, c_4)\big\}\eeqs  is a $\C^{{k-2}}-$smooth graph.
        \item \lab{item-3:key} We fix $\Psi$ as above.
        Any point $z \in B_n(0, c_7)$ can be expressed uniquely in the form $z = (x, \Psi(x)) + v$, with $x \in B_d(0, c_5), v \in \iN(x, \Psi(x))\cap B_{n}(0, c_6)$. Define
         \beq\lab{eq:c3c4c5}\Phi_d :   B_d(0, c_4) \times B_{n-d}(0, c_3) \ra B_d(0, c_5)\eeq and
 \beqs\Phi_{n-d} :  B_d(0, c_4) \times B_{n-d}(0, c_3) \ra B_{n}(0, c_6)\eeqs
           by  $z = (x, \Psi(x)) + v$. Then, $\Phi_d$ and $\Phi_{n-d}$  are $\C^{{k-2}}-$functions of $z$ and their derivatives of order up to ${k-2}$ are at most $C$ in absolute value.
            \een
            \end{lemma}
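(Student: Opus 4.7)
The strategy is to extract from hypothesis (\ref{eq:ch2}) --- which says $F(z)+\rho^{2}$ is trapped between fixed multiples of $|y|^{2}+\rho^{2}$, so $F$ is a $C^{0}$-approximation of a squared distance function --- a uniform spectral gap of the Hessian $\partial^{2}F(z)$. With the gap in hand, a Riesz contour integral gives the smooth top eigenbundle $\iN(z)$ (claim 1); the implicit function theorem solves $\Pi_{hi}(z)\,\partial F(z)=0$ for $y$ in terms of $x$ (claim 2); and the inverse function theorem converts $(x,v)\mapsto(x,\Psi(x))+v$ into a $C^{k-2}$ tubular chart (claim 3).

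The central technical step is to control the $xx$-block of $\partial^{2}F$ near $y=0$. Setting $y=0$ in (\ref{eq:ch2}) gives $|F(x,0)|\leq(C_{1}-1)\rho^{2}$ for $x\in B_{d}(0,1)$. A Landau--Kolmogorov interpolation (combining this smallness with the global bound $|\partial^{\alpha}F|\leq C_{0}$ for $|\alpha|\leq k$; for $k=4$ this reads $\|\partial^{2}_{xx}F(x,0)\|\lesssim \|F(\cdot,0)\|_{\infty}^{1/2}\|\partial^{4}_{x}F\|_{\infty}^{1/2}$) yields $\|\partial^{2}_{xx}F(x,0)\|\leq\delta(\rho)$ with $\delta(\rho)\to 0$ as $\rho\to 0$. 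In parallel, the even/odd decomposition
\[
F(x,y)+F(x,-y)=2F(x,0)+y^{T}\partial^{2}_{yy}F(x,0)\,y+O(|y|^{4}),
\]
evaluated at $|y|\sim\sqrt{\rho}$ and compared with (\ref{eq:ch2}), gives $\partial^{2}_{yy}F(x,0)\succeq c_{1}I_{n-d}$. Both bounds extend by Lipschitz continuity of $\partial^{2}F$ to points $(x,y)$ with $|y|\leq c_{2}$, where $c_{2}$ is a controlled constant chosen so that $\|\partial^{2}_{xx}F(x,y)\|\leq c_{1}/4$ and $\partial^{2}_{yy}F(x,y)\succeq(c_{1}/2)I$. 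By Courant--Fischer the full Hessian then has $d$ eigenvalues $\leq c_{1}/4$ and $n-d$ eigenvalues $\geq c_{1}/2$, irrespective of the cross block --- a uniform spectral gap of at least $c_{1}/4$ on $B_{n}(0,c_{2})$ once $\rho$ is sufficiently small.

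Given this gap, define
\[
\Pi_{hi}(z):=\frac{1}{2\pi i}\oint_{\Gamma}\bigl(\lambda I-\partial^{2}F(z)\bigr)^{-1}\,d\lambda,
\]
where $\Gamma$ is a fixed circle in $\mathbb{C}$ enclosing $[c_{1}/2,C_{0}]$ and avoiding $[-C_{0},c_{1}/4]$. Since $\partial^{2}F\in C^{k-2}$ and the resolvent is uniformly bounded on $\Gamma$ thanks to the gap, holomorphic functional calculus gives $\Pi_{hi}\in C^{k-2}$ with $|\partial^{\alpha}\Pi_{hi}(z)|\leq C$ for $|\alpha|\leq k-2$, which is claim (1).

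For claim (2), apply the implicit function theorem to $G(x,y):=\Pi_{hi}(x,y)\,\partial F(x,y)$, viewed as a map into the fixed subspace $\iN(0,0)$ after post-composing with the orthogonal projection onto it. The estimates above give $|G(0,0)|\leq C\rho$, while $\partial_{y}G(0,0)$ is an $O(\sqrt{\rho})$-perturbation of the restriction of $\partial^{2}_{yy}F(0,0)$ to $\iN(0,0)$, hence invertible with inverse norm $\leq 2/c_{1}$. The IFT then produces a unique $C^{k-2}$ solution $y=\Psi(x)$ on $B_{d}(0,c_{4})$ satisfying $|\Psi(0)|\leq C\rho$ and $|\partial^{\alpha}\Psi|\leq C^{|\alpha|}$ for $1\leq|\alpha|\leq k-2$. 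For claim (3), the $C^{k-2}$ map $\Phi(x,v):=(x,\Psi(x))+v$ with $v\in\iN(x,\Psi(x))$ has differential at $(0,0)$ sending $T_{0}(\mathrm{graph}\,\Psi)\oplus\iN(0,\Psi(0))$ isomorphically onto $\R^{n}$, because each summand is $O(\sqrt{\rho})$-close to the respective coordinate subspace and those subspaces are transverse. The inverse function theorem in $C^{k-2}$ then produces smooth inverses $\Phi_{d}$ and $\Phi_{n-d}$ on $B_{n}(0,c_{7})$ with derivatives bounded by $C$. The chief obstacle throughout is pinning down a quantitative spectral gap whose resolvent is under differentiable control; once that is done, the rest is standard implicit/inverse function theorem bookkeeping.
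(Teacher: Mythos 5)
Your plan has the same architecture as the paper's proof (structure of the Hessian at $0$, smooth spectral projection, implicit function theorem for the graph, inverse function theorem for the tubular coordinates), and several pieces are sound: the even/odd-in-$y$ Taylor argument for $\partial^2_{yy}F(x,0)\succeq c_1 I$, the interpolation bound for the $xx$-block on the slice $y=0$, and the Riesz-contour construction of $\Pi_{hi}$, which is a clean substitute for the paper's argument via the compact matrix set $\Omega$. But there is a genuine gap: you never bound the mixed block $\partial^2_{x_iy_j}F$ near the origin, and your later steps silently use exactly that bound. Your symmetrization is only in $y$, so the cross terms (odd in $y$) cancel out of it; Landau--Kolmogorov on the slice sees only the $xx$-block; and your Courant--Fischer observation is, as you yourself say, ``irrespective of the cross block'' --- it yields the spectral gap but says nothing about the \emph{direction} of the top eigenspace. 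Yet in claim (2) you assert that $\partial_yG(0,0)$ is an $O(\sqrt\rho)$-perturbation of $\partial^2_{yy}F(0,0)$ restricted to $\iN(0,0)$ (whence the inverse bound $2/c_1$), and in claim (3) that $T_0(\mathrm{graph}\,\Psi)$ and $\iN(0,\Psi(0))$ are $O(\sqrt\rho)$-close to $\R^d\oplus\{0\}$ and $\{0\}\oplus\R^{n-d}$. None of this follows from the gap alone: with $H_{xx}$ small, $H_{yy}\succeq c_1I$ and an $O(1)$ cross block (e.g.\ the $2\times 2$ matrix with rows $(0,1)$ and $(1,1)$), the top eigenspace tilts by an angle bounded away from zero, and $\nabla\Psi(0)$ need not be small. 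Concretely, with only what you have established, the differential at the origin of $(x,v)\mapsto(x,\Psi(x))+v$ has the block shape $\left[\begin{smallmatrix} I & O(1)\\ O(1) & I\end{smallmatrix}\right]$, which can be singular, so the uniqueness and bounded-derivative statements of item (3) are unproved.

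This missing ingredient is precisely what the paper establishes first: the full $z\mapsto -z$ Taylor symmetrization gives the two-sided quadratic form bound (\ref{eq:ch6}); positive definiteness of the shifted matrices plus the $2\times2$ principal-minor inequality gives $|\partial^2_{ij}F(0,0)|\le C\rho^{1/3}$ for $i\le d<j$ (\ref{eq:ch7}); and that is what yields $|\Pi_{hi}(0)-\Pi_{n-d}|\le C\rho^{1/3}$ (\ref{eq:ch17}), hence the $O(\rho^{1/3})$ upper-right block of the Jacobian (\ref{eq:ch44}) and its invertibility with controlled inverse (note the paper never needs $\nabla\Psi(0)$ small --- the lower-left block of (\ref{eq:ch44}) is allowed to be $O(1)$ exactly because the upper-right one is small). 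To repair your argument, upgrade your $y$-only symmetrization to the symmetrization in all of $z$ and run the minor trick, or otherwise prove the alignment of $\iN(z)$ with $\{0\}\oplus\R^{n-d}$. Partial fixes from the gap alone (e.g.\ one can show the projection of $\{0\}\oplus\R^{n-d}$ onto $\iN(0)$ has smallest singular value at least $\sqrt{c_1/(4C_0)}$, salvaging invertibility of $\partial_yG(0,0)$ with a worse constant) do not rescue the invertibility needed in item (3), so some form of the cross-block bound appears unavoidable.
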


\begin{proof}
We first study the gradient and Hessian of $F$. Taking $(x, y) = (0, 0)$ in (\ref{eq:ch2}), we see that

\beq\lab{eq:ch4} c_1 \rho^2 \leq F(0, 0) \leq C_1 \rho^2. \eeq

A standard lemma in analysis asserts that non-negative $F$ satisfying (\ref{eq:ch1}) must also satisfy

\beqs \big | \nabla F(z) \big | \leq C\left(F(z)\right)^\frac{1}{2}. \eeqs

In particular, applying this result to the function $F + \rho^2$, we find that

\beq \lab{eq:ch5} \big | \nabla F(0, 0)\big | \leq C \rho. \eeq

Next, we apply Taylor's theorem : For $(|x|^2 + |y|^2)^\frac{1}{2} \leq \rho^{\frac{2}{3}}$,  for $z = (z_1, \dots, z_n) = (x, y),$ estimates (\ref{eq:ch1}) and (\ref{eq:ch4}) and Taylor's theorem yield
$$\big | F(x, y) + F(-x, -y) - \sum_{i, j = 1}^n \partial_{ij}^2 F(0,0)z_iz_j \big | \leq C\rho^2.$$
Hence, (\ref{eq:ch2}) implies that
$$c|y|^2 - C\rho^2 \leq \sum_{i,j = 1}^n \partial_{ij}^2 F(0, 0)z_i z_j \leq C(|y|^2 + \rho^2).$$
Therefore,
$$c|y|^2 - C\rho^{2/3} |z|^2 \leq \sum_{i,j = 1}^n \partial_{ij}^2 F(0, 0)z_iz_j \leq C\left(|y|^2 + \rho^{2/3} |z|^2\right)$$
for $|z| = \rho^{2/3},$ hence for all $z \in \RR^n.$ Thus, the Hessian matrix $\left(\partial_{ij}^2 F(0)\right)$ satisfies

\beq \lab{eq:ch6}
               \left(
\begin{array}{c|c}
-C\rho^{2/3} &0 \\ \hline
0 &cI
\end{array}\right) \preceq \left( \partial^2_{ij} F(0, 0)\right) \preceq  \left(
\begin{array}{c|c}
+C\rho^{2/3} &0 \\ \hline
0 &CI
\end{array}\right)\eeq

That is, the matrices
$$\left(\partial_{ij}^2 F(0, 0)- \left[-C\rho^{2/3} \delta_{ij} + c\delta_{ij} \one_{i,j > d}\right]\right).$$
and
$$\left(C\left[\rho^{2/3} \delta_{ij} + \delta_{ij} \one_{i,j > d}\right] - \partial_{ij}^2 F(0, 0)\right).$$
are positive definite, real and symmetric.
If $\left(A_{ij}\right)$ is positive definite, real and symmetric, then
$$\big|A_{ij}\big|^2 < A_{ii} A_{jj}$$ for $i \neq j$, since the two--by--two submatrix
$$\left(
  \begin{array}{cc}
    A_{ii} & A_{ij} \\
    A_{ji} & A_{jj} \\
  \end{array}
\right)$$
must also be positive definite and thus has a positive determinant.
It follows from (\ref{eq:ch6}) that
$$\big|\partial_{ii}^2 F(0, 0)\big| \leq C \rho^{2/3},$$ if $i\leq d$, and
$$\big|\partial^2_{jj} F(0, 0)\big| \leq C$$ for any $j$. Therefore, if $i \leq d$ and $j > d$, then
$$\big|\partial_{ij}^2 F(0, 0)\big|^2 \leq \big|\partial_{ii}^2 F(0, 0)\big| \cdot \big|\partial_{jj}^2 F(0, 0)\big| \leq C\rho^{2/3}.$$ Thus,
\beq \lab{eq:ch7} \big| \partial_{ij}^2F(0, 0)\big| \leq C\rho^{1/3}\eeq if $1 \leq i \leq d$ and $d + 1 \leq j \leq n$. Without loss of generality, we can rotate the last $n-d$ coordinate axes in $\R^n$, so that the matrix
$$\left(\partial^2_{ij}F(0, 0)\right)_{i,j = d+1, \dots, n}$$ is diagonal, say,
$$\left(\partial^2_{ij} F(0, 0)\right)_{i,j = d+1, \dots, n} = \left(
  \begin{array}{ccc}
    \lambda_{d+1} & \cdots & 0 \\
    \vdots & \ddots & \vdots \\
    0 & \cdots & \lambda_n \\
  \end{array}
\right).$$
For an $n \times n$ matrix $A = (a_{ij})$, let \beqs \|A\|_\infty := \sup_{(i, j) \in [n]\times[n]} |a_{ij}|.\eeqs
Then (\ref{eq:ch6}) and (\ref{eq:ch7}) show that
\beq\lab{eq:ch8} \left\| \left(\partial_{ij}^2 F(0, 0)\right)_{i,j = 1, \dots, n} - \left(
\begin{array}{c|ccc}
\mathbf{0}_{d\times d} &\mathbf{0}_{d\times 1} &\cdots &\mathbf{0}_{d\times 1}\\ \hline
\mathbf{0}_{1 \times d} &\la_{d+1} &\cdots &0\\
\vdots & \vdots & \ddots & \vdots\\
\mathbf{0}_{1 \times d} &0 &\cdots &\la_n
\end{array}\right)\right\|_{\infty} \leq C\rho^{1/3}\eeq and
\beq\lab{eq:ch9} c \leq \la_j \leq C\eeq for each $j = d+1, \dots, n.$ We can pick controlled constants so that (\ref{eq:ch8}), (\ref{eq:ch9}) and (\ref{eq:ch1}), (\ref{eq:ch3}) imply the following.
\begin{notation}\lab{not:10} For $\la_j$ satisfying (\ref{eq:ch9}), let $c^{\#}$ be a sufficiently small controlled constant.
Let $\Omega$ be the set of all real symmetric $n \times n$ matrices $A$ such that
\beq\lab{eq:ch10.1}\left\|A -
               \left(
\begin{array}{c|ccc}
\mathbf{0}_{d\times d} &\mathbf{0}_{d\times 1} &\cdots &\mathbf{0}_{d\times 1}\\ \hline
\mathbf{0}_{1 \times d} &\la_{d+1} &\cdots &0\\
\vdots & \vdots & \ddots & \vdots\\
\mathbf{0}_{1 \times d} &0 &\cdots &\la_n
\end{array}\right)\right\|_\infty < c^{\#}.\eeq
\end{notation}
Then, $\left(\partial_{ij}^2 F(z)\right)_{i,j = 1, \dots, n}$ for $|z| < \oc_4$ belongs to $\Omega$ by (\ref{eq:ch8}) and (\ref{eq:ch9}). Here $\mathbf{0}_{d\times d}$, $\mathbf{0}_{1\times d}$ and $\mathbf{0}_{d\times 1}$ denote all-zero  ${d\times d}, {1\times d}$ and ${d\times 1}$ matrices respectively.
\begin{definition}\lab{def:oc}
If $A \in \Omega$, let $\Pi_{hi}(A):\R^n \ra \R^n$ be the orthogonal projection from $\R^n$ to the span of the eigenspaces of $A$ that correspond to eigenvalues in $[\oc_2, \oC_3],$ and let $\Pi_{lo}:\R^n \ra \R^n$ be the orthogonal projection from $\R^n$ onto the span of the eigenspaces of $A$ that correspond to eigenvalues in $[-\oc_1, \oc_1].$
\end{definition}
Then, $A \mapsto \Pi_{hi}(A)$ and $A \mapsto \Pi_{lo}(A)$ are smooth maps from the compact set $\Omega$ into the space of all real symmetric $n \times n$ matrices. For a matrix $A$, let $|A|$ denote its spectral norm, \ie
\beqs |A| := \sup_{\|u\| = 1} \|Au\|. \eeqs
Then, in particular,
\beq\lab{eq:ch12} \big|\Pi_{hi}(A) - \Pi_{hi}(A')\big| + \big|\Pi_{lo}(A) - \Pi_{lo}(A')\big|\leq C \big|A - A'\big|. \eeq for $A, A' \in \Omega$, and
\beq \lab{eq:ch13} \big|\partial_A^\a \Pi_{hi}(A)\big| + \big|\partial_A^\a \Pi_{lo}(A)\big| \leq C\eeq for $A \in \Omega, |\a| \leq k.$
Let
\beq\lab{eq:ch14.1} \Pi_{hi}(z) = \Pi_{hi}\left(\partial^2 F(z)\right)\eeq and
\beq\lab{eq:ch14.2} \Pi_{lo}(z) = \Pi_{lo}\left(\partial^2 F(z)\right),\eeq for $z < \oc_4$, which make sense, thanks to the comment following (\ref{eq:ch10.1}). Also, we define projections
$\Pi_d:\R^n \ra \R^n$ and $\Pi_{n-d}: \R^n \ra \R^n$ by setting
\beq\lab{eq:ch15} \Pi_d: (z_1, \dots, z_n) \mapsto (z_1, \dots, z_d, 0, \dots, 0)\eeq and
\beq\lab{eq:ch16} \Pi_{n-d}: (z_1, \dots, z_n) \mapsto (0, \dots, 0, z_{d+1}, \dots, z_{n}).\eeq From $(\ref{eq:ch8})$ and $(\ref{eq:ch12})$ we see that
\beq \lab{eq:ch17} \big|\Pi_{hi}(0) - \Pi_{n-d}\big| \leq C\rho^{1/3}.\eeq Also, (\ref{eq:ch1}) and (\ref{eq:ch13}) together give
\beq \lab{eq:ch18} \big|\partial_z^\a \Pi_{hi}(z)\big| \leq C\eeq for $|z| < \oc_4, |\a| \leq {k-2}$. From (\ref{eq:ch17}), (\ref{eq:ch18}) and (\ref{eq:ch3}), we have
\beq \lab{eq:ch19} |\Pi_{hi}(z) - \Pi_{n-d}| \leq C \rho^{1/3}\eeq for $|z| \leq \rho^{1/3}$.
Note that $\Pi_{hi}(z)$ is the orthogonal projection from $\R^n$ onto the span of the eigenvectors of $\partial^2 F(z)$ with $(n-d)$ highest eigenvalues; this holds for $|z| < \oc_4$.
Now set
 \beq\lab{eq:ch21} \zeta(z) = \Pi_{n-d}\Pi_{hi}\partial F(z)\eeq for $|z| < \oc_4$. Thus
 \beq\lab{eq:ch22} \zeta(z) = (\zeta_{d+1}(z), \dots, \zeta_n(z))\in \R^{n-d},\eeq
 where \beq\lab{eq:ch23}\zeta_i(z) = \sum_{j=1}^n [\Pi_{hi}(z)]_{ij} \partial_{z_j}F(z)\eeq for $i = d+1, \dots, n, |z| < \oc_4$.
 Here, $[\Pi_{hi}(z)]_{ij}$ is the $ij$ entry of the matrix $\Pi_{hi}(z)$. From (\ref{eq:ch18}) and (\ref{eq:ch1}) we see that
 \beq\lab{eq:ch24}|\partial^\a \zeta(z)| \leq C\eeq for $|z| < \oc_4, |\a| \leq {k-2}$.
Also, since $\Pi_{n-d}$ and $\Pi_{hi}(z)$ are orthogonal projections from $\R^n$ to subspaces of $\R^n$, (\ref{eq:ch5}) and (\ref{eq:ch21}) yield
\beq\lab{eq:ch25} |\zeta(0)| \leq c\rho.\eeq
From (\ref{eq:ch23}), we have
\beq\lab{eq:ch26}\frac{\partial \zeta_i}{\partial z_\ell}(z) = \sum_{j=1}^n \frac{\partial}{\partial z_\ell}[\Pi_{hi}(z)]_{ij} \frac{\partial}{\partial z_j}F(z) + \sum_{j=1}^n [\Pi_{hi}(z)]_{ij} \frac{\partial^2 F(z)}{\partial z_\ell \partial z_j}\eeq for $|z| < \oc_4$ and $i = d+1, \dots, n,$ $\ell = 1, \dots, n.$ We take $z=0$ in (\ref{eq:ch26}). From (\ref{eq:ch5}) and (\ref{eq:ch18}), we have
$$\big|\frac{\partial}{\partial z_\ell}[\Pi_{hi}(z)]_{ij}\big| \leq C$$ and $$\big|\frac{\partial}{\partial z_j} F(z)\big| \leq C\rho$$ for $z=0$. Also, from (\ref{eq:ch17}) and (\ref{eq:ch8}), we see that
$$\big|[\Pi_{hi}(z)]_{ij} - \delta_{ij}\big| \leq C \rho^{\frac{1}{3}}$$ for $z=0, i = d+1, \dots, n, j = d+1, \dots, n;$
$$\big|[\Pi_{hi}(z)]_{ij}| \leq C\rho^{1/3}$$ for $z = 0,$ and  $i = d+1, \dots, n$ and $j =  1, \dots, d$;
and
$$\big|\frac{\partial^2 F}{\partial z_j \partial z_\ell}(z) - \de_{j\ell}\lambda_\ell \big| \leq C\rho^{\frac{1}{3}},$$ for $z=0$, $j = 1, \dots, n$, $\ell = d+1, \dots, n$.

In view of the above remarks, (\ref{eq:ch26}) shows that
\beq\lab{eq:ch27} \big|\frac{\partial\zeta_i}{\partial z_\ell} (0) - \la_\ell \delta_{i\ell} \big| \leq C \rho^{1/3}\eeq for $i, \ell = d+1, \dots, n$. Let $B_d(0, r), B_{n-d}(0, r)$ and $B_n(0, r)$ denote the open balls about $0$ with radius $r$ in $\R^d, \R^{n-d}$ and $\R^n$ respectively. Thanks to (\ref{eq:ch3}), (\ref{eq:ch9}), (\ref{eq:ch24}), (\ref{eq:ch25}), (\ref{eq:ch27}) and the implicit function theorem (see Section 3 of \cite{narasimhan}), there exist controlled constants $\oc_6 < \oc_5 < \frac{1}{2}\oc_4$ and a $\C^{{k-2}}-$map
\beq\lab{eq:ch28} \Psi: B_d(0, \oc_6) \ra B_{n-d}(0, \oc_5),\eeq
with the following properties:
\beq\lab{eq:ch29}|\partial^\a \Psi| \leq C\eeq on $B_d(0, \oc_6),$ for $|\a|\leq {k-2}$.
\beq\lab{eq:ch30} |\Psi(0)| \leq C\rho.\eeq
Let $z = (x, y) \in B_d(0, \oc_6) \times B_{n-d}(0, \oc_5).$ Then \beq\lab{eq:ch31}\zeta(z) = 0 \,\text{if and only if}\, y = \Psi(x).\eeq According to $(\ref{eq:ch17})$ and $(\ref{eq:ch18})$, the following holds for a small enough controlled constant $\oc_7$. Let $z \in B_n(0, \oc_7).$ Then $\Pi_{hi}(z)$ and $\Pi_{n-d}\Pi_{hi}(z)$ have the same nullspace. Therefore by (\ref{eq:ch21}), we have the following.
Let $z \in B_n(0, \oc_7)$. Then $\zeta(z) = 0$ if and only if $\Pi_{hi}(z) \partial F(z) = 0$. Consequently, after replacing $\oc_5$ and $\oc_6$ in (\ref{eq:ch28}), (\ref{eq:ch29}), (\ref{eq:ch30}), (\ref{eq:ch31}) by smaller controlled constants $\oc_9 < \oc_8 < \frac{1}{2} \oc_7$, we obtain the following results:
\beq\lab{eq:ch32} \Psi: B_d(0, \oc_9) \ra B_{n-d}(0, \oc_8)\eeq is a $\C^{{k-2}}-$smooth map;
\beq\lab{eq:ch33} |\partial^\a \Psi| \leq C \eeq on $B_d(0, \oc_9)$ for $|\a| \leq k-2$;
\beq\lab{eq:ch34}|\Psi(0)| \leq C \rho;\eeq
Let $$z = (x, y) \in B_d(0, \oc_9) \times B_{n-d}(0, \oc_8).$$ Then, \beq\lab{eq:ch35} \Pi_{hi}(z)\partial F(z) = 0\eeq if and only if $y = \Psi(x)$.
Thus we have understood the set $\{\Pi_{hi}(z) \partial F(z) = 0\}$ in the neighborhood of $0$ in $\R^n$. Next, we study the bundle over $\{\Pi_{hi}(z) \partial F(z) = 0\}$ whose fiber at $z$ is the image of $\Pi_{hi}(z)$.
For $x\in B_d(0, \oc_9)$ and $v = (0, \dots, 0, v_{d+1}, \dots, v_n) \in \{0\}\oplus \R^{n-d}$, we define
\beq\lab{eq:ch36}E(x, v) = (x, \Psi(x)) + [\Pi_{hi}(x, \Psi(x))]v \in \R^n. \eeq
From (\ref{eq:ch18}) and (\ref{eq:ch29}), we have
\beq\lab{eq:ch37}\big|\partial_{x, v}^\a E(x, v)\big| \leq C\eeq for $x \in B_d(0, \oc_9), v \in B_{n-d}(0, \oc_8), |\a| \leq {k-2}.$ Here and below, we abuse notation by failing to distinguish between $\R^d$ and $\R^d\oplus\{0\} \in \R^n$. Let $E(x, v) = (E_1(x, v), \dots, E_n(x, v)) \in \R^n.$ For $i = 1, \dots, d,$ (\ref{eq:ch36}) gives
\beq\lab{eq:ch38}E_i(x, v) = x_i + \sum_{i=1}^n [\Pi_{hi}(x, \Psi(x))]_{ij} v_j.\eeq
For $i = d+1, \dots, n,$ (\ref{eq:ch36}) gives
\beq\lab{eq:ch39}E_i(x, v) = \Psi_i(x) + \sum_{i=1}^n [\Pi_{hi}(x, \Psi(x))]_{ij}v_j,\eeq
where we write $\Psi(x) = (\Psi_{d+1}(x), \dots, \Psi_n(x)) \in \R^{n-d}.$ We study the first partials of $E_i(x, v)$ at $(x, v) = (0, 0).$ From (\ref{eq:ch38}), we find that
\beq\lab{eq:ch40} \frac{\partial E_i}{\partial x_j}(x, v) = \de_{ij}\eeq at $(x, v) = (0, 0)$, for $i,j = 1, \dots, d.$ Also, (\ref{eq:ch34}) shows that $|(0, \Psi(0))| \leq c\rho;$ hence (\ref{eq:ch19}) gives
\beq\lab{eq:ch41} \big|\Pi_{hi}(0, \Psi(0)) - \Pi_{n-d}\big| \leq C\rho^{1/3},\eeq for $i \in \{1, \dots, d\}$ and $j \in \{1, \dots, n\}$. Therefore, another application of (\ref{eq:ch38}) yields
\beq\lab{eq:ch42} \big|\frac{\partial E_i}{\partial v_j}(x, v)\big| \leq C\rho^{1/3}\eeq for $i \in [d], j \in \{d+1, \dots, n\}$ and $(x, v) = (0, 0)$. Similarly, from (\ref{eq:ch41}) we obtain
\beqs\big|[\Pi_{hi}(0, \Psi(0))]_{ij} - \de_{ij}\big| \leq C\rho^{1/3}\eeqs for $i = d+1, \dots, n$ and $j = d+1, \dots, n$. Therefore, from (\ref{eq:ch39}), we have
\beq\lab{eq:ch43}\big|\frac{\partial E_i}{\partial v_j} (x, v) - \de_{ij}\big| \leq C\rho^{1/3}\eeq for $i, j = d+1, \dots, n,$ $(x, v) = (0, 0).$ In view of (\ref{eq:ch37}), (\ref{eq:ch40}), (\ref{eq:ch42}), (\ref{eq:ch43}), the Jacobian matrix of the map
$(x_1, \dots, x_d, v_{d+1}, \dots, v_n) \mapsto E(x, v)$ at the origin is given by

\beq \lab{eq:ch44}
               \left(
\begin{array}{c|c}
I_d &O(\rho^{1/3}) \\\\ \hline\\
O(1) &I_{n-d} + O(\rho^{1/3})
\end{array}\right), \eeq
where $I_d$ and $I_{n-d}$ denote (respectively) the $d\times d$ and $(n-d)\times (n-d)$ identity matrices, $O(\rho^{1/3})$ denotes a matrix whose entries have absolute values at most $C \rho^{1/3}$; and $O(1)$ denotes a matrix whose entries have absolute values at most $C$.\\\\\noindent
A matrix of the form (\ref{eq:ch44}) is invertible, and its inverse matrix has norm at most $C$. (Here, we use (\ref{eq:ch3}).) Note also that that $|E(0, 0)| = |(0, \Psi(0))| \leq C\rho.$ Consequently, the inverse function theorem (see Section 3 of \cite{narasimhan}) and (\ref{eq:ch37}) imply the following.\\\\\noindent

There exist controlled constants $\oc_{10}$ and $\oc_{11}$ with the following properties:
\beq\lab{eq:ch45} \text{The map}\, E(x, v) \,\text{is one-to-one when restricted to\,}
B_d(0, \oc_{10}) \times B_{n-d}(0, \oc_{10}).\eeq
\beq\lab{eq:ch46}\text{The image of}\, E(x, r): B_d(0, \oc_{10}) \times B_{n-d}(0, \frac{\oc_{10}}{2}) \ra \R^n
\text{contains a ball \,}B_n(0, \oc_{11}).\eeq
\beq\lab{eq:ch47}\text{In view of} \,(\ref{eq:ch45}), (\ref{eq:ch46}), \, \text{the map}\eeq $$E^{-1}:B_n(0, \oc_{11}) \ra B_d(0, \oc_{10})\times B_{n-d}(0, \frac{\oc_{10}}{2})$$ is well-defined.
\beq\lab{eq:ch48}\,\text{The derivatives of}\,\, E^{-1}\,\text{of order}\, \leq {k-2}\, \, \text{have absolute value at most}\, C.\eeq
Moreover, we may pick $\oc_{10}$ in (\ref{eq:ch45}) small enough that the following holds.
\begin{obs}\lab{obs:ch49.1}
\beqn\lab{eq:ch49.1}\text{Let\,}\, x\in B_d(0, \oc_{10}), \,\text{and let}\, v \in \Pi_{hi}(x, \Psi(x))\R^n.\eeqn \beqn\lab{eq:ch49}\text{\,Then, we can express\,\,}\,v \text{\,in the form\,\,}\, v = \Pi_{hi}(x, \psi(x))v^{\#} \text{\, where\,} v^\# \in \{0\}\oplus\R^{n-d} \text{\,and\,\,}\, |v^\#| \leq 2 |v|.\eeqn
\end{obs}

Indeed, if $x \in B_d(0, \oc_{10})$ for small enough $\oc_{10}$, then by (\ref{eq:ch3}), (\ref{eq:ch33}), (\ref{eq:ch34}), we have $|(x, \Psi(x))| < c$ for small $c$; consequently, (\ref{eq:ch49}) follows from (\ref{eq:ch17}), (\ref{eq:ch18}). Thus (\ref{eq:ch45}), (\ref{eq:ch46}), (\ref{eq:ch47}), (\ref{eq:ch48}) and (\ref{eq:ch49}) hold for suitable controlled constants $\oc_{10}, \oc_{11}$.
 From (\ref{eq:ch46}), (\ref{eq:ch47}), (\ref{eq:ch49}), we learn the following.
 \begin{obs}\lab{obs:2}
 Let $x, \tix \in B_d(0, \oc_{10})$, and let $v, \tiv \in B_{n-d}(0, \frac{1}{2}\oc_{10}).$ Assume that $v \in \Pi_{hi}(x, \Psi(x))\R^n$ and $\tiv \in \Pi_{hi}(\tix, \Psi(\tix))\R^n$. If $(x, \Psi(x)) + v = (\tix, \Psi(\tix)) + \tiv$, then $x = \tix$ and $v = \tiv$.
 \end{obs}

 \begin{obs}\lab{obs:3}
 Any $z \in B_n(0, \oc_{11})$ may be expressed uniquely in the form
 $(x, \Psi(x)) + v$ with $x \in B_d(0, \oc_{10}), v \in \Pi_{hi}(x, \Psi(x))\R^n \cap B_{n-d}(0, \frac{\oc_{10}}{2}).$
 Moreover, $x$ and $v$ here are $\C^{{k-2}}-$smooth functions of $z \in B_n(0, \oc_{11})$, with derivatives up to order ${k-2}$ bounded by $C$ in absolute value.
 \end{obs}
\end{proof}
\section{Constructing a disc bundle possessing the desired characteristics}\lab{sec:disc-bundle}
\subsection{Approximate squared distance functions}
% Suppose that $\HH = \R^n \supseteq \R^d$.
Suppose that $\MM \in \G(d,   V, \tau)$ is a submanifold of $\RR^n$.
Let \beq\bar \tau := \oc_{12} \tau.\eeq For $\tilde{\tau} > 0,$ let \beqs\MM_{\tilde \tau} := \{z|\inf\limits_{\bar{z} \in \MM} |z - \bar{z}| < \tilde{\tau}\}.\eeqs
Let $\tilde d$ be a suitable large constant depending only on $d$, and which is a monotonically increasing function of $d$. Let \beq\lab{eq:mono} \bar d := \min(n, \tilde d).\eeq
We use a basis for $\RR^n$ that is such that $\R^{\bar d}$ is the span of the first $\bar d$ basis vectors, and $\R^d$ is the span of the first $d$ basis vectors. We denote by $\Pi_{\bar d}$, the corresponding projection of $\R^n$ onto $\R^{\bar d}$.
\begin{definition}\lab{def:12}
Let $\asdf_\MM^{\bar \tau}$ denote the set of all  functions $\bar{F}:\MM_{\bar \tau}\ra \R$ such that  the following is true.
For every $z \in \MM$, there exists an isometry $\Theta_z$ of $\RR^n$ that fixes the origin, and maps $\R^d$ to a subspace parallel to the tangent plane at $z$ such that     $\hat{F}_z:B_{n}(0, 1) \ra \R$    given by
\beq\lab{eq:newF} \hat{F}_z(w) = \frac{\bar{F}(z+ \bar{\tau}\Theta_z( w))}{\bar{\tau}^2},\eeq satisfies the following.

\ben
\item[\underline{\texttt{ASDF-1}}] $\hat{F}_z$
satisfies the hypotheses of Lemma~\ref{lem:charlie} for a sufficiently small controlled constant $\rho$ which will be specified in Equation~\ref{eq:above1} in the proof of Lemma~\ref{lem:put}.  The value of $k$ equals $r+2$,  $r$ being the number in Definition~\ref{def:man}.
\item[\underline{\texttt{ASDF-2}}]   There is a function $F_z: \R^{\bar d} \ra \R$ such that for any  $w\in B_n(0, 1)$, \beq\lab{eq:diff1_12:51}\hat{F}_z(w) =  F_z\left(\Pi_{\bar d}(w)\right) + |w - \Pi_{\bar d}(w)|^2, \eeq where $\R^d \subseteq \R^{\bar d} \subseteq \R^n$.
\een
\end{definition}
 Let \beqs\lab{eq:diff1_6:44}  \Gamma_z = \{w\,|\,\Pi^z_{hi}(w)\partial \hat{F}_z(w) = 0\},\eeqs where $\Pi_{hi}$ is as in Lemma~\ref{lem:charlie} applied to the function $\hat{F}_z$.
\begin{lemma}\lab{lem:put} Let $\bar F$ be in $\asdf_\MM^{\bar \tau}$ and let $\Gamma_z$ and $\Theta_z$ be as in Definition~\ref{def:12}.  \ben
\item The  graph  $\Gamma_z$ is contained in $\R^{\bar d}$. \item Let $c_4$ and $c_5$ be the constants appearing in  (\ref{eq:c4c5}) in Lemma~\ref{lem:charlie}, once we fix $C_0$ in (\ref{eq:C0}) to be $10$, and the constants $c_1$ and $C_1$ (\ref{eq:ch2}) to $1/10$ and $10$ respectively. The "putative" submanifold \beqs \MM_{put} :=\left \{z\in \MM_{\min(c_4, c_5)\bar{\tau}}\big | \Pi_{hi}(z)\partial \bar{F}(z) = 0 \right\}, \eeqs  has a  reach greater than $c \tau$, where $c$ is a controlled constant depending only on $d$.\een Here $\Pi_{hi}(z)$  is the orthogonal projection onto the eigenspace corresponding to eigenvalues in the interval $[\oc_2, \oC_2]$ that is specified in Definition~\ref{def:oc}.
\end{lemma}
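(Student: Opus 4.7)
The plan is to exploit the product structure $\hat F_z(w) = F_z(\Pi_{\bar d}(w)) + |w - \Pi_{\bar d}(w)|^2$ imposed by \texttt{ASDF-2}, which forces $\hess \hat F_z$ to be block diagonal in the decomposition $\R^n = \R^{\bar d} \oplus (\R^{\bar d})^\perp$. For part~(1), writing $w = (u, v)$ with $u \in \R^{\bar d}$ and $v \in (\R^{\bar d})^\perp$, one has $\nabla \hat F_z(w) = (\nabla F_z(u),\, 2v)$ and $\hess \hat F_z(w) = \hess F_z(u) \oplus 2\, I_{n-\bar d}$. Every vector of $(\R^{\bar d})^\perp$ is thus an eigenvector of the Hessian with eigenvalue $2$. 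The choice $c_1 = 1/10,\, C_1 = 10$ in \texttt{ASDF-1} places $2$ inside the high-eigenvalue window $[\oc_2, \oC_3]$ appearing in Definition~\ref{def:oc}, so $(\R^{\bar d})^\perp$ lies in the range of $\Pi_{hi}^z(w)$. Since $\Pi_{hi}^z(w)$ respects the block splitting of $\R^n$, the image of $\nabla \hat F_z(w)$ decomposes into an $\R^{\bar d}$-piece plus the full vector $(0, 2v)$; setting this equal to zero immediately yields $v = 0$, i.e. $w \in \R^{\bar d}$.

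For part~(2), the plan is to invoke Federer's characterization (Proposition~\ref{thm:federer}): it suffices to show that $2|b-a|^{-2}\dist(b, Tan(a, \MM_{put})) \leq C/\tau$ for any $a, b \in \MM_{put}$. Given $a \in \MM_{put}$, pick $z_a \in \MM$ with $|a - z_a| \leq \min(c_4, c_5)\bar\tau$. Applying Lemma~\ref{lem:charlie} to $\hat F_{z_a}$ in the rescaled chart $w \mapsto z_a + \bar\tau \Theta_{z_a}(w)$ identifies $\MM_{put}$ locally with the graph of a $C^{k-2}$ map $\Psi_{z_a} : B_d(0, c_4) \to B_{n-d}(0, c_3)$ whose derivatives of order up to $k-2$ are bounded by a controlled constant independent of $z_a$; part~(1) guarantees the image of $\Psi_{z_a}$ actually lies inside $\R^{\bar d - d} \subseteq \R^{n-d}$, but that refinement is not needed here.

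Now split into two cases. If $|a - b| < c''\bar\tau$ for a suitably small controlled $c''$, then $b$ also lies in the chart around $z_a$, and in chart coordinates $a, b$ correspond to $(x_a, \Psi_{z_a}(x_a))$, $(x_b, \Psi_{z_a}(x_b))$. The second-order Taylor estimate
\[
\bigl|\Psi_{z_a}(x_b) - \Psi_{z_a}(x_a) - D\Psi_{z_a}(x_a)(x_b - x_a)\bigr| \leq C|x_b - x_a|^2
\]
bounds the distance in the chart from $\tilde b$ to the tangent at $\tilde a$; undoing the $\bar\tau$-rescaling (which scales all lengths by $\bar\tau$) yields $\dist(b, Tan(a, \MM_{put})) \leq C|b-a|^2/\bar\tau = C|b-a|^2/(c\tau)$. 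If instead $|a-b| \geq c''\bar\tau$, the trivial bound $\dist(b, Tan(a, \MM_{put})) \leq |b-a|$ gives $2|b-a|^{-2}\dist(b, Tan(a, \MM_{put})) \leq 2/(c''\bar\tau) \leq C/\tau$. Combining both cases with Proposition~\ref{thm:federer} yields $\reach(\MM_{put}) \geq c\tau$, as desired.

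The main obstacle I anticipate is not any single estimate but careful bookkeeping of the many controlled constants: verifying that the window $[\oc_2, \oC_3]$ from Definition~\ref{def:oc} actually contains the value $2$ coming from the Hessian of $|w - \Pi_{\bar d}(w)|^2$ when \texttt{ASDF-1} is applied with $c_1 = 1/10,\, C_1 = 10$, and ensuring that the $C^{1,1}$-bound on $\Psi_{z_a}$ is uniform in $z_a \in \MM$. Both follow from the uniformity built into the hypotheses of Definition~\ref{def:12}, but they must be tracked carefully through the constants named in Lemma~\ref{lem:charlie}.
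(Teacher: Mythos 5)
Your part~(1) is correct and is essentially the paper's own argument: the splitting $\hat F_z(u,v)=F_z(u)+|v|^2$ makes the Hessian block diagonal, the $(\R^{\bar d})^\perp$ block has eigenvalue $2$ lying in the window $[\oc_2,\oC_3]$, so the range of $\Pi_{hi}$ contains $(\R^{\bar d})^\perp$ and $\Pi_{hi}\partial\hat F_z=0$ forces $v=0$.

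In part~(2), however, there is a genuine gap at the step ``pick $z_a\in\MM$ with $|a-z_a|\leq\min(c_4,c_5)\bar\tau$; applying Lemma~\ref{lem:charlie} in the rescaled chart at $z_a$ identifies $\MM_{put}$ locally with the graph of $\Psi_{z_a}$.'' The graph description of Lemma~\ref{lem:charlie} characterizes the zero set of $\Pi_{hi}\partial\hat F_{z_a}$ only inside the product region $B_d(0,c_4)\times B_{n-d}(0,c_3)$ of the chart. If you take an arbitrary $z_a$ at the full tube radius, the chart coordinates of $a$ can sit at (or beyond) the boundary of that region, and the nearby point $b$ with $|b-a|<c''\bar\tau$ may lie outside it altogether; then nothing in Lemma~\ref{lem:charlie} tells you that $a$ and $b$ are graph points of $\Psi_{z_a}$, and your Taylor estimate for $\dist(b,Tan(a,\MM_{put}))$ has no foundation. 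This is exactly the point to which the paper devotes the substantive part of its proof: it shows, by contradiction, using $|\Psi(0)|\leq C\rho$ and the first-derivative bound in (\ref{eq:chnew}) together with the mean value theorem (see (\ref{eq:above1})--(\ref{eq:hatz})), that every $\hat z\in\MM_{put}$ satisfies $\dist(\hat z,\MM)\leq\tfrac{1}{2}\min(c_4,c_5)\bar\tau$, i.e.\ $\MM_{put}$ stays well inside the tube with a margin of a controlled constant times $\bar\tau$; only then does it conclude reach $\geq c\tau$ from (\ref{eq:chnew}) and Proposition~\ref{thm:federer}, which is the terse ending your near/far Federer computation fleshes out correctly. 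To repair your argument you must supply that containment (or, at minimum, take $z_a$ to be the nearest point of $\MM$ to $a$ and verify explicitly that the chart coordinates of both $a$ and $b$ lie in $B_d(0,c_4)\times B_{n-d}(0,c_3)$, which uses the nesting $c_4<c_3$ of the constants from Lemma~\ref{lem:charlie} and a choice of $c''$ below their gap); as written, this is not routine bookkeeping but the missing core of the proof, since a priori $\MM_{put}$ could graze the boundary of $\MM_{\min(c_4,c_5)\bar\tau}$ where no single chart controls a neighborhood of the pair $a,b$.
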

\begin{proof}
To see the first part of the lemma, note that  because of (\ref{eq:diff1_12:51}), for any $w \in B_{n}(0, 1)$, the span of the  eigenvectors corresponding to the  eigenvalues of the Hessian of  $F = \hat{F}_z$ that lie in $(\oc_2, \oC_3)$ contains the orthogonal complement of $\R^{\bar d}$ in $\R^n$ (henceforth referred to as  $\R^{n - {\bar d}}$). Further, if $w \not\in \R^{\bar d}$, there is a vector in $\R^{n - \bar{d}}$ that is not orthogonal to the gradient $ \partial \hat{F}_z(w)$. Therefore \beqs \Gamma_z \subseteq \R^{\bar d}.\eeqs

We proceed to the second part of the Lemma. We choose $\oc_{12}$ to be a small enough  monotonically decreasing function of $\bar d$ (by (\ref{eq:mono}) and the assumed monotonicity of $\tilde d$, $\oc_{12}$ is consequently a monotonically decreasing function of $d$) such that for every point $z \in \MM$, $F_z$ given by (\ref{eq:diff1_12:51}) satisfies the hypotheses of Lemma~\ref{lem:charlie} with $\rho < \frac{\tilde{c}{\bar \tau}}{ C^2}$ where $C$ is the constant in Equation~\ref{eq:chnew} and where $\tilde{c}$ is a sufficiently small controlled constant.
Suppose that there is a point $\hat z$ in $\MM_{put}$ such that $\dist(\hat z, \MM)$ is greater than $\frac{min(c_4, c_5)\bar \tau}{2}$, where $c_4$ and $c_5$ are the constants in (\ref{eq:c4c5}). Let $z$ be the unique  point on $\MM$ nearest to $\hat z$. We apply Lemma~\ref{lem:charlie} to $F_z$.
By Equation~\ref{eq:chnew} in Lemma~\ref{lem:charlie}, there is a point $\tilde z \in \MM_{put}$ such that \beq\lab{eq:above1}|z - \tilde z| < C \rho <  \frac{c_{lem} {\bar{\tau}}}{ C}.\eeq The constant $c_{lem}$ is controlled by $\tilde{c}$ and can be made as small as needed provided it is ultimately controlled by $d$ alone. We have an upper bound of  $C$ on the first-order derivatives of $\Psi$ in Equation~\ref{eq:chnew}, which is a function whose graph corresponds via $\Theta_z$ to  $\MM$ in a ${\frac{\bar \tau}{2}}-$neighborhood of $z$. Any unit vector $v \in Tan^0(z)$, is nearly orthogonal to $\tilde z - \hat z$ in that \beq\lab{eq:mnvlu_cnt}\big |\langle \tilde z - \hat z, v \rangle\big | < \frac{ 2 c_{lem}\big |\tilde z - \hat z\big| }{\min(c_4, c_5) C}.\eeq We can  choose $c_{lem}$ small enough that (\ref{eq:mnvlu_cnt})  contradicts the mean value theorem applied to $\Psi$ because of the
upper bound of  $C$ on $|\partial \Psi|$ from Equation~\ref{eq:chnew}.

This shows that for every $\hat z \in \MM_{put}$ its distance to $\MM$ satisfies \beq\lab{eq:hatz} \dist(\hat z, \MM) \leq \frac{\min(c_4, c_5)\bar \tau}{2}.\eeq
 Recall that \beqs \MM_{put} :=\left \{z\in \MM_{{\min(c_4, c_5)\bar{\tau}}}\big | \Pi_{hi}(z)\partial \bar{F}(z) = 0 \right\}. \eeqs Therefore, for every point $\hat{z}$ in $\MM_{put}$, there is a point $z\in \MM$ such that
 \beq B_n\left(\hat{z}, \frac{\min(c_4, c_5)\bar \tau}{2}\right) \subseteq \Theta_z\left(B_d(0, c_4{\bar \tau})\times B_{n-d}(0, c_5{\bar \tau}) \right). \eeq
 We have now shown that $\MM_{put}$ lies not only in $\MM_{{\min(c_4, c_5)\bar{\tau}}}$ but also in $\MM_{\frac{\min(c_4, c_5)\bar{\tau}}{2}}$.
 This fact, in conjunction with (\ref{eq:chnew}) and Proposition~\ref{thm:federer} implies that $\MM_{put}$ is a manifold with reach greater than $c\tau$.

\end{proof}

Let \beq \lab{eq:defbund}\bar{D}^\norm_{\bar{F}} \ra \MM_{put}\eeq be the bundle over  $\MM_{put}$ wherein the fiber at a point $ \hat{z} \in \MM_{put}$, consists of all points $z$ such that \ben\item$|\hat{z}-z| \leq \oc_{12}\tau$, and \item $z-w$ lies in the span of the top $n-d$ eigenvectors of the Hessian of $\bar{F}$ evaluated at $\hat{z}$. \een
\begin{obs}
By Lemma~\ref{lem:charlie},  $\MM$ is a $\C^{r}-$smooth section of ${\bar D}^\norm_{\bar F}$ and  the controlled constants $c_1, \dots, c_7$ and $C$ and depend only on $c_1, C_1, C_0, k$ and $n$ (these constants are identical to those in Lemma~\ref{lem:charlie}). By (\ref{eq:diff1_6:44}), we conclude that the dependence  $n$ can be replaced by a dependence on $\bar d$.
\end{obs}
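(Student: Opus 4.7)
The observation has two assertions: that $\MM$ is a $\C^r$ section of $\bar D^\norm_{\bar F}$, and that the controlled constants $c_1, \dots, c_7, C$ of Lemma~\ref{lem:charlie} depend only on $\bar d$ (hence on $d$) rather than on the ambient dimension $n$. My plan is to reduce both claims to Lemma~\ref{lem:charlie} applied to the rescaled function $\hat F_z$ at each $z \in \MM$; by ASDF-1 we fixed $k = r+2$, which is precisely what is needed to upgrade the $\C^{k-2}$ output of the lemma to $\C^r$.

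Fix $z \in \MM$, pass through the isometry $\Theta_z$ of Definition~\ref{def:12} so that $T_z\MM = \R^d\oplus\{0\}$, and apply Lemma~\ref{lem:charlie} to $F = \hat F_z$. Item~\ref{item-3:key} of that lemma expresses every $w \in B_n(0, \oc_{11})$ uniquely as $w = (x, \Psi(x)) + v$ with $x \in B_d(0, \oc_{10})$, $v \in \iN(x, \Psi(x))$, and the coordinates $x(w), v(w)$ are $\C^{k-2} = \C^r$. Rescaling by $\bar\tau$ and pulling back through $\Theta_z^{-1}$, this says every point in a $\oc_{11}\bar\tau$-neighborhood of $z$ has a unique $\C^r$ representation as $\hat z + v$ with $\hat z \in \MM_{put}$ and $v$ in the fiber of $\bar D^\norm_{\bar F}$ over $\hat z$. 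Applied to $\MM$, this yields a $\C^r$ projection $\pi_\MM : \MM \to \MM_{put}$. To invert $\pi_\MM$ and produce a section: by Claim~\ref{cl:g1sept}, $\MM$ is locally the $\C^r$ graph of a map $G: B_d \to \R^{n-d}$ with $G(0) = 0$, $\nabla G(0) = 0$ and bounded $\C^{1,1}$ norm, and the bounds $|\partial \Psi| \leq C$ from (\ref{eq:ch33}), $|\partial G| \leq C$, together with $|\Pi_{hi}(z) - \Pi_{n-d}| \leq C\rho^{1/3}$ from (\ref{eq:ch19}), ensure that $\MM$ is transverse to every fiber of $\bar D^\norm_{\bar F}$ it meets. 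The implicit function theorem then produces a local $\C^r$ inverse $s_z$ of $\pi_\MM$, and uniqueness in item~\ref{item-3:key} forces local inverses at different base points to agree on overlaps, patching into a global $\C^r$ section $s : \MM_{put} \to \MM \subset \bar D^\norm_{\bar F}$.

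For the statement about constants, I use (\ref{eq:diff1_12:51}), which is the ASDF-2 condition: $\hat F_z(w) = F_z(\Pi_{\bar d}w) + |w - \Pi_{\bar d}w|^2$. In any orthonormal basis respecting $\R^n = \R^{\bar d}\oplus \R^{n-\bar d}$, the Hessian of $\hat F_z$ is block diagonal and equals the identity on the second factor. The top $(n-d)$-dimensional eigenspace $\iN$ therefore contains $\R^{n-\bar d}$, and the entire argument of Lemma~\ref{lem:charlie}---the Hessian expansion (\ref{eq:ch6})--(\ref{eq:ch9}), the implicit function construction of $\Psi$, and the inverse function construction of $E$---reduces verbatim to the corresponding arguments for the function $F_z$ on $\R^{\bar d}$. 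The constants $c_1, \dots, c_7, C$ therefore depend only on $C_0, c_1, C_1, k, \bar d$, i.e., ultimately on $d$.

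The only nontrivial step is the surjectivity of $s_z$, which I handle via the implicit function theorem as sketched above; everything else is direct application of Lemma~\ref{lem:charlie} together with bookkeeping of the intrinsic dimension in which the eigenvalue estimates actually take place.
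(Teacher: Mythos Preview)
Your proposal is correct and fills in the details the paper leaves implicit. The paper states this observation without proof, intending it as a direct consequence of Lemma~\ref{lem:charlie} (applied at each $z\in\MM$ with $k=r+2$, so that the $\C^{k-2}$ outputs are $\C^r$) together with the ASDF-2 splitting (\ref{eq:diff1_12:51}); your argument tracks exactly this, making explicit the transversality of $\MM$ to the fibers of $\bar D^\norm_{\bar F}$ via the near-orthogonality $|\Pi_{hi}-\Pi_{n-d}|\leq C\rho^{1/3}$ and the uniqueness in item~\ref{item-3:key} to patch local sections into a global one. One minor remark: Claim~\ref{cl:g1sept} only yields a $\C^{1,1}$ local graph for $\MM$; the $\C^r$ regularity of the graph comes from Definition~\ref{def:man}, with Claim~\ref{cl:g1sept} supplying the controlled Lipschitz bound on the gradient you use in the transversality step.
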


%Let $F(z)$ be a $C^{k}-$function on the unit ball $\{| z | < 1\}$ in $\RR^n$.
%%Let \beq\lab{eq:0.1}  \big|\partial^\a F(z)\big| := %\sup_{|v_1| =1, \dots, |v_k| = 1} \big|\partial_{v_1} %\dots \partial_{v_k} F\big|. \eeq Thus $|\cdot|_m$ is the %norm as a multilinear form.
%Suppose that
%\beq\lab{eq:ch1} \big|\partial^\a F(z)\big| \leq C_0 \eeq
%for $|\a| \leq k$ and for all $|z| < 1.$
%Write $z = (x, y) \in \R^d \oplus \R^{n-d}$, and suppose that
%\beq  \lab{eq:ch2}
%c_1[|y|^2 + \rho^2] \leq F(x, y) \leq C_1 [|y|^2 + \rho^2]
%\eeq
%In the remainder of this section, a controlled constant is one that depends only on $C_0, c_1, C_1$, $k$, $d$ and $n$. We write $c, C, C'$ etc to denote controlled constants. These symbols may denote different constants in different occurrences. We suppose that
%\beq \lab{eq:ch3} \rho \,\, \text{is less than a small enough controlled constant.} \eeq
%
% We work in $\R^n = \R^d\oplus\R^{n-d},$ and write $z = (x, y)$ for a point of $\R^n.$

\section{Constructing  cylinder packets}\lab{sec:cyl_pac}

We wish to construct a family of functions $\bar \F$ defined on open subsets of $B_n(0, 1)$ such that for every $\MM \in \G(d, V, \tau)$ such that $\MM \subseteq B_n(0, 1)$, there is some $\hat F \in \bar \F$ such that the domain of $\hat F$ contains $\MM_{\bar \tau}$ and  the restriction of $\hat F$ to $\MM_{\bar \tau}$ is contained in $\asdf_\MM^{\bar \tau}$.

Let $\R^d$ and $\R^{n-d}$ respectively denote the spans of the first $d$  vectors and the last $n-d$  vectors of the canonical basis of $\R^n$. Let $B_d$ and $B_{n-d}$ respectively denote the unit Euclidean balls in $\R^d$ and $\R^{n-d}$.
Let $\Pi_d$ be the map given by the orthogonal projection from $\R^n$ onto $\R^d$. Let $\cyl := {\bar \tau}(B_d \times B_{n-d})$, and $\cyl^{{2}} = {2\bar \tau}(B_d \times B_{n-d})$. Suppose that   for any $x \in 2{\bar \tau}B_d$ and $y \in 2{\bar \tau}B_{n-d}$, $\phi_{\cyl^2}: \R^d\oplus\R^{n-d} \ra \R$ is given by  \beqs \phi_{\cyl^2}(x, y) = |y|^2,\eeqs and for any $z\not\in {\cyl^2}$,
\beqs \phi_{\cyl^2}(z) = 0.\eeqs

Suppose for each $i \in [\bar N] := \{1, \dots,\bar N\} $, $x_i \in B_n(0, 1)$ and $o_i$ is a proper rigid body motion, \ie the composition of a proper rotation and translation of $\R^n$ and that $o_i(0) = x_i$.

For each $i \in [\bar N]$, let $\cyl_i := o_i(\cyl),$ and $\cyl^2_i := o_i(\cyl^2).$ Note that $x_i$ is the center of $\cyl_i$.

We say that a set of cylinders $C_p := \{\cyl_1^2, \dots, \cyl_{\bar N}^2\}$ (where each $\cyl_i^2$ is isometric to $\cyl^2$) is a cylinder packet if the following conditions hold true for each $i$.

Let $S_i:= \{\cyl_{i_1}^2, \dots, \cyl_{i_{|S_i|}}^2\}$ be the set of cylinders that intersect $\cyl_i^2$. Translate the origin to the center of $\cyl_i^2$ (\ie $x_i$) and perform a proper Euclidean transformation that puts the $d-$dimensional central cross-section of $\cyl_i^2$ in $\R^d$.

 There exist proper rotations $U_{i_1}, \dots, U_{i_{|S_i|}}$ respectively of the cylinders $ \cyl_{i_1}^2, \dots, \cyl_{i_{|S_i|}}^2$ in $S_i$ such that $U_{i_j}$ fixes the center $x_{i_j}$ of $\cyl_{i_j}^2$  and translations $Tr_{i_1}, \dots, Tr_{i_{|S_i|}}$ such that
 \ben
 \item For each $j \in [|S_i|]$, $ Tr_{i_j} U_{i_j} \cyl^2_{i_j}$ is a translation of $\cyl_i^2$ by a vector contained in $\R^d$.
\item $\big |\left( Id - U_{i_j}\right) v \big | < c_{12}{\bar \tau}|v - x_{i_j}|$,  for each $j$ in $\{1, \dots, |S_j|\}$
\item $|Tr_{i_j}(0)| < C \frac{{\bar \tau}^2}{\tau}$ for each $j$ in $\{1, \dots, |S_j|\}$.
\item $\bigcup_j ( Tr_{i_j} U_{i_j}  \cyl_j) \supseteq B_d(0, 3\bar \tau)$.
\een

We call $\{o_1, \dots, o_{\bar N}\}$ a packet if $\{o_1(\cyl), \dots, o_N(\cyl)\}$ is a cylinder packet.
\section{Constructing an exhaustive family of disc bundles}

We now show how to construct a set $\bar{D}$ of disc bundles rich enough that any manifold $ \MM \in \G(d, \tau, V)$ corresponds to a section of at least one disc bundle in $\bar{D}$. The constituent disc bundles in $\bar D$ will be obtained from cylinder packets.

Define \beq\lab{eq:bmp} \bmp:\R^d \ra [0, 1]\eeq to be a bump function that has the following properties
%by \beq\lab{eq:bmp} \bmp(x) := \left\{
%                  \begin{array}{ll}
% 1, & \hbox{if $|x| \in [0, 1/4] $;}\\
%                    \left(1 + \exp\left(-\frac{1}{(2|x|-\frac{1}{2})^{2}}\right)\right)\exp(\frac{4}{3})\exp\left(-\frac{1}{1-4|x|^2}\right), & %\hbox{if $|x| \in\, (1/4, 1/2)$;} \\
%                    0, & \hbox{otherwise.}
%                  \end{array}
%                \right.
% \eeq
for any fixed $k$ for a controlled constant $C$.
\ben
\item  For all $\a$ such that $0 < |\a| \leq k$,
 for all $x \in  \{0\}\cup\{x|\,|x| \geq 1\}$  \beqs \partial^\a \bmp(x)= 0,\eeqs and
 for all $x \in  \{x|\,|x| \geq 1\}$  \beqs \bmp(x)= 0.\eeqs
\item for all $x,$ \beqs \big| \partial^\a \bmp (x)\big | < C,\eeqs and
for $|x| < \frac{1}{4}$, \beqs\bmp(x) = 1.\eeqs
\een

\begin{definition}
Given a  Packet $\bar o:= \{o_1, \dots, o_{\bar N}\}$, define $ F^{\bar o}:\bigcup_i \cyl_i \ra \R$ by
\beq \lab{eq:Fhatbaro}{F}^{\bar o}(z) = \frac{\sum\limits_{\cyl^2_i \ni z} \phi_{\cyl^2}(\frac{o_i^{-1}(z)}{2\bar \tau})\bmp\left(\frac{\Pi_d(o_i^{-1}(z))}{2\bar \tau}\right)}{\sum\limits_{\cyl^2_i \ni z} \bmp\left(\frac{\Pi_d(o_i^{-1}(z))}{2\bar \tau}\right)}.\eeq
\end{definition}

\begin{definition}
Let $A_1$ and $A_2$ be two $d-$dimensional affine subspaces of $\R^n$ for some $n \geq 1$, that respectively contain points $x_1$ and $x_2$. We define $\sphericalangle(A_1, A_2)$, the "angle between $A_1$ and $A_2$", by
\beqs \sphericalangle(A_1, A_2) := \sup_{x_1 + v_1 \in A_1\setminus x_1}\left( \inf_{x_2 + v_2 \in A_2\setminus x_2 } \arccos\left(\frac{\langle v_1, v_2 \rangle}{\|v_1\|\|v_2\|}\right)\right).\eeqs
\end{definition}

Let $\MM$ belong to  $\G(d, V, \tau)$. Let $Y := \{y_1, \dots, y_{\bar N}\}$ be a maximal subset of $\MM$ with the property that no two distinct points are at a distance of less than $\frac{\bar \tau}{2}$ from each other. We construct an {\it ideal} cylinder packet $\{\cyl_1^2, \dots, \cyl_{\bar N}^2\}$ by fixing the center of $\cyl_i^2$ to be $y_i$, and fixing their orientations by the condition that for each cylinder $\cyl^2_i$, the $d-$dimensional central cross-section is a tangent disc to the manifold at $y_i$. Given an ideal cylinder packet, an {\it admissible} cylinder packet corresponding to $\MM$ is obtained by perturbing the the center of each cylinder by less than $c_{12}{\bar \tau}$ and applying arbitrary unitary transformations to these cylinders whose difference with the identity has a  norm less than $C\frac{{\bar \tau}^2}{\tau}.$

\begin{lemma}\lab{lem:17}
Let $\MM$ belong to  $\G(d, V, \tau)$ and let $\{\cyl_1, \dots, \cyl_{\bar N}\}$ be an admissible packet corresponding to $\MM$.

Then,
\beqs {F}^{\bar o} \in \asdf_\MM^{\bar \tau}.\eeqs
\end{lemma}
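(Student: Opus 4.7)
\medskip
\noindent\textbf{Proof proposal for Lemma~\ref{lem:17}.}
The plan is to fix $z\in\MM$, to choose the isometry $\Theta_z$ so that $\Theta_z(\R^d)=Tan(z,\MM)$ and $\Theta_z(\R^{\bar d})$ is an appropriate ``enveloping'' subspace (see below), and then to verify the two conditions \texttt{ASDF-1} and \texttt{ASDF-2} for
$\hat F_z(w)=\bar\tau^{-2}F^{\bar o}(z+\bar\tau\Theta_z(w))$ on $B_n(0,1)$.

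First I would identify the set $I(z)$ of cylinders $\cyl_i^2$ whose support meets $z+\bar\tau B_n$. Because the ideal packet is built from a $\bar\tau/2$-net of $\MM$, because the reach of $\MM$ is $\tau$, and because the admissible perturbations of centers and orientations are bounded by $c_{12}\bar\tau$ and $C\bar\tau^2/\tau$ respectively, the following hold for some controlled $N_0=N_0(d)$ and for $\oc_{12}$ small enough: (a) $|I(z)|\le N_0$; (b) every central $d$-plane $L_i$ with $i\in I(z)$ makes angle $O(\bar\tau/\tau)=O(\oc_{12})$ with $Tan(z,\MM)$, combining Federer's bound~\ref{thm:federer} on how fast tangent planes turn with the allowed rotational slack; (c) each center $y_i$ of $\cyl_i^2$ with $i\in I(z)$ lies within $O(\bar\tau)$ of $z$ and within $O(\bar\tau^2/\tau)$ of $\MM$.

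Next I would choose $\bar d=\tilde d(d)$ large enough (monotone in $d$, as required in~(\ref{eq:mono})) that for every $z\in\MM$, the affine hull
$\bigl\langle\,L_i\cup\{z\}\,:\,i\in I(z)\bigr\rangle$
has dimension at most $\bar d$; this is possible because $|I(z)|\le N_0$ and each $L_i$ is $d$-dimensional, so a union bound on dimensions gives $\le d(N_0+1)$. Pick $\Theta_z$ so that this affine hull equals $z+\Theta_z(\R^{\bar d})$. For $w\in B_n(0,1)$, write $w=u+v$ with $u=\Pi_{\bar d}(w)$ and $v=w-u$. For each $i\in I(z)$, $\Theta_z(v)$ is orthogonal to $L_i-y_i$ by construction, so the $(n-d)$-component of $o_i^{-1}(z+\bar\tau\Theta_z(w))$ decomposes orthogonally into a piece depending only on $u$ plus $\bar\tau v$, yielding
\[
\phi_{\cyl^2}\!\left(\tfrac{o_i^{-1}(z+\bar\tau\Theta_z(w))}{2\bar\tau}\right)
=\phi_{\cyl^2}\!\left(\tfrac{o_i^{-1}(z+\bar\tau\Theta_z(u))}{2\bar\tau}\right)+\tfrac{1}{4}|v|^2.
\]
Similarly the bump argument $\Pi_d(o_i^{-1}(z+\bar\tau\Theta_z(w)))/(2\bar\tau)$ depends only on $u$ (being in $\R^d\subset\R^{\bar d}$). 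Hence the weighted average in~(\ref{eq:Fhatbaro}) yields
$\hat F_z(w)=F_z(u)+\tfrac14|v|^2$ (up to the overall $\bar\tau^{-2}$ normalization which is absorbed into $F_z$), which is precisely the splitting \texttt{ASDF-2} demanded by~(\ref{eq:diff1_12:51}).

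It remains to prove \texttt{ASDF-1}. Smoothness and the $C^k$-bound~(\ref{eq:C0}) on $\hat F_z$ follow by the chain rule from the uniform $C^k$-bound on $\bmp$, the quadratic nature of $\phi_{\cyl^2}$, and a uniform lower bound on the partition-of-unity denominator in~(\ref{eq:Fhatbaro}); this last bound is a direct consequence of the fourth cylinder-packet condition ($\bigcup_j Tr_{i_j}U_{i_j}\cyl_j\supseteq B_d(0,3\bar\tau)$) together with the fact that $\bmp\equiv 1$ on $B_d(0,1/4)$. For the quadratic squeeze~(\ref{eq:ch2}), I would estimate $|\hat F_z(w)-|\Pi_{n-d}(w)|^2|$ on $B_n(0,1)$: each term $\phi_{\cyl^2}(o_i^{-1}(\cdot)/(2\bar\tau))$ equals $|\Pi_{n-d}(w)|^2$ plus an error coming from (i) the angular tilt of $L_i$ relative to $Tan(z,\MM)$, which by step one is $O(\oc_{12})$, and (ii) the displacement of $y_i$ from $\MM$, which is $O(\bar\tau^2/\tau)=O(\oc_{12}\bar\tau)$. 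Both errors contribute at most $\rho=O(\oc_{12})$ in the normalized coordinates, and taking $\oc_{12}$ small enough makes $\rho$ as small as required in~(\ref{eq:ch3}) to invoke Lemma~\ref{lem:charlie}. The squeeze with explicit constants $c_1=1/10$, $C_1=10$ then drops out from $\hat F_z(w)=|\Pi_{n-d}(w)|^2+O(\rho^2)$.

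The hardest part will be the combinatorial/geometric step of fitting the relevant central planes into a single controlled-dimensional subspace $\Theta_z(\R^{\bar d})$; once this is done, the splitting~(\ref{eq:diff1_12:51}) is exact rather than approximate, and the remaining smoothness and squeeze estimates are a careful but essentially routine application of the chain rule, the perturbation bounds from the admissible packet, and Federer's tangent-plane inequality.
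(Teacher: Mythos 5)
Your proposal is correct and follows essentially the same route as the paper: condition \texttt{ASDF-1} is verified by the chain rule together with Federer-based estimates showing the cylinder centers lie within $O(\bar\tau^2/\tau)$ of $\MM$ and their central planes are tilted by $O(\oc_{12})$ relative to nearby tangent planes, and condition \texttt{ASDF-2} is obtained by noting that only a controlled number of cylinders meet a $\bar\tau$-neighborhood of $z$ (by the reach bound and the $\bar\tau/2$-separation of the net) and choosing $\Theta_z$ so that $\Theta_z(\R^{\bar d})$ contains the span of all the relevant $d$-dimensional cross-sections. The only difference is one of detail: you spell out the dimension count ($\bar d \le d(N_0+1)$) and the exact orthogonal splitting of the quadratic terms, which the paper states in a single sentence.
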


\begin{proof}

Recall that
$\asdf_\MM^{\bar \tau}$ denotes the set of all $\bar{F}:\MM_{\bar \tau}\ra \R$ (where $\bar{\tau} = \oc_{12} \tau$ and $\MM_{\bar \tau}$ is a $\bar \tau-$neighborhood of $\MM$) for which the following is true:
\bit
 \item For every $z \in \MM$, there exists an isometry $\Theta$ of $\HH$ that fixes the origin, and maps $\R^d$ to a subspace parallel to the tangent plane at $z$ satisfying the conditions below.\\
Let $\hat{F}_z:B_{n}(0, 1) \ra\R$   be given by
\beqs \hat{F}_z(w) = \frac{\bar{F}(z+ \bar{\tau}\Theta( w))}{\bar{\tau}^2}. \eeqs Then, $\hat{F}_z$

\ben \item
satisfies the hypotheses of Lemma~\ref{lem:charlie} with $k = r + 2$.
 \item  For any  $w\in B_n$, \beq\lab{eq:diff1_oct_12:50}\hat{F}_z(w) =  F_z\left(\Pi_{\bar d}(w)\right) + |w - \Pi_{\bar d}(w)|^2, \eeq where $\R^n \supseteq \R^{\bar d} \supseteq \R^d$, and $\Pi_{\bar d}$ is the projection of $\R^n$ onto $\R^{\bar d}$.
\een
\eit
For any fixed $z \in \MM$, it suffices to check that there exists a proper isometry $\Theta$ of $\HH$ such that :  \ben \item[(A)] The hypotheses of Lemma~\ref{lem:charlie} are satisfied by \beq\lab{eq:fzobar} \hat{F}_z^{\bar o}(w) :=  \frac{F^{\bar{o}}(z+ \bar{\tau}\Theta(w))}{\bar{\tau}^2},\eeq
and \item[(B)] \beqs \hat{F}_z^{\bar o}(w) =  \hat{F}_z^{\bar o}\left(\Pi_{\bar d}(w)\right) + |w - \Pi_{\bar d}(w)|^2, \eeqs where $\R^n \supseteq \R^{\bar d} \supseteq \R^d$, and $\Pi_{\bar d}$ is the projection of $\R^n$ onto $\R^{\bar d}$.
\een

We begin by checking the condition (A).
It is clear that $\hat{F}_z^{\bar o}:B_n(0, 1) \ra \R$ is $\C^k-$smooth.

Thus, to check  condition (A), it suffices to establish the following claim.
 \begin{claim}\lab{cl:4.1,4.2}
There is a constant $C_0$ depending only on $d$ and $k$ such that
\ben
\item[\texttt{C4.1}] $\partial^\a_{x, y}\hat{F}_z^{\bar o}(x, y) \leq C_0$ for $(x, y) \in B_n(0, 1)$ and $1 \leq |\a| \leq k$.
\item[\texttt{C4.2}] For $(x, y) \in B_n(0, 1)$,
$$c_1[|y|^2 + \rho^2] \leq [\hat{F}_z^{\bar o}(x, y) + \rho^2] \leq C_1[|y|^2 + \rho^2],$$
where, by making $c_{12}$  and $\oc_{12}$ sufficiently small we can ensure that $\rho> 0$ is less than any  constant determined by $C_0, c_1, C_1, k, {d}$.
\een\end{claim}
\begin{proof}
That the first part of the  claim, \ie (\texttt{C4.1}) is true follows from the chain rule and the definition of $\hat{F}_z^{\bar o}(x, y)$ after rescaling by $\bar \tau$. We proceed to show (\texttt{C4.2}). For any $i \in [\bar N]$ and any vector $v$ in $\R^d$,
For $\rho$ taken to be the value from Lemma~\ref{lem:charlie}, we see that
 for a sufficiently small value  of $\oc_{12} = \frac{\bar \tau}{\tau}$ (controlled by $d$ alone), and a sufficiently small controlled constant as the value of $c_{12}$, (\ref{eq:159}) and (\ref{eq:160})  follow because  $\MM$ is a manifold of reach greater or equal to $\tau$, and consequently Proposition~\ref{thm:federer} holds true.
 \beq \lab{eq:159}|x_i - \Pi_\MM x_i| < \frac{\rho}{100}.\eeq
\beq \lab{eq:160} \sphericalangle\left(o_i(\R^d), Tan(\Pi_\MM(x_i), \MM)\right) \leq  \frac{\rho}{100}.\eeq
Making use of Proposition~\ref{thm:federer} and Claim~\ref{cl:g1sept}, we see that
 for any $x_i, x_j$ such that $|x_i - x_j| < 3 \bar \tau$,
\beq \lab{eq:161}\sphericalangle\left(Tan(\Pi_\MM(x_i), \MM), Tan(\Pi_\MM(x_j), \MM)\right) \leq  \frac{3 \rho}{100}.\eeq
 The inequalities (\ref{eq:159}), (\ref{eq:160}) and (\ref{eq:161}) imply ({\texttt{C4.2}}), completing the proof of the claim.
\end{proof}
We proceed to check condition (B). This holds because for every point $z$ in $\MM$, the number of $i$ such that the cylinder $\cyl_i$ has a non-empty intersection with a ball of radius $2\sqrt{2}(\bar \tau)$ centered at $z$ is bounded above by a controlled constant (\ie a quantity that depends only on $d$). This, in turn, is because $\MM$ has a reach of $\tau$ and no two distinct $y_i, y_j$ are at a distance less than $\frac{\bar \tau}{2}$ from each other. Therefore, we can choose $\Theta$ so that $\Theta(\Pi_{\bar d}(w))$ contains the linear span of the $d-$dimensional cross-sections of all the cylinders containing $z$. This, together with the fact that $\HH$ is a  Hilbert space, is sufficient to yield condition $(B)$.
The Lemma now follows.
\end{proof}

%\begin{lemma}\lab{lem:16.5}
%Let $\MM$ belong to  $\G(d,  \tau, V)$. There exists a Packet $\{o_i|i \in [\bar N]\}$ such that \beq\lab{cond:lem16.5} \forall i,\,\,\, \dist(o_i(0), \MM) < c_{12}\tau.\eeq
%\end{lemma}
%\begin{proof}

%\end{proof}
\begin{definition}
Let  $\bar \F$ be set of all functions $F^{\bar o}$ obtained as $\{\cyl^2_i\}_{i \in [\bar{N}]}$  ranges over all  cylinder packets centered on points of a lattice whose spacing is a controlled constant multiplied by $\tau$ and the orientations are chosen arbitrarily from a net of the Grassmannian manifold $Gr_d^n$ (with the usual Riemannian metric) of scale that is a sufficiently small controlled constant.
 \end{definition}
By Lemma~\ref{lem:17}   $\bar \F$  has the following property:\\
\begin{corollary}\lab{cor:asdf_rich}For every $\MM \in \G$ that is a $\C^r-$submanifold, there is some $\hat F \in \bar \F$ that is an approximate-squared-distance-function for $\MM$, \ie the restriction of $\hat F$ to $\MM_{\bar \tau}$ is contained in $\asdf_\MM^{\bar \tau}$.
\end{corollary}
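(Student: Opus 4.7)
The plan is to reduce the corollary directly to Lemma~\ref{lem:17}: it suffices to exhibit, for every $\MM\in\G$ that is a $\C^r$-submanifold, an admissible cylinder packet $\bar o$ for $\MM$ whose centers lie in the lattice and whose orientations lie in the Grassmannian net that together parametrize $\bar\F$. Once such a $\bar o$ is produced, $F^{\bar o}\in\bar\F$ by construction of $\bar\F$, and Lemma~\ref{lem:17} gives $F^{\bar o}|_{\MM_{\bar\tau}}\in\asdf_\MM^{\bar\tau}$.

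To build $\bar o$, I would first form the \emph{ideal} packet for $\MM$: let $\{y_1,\dots,y_{\bar N}\}\subseteq \MM$ be a maximal subset with pairwise distance $\geq \bar\tau/2$, and place the $i$-th cylinder with center $y_i$ and central $d$-plane equal to $Tan(y_i,\MM)$. The neighbor-alignment items~1--4 of the cylinder-packet definition in Section~\ref{sec:cyl_pac} hold with slack for this ideal packet, since $\MM$ has reach $\geq\tau$: by Proposition~\ref{thm:federer} together with Claim~\ref{cl:g1sept}, any two tangent planes at points on $\MM$ within distance $\bar\tau$ differ by angle $O(\bar\tau/\tau)=O(\oc_{12})$, and the corresponding offsets between $d$-dimensional cross-sections are $O(\bar\tau^2/\tau)$.

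Next I would snap the ideal packet into the discrete family. Let $\Lambda\subseteq\R^n$ be the lattice of spacing a controlled constant times $\tau$, and $\NN\subset Gr_d^n$ the Grassmannian net of a sufficiently small controlled constant scale, both fixed in the definition of $\bar\F$. For each $i$, replace $y_i$ by the nearest $\tilde y_i\in\Lambda$ and replace $Tan(y_i,\MM)$ by the nearest $P_i\in\NN$; define $\bar o=(o_i)_{i=1}^{\bar N}$ by $o_i(0)=\tilde y_i$ and $o_i(\R^d)=\tilde y_i+P_i$. Provided the lattice spacing and the net scale are chosen small enough (as controlled constants depending only on $d$), the center perturbation satisfies $|\tilde y_i-y_i|<c_{12}\bar\tau$ and the rotation $U_i$ carrying $Tan(y_i,\MM)$ to $P_i$ satisfies the operator-norm bound $\|U_i-I\|<C\bar\tau^2/\tau$ demanded by the admissibility definition. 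Hence $\bar o$ is an admissible packet for $\MM$ and Lemma~\ref{lem:17} concludes the proof.

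The main technical point is verifying that the four neighbor-alignment constraints of a cylinder packet survive the simultaneous perturbation of centers (by $O(\kappa\tau)$) and orientations (by $O(\eta)$). This is routine rather than deep, because those constraints already hold for the ideal packet with margin of the same order, inherited from reach-$\tau$ control via Proposition~\ref{thm:federer}, so absorbing the snapping errors amounts to adjusting the controlled constants appearing in Section~\ref{sec:cyl_pac}; no new geometric input is required.
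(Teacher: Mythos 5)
Your proposal is correct and follows essentially the same route as the paper: the paper derives the corollary directly from Lemma~\ref{lem:17} together with the ideal/admissible packet construction of Section~\ref{sec:cyl_pac}, exactly as you do. The only difference is that you spell out the snapping of centers to the lattice and of tangent planes to the Grassmannian net, a step the paper leaves implicit in its one-line appeal to Lemma~\ref{lem:17}.
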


%The following Lemma shows that the set of coresets which are subsets of $B_n \cap (c_{14}\tau \Z^n)$ is rich enough, that
%\begin{lemma}
%
%\end{lemma}

\section{Finding good local sections}\lab{sec:loc_sec}
%{\color{blue} Fix Below Definition!}
\begin{definition}\lab{def:interpolant}
Let $(x_1, y_1), \dots, (x_N, y_N)$ be ordered tuples belonging to $B_d \times B_{n-d}$, and let $r \in \N$. Recall that by definition~\ref{def:man}, the value of $r$ is $2$. However, in the interest of clarity, we will use the symbol $r$ to denote the number of derivatives.
We say that that a function \beqs f: B_d \ra B_{n-d}\eeqs is an $\eps-$optimal interpolant if the $\C^r-$norm of $f$ (see Definition~\ref{def:crnorm})) satisfies \beqs \|f\|_{\C^r} \leq c,\eeqs and
 \beq\lab{eq:h:func} \sum_{i = 1}^N |f(x_i) - y_i|^2 \leq C N{\eps} + \inf\limits_{\{\check{f}:\|\check{f}\|_{\C^r} \leq \, C^{-1}c\}}  \sum_{i=1}^N |\check{f}(x_i) - y_i|^2, \eeq where $c$ and $C > 1$ are some  constants depending only on $d$.
\end{definition}
\begin{figure}\label{fig:cylinder}
\begin{center}
\includegraphics[height=2.4in]{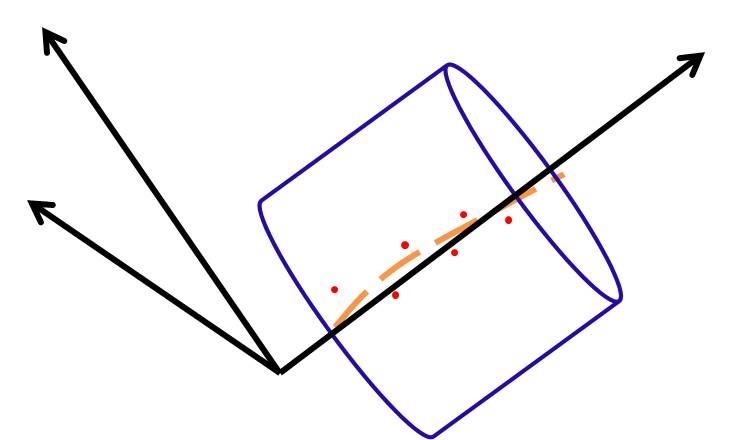}
\caption{Optimizing over local sections.}
\end{center}
\end{figure}
\subsection{Basic convex sets}
We will denote the codimension $n-d$ by $\bar n$.
It will be convenient to introduce the following  notation. For some $i \in \N$, an "$i-$Whitney field" is a family
$\vec{P} = \{P^x\}_{x\in E}$ of $i$ dimensional vectors of real-valued polynomials $P_x$
 indexed by the points $x$ in a finite set
$E \subseteq  \R^d$.
We say that
$\vec{P} = (P_x)_{x\in E}$ is a Whitney field "on $E$", and we write $\wh^{\bar{n}}_r(E)$ for the
vector space of all $\bar{n}-$Whitney fields on $E$ of degree at most $r$.
 \begin{definition}
 Let $\C^r(\R^d)$ denote the space of all real functions on $\R^d$ that are $r-$times continuously differentiable and
 $$\sup_{|\a| \leq r} \sup_{ x \in \R^d } {|\partial^\a f\big|_x|} < \infty.$$

  For a closed subset $U \in \R^d$ such that $U$ is the closure of its interior $U^o$, we define the $\C^{r}-$norm of a function $f:U \ra \R$ by
 \beq \lab{eq:crnorm} \|f\|_{\C^{r}(U))} := \sup_{|\a| \leq r} \sup_{ x \in U^o} |\partial^\a f\big|_x|.\eeq

When $U$ is clear from context, we will abbreviate $\|f\|_{\C^{r}(U)}$ to $\|f\|_{\C^{r}}$.
 \end{definition}

%If
%$\vec{P}$ is a Whitney field on $E$, and if $S \subseteq E$, then we
%define the restriction
%~P|S ? Wh(S) in an obvious way.
\begin{definition}\lab{def:crnorm}
 We define $\C^{r}(B_d, B_{\bar{n}})$ to consist of all  $f:B_d \ra B_{\bar{n}}$ such that $f(x) = (f^{1}(x), \dots, f^{\bar n}(x))$ and for each $i\in \bn$, $f_i:B_d \ra \R$ belongs to $\C^r(B_d)$.
  We define the $\C^{r}-$norm of $f(x):=(f^{1}(x), \dots, f^{\bn}(x))$ by
 $$\|f\|_{\C^{r}(B_d, B_{\bn})} := \sup_{|\a| \leq r}\sup_{v \in B_{\bn}} \sup_{ x \in B_d } |\partial^\a (\langle f, v\rangle)\big|_x|.$$

 Suppose $F \in \C^r(B_d)$, and $x \in B_d$, we denote by $J_x(F)$ the polynomial that is the $r^{th}$ order Taylor approximation to $F$ at $x$, and call it the ``jet of $F$ at $x$".
\end{definition}
If
$\vec{P} = \{P_x\}_{x \in E}$ is an $\bar{n}-$Whitney field,  and $F \in \C^r(B_d, B_{\bn})$, then we say
that ``$F$ agrees with $\vec{P}$\,", or ``$F$ is an extending function for
$\vec{P}$\,", provided  $J_x(F) = P_x$
for each
$x \in E$. If $E^+ \supset E$, and $(P^+_x)_{x\in E^+}$ is an $\bar{n}-$Whitney field on $E^+$, we say that $\vec{P}^+$ ``agrees with $\vec{P}$ on $E$" if for all $x \in E$, $P_x = P^+_x$.
We define a $C^r-$norm on $\bar{n}-$Whitney fields as follows. If
$\vec{P} \in \wh^{\bar{n}}_r(E)$,
we define
\beq\lab{Cr_norm} \|\vec{P}\|_{\C^r(E)} = \inf_{F} \|F\|_{\C^r(B_d, B_{\bar{n}})},\eeq where the infimum is taken over all $F \in \C^r(B_d, B_{\bar{n}})$ such that $F$ agrees with $\vec{P}$.

We are interested in the set of all $f \in \C^r(B_d, B_{\bar{n}})$ such that $\|f\|_{\C^r(B_d, B_{\bar{n}})} \leq 1$.
By results of Fefferman (see page 19, \cite{Feff_jets2}) we have the following.

\begin{theorem}\lab{thm:feff1} Given $\eps>0$, a positive integer $r$ and a finite set $E \subset \R^d$, it is possible to  construct in time and space bounded by $\exp(C/\eps)|E|$ (where $C$ is controlled by $d$ and $r$), a set $E^+$ and a convex set $K$ having the following properties.
\bit
\item  Here $K$ is the intersection of $\bar{m} \leq \exp(C/\eps)|E|$ sets $\{x|(\alpha_i(x))^2 \leq \beta_i\}$, where $\a_i(x)$ is a real valued linear function such that $\a(0) = 0$ and  $\beta_i > 0$. Thus
     \beqs K:= \{x|\forall i\in [\bar{m}], \,(\alpha_i(x))^2 \leq \beta_i\}\subset \wh^{1}_r(E^+).\eeqs

\item If $\vec{P} \in \wh^{1}_r(E^+)$  such that $\|\vec{P}\|_{\C^r(E)} \leq 1-\eps$, then
there exists a Whitney field $\vec{P}^+ \in K$, that agrees with $\vec{P}$ on $E$.
\item Conversely, if there exists a Whitney field $\vec{P}^+ \in K$ that agrees with $\vec{P}$ on $E$, then  $\|\vec{P}\|_{\C^r(E)} \leq 1+\eps.$
\eit
\end{theorem}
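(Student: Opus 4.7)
The plan is to derive this from the machinery built in \cite{Feff_jets2}, where Fefferman gives an algorithmic version of the $C^r$-Whitney extension problem. I would begin by recalling the classical criterion: a jet field $\vec P=(P_x)_{x\in E}$ extends to some $F\in C^r(\R^d,\R)$ with $\|F\|_{C^r}\le M$ if and only if the Whitney compatibility relations
$$|\partial^{\alpha}(P_x-P_y)(y)|\le C_{r,d}\,M\,|x-y|^{r-|\alpha|},\quad |\alpha|\le r,$$
hold for all $x,y\in E$; the standard Whitney extension proof supplies one direction, and a truncated Taylor construction supplies the other. These are linear inequalities in the coefficients of the $P_x$'s. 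Unfortunately the constant $C_{r,d}$ is not close to $1$, so one cannot directly read off a $(1\pm\eps)$-sharp convex body from this criterion.

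To sharpen the constant, the plan is to invoke the constructive refinement from \cite{Feff_jets2}. That paper produces, in time and space $\exp(C/\eps)|E|$ with $C=C(d,r)$, a finite enlargement $E^+\supset E$ of auxiliary "test points," together with a family of linear functionals $\alpha_1,\dots,\alpha_{\bar m}\colon \wh_r^1(E^+)\to\R$ and thresholds $\beta_1,\dots,\beta_{\bar m}>0$, with $\bar m\le\exp(C/\eps)|E|$, each $\alpha_i$ vanishing at the zero field, such that the two-sided approximation
$$(1-\eps)\|\vec P^+\|_{C^r(E^+)}^2 \le \max_i \frac{(\alpha_i(\vec P^+))^2}{\beta_i}\le (1+\eps)\|\vec P^+\|_{C^r(E^+)}^2$$
holds for every $\vec P^+\in \wh_r^1(E^+)$. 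The construction proceeds by a dyadic Calderón–Zygmund decomposition adapted to $E$, the selection of "keystone" cubes, and the discretization of a finite-dimensional functional-analytic dual to the $C^r$ trace norm; each test functional $\alpha_i$ probes the Whitney compatibility on one cube of the decomposition at a specified order, yielding the required linear form in the jet coefficients. Setting $K:=\bigcap_i\{(\alpha_i)^2\le\beta_i\}$ produces the convex body in the statement, automatically a symmetric convex set.

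With $(E^+,K)$ in hand, the two implications of the theorem fall out as follows. If $\vec P\in\wh_r^1(E)$ satisfies $\|\vec P\|_{C^r(E)}\le 1-\eps$, pick an extending $F\in C^r$ with $\|F\|_{C^r}\le 1-\eps/2$ (using the definition of the trace norm \eqref{Cr_norm}) and form $\vec P^+:=(J_xF)_{x\in E^+}$. Then $\vec P^+$ agrees with $\vec P$ on $E$, and the upper bound in the above two-sided estimate, applied to $\vec P^+$, shows $(\alpha_i(\vec P^+))^2\le (1-\eps/2)^2(1+\eps)\beta_i\le\beta_i$, so $\vec P^+\in K$. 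Conversely, any $\vec P^+\in K$ agreeing with $\vec P$ on $E$ has $\max_i(\alpha_i(\vec P^+))^2/\beta_i\le 1$, hence $\|\vec P^+\|_{C^r(E^+)}^2\le (1-\eps)^{-1}$ by the lower inequality, which gives an extending function of $C^r$-norm at most $(1-\eps)^{-1/2}\le 1+\eps$ (for $\eps$ small, absorbable into constants); its restriction to $E$ is $\vec P$, so $\|\vec P\|_{C^r(E)}\le 1+\eps$.

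The main obstacle is the construction of the $\alpha_i$ and $\beta_i$ with the stated complexity and two-sided approximation; this is not a routine calculation but the central technical contribution of \cite{Feff_jets2}, and I would import it as a black box rather than reproduce the keystone/dyadic apparatus here. Two minor cosmetic points remain: ensuring each $\alpha_i$ is linear (not merely affine) with $\alpha_i(0)=0$, which is automatic from the fact that $C^r$-norm is a seminorm vanishing on the zero jet; and adjusting $\eps$ by a universal constant so that the "$1\pm\eps$" constants match exactly rather than only up to $O(\eps)$, which is harmless since $\eps$ appears only inside an $\exp(C/\eps)$ complexity bound.
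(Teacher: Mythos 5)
Your proposal is correct and follows essentially the same route as the paper: the paper states this theorem without proof, importing it directly from Fefferman's algorithmic Whitney extension work (page 19 of \cite{Feff_jets2}), exactly as you do by treating the construction of $E^+$, the functionals $\alpha_i$, and the thresholds $\beta_i$ as a black box. Your additional derivation of the two bullet points from the imported two-sided norm estimate is routine and consistent with how the cited result is meant to be used.
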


For our purposes, it would suffice to set the above $\eps$ to any controlled constant. To be specific, we set $\eps$ to $2$.
By Theorem 1 of  \cite{Feff_ext04} we know  the following.
\begin{theorem}\lab{thm:feff2} There exists a linear map $T$ from $\C^{r}(E)$ to  $\C^{r}(\R^d)$ and a controlled constant $C$ such that ${T}f\big|_{E} = f$ and $\|{T}f\|_{\C^{r}( \R^d)} \leq C \|f\|_{\C^{r}(E)}$.
\end{theorem}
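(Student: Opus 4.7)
The plan is to build $T$ in two linear stages. First, I would construct a bounded linear map $J : \C^r(E) \to \wh^{1}_r(E)$ sending $f$ to a Whitney field $(P_x^f)_{x \in E}$ with $P_x^f(x) = f(x)$ and $\|(P_x^f)\|_{\C^r(E)} \leq C\|f\|_{\C^r(E)}$. Second, I would extend this Whitney field linearly to a function $Tf \in \C^r(\R^d)$ with $\|Tf\|_{\C^r(\R^d)} \leq C'\|(P_x^f)\|_{\C^r(E)}$; the composition is the required operator.

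For the second stage I would use the classical Whitney construction. Take a Whitney cover of $\R^d \setminus E$ by dyadic cubes $\{Q_\nu\}$ with $\mathrm{diam}(Q_\nu)$ comparable to $\mathrm{dist}(Q_\nu, E)$, a smooth partition of unity $\{\phi_\nu\}$ subordinate to mild enlargements with $|\partial^\alpha \phi_\nu| \leq C_\alpha (\mathrm{diam}\,Q_\nu)^{-|\alpha|}$, and for each $\nu$ a nearest point $x_\nu \in E$ to $Q_\nu$. Define $Tf = f$ on $E$ and $Tf(x) = \sum_\nu \phi_\nu(x) P_{x_\nu}^f(x)$ elsewhere. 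The derivative bounds follow from the telescoping identity $Tf = P_{x_\nu}^f + \sum_\mu \phi_\mu (P_{x_\mu}^f - P_{x_\nu}^f)$ on each cube: the Whitney compatibility between $P_{x_\nu}^f$ and $P_{x_\mu}^f$ for neighbouring $\mu$ absorbs the blow-up of derivatives of $\phi_\mu$. Linearity is automatic because $\{Q_\nu, x_\nu, \phi_\nu\}$ depend only on the geometry of $E$, not on $f$.

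The main obstacle is the first stage: producing the Whitney field $(P_x^f)$ linearly in $f$ with a norm bound depending only on $d$ and $r$. I would follow Fefferman's approach from \cite{Feff_ext04}. For each $x \in E$, the set $K_x^0(f)$ of polynomials $P$ of degree $\leq r$ with $P(x) = f(x)$ that could arise as the Taylor jet at $x$ of some $C$-bounded $\C^r$-extension of $f|_E$ is an affine-convex subset of a finite-dimensional space, depending affinely on $f$. I would iteratively shrink these sets by a Glaeser-type refinement $K_x \mapsto \{P \in K_x : \text{for every } y \text{ near } x \text{ there exists } P' \in K_y \text{ Whitney-compatible with } P\}$. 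On the stabilized family $K_x^\infty(f)$ one then selects a representative linearly in $f$, for instance by a minimum-norm projection in a fixed finite-dimensional inner product.

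The hard part, and the technical heart of \cite{Feff_ext04}, is to show that the Glaeser iteration stabilizes after a bounded number of steps depending only on $d$ and $r$, and that the linear selection from the stabilized sets carries a norm bound independent of $E$. This is an intricate inductive argument on the ``complexity'' of the local extension problem at each point, which I would cite directly rather than reproduce.
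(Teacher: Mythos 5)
Your proposal is correct and rests on exactly the same foundation as the paper, which offers no proof of Theorem~\ref{thm:feff2} beyond citing Theorem~1 of \cite{Feff_ext04}; your two-stage outline (linear jet selection via Glaeser refinement, then the classical Whitney extension from the resulting field) is a faithful sketch of how that cited result is established, and you defer its technical heart to the same reference. Nothing further is needed.
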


\begin{definition}\lab{def:barK}
For $\{\alpha_i\}$ as in Theorem~\ref{thm:feff1}, let
$\bar{K} \subset \bigoplus_{i=1}^{\bn} \wh_r^1(E^+)$ be the set of all  $(x_1, \dots, x_{\bn})\in \bigoplus_{i=1}^{\bn} \wh_r^1(E^+)$ (where each $x_i \in \wh_r^1(E^+)$) such that  for each $i\in [\bar{m}]$ \beqs \sum_{j = 1}^{\bn}(\alpha_i( x_j))^2 \leq \beta_i.\eeqs
\end{definition}

 Thus, $\bar{K}$ is an intersection of $\bar{m}$ convex sets, one for each linear constraint $\alpha_i$. We identify $\bigoplus_{i=1}^{\bn} \wh_r^1(E^+)$ with $\wh_r^{\bar{n}}(E^+)$ via the natural isomorphism. Then, from Theorem ~\ref{thm:feff1} and Theorem~\ref{thm:feff2} we obtain the following.
\begin{corollary}\lab{cor:feff3}
There is a controlled constant $C$ depending on $r$ and $d$ such that
\bit
\item If $\vec{P}$ is a $\bar{n}-$Whitney field  on $E$ such that $\|\vec{P}\|_{\C^r(E, \R^{\bn})} \leq C^{-1}$,   then
there exists a $\bar{n}-$Whitney field $\vec{P}^+ \in \bar{K}$, that agrees with $\vec{P}$ on $E$.
\item Conversely, if there exists a $\bar{n}-$Whitney field $\vec{P}^+ \in \bar{K}$ that agrees with $\vec{P}$ on $E$, then  $\|\vec{P}\|_{\C^r(E, \R^{\bn})} \leq C.$
\eit
\end{corollary}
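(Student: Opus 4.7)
The plan is to reduce the vector-valued statement to the scalar Theorem~\ref{thm:feff1} via two observations: a clean description of $\bar{K}$ in terms of scalar directional constraints, and the equivalence of the vector $\C^r$-norm with the supremum over unit directions of scalar $\C^r$-norms (both up to controlled constants). Together these ingredients make the constant $C$ in the conclusion depend only on $d$ and $r$, not on the codimension $\bn$.

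First I would establish the structural identity: $\vec{Q} \in \bar{K}$ if and only if $\langle \vec{Q}, v\rangle \in K$ for every unit vector $v \in \R^{\bn}$. Since each $\alpha_i$ is linear, $\alpha_i(\langle \vec{Q}, v\rangle) = \sum_j v^j \alpha_i(Q^j)$, and by Cauchy--Schwarz
\[
\sup_{\|v\|=1} \bigl(\alpha_i(\langle \vec{Q}, v\rangle)\bigr)^2 \;=\; \sum_{j=1}^{\bn} \bigl(\alpha_i(Q^j)\bigr)^2,
\]
so the sum-of-squares constraint defining $\bar{K}$ is equivalent to the family of scalar constraints $(\alpha_i(\langle \vec{Q}, v\rangle))^2 \leq \beta_i$ holding for all unit $v$, i.e., to $\langle \vec{Q}, v\rangle \in K$ for all such $v$. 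In parallel, applying the linear extension operator $T$ of Theorem~\ref{thm:feff2} componentwise yields
\[
\sup_{\|v\|=1} \|\langle \vec{P}, v\rangle\|_{\C^r(E)} \;\leq\; \|\vec{P}\|_{\C^r(E, \R^{\bn})} \;\leq\; C_T \sup_{\|v\|=1} \|\langle \vec{P}, v\rangle\|_{\C^r(E)},
\]
the left inequality by projecting any vector extension onto $v$, and the right by forming $\vec{F} := (TP^1, \ldots, TP^{\bn})$, using linearity of $T$ to get $\langle \vec{F}, v\rangle = T(\langle \vec{P}, v\rangle)$, and taking the supremum over $v$.

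For the first assertion of the corollary, suppose $\|\vec{P}\|_{\C^r(E, \R^{\bn})} \leq C^{-1}$ with $C$ chosen large enough that $C_T / C \leq 1 - \eps$ for the scalar threshold $\eps$ in Theorem~\ref{thm:feff1}. Let $\vec{F} := (TP^1, \ldots, TP^{\bn})$ and define $\vec{P}^+ \in \wh_r^{\bn}(E^+)$ to be the $r$-jet field of $\vec{F}$ at the points of $E^+$. For every unit $v$, $\langle \vec{P}^+, v\rangle$ is the jet field of $T(\langle \vec{P}, v\rangle)$, hence $\|\langle \vec{P}^+, v\rangle\|_{\C^r(E^+)} \leq C_T/C \leq 1 - \eps$. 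Theorem~\ref{thm:feff1} then places $\langle \vec{P}^+, v\rangle$ in $K$ for every unit $v$, and the structural identity gives $\vec{P}^+ \in \bar{K}$; by construction $\vec{P}^+$ agrees with $\vec{P}$ on $E$.

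For the converse, suppose $\vec{P}^+ \in \bar{K}$ agrees with $\vec{P}$ on $E$. By the structural identity, $\langle \vec{P}^+, v\rangle \in K$ agrees with $\langle \vec{P}, v\rangle$ on $E$ for each unit $v$, so Theorem~\ref{thm:feff1} gives $\|\langle \vec{P}, v\rangle\|_{\C^r(E)} \leq 1 + \eps$. Taking the supremum over $v$ and invoking the norm equivalence yields $\|\vec{P}\|_{\C^r(E, \R^{\bn})} \leq C_T (1 + \eps) \leq C$. The main technical point in the whole argument is the Cauchy--Schwarz rewriting of $\bar{K}$ as a family of directional scalar constraints: this is what keeps $C$ independent of the codimension $\bn$ and reduces both directions to the scalar Fefferman extension without the $\sqrt{\bn}$ loss that a naive componentwise application of Theorem~\ref{thm:feff1} would incur.
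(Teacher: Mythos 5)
Your two structural ingredients are correct and are almost certainly what the paper has in mind when it says the corollary follows from Theorem~\ref{thm:feff1} and Theorem~\ref{thm:feff2} (the paper gives no further details): the Cauchy--Schwarz identity $\sup_{\|v\|=1}(\alpha_i(\langle\vec{Q},v\rangle))^2=\sum_j(\alpha_i(Q^j))^2$, which says $\vec{Q}\in\bar{K}$ iff $\langle\vec{Q},v\rangle\in K$ for every unit $v$, and the use of the \emph{linear} operator $T$ to pass between the vector trace norm and the supremum of directional scalar trace norms without a $\sqrt{\bn}$ loss. Your converse direction is sound on this basis.

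The gap is in the first bullet, at the step ``Theorem~\ref{thm:feff1} then places $\langle\vec{P}^+,v\rangle$ in $K$.'' As stated, Theorem~\ref{thm:feff1} is only an existence statement: given scalar data of norm $\leq 1-\eps$ on $E$, \emph{some} Whitney field in $K$ agrees with it on $E$. It does not say that every small-norm field on $E^+$ lies in $K$, nor that the particular field you built (the $E^+$-jets of $T(\langle\vec{P},v\rangle)$, equivalently of a fixed vector extension) does. The whole content of the corollary is to produce one vector field whose directional projections lie in $K$ simultaneously for all $v$; the fields Theorem~\ref{thm:feff1} supplies for different $v$ need not cohere, and your mechanism for coherence (taking jets of a single extension) is exactly what the quoted statement does not license. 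To close this you need either the stronger property of Fefferman's construction (that $K$ contains the $E^+$-jets of \emph{every} function of $\C^r$ norm at most $1-\eps$, i.e.\ $K$ approximates the unit ball of the trace space on $E^+$ from inside), or a linear selection $P\mapsto P^+\in K$ of the extension guaranteed by Theorem~\ref{thm:feff1}; with either strengthening your argument goes through verbatim. A minor additional point: $T$ in Theorem~\ref{thm:feff2} is stated for functions on $E$, so applying it to the jet components $P^j$ is a type mismatch -- what you want is a linear extension operator for Whitney fields (classical Whitney suffices); and in the first bullet you do not need linearity at all, since a single near-optimal vector-valued extension of $\vec{P}$ already gives all directional extensions coherently.
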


\subsection{Preprocessing}\lab{ssec:sketch}
Let $\beps >0$ be an error parameter.
\begin{notation}
For $n \in \N$, we  denote the set $\{1, \dots, n\}$ by $[n]$. Let $\{x_1, \dots, x_N\} \subseteq \R^d$.
\end{notation}
Suppose $x_1, \dots, x_N$ is a set of data points in $\R^{\bar d}$ and $y_1, \dots, y_N$ are corresponding values in $\R^{\bn}$.
The following procedure constructs a function $\pare:[N] \ra [N]$ such that $\{x_{\pare(i)}\}_{i \in [N]}$ is an $\beps-$net of $\{x_1, \dots, x_N\}$.
For $i = 1$ to $N$, we sequentially define sets $S_i$, and construct $\pare$.

Let $S_1 := \{1\}$ and $\pare(1) := 1$. For any $i > 1$,
\ben \item if $\{j: j \in S_{i-1} \, \text{and}\, |x_j - x_i| < \beps\} \neq \emptyset$, set $\pare(i)$ to be an arbitrary element of $\{j: j \in S_{i-1} \, \text{and}\, |x_j - x_i| < \beps\}$, and set $S_i := S_{i-1}$,
\item and otherwise  set $\pare(i) := i$ and set $S_i := S_{i-1}\cup\{i\}$.
    \een
Finally, set $S := S_N$, $\hN = |S|$ and for each $i$, let $$\chil(i) := \{j: \pare(j) = i\}.$$
For $i \in S$, let $\mu_i := N^{-1}\big|\chil(i)\big|$, and let \beq\bar{y}_i := \left(\frac{1}{{|\chil(i)|}}\right)\sum_{j \in \chil(i)} {y_j}.\eeq

It is clear from the construction that for each $i\in [N]$, $|x_{\pare(i)} - x_i| \leq \beps$.
The construction of $S$ ensures that the distance between any two points in $S$ is at least $\bar{\eps}$.  The motivation for sketching the data in this manner was that now, the extension problem  involving $E = \{x_i|i \in S\}$ that we will have to deal with will be better conditioned in a sense explained in the following subsection.

\subsection{Convex program}\lab{ssec:convex_program}
%Let $\bar{f}:\R\ra\R$ be even, $1-$Lipschitz, satisfy %$\bar{f}(0)=0$ and have the property that $\bar{f}(x^2)$ is a %convex function as $x$ ranges over $\R$.
Let the indices  in $[N]$ be permuted so that $S = [\hN]$. For any $f$ such that $\|f\|_{\C^2} \leq C^{-1}c$, and $|x-y| < \beps$, we have $|f(x) - f(y)| < \beps$, (and so the grouping and averaging described in the previous section do not affect the quality of our solution), therefore we see that in order to find a $\beps-$optimal interpolant, it suffices to minimize the objective function
\beqs \obj := \sum_{i=1}^{\hN} \mu_i |\bar{y}_i - P_{x_i}(x_i)|^2,\eeqs over all $\vec{P} \in \bar{K} \subseteq \wh_r^{\bar{n}}(E^+)$, to within an additive error of $\beps$, and to find the corresponding point in $\bar{K}$. We note that $\obj$ is a convex function over $\bar{K}$.
\begin{lemma}\lab{lem:conditioning} Suppose that the distance between any two points in $E$ is at least $\beps$. Suppose $\vec{P} \in \wh_r^{1}(E^+)$ has the property that for each $x \in E$,
 every  coefficient of $P_x$  is bounded above by $c'\beps^2$.
Then, if $c'$ is less than some controlled constant depending on  $d$, \beqs \|\vec{P}\|_{\C^2(E)} \leq 1. \eeqs
\end{lemma}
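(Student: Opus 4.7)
The plan is to construct, by an explicit partition-of-unity glueing, an extending function $F:B_d\to\R$ with $J_x(F)=P_x$ for every $x\in E$ and $\|F\|_{\C^2(B_d)}\le 1$; the conclusion $\|\vec P\|_{\C^2(E)}\le 1$ then follows immediately from the definition (\ref{Cr_norm}) of the jet norm. The separation of $E$ is the feature that makes this work: one can use bumps of width comparable to $\bar\eps$ supported in pairwise disjoint balls around the points of $E$, and the only thing that could go wrong is that derivatives of such bumps produce factors of $\bar\eps^{-|\beta|}$ — but these are exactly balanced against the $\bar\eps^2$ smallness of the coefficients of $P_x$.

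Concretely, I would fix once and for all a smooth cutoff $\theta:\R^d\to[0,1]$ equal to $1$ on $B_d(0,1/4)$ and supported in $B_d(0,1/2)$, with $|\partial^\alpha\theta|\le C(d)$ for $|\alpha|\le 2$. For each $x\in E$ set $\theta_x(y):=\theta((y-x)/\bar\eps)$, so $\mathrm{supp}(\theta_x)\subset B_d(x,\bar\eps/2)$, $\theta_x\equiv 1$ on $B_d(x,\bar\eps/4)$, and $|\partial^\beta\theta_x|\le C\bar\eps^{-|\beta|}$. Because the points of $E$ are at distance at least $\bar\eps$ apart, these supports are pairwise disjoint. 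Define
\[
F(y)\;:=\;\sum_{x\in E}\theta_x(y)\,P_x(y).
\]
By disjointness only one summand is ever nonzero, and on $B_d(x,\bar\eps/4)$ the unique nonzero summand is $P_x$ itself; hence $F\equiv P_x$ on a neighbourhood of $x$, and $J_x(F)=P_x$ for every $x\in E$.

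The remaining step is to bound $\|F\|_{\C^2(B_d)}$. We may assume $\bar\eps\le 1$ (otherwise $|E|\le 1$ and the lemma is trivial). Writing $P_x(y)=\sum_{|\beta|\le 2}a^x_\beta(y-x)^\beta$ with $|a^x_\beta|\le c'\bar\eps^2$, and using $|y-x|\le \bar\eps/2\le 1$ on $\mathrm{supp}(\theta_x)$, one gets $|\partial^\gamma P_x(y)|\le Cc'\bar\eps^2$ for all $|\gamma|\le 2$. The Leibniz rule then yields, for $y\in\mathrm{supp}(\theta_x)$ and $|\alpha|\le 2$,
\[
|\partial^\alpha F(y)|\;\le\;\sum_{\beta+\gamma=\alpha}\binom{\alpha}{\beta}|\partial^\beta\theta_x(y)|\,|\partial^\gamma P_x(y)|\;\le\;C\,c'\sum_{|\beta|\le|\alpha|}\bar\eps^{\,2-|\beta|}\;\le\;C_\star(d)\,c',
\]
and $\partial^\alpha F(y)=0$ for $y$ outside $\bigcup_x\mathrm{supp}(\theta_x)$. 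Choosing $c'\le 1/C_\star$ gives $\|F\|_{\C^2(B_d)}\le 1$.

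The only substantive point — really the entire content of the lemma — is the exponent matching in the worst case of the last display, in which both derivatives land on $\theta_x$: this produces $\bar\eps^{-2}\cdot\bar\eps^2=1$, which is exactly why the hypothesis is phrased with the sharp power $c'\bar\eps^2$ and why the proof is calibrated to $r=2$. Everything else is a routine Leibniz computation, so I do not expect any further obstacles.
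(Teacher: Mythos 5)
Your construction is exactly the paper's: the paper's proof sets $f(x)=\sum_{z\in E}\bmp\bigl(\tfrac{10(x-z)}{\beps}\bigr)P_z(x)$, i.e.\ the same disjointly supported bumps of width $\sim\beps$ around the $\beps$-separated points of $E$, with the same Leibniz-rule cancellation $\beps^{-|\beta|}\cdot\beps^{2}$ that you spell out. So your proposal is correct and follows essentially the same route, merely making explicit the derivative bookkeeping the paper leaves implicit.
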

\begin{proof} Let
\beqs f(x) = \sum_{z \in E} \bmp\left(\frac{10(x-z)}{\beps}\right)P_z(x).\eeqs By the properties of $\bmp$ listed above, we see that  $f$  agrees with $\vec{P}$ and that $\|f\|_{\C^2(\R^d)} \leq 1$ if $c'$ is bounded above by a sufficiently small controlled constant.
\end{proof}

Let $z_{opt} \in \bar{K}$ be any point such that \beqs\obj(z_{opt}) = \inf_{z'\in \bar{K}}\obj(z').\eeqs
\begin{obs}\lab{obs:conditioning}
By Lemma~\ref{lem:conditioning} we see that the set $K$ contains a Euclidean ball of radius $c'\beps^2 $ centered at the origin, where $c'$ is a controlled constant depending on  $d$.

It follows that  $\bar{K}$ contains a Euclidean ball of the same radius $c'\beps^2$ centered at the origin. Due to the fact that the the magnitudes of the first $m$ derivatives at any point in $E^+$ are bounded by $C$, every point in $\bar K$ is at a Euclidean distance of at most $C\hN$ from the origin. We can bound $\hN$ from above as follows: $$\hN \leq \frac{C}{\beps^d}.$$
\end{obs}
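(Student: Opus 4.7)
The plan is to verify the four component assertions of the observation in sequence: that $K$ contains a Euclidean ball of radius $c'\beps^2$ about the origin, that the same holds for $\bar K$, that every point of $\bar K$ has Euclidean norm at most $C\hN$, and that $\hN\leq C/\beps^d$.

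For the first claim, I would take any Whitney field $\vec{P}^+\in\wh^1_r(E^+)$ with Euclidean norm at most $c'\beps^2$, so that every coefficient of every $P^+_x$ is bounded by $c'\beps^2$. The hypothesis of Lemma~\ref{lem:conditioning} then holds at every $x\in E$, and a choice of $c'$ sufficiently small (but still controlled by $d$) forces the explicit partition-of-unity extension $f(x)=\sum_{z\in E}\bmp(10(x-z)/\beps)P^+_z(x)$ used in the proof of that lemma to satisfy $\|f\|_{\C^r(\R^d)}\leq 1-\eps$. By the first bullet of Theorem~\ref{thm:feff1}, the jets of $f$ at all points of $E^+$ constitute a Whitney field in $K$; and for $c'$ small enough the coordinatewise difference between this field and $\vec{P}^+$ at the points of $E^+\setminus E$ is absorbed into the $\eps$-slack built into $K$, which places $\vec{P}^+$ itself in $K$.

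For the second claim, recall from Definition~\ref{def:barK} that $\bar K$ is cut out of $\bigoplus_{j=1}^{\bn}\wh^1_r(E^+)$ by the inequalities $\sum_j(\alpha_i(x_j))^2\leq\beta_i$. A tuple $(x_1,\dots,x_{\bn})$ of Euclidean norm at most $c'\beps^2$ has each slot $x_j$ of norm at most $c'\beps^2$ in $\wh^1_r(E^+)$; each $x_j$ then lies in $K$ by the first claim, giving $(\alpha_i(x_j))^2\leq\beta_i$ for every $i,j$, and after rescaling $c'$ by $1/\sqrt{\bn}$ (a controlled constant) the tuple inequality follows. For the diameter bound I would invoke the observation's input that the magnitudes of Taylor coefficients up to order $r$ at every $x\in E^+$ of any extending function associated to a point of $K$ are bounded by a controlled constant $C$, a direct consequence of the second bullet of Theorem~\ref{thm:feff1} combined with Theorem~\ref{thm:feff2}. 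Since $\wh^{\bn}_r(E^+)$ has dimension of order $|E^+|$ times a controlled factor, and $|E^+|=O(\hN)$ from the complexity bound of Theorem~\ref{thm:feff1}, the Euclidean norm of any point of $\bar K$ is at most a controlled constant times $\hN^{1/2}$, which is at most $C\hN$. Finally, the bound $\hN\leq C/\beps^d$ is immediate from packing: the preprocessing in Section~\ref{ssec:sketch} guarantees that $\{x_i\}_{i\in S}\subseteq B_d$ is pairwise $\beps$-separated, so the open balls $B(x_i,\beps/2)$ are disjoint and contained in the ball of radius $2$, and volume comparison gives $|S|\cdot c\beps^d\leq C$.

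The one conceptually nonroutine step is the first: Theorem~\ref{thm:feff1} as literally stated only guarantees the \emph{existence} of some Whitney field in $K$ extending $\vec{P}^+|_E$, not that $\vec{P}^+$ itself belongs to $K$. The reconciling point, and the real content invoked from Lemma~\ref{lem:conditioning}, is that the concrete extending function built there has jets at the additional points of $E^+\setminus E$ that differ from those of $\vec{P}^+$ by amounts controlled by $c'$, which can be driven below the tolerance $\eps$ baked into $K$ by taking $c'$ as a sufficiently small controlled constant. Everything else reduces to linear algebra, elementary packing, and bookkeeping against the Fefferman constants.
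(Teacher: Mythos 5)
Your treatment of the last two assertions coincides with the paper's intent: the packing argument giving $\hN\leq C\beps^{-d}$ (the points of $S$ are pairwise $\beps$-separated in a ball of controlled radius) and the jet-coefficient bounds giving the crude diameter estimate are exactly what the observation is relying on. The problem is the first assertion, and you correctly put your finger on the crux --- Theorem~\ref{thm:feff1} only guarantees that \emph{some} field in $K$ agrees with a given small field on $E$, not that the given field on $E^+$ itself lies in $K$ --- but your patch does not close the gap. There is no ``$\eps$-slack built into $K$'' in any stated sense: $K$ is an intersection of sets $\{(\alpha_i(x))^2\leq\beta_i\}$, and the two bullets of Theorem~\ref{thm:feff1} give no quantitative lower bound on the thickness of $K$ in the coordinates attached to $E^+\setminus E$ (a set satisfying both bullets could pin the jets at $E^+\setminus E$ inside an arbitrarily thin tube around the values of one fixed extension). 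Moreover, the discrepancy you propose to absorb is not small: the jets at a point $w\in E^+\setminus E$ of the bump-function extension $f(x)=\sum_{z\in E}\bmp(10(x-z)/\beps)P_z(x)$ from Lemma~\ref{lem:conditioning} have order-$r$ coefficients of size about $c'$ (each derivative of $\bmp(10(\cdot-z)/\beps)$ costs a factor $\beps^{-1}$), whereas the given field has all coefficients at most $c'\beps^2$ there; the difference is $O(c')$, not $O(c'\beps^2)$, and nothing compares it with the $\beta_i$. Nor does the first bullet assert that the jet field of your particular extension is the member of $K$ it furnishes. The membership of the whole $c'\beps^2$-ball in $K$ is really a statement about the explicit Whitney-type inequalities defining $K$ in \cite{Feff_jets2}, which are satisfied with room to spare by any jet field that is uniformly tiny at every point of $E^+$; it cannot be derived from the quoted bullets alone, and this is precisely why the paper invokes Lemma~\ref{lem:conditioning} only as the smallness criterion and otherwise leans on the construction itself.

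Two smaller points. In passing from $K$ to $\bar K$ you shrink $c'$ by $1/\sqrt{\bn}$ and call this controlled; $\bn=n-d$ depends on the ambient dimension and is not controlled by $d$, and the loss is in any case unnecessary: if every $x$ with $\|x\|\leq c'\beps^2$ satisfies $(\alpha_i(x))^2\leq\beta_i$, then $\|\alpha_i\|^2(c'\beps^2)^2\leq\beta_i$, so for any tuple with $\sum_j\|x_j\|^2\leq(c'\beps^2)^2$ one has $\sum_j(\alpha_i(x_j))^2\leq\|\alpha_i\|^2\sum_j\|x_j\|^2\leq\beta_i$; thus $\bar K$ contains a ball of the \emph{same} radius, as the observation states. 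Finally, your diameter count treats $\dim\bigl(\wh^{\bn}_r(E^+)\bigr)$ as $O(\hN)$, dropping the factor $\bn$; this is harmless for the logarithmic bound $L\leq C(1+|\log\beps|)$ actually used downstream (and the paper is equally cavalier here), but it should be acknowledged rather than asserted.
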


Thanks to Observation~\ref{obs:conditioning} and facts from Computer Science, we will see in a few paragraphs that the relevant optimization problems are tractable.
\subsection{Complexity}\lab{ssec:convex_complexity}

Since we have an explicit description of $\bar{K}$ as in intersection of cylinders, we can construct a ``separation oracle", which, when fed with $z$, does the following. \bit\item If $z \in \bar{K}$ then the separation oracle  outputs ``Yes." \item If $z \not\in \bar{K}$ then the separation oracle outputs ``No" and in addition outputs a real affine function $a: \wh_r^{\bar{n}}(E^+) \ra \R$ such that $a(z) < 0$ and $\forall z' \in \bar{K}$ $a(z') > 0$. \eit
To implement this separation oracle for $\bar{K}$, we  need to do the following. Suppose we are presented with a point $x = (x_1, \dots, x_{\bn}) \in \wh_r^{\bar{n}}(E^+)$, where each $x_j \in \wh_r^1(E^+)$.
\ben
\item If, for each $i \in [\bar m]$,
\beqs \sum_{j = 1}^{\bn}(\alpha_i( x_j))^2 \leq \beta_i\eeqs holds, then declare that $x \in \bar K$.
\item Else, let there be some $i_0 \in [\bar m]$ such that
\beqs \sum_{j = 1}^{\bn}(\alpha_{i_0}( x_j))^2 \leq \beta_{i_0}.\eeqs
Output the following separating half-space :
\beqs \{(y_1, \dots, y_{\bn}): \sum_{j = 1}^{\bn} \alpha_{i_0}(x_j) \alpha_{i_0}(y_j - x_j) \leq 0\}.\eeqs
\een

The complexity $A_0$ of answering the above query is the complexity of evaluating $\alpha_i(x_j)$ for each $i \in [\bar m]$ and each $j \in [\bn]$. Thus \beq A_0 \leq \bn {\bar m}(dim(K)) \leq C n \hN^2. \eeq

\begin{claim}
For some $a \in \bar K$, \beqs\lab{eq:valzopt0} B(a, 2^{-L}) \subseteq \{z \in \bar{K} |\obj(z) - \obj(z_{opt}) < \beps\} \subseteq B(0, 2^{L}),\eeqs   where $L$ can be chosen so that
$L \leq C(1 + |\log(\beps)|)$.
\end{claim}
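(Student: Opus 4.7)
The plan is to verify the two inclusions separately, treating the outer one as immediate and the inner one via a standard convex shrinking trick.

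For the outer inclusion, I will invoke Observation~\ref{obs:conditioning} directly: every point of $\bar K$ lies within Euclidean distance $C\hN\le C/\beps^d$ of the origin, so certainly the $\beps$-sublevel set does too. Choosing $L$ so that $2^L\ge C/\beps^d$, i.e.\ $L\ge \log_2 C+d\,|\log_2\beps|$, settles this half and is already of the claimed size $C(1+|\log\beps|)$.

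For the inner inclusion, the key input is the other half of Observation~\ref{obs:conditioning}: $\bar K$ contains a Euclidean ball $B(0,r_0)$ with $r_0=c'\beps^2$. Before using it I need a crude upper bound $M:=\sup_{\vec{P}\in\bar K}\obj(\vec{P})=O(1)$. This follows because $|\bar y_i|\le 1$ and, for any $\vec{P}\in\bar K$, Corollary~\ref{cor:feff3} produces an extending function $F$ with $\|F\|_{\C^r}\le C$, giving in particular $|P_{x_i}(x_i)|=|F(x_i)|\le C$; then $\obj(\vec{P})\le\sum_i\mu_i(1+C)^2=(1+C)^2$ since $\sum_i\mu_i=1$.

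With these ingredients in hand I set $t:=\beps/M$ and $a:=(1-t)z_{opt}$. Convexity of $\bar K$ combined with $0\in B(0,r_0)\subseteq\bar K$ yields $a\in\bar K$ and $B(a,tr_0)\subseteq\bar K$. For any $z\in B(a,tr_0)$, writing $z=(1-t)z_{opt}+tw$ with $w\in B(0,r_0)\subseteq\bar K$, convexity of the quadratic $\obj$ gives
\[
\obj(z)\;\le\;(1-t)\,\obj(z_{opt})+t\,\obj(w)\;\le\;\obj(z_{opt})+tM\;=\;\obj(z_{opt})+\beps,
\]
so $B(a,tr_0)$ lies in the sublevel set. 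Since $tr_0\ge (\beps/C)(c'\beps^2)=c''\beps^3$, taking $L$ with $2^{-L}\le c''\beps^3$, i.e.\ $L\ge 3|\log_2\beps|+\log_2(1/c'')$, closes the inner inclusion, again within the budget $C(1+|\log\beps|)$. The final $L$ is the maximum of the two bounds above.

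The only mildly delicate step is the bound $M=O(1)$: one must remember that membership in $\bar K$ gives a uniform $\C^r$-extension bound via Corollary~\ref{cor:feff3}, and that $P_{x_i}(x_i)$ is exactly the value at $x_i$ of any such extension, which is therefore controlled. Everything else is a direct consequence of Observation~\ref{obs:conditioning} together with the standard fact that on a convex body sandwiched between an inner and outer Euclidean ball, a bounded convex function has well-conditioned sublevel sets centered at a suitable shrinkage of the minimizer toward the interior point.
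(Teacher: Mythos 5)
Your argument is correct, and for the essential half of the claim it proceeds along a genuinely different route from the paper. The outer inclusion is handled the same way in both: Observation~\ref{obs:conditioning} bounds $\bar K$ inside $B(0, C\hN)\subseteq B(0, C\beps^{-d})$. For the inner inclusion, however, the paper works locally at $z_{opt}$: it bounds the gradient of $\obj$ at $z_{opt}$ by $C\hN$ and the Hessian by the identity, deduces that the cap $K_h\cap B\bigl(z_{opt},\tfrac{\beps}{2C\hN}\bigr)$ of the cone $K_h$ (the convex hull of $z_{opt}$ and the inner ball of radius $c'\beps^2$) lies in the sublevel set, and then extracts a small ball from that cone cap. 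You instead prove a global bound $M=\sup_{\bar K}\obj=O(1)$ (using $\sum_i\mu_i=1$, $|\bar y_i|\le 1$, and the fact that membership in $\bar K$ forces $|P_{x_i}(x_i)|\le C$ via Corollary~\ref{cor:feff3} --- equivalently, via the derivative bound already asserted inside Observation~\ref{obs:conditioning}), and then apply the standard shrinkage $a=(1-t)z_{opt}$, $B(a,tr_0)=(1-t)z_{opt}+tB(0,r_0)\subseteq\bar K$, with convexity of $\obj$ giving $\obj\le\obj(z_{opt})+tM$ on that ball. This avoids any gradient or Hessian estimate at $z_{opt}$ and in fact yields a larger inner radius (of order $\beps^3$ rather than the paper's $2^{-2L}$), at the modest cost of the extra step establishing $M=O(1)$; both routes give $L\le C(1+|\log\beps|)$. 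One cosmetic point: with $t=\beps/M$ you only get $\obj(z)-\obj(z_{opt})\le\beps$, whereas the sublevel set in the claim is defined by strict inequality, so take $t=\beps/(2M)$ (and note $t<1$); this changes nothing in the bound on $L$.
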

\begin{proof}
By Observation~\ref{obs:conditioning}, we see that the diameter of $\bar K$ is at most $C \beps ^{-d}$  and $\bar K$ contains a ball $B_L$ of radius $2^{-L}$. Let the convex hull of $B_L$ and the point $z_{opt}$ be $K_h$. Then,  $$\{z \in {K_h} |\obj(z) - \obj(z_{opt}) < \beps\}  \subseteq \{z \in \bar{K} |\obj(z) - \obj(z_{opt}) < \beps\}$$ because $\bar{K}$ is convex.
Let the set of all $\vec{P} \in \wh_r^{\bar{n}}(E^+)$ at which \beqs \obj := \sum_{i=1}^{\hN} \mu_i |\bar{y}_i - P_{x_i}(x_i)|^2 = 0\eeqs be the affine subspace $H$. Let $f:\wh_r^{\bar{n}}(E^+) \ra \R$ given by $$f(x) = \dist(x, z_{opt}) :=  |x- z_{opt}|,$$ where $|\cdot|$ denotes the Euclidean norm. We  see that the magnitude of the gradient of  $\obj$ is bounded above by $C\hN$ at $z_{opt}$, and the Hessian of $\obj$ is bounded above by the Identity. Therefore,
$$\{z \in {K_h} |\obj(z) - \obj(z_{opt}) < \beps\}  \supseteq \{z \in {K_h} \big | 2 C\hN (f(z)) < \beps\}.$$

We note that $$\{z \in {K_h} \big | 2 C\hN (f(z)) < \beps\} = K_h \cap B\left(z_{opt}, \frac{\beps}{2C\hN}\right),$$ where the right hand side denotes the intersection of $K_h$ with a Euclidean ball of radius $\frac{\beps}{2C\hN}$ and center $z_{opt}$.
By the definition of $K_h$,
$K_h \cap B\left(z_{opt}, \frac{\beps}{2C\hN}\right)$ contains a ball of radius $2^{-2L}$. This proves the claim.
\end{proof}
Given a separation oracle for $\bar{K} \in \R^{\bar{n}(\dim(K))}$ and the guarantee that for some $a \in \bar K$, \beq\lab{eq:valzopt} B(a, 2^{-L}) \subseteq \{z \in \bar{K} |\obj(z) - \obj(z_{opt}) < \beps\} \subseteq B(0, 2^{L}),\eeq   if $\eps > \beps + \obj(z_{opt})$, Vaidya's algorithm (see  \cite{vaidya}) finds a point in $\bar{K} \cap \{z|\obj(z) < \eps\}$ using \beqs O(\dim(\bar{K})A_0L' + \dim(\bar{K})^{3.38}L')\eeqs arithmetic steps, where $L' \leq C(L + |\log(\beps))|)$. Here $A_0$ is the number of arithmetic operations required to answer a query to the separation oracle.

 Let $\eps_{va}$ denote the smallest real number such that
\ben
\item
\beqs \eps_{va} > \beps.
\eeqs
\item For any $\eps > \eps_{va}$, Vaidya's algorithm  finds a point in $\bar{K} \cap \{z|\obj(z) < \eps\}$ using \beqs O(\dim(\bar{K})A_0L' + \dim(\bar{K})^{3.38}L')\eeqs arithmetic steps, where $L' \leq C(1 + |\log(\beps))|)$.
\een

A consequence of (\ref{eq:valzopt}) is that $\eps_{va} \in [2^{-L},  2^{L+1}].$ It is therefore clear that $\eps_{va}$ can be computed to within an additive error of $\beps$ using binary search and $C(L + |\ln \beps|)$ calls to Vaidya's algorithm.

The total number of arithmetic operations is therefore $O(\dim(\bar{K})A_0 L^2 + \dim(\bar{K})^{3.38}L^2)$ where $L \leq C(1 + |\log(\beps)|)$.

\section{Patching local sections together}
\begin{figure}\label{fig:patching}
\begin{center}
\includegraphics[height=2.4in]{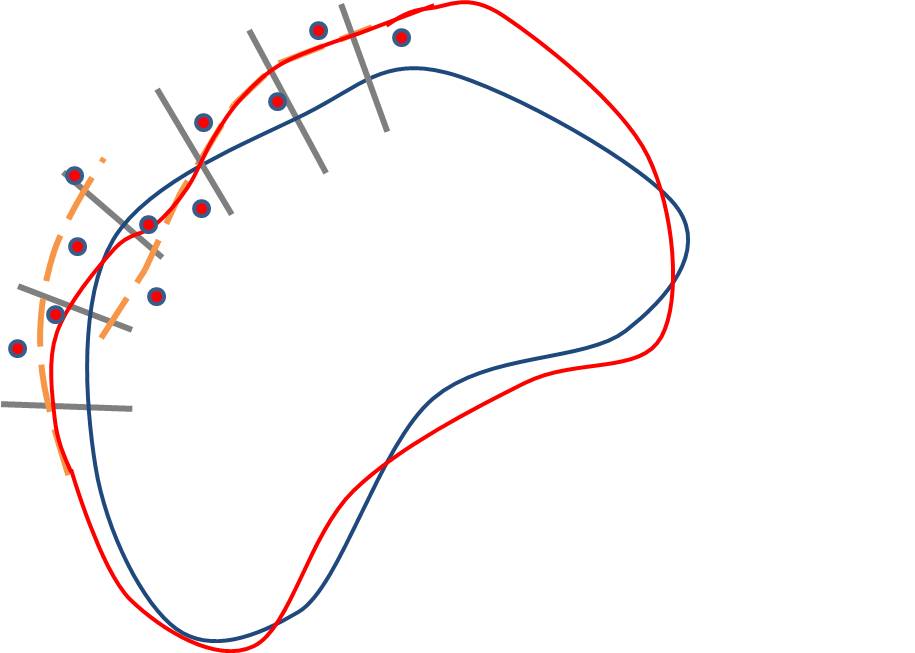}
\caption{Patching local sections together: base manifold in blue, final manifold in red}
\end{center}
\end{figure}

For any $i \in [\bar N]$, recall the cylinders $\cyl_i$ and Euclidean motions $o_i$ from Section~\ref{sec:cyl_pac}.

Let $\base(\cyl_i) := o_i(\cyl\cap \R^d)$ and $\stalk(\cyl_i) := o_i(\cyl \cap \R^{n-d})$. Let $\check{f}_i : B_d \ra B_{n-d}$ be an arbitrary $C^2$ function such that \beq\lab{eq:bound.01}\|\check{f}_i\|_{\C^2} \leq \frac{2\bar{\tau}}{\tau}.\eeq

Let $f_i:\base(\cyl) \ra \stalk(\cyl)$ be given by
\beq\lab{eq:bound.011} f_i(x) = \bar{\tau}\check{f}_i\left(\frac{x}{\bar{\tau}}\right).\eeq

Now, fix an $i \in [\bar N]$.  Without loss of generality, we will drop the subscript $i$ (having fixed this $i$), and assume that $o_i := id$, by changing the frame of reference using a proper rigid body motion.  Recall that  $\hat{F}^{\bar o}$ was defined by (\ref{eq:fzobar}), \ie
\beqs \hat{F}^{\bar o}(w) :=  \frac{F^{\bar{o}}(\bar{\tau} w)}{\bar{\tau}^2},\eeqs (now $0$ and $o_i = id$ play the role that $z$ and $\Theta$ played in (\ref{eq:fzobar})). Let $\iN(z)$ be the linear subspace spanned by the top $n-d$ eigenvectors of the Hessian of $\hat{F}^{\bar o}$ at a variable point $z$.  Let the intersection of
\beqs  B_d(0, 1)\times B_{n-d}(0, 1)\eeqs with \beqs\{\tilde{z}\big |\langle \nabla \hat{F}^{\bar o}\big|_{\tilde{z}}, v \rangle = 0 \,\text{for all} \,v \in \Pi_{hi}(\tilde{z})(\R^n)\}\eeqs be locally expressed as the graph of a function $g_i$, where \beq\lab{eq:defgi} g_i:B_d(0, 1) \ra\R^{n-d}.\eeq
For this fixed $i$, we drop the subscript and let
$g:B_d(0, 1) \ra \R^{n-d}$ be given by \beq\lab{eq:def_g_frm_gi}g:=g_i.\eeq
 As in (\ref{eq:diff1_6:44}), we see that  \beqs \Gamma = \{w\,|\,\Pi_{hi}(w)\partial F^{\bar o}(w) = 0\}\eeqs  lies in $\R^{\bar d},$ and the manifold $\MM_{put}$ obtained by patching up all such manifolds for $i \in [\bar N]$
is, as a consequence of  Proposition~\ref{thm:federer} and Lemma~\ref{lem:charlie}  a submanifold, whose reach is at least $c \tau$.
 Let \beqs \bar{D}^\norm_{\bar{F}^{\bar o}} \ra \MM_{put}\eeqs be the bundle over  $\MM_{put}$ defined by (\ref{eq:defbund}).

Let $s_i$ be the local section of $\bar{D}^\norm := \bar{D}^\norm_{{F^{\bar{o}}}}$ defined by \beq\lab{eq:fixU}\{z + s_i(z)| z \in U_i\} := o_i\left(\left\{x +  f_i(x)\right\}_{x \in \base(\cyl)}\right),\eeq where $U := U_i \subseteq \MM_{put}$ is an open set fixed by (\ref{eq:fixU}). The choice of $\frac{\bar{\tau}}{\tau}$ in (\ref{eq:bound.01}) is small enough to ensure that there is a unique open set $U$ and a unique $s_i$ such that (\ref{eq:fixU}) holds (by Observations \ref{obs:ch49.1}, \ref{obs:2} and \ref{obs:3}). We define $U_j$ for any $j \in [\bar N]$ analogously.
%Thus $s_i$ maps a point $x \in  U_i$  to a point $y \in {\bar %D}^\norm_x$.   When $\ob_6$ is a sufficiently small controlled %constant, this c such a section exists, uniquely fixes $U_i$, %and is itself unique.
 %defined over $U_i := n\{\Phi_d((w, f_i(w)))\big| w \in o_i(B_d(0, c_5))\}$ as follows.
%For $ x =  \Phi^i_d((w, f_i(w))),$  \beq s_i(x) := %\Phi^i_{n-d}((w, f_i(w)). \eeq
%\footnote{Define $\Phi^i_d, \Phi^i_{n-d}.$}
Next, we construct a partition of unity on $\MM_{put}$.
For each $j \in [\bar N]$, let $\tilde{\theta}_j: \MM_{put} \ra [0, 1]$ be an element of a partition of unity defined as follows. For $x \in \cyl_j$,
\beqs \tilde{\theta}_j(x) :=  \left\{
                               \begin{array}{ll}
                                 \bmp\left(\frac{\Pi_d(o_j^{-1} x)}{\bar \tau}\right), & \hbox{if $x \in \cyl_j$;} \\
                                 0 & \hbox{otherwise.}
                               \end{array}
                             \right.\eeqs
where $\bmp$ is defined by (\ref{eq:bmp}).
Let \beqs \theta_j(z) := \frac{\tilde{\theta}_j(z)}{\sum_{j' \in [\bar N]} \tilde{\theta}_{j'}(z)}. \eeqs

We use the local sections $\{s_j|j\in [\bar N]\}$, defined separately for each $j$ by (\ref{eq:fixU}) and the partition of unity $\{\theta_i\}_{i \in \bar N}$, to obtain a global section  $s$ of $D^\norm_{\bar o}$ defined as follows for $x \in U_i$.
\beq \lab{eq:defsx} s(x) := \sum_{j\in [\bar N]} \theta_j(x) s_j(x). \eeq
We also define $f:V_i\ra B_{n-d}$ by
\beq\lab{eq:def-f}\{z + s(z)| z \in {U}_i\} := \left\{x +  \bar \tau f(x/\bar\tau)\right\}_{x \in V_i}.\eeq The above equation fixes an open set $V_i$ in $\R^d$.
The graph of $s$, \ie \beq\lab{eq:defmfin} \left\{(x + s(x))\big |x \in
\MM_{put}\right\} =: \MM_{fin}\eeq is the output manifold. We see that (\ref{eq:defmfin}) defines a manifold $\MM_{fin}$, by checking this locally.
We will obtain a lower bound on the reach of $\MM_{fin}$ in Section~\ref{sec:reach}.
%{\color{blue} \section{Details of the Algorithm}}

\section{The reach of the output manifold}\lab{sec:reach}
Recall that  $\hat{F}^{\bar o}$ was defined by (\ref{eq:fzobar}), \ie
\beqs \hat{F}^{\bar o}(w) :=  \frac{F^{\bar{o}}(\bar{\tau} w)}{\bar{\tau}^2},\eeqs (now $0$ and $o_i = id$ play the role that $z$ and $\Theta$ played in (\ref{eq:fzobar})).
We place ourselves in the context of  Observation~\ref{obs:3}. By construction, ${F}^{\bar o}:B_n \ra \R$ satisfies the conditions of Lemma~\ref{lem:charlie}, therefore there exists a  map
\beqs {{\Phi}}: B_n(0, \oc_{11}) \ra B_d(0, \oc_{10})\times B_{n-d}\left(0, \frac{\oc_{10}}{2}\right),\eeqs
satisfying the following condition.
\beq \lab{eq:zxv} {\Phi}(z) = (x, \Pi_{n-d} v), \eeq
where $$z = x + g(x) + v,$$ and $$v \in \iN(x + g(x)).$$
Also, $x$ and $v$ are $C^{r}-$smooth functions of $z \in B_n(0, \bar{c}_{11})$.
with derivatives of order up to $r$ bounded above by $C$.
Let  \beq \check{\Phi}: B_n(0, \oc_{11}\bar{\tau}) \ra B_d(0, \oc_{10}\bar{\tau})\times B_{n-d}\left(0, \frac{\oc_{10}\bar{\tau}}{2}\right)\eeq be given by
$$\check{\Phi}(x) = \bar{\tau}\Phi(x/\bar{\tau}).$$

%\beq \tilde{\Phi}(z) = (x, v), \eeq where $x$ and $v$ are %those appearing in (\ref{eq:zxv}).
Let $D_{g}$ be the disc bundle over the graph of $g$, whose fiber at $x + g(x)$ is the disc  \beqs B_{n}\left(x + g(x), \frac{\oc_{10}}{2}\right) \bigcap \iN(x + g(x)).\eeqs
%Let $\oc_{11}$ be the constant introduced in (\ref{eq:enter-c_11}).
By Lemma~\ref{lem:zoom_out} below,  we can ensure, by setting $\oc_{12} \leq \bar c$ for a sufficiently small controlled constant $\bar c$, that the derivatives of $\Phi - id$ of order less or equal to  $r=k-2$ are bounded above by a prescribed controlled constant $c$.

\begin{lemma}\lab{lem:zoom_out}
For any controlled constant $c$, there is a controlled constant $\bar c$ such that if $\oc_{12} \leq \bar c$, then for each $i \in [\bar N]$, and each $|\a| \leq 2$ the functions $\Phi$ and $g$, respectively defined in (\ref{eq:zxv}) and (\ref{eq:def_g_frm_gi}) satisfy
\beqs |\partial^\alpha (\Phi - id)| \leq c.\eeqs
\beqs |\partial^\alpha g| \leq c.\eeqs
\end{lemma}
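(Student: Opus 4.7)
The plan is to exploit the fact that shrinking $\oc_{12}$ amounts to viewing the manifold $\MM$ at a scale $\bar\tau = \oc_{12}\tau$ that is much smaller than its reach $\tau$. At that scale the manifold looks like an affine subspace plus an arbitrarily small perturbation, so every quantity tied to its curvature, including the derivatives of $g$ and of $\Phi - id$, can be controlled by positive powers of $\oc_{12}$. Concretely, one runs through the conclusions of Lemma~\ref{lem:charlie} applied to $\hat F^{\bar o}$ with the parameter $\rho$ tracked explicitly -- the proof of Lemma~\ref{lem:put} already shows that $\rho$ can be made as small as desired by taking $\oc_{12}$ small -- and then transfers the bounds on $\Psi$ and $E$ to the corresponding bounds on $g$ and $\Phi = E^{-1}$.

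For $g = \Psi$, the $C^0$ and $C^1$ estimates at the origin come directly from retaining the $\rho$-dependence in (\ref{eq:chnew}) and (\ref{eq:ch27}): $|g(0)| \leq C\rho$ and $|\partial g(0)| \leq C\rho^{1/3}$. For the $C^2$ estimate I would bypass implicit differentiation and use instead the global output of Lemma~\ref{lem:put}, which asserts that $\MM_{put}$ has reach $\geq c\tau$ in the original picture, equivalently reach $\geq c/\oc_{12}$ in the rescaled picture in which $g$ is defined. Proposition~\ref{thm:federer}, applied to the graph $\{(x, g(x))\}$, then yields $|\partial^2 g| \leq C\oc_{12}$ uniformly on $B_d(0, 1)$, and this second-derivative control propagates the pointwise estimates at $0$ to uniform bounds on $|\partial^\alpha g|$ throughout the domain for $|\alpha| \leq 1$.

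For $\Phi - id$, I would use the explicit formula $E(x, v) = (x, \Psi(x)) + \Pi_{hi}(x, \Psi(x))v$, of which $\Phi$ is the local inverse, and note that $E$ is literally the identity when $\Psi \equiv 0$ and $\Pi_{hi} \equiv \Pi_{n-d}$. Combining (\ref{eq:ch19}) with the bounds on $\Psi$ just established yields $\|E - id\|_{C^0} + \|\partial(E - id)\|_{C^0} \leq C\oc_{12}^{1/3}$; more precisely, the Jacobian of $E$ at any point of the relevant neighborhood has the block form (\ref{eq:ch44}) with all off-identity entries of size $O(\oc_{12}^{1/3})$, hence is invertible with inverse $I + O(\oc_{12}^{1/3})$. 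The standard $C^2$ inverse function theorem then produces the desired $C^1$ bound on $\Phi - id$.

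The main obstacle I anticipate is the $C^2$ transfer for $\Phi - id$, because second derivatives of $E$ involve the term $\partial^2 \Pi_{hi}\cdot v$, which a priori is only bounded rather than small. Bypassing this requires showing that the derivatives of $\Pi_{hi}$, equivalently of $\partial^2 \hat F^{\bar o}$, are themselves small when $\oc_{12}$ is small. This in turn follows by unpacking the partition-of-unity construction (\ref{eq:Fhatbaro}) and using that the tangent discs aggregated there differ by rotations of angle $O(\oc_{12})$ across displacements of order $\bar\tau$, so the aggregated Hessian varies by $O(\oc_{12})$ as one moves within a single cylinder. Plugging this back into the chain-rule expansion of $\partial^2(E^{-1})$ yields $|\partial^2 (\Phi - id)| \leq C\oc_{12}^{1/3}$. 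Choosing $\bar c$ so that the resulting power of $\oc_{12}$ is at most $c$ completes the proof.
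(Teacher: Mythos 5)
Your strategy is genuinely different from the paper's, and parts of it are viable (using the reach bound of Lemma~\ref{lem:put} together with Proposition~\ref{thm:federer} to force $|\partial^2 g|\leq C\oc_{12}$ in rescaled coordinates is a reasonable way to get the second-order estimate for $g$, given the uniform bound $|\partial g|\leq C$ from Lemma~\ref{lem:charlie}). But there is a concrete gap in the $\Phi-id$ part. The estimate (\ref{eq:ch19}) that you invoke, $|\Pi_{hi}(z)-\Pi_{n-d}|\leq C\rho^{1/3}$, is only valid for $|z|\leq\rho^{1/3}$, a neighborhood that shrinks as $\oc_{12}\to 0$, whereas $\Phi$ must be controlled on a ball of \emph{controlled} radius (of order $\oc_{10},\oc_{11}$), which does not shrink. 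Away from that tiny neighborhood, Lemma~\ref{lem:charlie} only supplies $|\partial_z\Pi_{hi}(z)|\leq C$ via (\ref{eq:ch18}), so $\Pi_{hi}(z)-\Pi_{n-d}$ is a priori of size $O(1)$ there, and your claimed bound $\|E-id\|_{C^0}+\|\partial(E-id)\|_{C^0}\leq C\oc_{12}^{1/3}$ on the full domain does not follow from (\ref{eq:ch19}) plus the bounds on $\Psi$. The same issue already contaminates the $C^0$ estimate for $\Phi-id$ (through the term $\Pi_{n-d}v-v$) and the claim that the Jacobian has the form (\ref{eq:ch44}) with small off-identity blocks at every point of the domain, not just at the origin. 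In other words, the difficulty you flag only for the second-order term $\partial^2\Pi_{hi}\cdot v$ is in fact the load-bearing step at \emph{every} order: everything reduces to showing that the Hessian of $\hat F^{\bar o}$ varies by $O(\oc_{12})$ across the controlled-size domain (equivalently that $\partial^3 \hat F^{\bar o}$ is small, not merely bounded), and that estimate is only sketched, not proved. Until it is carried out -- with the bookkeeping for the packet alignment conditions ($|(Id-U_{i_j})v|<c_{12}\bar\tau|v-x_{i_j}|$, $|Tr_{i_j}(0)|<C\bar\tau^2/\tau$) and the $O(1)$ rescaled derivatives of the cutoffs in (\ref{eq:Fhatbaro}) -- the proof is incomplete.

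For comparison, the paper avoids proving any such refinement of Lemma~\ref{lem:charlie}. It rescales ("zooms out") by a large controlled factor $\check C$, replacing the cylinders by the fattened $\check\cyl_j$ and translating the distant ones so that the effective dimension $\bar d$ stays controlled by $d$, and applies Lemma~\ref{lem:charlie} to $\check F^{\check o}$ of (\ref{eq:fzocheck}); the scaling converts the crude bound $|\partial^\a\Phi|\leq C$ into the smallness (\ref{eq:phi2andmore}), (\ref{eq:g2andmore}) for $|\a|\geq 2$. The low-order smallness is then obtained from an anchor point where $\partial\Phi=id$ exactly, namely (\ref{eq:thisafterphi2}), together with Taylor's theorem -- rather than from tracking $\rho$-dependence as you propose. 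If you want to keep your route, the honest statement is that you must prove the quantitative "Hessian varies by $O(\oc_{12})$" lemma directly from the packet structure; that is doable, but it is precisely the work the paper's rescaling trick is engineered to bypass, so you should either supply it in full or adopt the rescaling argument.
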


\begin{proof}[Proof of Lemma~\ref{lem:zoom_out}]
We would like to apply Lemma~\ref{lem:charlie} here, but its conclusion would not directly help us, since it would give a bound of the form
$$|\partial^\alpha \Phi| \leq C,$$ where $C$ is some controlled constant. To remedy this, we are going to use a simple scaling argument. We first provide an outline of the argument. We change scale by "zooming out", then apply Lemma~\ref{lem:charlie}, and thus obtain a bound of the the desired form
\beqs |\partial^\alpha (\Phi - id)| \leq c.\eeqs
We replace each cylinder $\cyl_j = o_j(\cyl)$ by  $\check{\cyl}_j := o_j( \bar{\tau}(B_d \times (\check{C} B_{n-d})))$.
 Since the guarantees provided by Lemma~\ref{lem:charlie} have an unspecified dependence on $\bar d$ (which appears in (\ref{eq:diff1_oct_12:50})), we require an upper bound on the "effective dimension"  that depends only on $d$ and is independent of  $\check{C}$. If we were only to "zoom out", this unspecified dependence on $\bar d$ renders the bound useless. To mitigate this, we need  to modify the cylinders that are far away from the point of interest. More precisely, we consider a point $x \in \check{cyl}_i$ and replace each $\cyl_j$ that does not contribute to $\Phi(x)$ by  $\check{\cyl}_j$,  a suitable translation of $$\bar{\tau}(B_d \times (\check{C} B_{n-d})).$$ This ensures that the dimension of
\beqs \{\sum_j \la_j v_j|\, \la_j \in \R,  v_j \in \check{o}_j(\R^d)\}\eeqs is bounded above by a controlled constant depending only on $d$. We then apply Lemma~\ref{lem:charlie} to the function
$ \check{F}^{\check o}(w)$ defined in (\ref{eq:fzocheck}).
This concludes the outline; we now proceed with the details.\\\\
Recall that we have fixed our attention to $\check{cyl}_i$.
%Let $\check{S}$ denote the set of all $j \in [\bar N]$ such that $|o_j(0)| < 3 \bar{\tau}.$
Let \beqs \check{\cyl} := \bar{\tau}(B_d \times (\check{C} B_{n-d})) = \check{\cyl_i}, \eeqs where $\check{C}$ is an appropriate (large) controlled constant, whose value will be specified later.

Let \beqs \check{\cyl^2} := 2\bar{\tau}(B_d \times (\check{C} B_{n-d})) = \check{\cyl^2_i}. \eeqs
Given a  Packet $\bar o:= \{o_1, \dots, o_{\bar N}\}$, define a collection of cylinders \beqs\{\check{\cyl}_j|j \in [\check{N}]\} \eeqs in the following manner.
Let \beqs\check{S} := \left\{ j\in [\bar N]\big| |o_j(0)| < 6\bar{\tau}\right\}.\eeqs
 Let \beqs\check{T} := \left\{j\in [\bar N]\big| |\Pi_d(o_j(0))| < \check{C}\bar{\tau} \,\text{and}\,|o_j(0)| < 4\sqrt{2}\hat{C}\bar{\tau}\right\},\eeqs and assume without loss of generality that $\check{T} = [\check{N}]$ for some integer $[\check{N}]$. Here  $ 4\sqrt{2}\hat{C}$ is a constant  chosen to ensure that for any $j \in [\bar N]\setminus[\check{N}]$, $\check{\cyl^2}_j \cap \check{\cyl^2} = \emptyset$.
For $v \in \R^n$, let $Tr_v:\R^n \ra \R^n$ denote the map that takes $x$ to $x + v$. For any $j \in  \check{T} \setminus \check{S} $, let \beqs v_{j}:= \Pi_d o_j(0).\eeqs
Next, for any $j \in \check{T}$, let
\beqs\check{o}_j := \left\{\begin{array}{ll}
                                                              o_j, & \hbox{if $ \check{S}$;} \\
                                                          Tr_{v_{j}}   , & \hbox{if $j \in \check{S}\setminus \check{T} $;.}
                                                            \end{array}
                                                          \right.
\eeqs
For each $j \in \check{T}$, let $\check{\cyl}_j := \check{o}_j(\check{\cyl}).$
Define $F^{\check o}:\bigcup_{j \in \check{T}} \check{\cyl}_j \ra \R$ by

\beq \lab{eq:Fchecko}{F}^{\check o}(z) = \frac{\sum\limits_{\check{\cyl^2}_j \ni z} \big|\Pi_{n-d}(\check{o}_j^{-1}(z))\big|^2\bmp\left(\frac{\Pi_d(\check{o}_j^{-1}(z))}{2\bar \tau}\right)}{\sum\limits_{\check{\cyl^2}_j \ni z} \bmp\left(\frac{\Pi_d(\check{o}_j^{-1}(z))}{2\bar \tau}\right)}.\eeq

Taking $\oc_{12}$ to be a sufficiently small controlled constant depending on $\check{C}$, we see that
\beq
\lab{eq:fzocheck} \check{F}^{\check o}(w) :=  \frac{F^{\check{o}}( \check{C}\bar{\tau} w)}{\check{C}^2\bar{\tau}^2},\eeq restricted to $B_n$, satisfies the requirements of Lemma~\ref{lem:charlie}.
Choosing $\check{C}$ to be sufficiently large, for each $|\a| \in [2, k]$, the function $\Phi$  defined  in  (\ref{eq:zxv}) satisfies
\beq\lab{eq:phi2andmore} |\partial^\alpha \Phi| \leq c,\eeq and

for each $|\a| \in [0, k-2]$, the function $g$  defined in  (\ref{eq:zxv}) satisfies
\beq \lab{eq:g2andmore} |\partial^\alpha g| \leq c.\eeq
Observe that we can choose  $j \in [\bar N]\setminus [\check N]$ such that $|\check{o}_j(0)| < 10\tau$, and for this $j$,   $\check{\cyl}_j \cap \check{\cyl} = \emptyset$ and so
 \beq\lab{eq:thisafterphi2} \partial \Phi\big|_{({\bar\tau}^{-1})\check{o}_j(0)} = id.\eeq
%and
%\beq\lab{eq:thisafterg2} \partial g\big|_{({\bar \tau}^{-1})\check{o}_j(0)} = 0.\eeq
The Lemma follows from Taylor's Theorem, in conjunction with (\ref{eq:phi2andmore}), (\ref{eq:g2andmore}) and  (\ref{eq:thisafterphi2}).
\begin{obs}\lab{obs:phi-inv} By choosing $\check{C} \geq 2/\oc_{11}$ we find that the domains of both $\Phi$ and $\Phi^{-1}$ may be extended to contain the cylinder $\left(\frac{3}{2}\right) B_d \times B_{n-d}$, while satisfying (\ref{eq:zxv}).
\end{obs}
\end{proof}

Since $|\partial^\a(\Phi - Id)(x)| \leq c$ for $|\a| \leq r$ and $x \in \left(\frac{3}{2}\right) B_d \times B_{n-d}$,
we have  $|\partial^\a(\Phi^{-1} - Id)(w)| \leq c$ for $|\a| \leq r$ and  $w \in  B_d \times B_{n-d}$.
For the remainder of this section, we will assume a scale where $\bar \tau = 1$.

For $u \in U_i$, we have the following equality which we restate from (\ref{eq:defsx}) for convenience.
\beqs s(u) = \sum_{j \in [\bar N]} \theta_j(u)s_j(u). \eeqs
Let $\Pi_{pseud}$ (for "pseudonormal bundle") be the map from a point $x$ in $\cyl$ to the basepoint belonging to $\MM_{put}$ of the corresponding fiber.
The following relation exists between $\Pi_{pseud}$ and $\Phi$:
$$ \Pi_{pseud} = \Phi^{-1}\Pi_d\Phi.$$
%Proving that $|\partial^\a s_j\Phi^{-1}| < c$ seems to require some form of the implicit function %theorem, but I unfortunately don't see how.
We define the $C^{k-2}$ norm of a local section $s_j$ over $U \subseteq U_j \cap U_i$ by
$$\|s_j\|_{C^{k-2}(U)} := \|s_j \circ \Phi^{-1}\|_{C^{k-2}(\Pi_d(U))}.$$

%We have
%\beqs\{t + s_j(t)| t \in U_i\} = \left\{x +  {f_j}(x)|{x \in \base(\cyl)}\right\}.\eeqs
Suppose for a specific $x$ and $t$,
\beqs x + f_j(x) = t + s_j(t),\eeqs where $t$ belongs to $U_j\cap U_i$.
Applying $\Pi_{pseud}$ to both sides,
\beqs \Pi_{pseud}(x + f_j(x)) = t. \eeqs
Let \beqs \Pi_{pseud}(x + f_j(x)) =: \phi_j(x).\eeqs
Substituting back, we have
\beq\lab{eq:sj1}x + f_j(x) =  \phi_j(x) + s_j(\phi_j(x)).\eeq

By definition~\ref{def:interpolant}, we have the bound  $\|f_j\|_{C^{k-2}(\phi_j^{-1}(U_i \cap U_j))}\leq c$. We have $$\Pi_{pseud}(x + f_j(x)) = (\Pi_{pseud}- \Pi_d)(x + f_j(x)) + x,$$ which gives the bound  $$\|\phi_j - Id\|_{C^{k-2}(\phi_j^{-1}(U_i \cap U_j))}\leq c.$$
Therefore, from (\ref{eq:sj1}),
\beq \|s_j\circ\phi_j\|_{C^{k-2}(\phi_j^{-1}(U_i \cap U_j))} \leq c.\eeq

Also, \beq \|\phi_j^{-1}\circ \Phi^{-1} - Id\|_{C^{k-2}(\Pi_d(U_i\cap U_j))} \leq c.\eeq
From the preceding two equations, it follows that

\beq \|s_j\|_{C^{k-2}(U_i\cap U_j)} \leq c.\eeq

The cutoff functions $\theta_j$ satisfy
\beq \|\theta_j\|_{C^{k-2}(U_i\cap U_j)} \leq C.\eeq
Therefore,
by (\ref{eq:defsx}), \beq \|s\|_{C^{k-2}(U_i\cap U_j)} \leq C c,\eeq
which we rewrite as  \beq \lab{eq:s2}\|s\|_{C^{k-2}(U_i\cap U_j)} \leq c_1.\eeq

We will now show that $$\|f\|_{C^{k-2}(V_i)} \leq c.$$

By (\ref{eq:def-f}) in view of $\bar\tau=1$, for $u \in U_i$, there is an $x \in V_i$ such that
$$u + s(u) = x +  {f}(x).$$ %let this set be denoted $\sigma$.

This gives us \beqs \Pi_d(u + s(u)) = x.\eeqs
Substituting back, we have
$$\Pi_d(u + s(u)) +  {f}(\Pi_d(u + s(u))) = u + s(u).$$

Let $$\psi(u) := \Pi_d(u + s(u)).$$
This gives us \beq\lab{eq:psi1}f(\psi(u)) = (u - \psi(u)) + s(u).\eeq

By (\ref{eq:s2}) and the fact that $|\partial^\a(\Phi - Id)(x)| \leq c$ for $|\a| \leq r$, we see that \beq\lab{eq:psi}\|\psi - Id\|_{C^{k-2}(U_i)} \leq c.\eeq
By (\ref{eq:psi1}),(\ref{eq:psi}) and (\ref{eq:s2}), we have $\|f\circ \psi\|_{C^{k-2}(U_i)} \leq c.$

By (\ref{eq:psi}), we have
$$\|\psi^{-1} - Id\|_{C^{k-2}(V_i)} \leq c.$$

Therefore
\beq\lab{eq:boundalphaf}\|f\|_{C^{k-2}(V_i)} \leq c.\eeq

For any point $u \in \MM_{put}$, there is by Lemma~\ref{lem:put} for some $j \in [\bar N]$, a $U_j$ such that $\MM_{put}\cap B(u, 1/10) \subseteq U_j$ (recall that $\bar \tau = 1$). Therefore, suppose $a, b$ are two points on $\MM_{fin}$ such that $|a - b| < 1/20$, then $|\Pi_{pseud}(a) - \Pi_{pseud}(b)| < 1/10$, and so both $\Pi_{pseud}(a)$ and $\Pi_{pseud}(b)$ belong to $U_j$ for some $j$. Without loss of generality, let this $j$ be $i$. This implies that $a, b$ are points on the graph of $f$ over  $V_i$. Then, by (\ref{eq:boundalphaf}) and Proposition~\ref{thm:federer},
$\MM_{fin}$ is a manifold whose reach is at least $c \tau$.

\section{The mean-squared distance of the output manifold from a random data point}

Let $\MM_{opt}$ be an approximately optimal manifold in that
\beqs \reach(\MM_{opt}) > C\tau,\eeqs and
\beqs \vol(\MM_{opt}) < V/C,\eeqs and
\beqs \E_\PP \dist(x, \MM_{opt})^2 \leq  \inf\limits_{\MM \in \G(d, C\tau, cV)}\E_\PP \dist(x, \MM)^2 + \eps.\eeqs

%Note that $\G$ is compact with respect to the Hausdorff topology, and so the infimum stated above is achieved by some manifold in $\G$

Suppose that $\bar{o}$ is the packet from the previous section and that the corresponding function $F^{\bar o}$ belongs to $\asdf(\MM_{opt})$.
We need to show that the $\MM_{fin}$ constructed using $\bar{o}$ serves the purpose it was designed for; namely that the following Lemma holds.
\begin{lemma}\lab{lem:quad_loss}
\beqs \E_{x \dashv \PP} \dist(x, \MM_{fin})^2 \leq C_0\left( \E_{x \dashv \PP} \dist(x, \MM_{opt})^2 + \eps\right).\eeqs
%{\color{blue} Can $C_0$ be taken to be $1 + c_0$?}
\end{lemma}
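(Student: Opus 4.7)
The plan is to compare, pointwise in the random draw $x \dashv \PP$, the squared distance to $\MM_{fin}$ against the squared distance to $\MM_{opt}$, using the fact that both manifolds are sections of the same disc bundle $\bar{D}^\norm_{F^{\bar o}} \to \MM_{put}$, and then upgrade from an empirical bound to an expected bound via Theorem~\ref{thm:ext_manifold}.

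\textbf{Step 1: reducing to fiberwise distance.} Since $F^{\bar o} \in \asdf(\MM_{opt})$, Lemma~\ref{lem:17} together with Lemma~\ref{lem:charlie} show that $\MM_{opt}$ is itself the graph of a section $s^{opt}$ of $\bar{D}^\norm_{F^{\bar o}}$ whose local representatives $s^{opt}_j$ (in the coordinates of each cylinder $\cyl_j$) have $\C^r$-norm bounded by $c\bar\tau/\tau$, because $\reach(\MM_{opt}) \geq C\tau$. For a point $x$ in the total space of $\bar{D}^\norm$, write $u = \Pi_{pseud}(x) \in \MM_{put}$ and decompose $x = u + v$ with $v$ in the fiber at $u$. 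Observation~\ref{obs:3} and Lemma~\ref{lem:zoom_out} give that $\dist(x,\MM_{fin})^2$ is comparable, up to controlled multiplicative constants, to $\|v - s(u)\|^2$, and similarly for $\MM_{opt}$ with $s^{opt}$. For $x$ outside $\bigcup_i \cyl_i$, both $\dist(x, \MM_{fin})$ and $\dist(x, \MM_{opt})$ are bounded below by a fixed multiple of $\tau$ (since both manifolds sit inside the union of cylinders up to $O(\bar\tau)$), so those contributions can be absorbed into a constant factor.

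\textbf{Step 2: convexity of the patching.} Using the section formula $s(u)=\sum_j\theta_j(u)\,s_j(u)$ from \eqref{eq:defsx} and Jensen's inequality applied to the convex function $v\mapsto \|v-\cdot\|^2$,
\begin{equation*}
\|v - s(u)\|^2 \;=\; \Big\|\sum_j \theta_j(u)\,(v - s_j(u))\Big\|^2 \;\leq\; \sum_j \theta_j(u)\,\|v - s_j(u)\|^2,
\end{equation*}
so the squared fiber-error of the patched section at $u$ is dominated by the $\theta_j(u)$-weighted average of the squared fiber-errors of the local sections $s_j$. Consequently,
\begin{equation*}
\dist(x,\MM_{fin})^2 \;\leq\; C\sum_j \theta_j(\Pi_{pseud}(x))\,\|(x-\Pi_{pseud}(x)) - s_j(\Pi_{pseud}(x))\|^2.
\end{equation*}

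\textbf{Step 3: local optimality of $s_j$ against $s^{opt}_j$.} For each cylinder index $j$, the convex program of Section~\ref{ssec:convex_program} produces $s_j$ minimizing, up to additive error $\bar\eps$, the empirical squared error over all local sections of $\C^r$-norm $\leq c\bar\tau/\tau$. Since $s^{opt}_j$ is a feasible competitor in that program, one obtains the local comparison
\begin{equation*}
\sum_{i:\,x_i\in\cyl_j}\theta_j(\Pi_{pseud}(x_i))\,\|(x_i-\Pi_{pseud}(x_i)) - s_j(\Pi_{pseud}(x_i))\|^2 \;\leq\; \sum_{i:\,x_i\in\cyl_j}\theta_j(\Pi_{pseud}(x_i))\,\|(x_i-\Pi_{pseud}(x_i)) - s^{opt}_j(\Pi_{pseud}(x_i))\|^2 + \bar\eps.
\end{equation*}
Summing over $j$, using $\sum_j \theta_j \equiv 1$ on $\MM_{put}$, the bounded overlap of the cylinders (each point lies in $O(1)$ cylinders, by the reach-controlled packing in Section~\ref{sec:cyl_pac}), and combining with Step~1 to pass back from the fiber-error to the true distance, yields the empirical bound
\begin{equation*}
\frac{1}{s}\sum_{i=1}^s \dist(x_i,\MM_{fin})^2 \;\leq\; C_0\Big(\frac{1}{s}\sum_{i=1}^s \dist(x_i,\MM_{opt})^2 \;+\; \eps\Big),
\end{equation*}
after choosing the tolerance $\bar\eps$ of the convex program to be a suitable fraction of $\eps$.

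\textbf{Step 4: from empirical to expected.} Apply the uniform deviation bound of Theorem~\ref{thm:ext_manifold} (equivalently Lemma~\ref{lem:main1}) to both $\MM_{fin}$ and $\MM_{opt}$, which lie in $\G(d,CV,\tau/C)$ by Lemma~\ref{lem:put} and the reach estimate of Section~\ref{sec:reach}. With probability $\geq 1-\delta$ the empirical losses are within $\eps$ of the true losses, so
\begin{equation*}
\E_{x\dashv\PP}\dist(x,\MM_{fin})^2 \;\leq\; C_0\big(\E_{x\dashv\PP}\dist(x,\MM_{opt})^2 + \eps\big),
\end{equation*}
after absorbing the deviation $\eps$ into the right-hand side by enlarging $C_0$.

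\textbf{Main obstacle.} The delicate step is the first one: controlling $\dist(x,\MM_{fin})^2$ by the \emph{squared} fiber-error $\|v - s(u)\|^2$. The fibers of $\bar{D}^\norm$ are only approximately orthogonal to the base manifold (the skewness is controlled by the bounds in Lemma~\ref{lem:zoom_out}), so a naive nearest-point argument produces a cross-term that must be absorbed using the smallness of $\bar\tau/\tau$ and the $\C^{k-2}$-estimate $\|s\|_{\C^{k-2}}\leq c_1$ from \eqref{eq:s2}. Everything else (the partition-of-unity step, the local optimality of the convex program, the passage to expectation) is a routine assembly of the machinery already developed in the paper.
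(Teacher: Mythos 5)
Your argument has essentially the same skeleton as the paper's proof: split $\PP$ into the part supported on $\bigcup_i\cyl_i$ and the part outside, handle the outside part by comparing against the lower bound $\approx\bar\tau$ on both distances, disintegrate the inside part along the fibers of the bundle over $\MM_{put}$, use convexity (Jensen over the partition of unity $\{\theta_j\}$) to dominate the error of the patched section by the weighted errors of the local sections, invoke the near-orthogonality of the fibers (Lemma~\ref{lem:zoom_out}) to pass between fiberwise distance and true distance, and finally play each $s_j$ off against the local graph $f_j^{opt}$ of $\MM_{opt}$, which is feasible for the convex program because of the reach bound on $\MM_{opt}$; that is exactly the chain (\ref{eq:quad_loss:1}), (\ref{eq:quad_loss:2}), (\ref{eq:abov_eq1})--(\ref{eq:abov3}), (\ref{eq:optfi})--(\ref{eq:optfifopt}) in the paper, with the tolerance $\hat\eps=c\eps/\bar N$ absorbing the bounded overlap of the cylinders. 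The one genuine divergence is your Step 4: the paper never passes from empirical sums to expectations inside this lemma, because (\ref{eq:optfi}) is stated directly with respect to $\PP^i$ (the restriction of $\PP$ to $\cyl_i$), so the comparison is deterministic given the construction. Your route--solve the local programs against the sample and then apply the uniform deviation bound of Lemma~\ref{lem:main1} to both $\MM_{fin}$ and $\MM_{opt}$--is workable (the uniform bound does cover the data-dependent $\MM_{fin}$, provided you also check that $\MM_{fin}$ has volume $\leq CV$, not just reach $\geq c\tau$), but it proves only a with-probability-$1-\delta$ version of the stated inequality and spends sample-complexity budget that the paper's formulation does not need here; the probabilistic content is kept in Theorem~\ref{thm:ext_manifold}, upstream of this lemma.

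One smaller point: in Step 1 your stated justification for the outside region is incomplete as written. Knowing only that $\dist(x,\MM_{fin})$ and $\dist(x,\MM_{opt})$ are both bounded below by $c\bar\tau$ does not by itself let you absorb the outside contribution into a constant factor; you also need that $\MM_{fin}$ lies in a tube of width $\ll\bar\tau$ (of order $\bar\tau^2/\tau$) around $\MM_{opt}$ (or that both are graphs of sections of size $\ll\bar\tau$ over $\MM_{put}$), so that for outside points $\dist(x,\MM_{fin})\leq(1+c)\dist(x,\MM_{opt})$. Your parenthetical remark gestures at this, and it is exactly how the paper derives (\ref{eq:quad_loss:2}), but the inequality should be made explicit rather than inferred from the two lower bounds alone.
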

\begin{proof}
Let us examine the manifold $\MM_{fin}$. Recall that $\MM_{fin}$ was constructed from a collection of  local sections $\{s_i\}_{i \in \bar N}$, one for each $i$ such that $o_i \in \bar o$. These
 local sections were obtained from functions $f_i:\base(\cyl_i) \ra \stalk(\cyl_i)$. The $s_i$ were patched together using a partition of unity supported on $\MM_{put}$.

Let $\PP_{in}$ be the measure obtained by restricting  $\PP$ to $\cup_{i \in [\bar N]}\cyl_i$. Let $\PP_{out}$ be the measure obtained by restricting $\PP$ to $\left(\cup_{i \in [\bar N]} \cyl_i\right)^c$. Thus,
\beqs \PP = \PP_{out} + \PP_{in}.\eeqs
For any $\MM \in \G$,
\beq \lab{eq:quad_loss:1} \E_\PP \dist(x, \MM)^2 & = & \E_{\PP_{out}} \dist(x, \MM)^2 + \E_{\PP_{in}} \dist(x, \MM)^2.\eeq

We will separately analyze the two terms on the right when $\MM$ is $\MM_{fin}$. We begin with $\E_{\PP_{out}} \dist(x, \MM_{fin})^2$.
We make two observations:
\ben \item
By (\ref{eq:bound.01}), the function $\check{f}_i$, satisfies
\beqs\|\check{f}_i\|_{L^\infty} \leq \frac{\bar{\tau}}{\tau}.\eeqs \item By Lemma~\ref{lem:zoom_out}, the fibers of the disc bundle $D^{\norm}$ over $\MM_{put} \cap \cyl_i$ are nearly orthogonal to $\base(\cyl_i)$. \een

Therefore, no point outside the union of the $\cyl_i$ is at a distance less than $\bar{\tau}(1 - \frac{2\bar{\tau}}{\tau})$ to $\MM_{fin}$.

Since $F^{\bar o}$ belongs to $\asdf(\MM_{opt})$, we see that no point outside the union of the $\cyl_i$ is at a distance less than $\bar{\tau}(1 - Cc_{12})$ to $\MM_{opt}$. Here $C$ is a controlled constant.

For any given controlled constant $c$, by choosing $\bar{c}_{11}$ (\ie $\frac{\bar \tau}{\tau}$) and $c_{12}$ appropriately, we can arrange for
\beq\lab{eq:quad_loss:2} \E_{\PP_{out}}[\dist(x, \MM_{fin})^2] \leq (1 + c) \E_{\PP_{out}}[\dist(x, \MM_{opt})^2]\eeq
to hold.

Consider terms involving $\PP_{in}$ now. We assume without loss of generality that $\PP$ possesses a density, since we can always find an arbitrarily small perturbation of $\PP$ (in the $\ell^2-$Wasserstein metric) that is supported in a ball and also possesses a density.
Let \beqs\Pi_{put}:\cup_{i \in \bar N} \cyl_i \ra \MM_{put}\eeqs be the projection which maps a point in $\cup_{i \in \bar N} \cyl_i$ to the unique nearest point on $\MM_{put}$.
Let $\mu_{put}$ denote the $d-$dimensional volume measure on $\MM_{put}$.

Let
$\{\PP_{in}^z\}_{z \in \MM_{put}}$
denote the natural measure induced on the fiber of the normal disc bundle of radius $2\bar \tau$ over $z$.

Then,
\beq\lab{eq:abov_eq1} \E_{\PP_{in}} [\dist(x, \MM_{fin})^2] = \int\limits_{\MM_{put}} \E_{\PP_{in}^z} [\dist(x, \MM_{fin})^2]d\mu_{put}(z).\eeq

Using the partition of unity $\{\theta_j\}_{j \in [\bar N]}$ supported in $\MM_{put}$, we split the right hand side of (\ref{eq:abov_eq1}).

\beq\lab{eq:abov_eq2} \int\limits_{\MM_{put}} \E_{\PP_{in}^z} [\dist(x, \MM_{fin})^2]d\mu_{put}(z) = \sum_{i \in {\bar N}} \int\limits_{\MM_{put}} \theta_i(z) \E_{\PP_{in}^z} [\dist(x, \MM_{fin})^2]d\mu_{put}(z). \eeq

For $x \in \cyl_i$, let $\N_x$ denote the unique fiber of $D^\norm$ that $x$ belongs to.
Observe that $\MM_{fin} \cap \N_x$ consists of a single point. Define $\tilde{\dist}(x, \MM_{fin})$ to be the distance of $x$ to this point, \ie \beqs \tilde{\dist}(x, \MM_{fin}) := \dist(x, \MM_{fin} \cap \N_x).\eeqs

We proceed to examine the right hand side in (\ref{eq:abov_eq2}).

By (\ref{eq:abov3})
\beqs \sum_i \int\limits_{\MM_{put}} \theta_i(z) \E_{\PP_{in}^z} [\dist(x, \MM_{fin})^2]d\mu_{put}(z) \leq \sum_i \int\limits_{\MM_{put}} \theta_i(z) \E_{\PP_{in}^z} [\tilde{\dist}(x, \MM_{fin})^2]d\mu_{put}(z).\eeqs

For each $i \in [\bar N]$, let $\MM^i_{fin}$ denote  manifold with boundary corresponding to the graph of $f_i$, \ie let
\beq\MM^i_{fin} := \left\{x +  f_i(x)\right\}_{x \in \base(\cyl)}.\eeq
Since the quadratic function is convex, the average squared "distance" (where "distance" refers to $\tilde{\dist}$) to $\MM_{fin}$ is less or equal to the average of the squared "distances" to the local sections in the following sense.
\beqs \sum_i \int\limits_{\MM_{put}} \theta_i(z) \E_{\PP_{in}^z} [\tilde{\dist}(x, \MM_{fin})^2]d\mu_{put}(z) \leq \sum_i \int\limits_{\MM_{put}} \theta_i(z) \E_{\PP_{in}^z} [\tilde{\dist}(x, \MM^i_{fin})^2]d\mu_{put}(z). \eeqs

Next, we will look at the summands of the right hand side.
Lemma~\ref{lem:zoom_out} tells us that $\N_x$ is almost orthogonal to $o_i(\R^d)$. By Lemma~\ref{lem:zoom_out}, and the fact that each $f_i$ satisfies (\ref{eq:boundalphaf}), we see that \beq\lab{eq:abov3} \dist(x, \MM^i_{fin}) \leq \tilde{\dist}(x, \MM^i_{fin}) \leq (1 + c_0)\dist(x, \MM^i_{fin}).\eeq

%Let $q_i(x) = \left\{
%                \begin{array}{ll}
%                  x, & \hbox{if $\MM^i_{fin} \cap \left(x - o_i(0) + \stalk(\cyl_i)\right) = \emptyset$;} \\
%                  \MM^i_{fin} \cap \left(x - o_i(0) + \stalk({\cyl_i})\right), & \hbox{otherwise.}
%                \end{array}
%              \right.
%$.
%Observe that
%\beq \dist(x, \MM^i_{fin}) \leq \dist(x, q(x)) \leq (1 + %c_0)\dist(x, \MM^i_{fin}).\eeq

Therefore,
\beqs \sum_i \int\limits_{\MM_{put}} \theta_i(z) \E_{\PP_{in}^z} [\tilde{\dist}(x, \MM^i_{fin})^2]d\mu_{put}(z)\leq (1 + c_0)\sum_i \int\limits_{\MM_{put}} \theta_i(z) \E_{\PP_{in}^z} [\dist(x, \MM^i_{fin})^2]d\mu_{put}(z).\eeqs

We now fix  $i \in [\bar N]$.
%{\color{blue} Define $\mu^i_{base}$, define $\tilde{\theta}$.}
Let $\PP^i$ be the measure which is obtained, by the translation via $o_i^{-1}$ of the restriction of $\PP$ to $\cyl_i$. In particular, $\PP^i$ is supported on $\cyl$.

Let $\mu^i_{base}$ be the push-forward of $\PP^i$ onto $\base(\cyl)$ under $\Pi_d$.
For any $x \in \cyl_i$, let $v(x) \in \MM^i_{fin}$ be the unique point such that $x - v(x)$ lies in $o_i(\R^{n-d})$. In particular, $$v(x) = \Pi_d x + f_i(\Pi_d x).$$ By Lemma~\ref{lem:zoom_out}, we see that
\beqs \int\limits_{\MM_{put}} \theta_i(z) \E_{\PP_{in}^z} [\tilde{\dist}(x, \MM^i_{fin})^2]d\mu_{put}(z) \leq C_0 \E_{{\PP}^i} |x - v(x)|^2. \eeqs
 Recall that $\MM^i_{fin}$ is the graph of a function $f_i:\base(\cyl) \ra \stalk(\cyl)$. In Section~\ref{sec:loc_sec}, we have shown how to construct $f_i$ so that it satisfies (\ref{eq:bound.01}) and (\ref{eq:optfi}), where $\hat{\eps} =  \frac{c\eps}{\bar{N}}$, for some sufficiently small controlled constant $c$.

\beq\lab{eq:optfi} \E_{\PP^i} |f_i(\Pi_d x) - \Pi_{n-d} x|^2 \leq \hat{\eps} + \inf\limits_{f:\|f\|_{\C^r} \leq c \bar{\tau}^{-2}} \E_{\PP^i} |f(\Pi_d x) - \Pi_{n-d} x|^2. \eeq

Let $f_i^{opt}:\base(\cyl) \ra \stalk(\cyl)$ denote the function (which exists because of the bound on the reach of $\MM_{opt}$) with the property that \beqs \MM_{opt} \cap \cyl_i = o_i\left(\{x, f_i^{opt}(x)\}_{x \in \base(\cyl)}\right).\eeqs

By (\ref{eq:optfi}), we see that
\beq\lab{eq:optfifopt} \E_{\PP^i} |f_i(\Pi_d x) - \Pi_{n-d} x|^2 \leq \hat{\eps} +  \E_{\PP^i} |f_i^{opt}(\Pi_d x) - \Pi_{n-d} x|^2. \eeq

Lemma~\ref{lem:zoom_out} and the fact that each $f_i$ satisfies (\ref{eq:boundalphaf}), and (\ref{eq:optfi}) show that
\beq\lab{eq:quad_loss:3} \E_{\PP_{in}}[\dist(x, \MM_{fin})^2] \leq C_0 \E_{\PP_{in}}[\dist(x, \MM_{opt})^2] + C_0 \hat{\eps}.\eeq
The proof follows from (\ref{eq:quad_loss:1}), (\ref{eq:quad_loss:2}) and (\ref{eq:quad_loss:3}).
\end{proof}

\section{Number of arithmetic operations}
After the dimension reduction of Section~\ref{sec:dim_red}, the ambient dimension is reduced to \beqs n:= O\left(\frac{N_p \ln^4 \left(\frac{N_p}{\eps}\right) + \log \de^{-1}}{\eps^2}\right),\eeqs
where \beqs N_p := V\left(\tau^{-d} + (\eps\tau)^{\frac{-d}{2}}\right). \eeqs The number of times that local sections are computed is bounded above by the product of the maximum number of cylinders in  a cylinder packet, (\ie $\bar N$, which is less or equal to $\frac{CV}{\tau^d}$) and the total number of cylinder packets contained inside $B_n \cap (c_{12}\tau)^{-1}\Z_n.$ The latter number is bounded above by $(c_{12}\tau)^{-n \bar N}$. Each optimization for computing a local section requires only a polynomial number of computations as discussed in Subsection~\ref{ssec:convex_complexity}. Therefore, the total number of arithmetic operations required is bounded above by
\beqs \exp\left(C\left(\frac{V}{\tau^d}\right)n\ln \tau^{-1}\right). \eeqs

\section{Conclusion}

We developed an algorithm for testing if data drawn from a distribution supported in a separable Hilbert space has an expected squared distance of $O(\eps)$ to a submanifold (of the unit ball) of dimension $d$, volume at most $V$ and reach at least $\tau$. The number of data points required is of the order of
\beqs n:= \frac{N_p \ln^4 \left(\frac{N_p}{\eps}\right) + \ln \de^{-1}}{\eps^2} \eeqs
where \beqs N_p := V\left(\frac{1}{\tau^d} + \frac{1}{\tau^{d/2}\eps^{d/2}}\right),\eeqs
and the number of arithmetic operations and calls to the black-box that evaluates inner products in the ambient Hilbert space is
\beqs \exp\left(C\left(\frac{V}{\tau^d}\right)n \ln \tau^{-1}\right). \eeqs
%Our algorithm involves an exhaustive search over a large family of disc-bundles. In practical applications, we might be justified in making %distributional assumptions about  data. Under such assumptions, an exhaustive search of this nature may be unnecessary. We hope to explore these %directions in future work.

%\beqs\|\partial^\a\check{f}_i\|_{L^\infty} \leq %\frac{\bar{\tau}}{\tau},\eeqs for $|\a| = 1$.

%\begin{lemma}
%
%\end{lemma}
%\begin{proof}
%
%\end{proof}

\section{Acknowledgements}
%We thank Sanjoy Mitter for suggesting the question addressed %in this paper and providing invaluable guidance and support.
 We are grateful to Alexander Rakhlin and Ambuj Tiwari for valuable feedback and for directing us to material related to Dudley's entropy integral.
We thank Stephen Boyd, Thomas Duchamp, Sanjeev Kulkarni, Marina Miela, Thomas Richardson, Werner Stuetzle and Martin Wainwright for valuable discussions.

 \bibliographystyle{ACM} \bibliography{kpca_colt}

\begin{thebibliography}{10}

\bibitem{misha}
{\sc Belkin, M., and Niyogi, P.}
\newblock Laplacian eigenmaps for dimensionality reduction and data
  representation.
\newblock {\em Neural Comput. 15}, 6 (2003), 1373--1396.

\bibitem{BousquetBoucheronLugosi}
{\sc Bousquet, O., Boucheron, S., and Lugosi, G.}
\newblock Introduction to statistical learning theory.
\newblock In {\em Advanced Lectures on Machine Learning\/} (2003),
  pp.~169--207.

\bibitem{Bradley00}
{\sc Bradley, P., and Mangasarian, O.~L.}
\newblock k-plane clustering.
\newblock {\em Journal of Global Optimization 16\/} (2000), 23--32.

\bibitem{Carlsson}
{\sc Carlsson, G.}
\newblock Topology and data.
\newblock {\em Bulletin of the American Mathematical Society 46\/} (January
  2009), 255--308.

\bibitem{Wasserman}
{\sc Christopher R.~Genovese, Marco Perone-Pacifico, I.~V., and Wasserman, L.}
\newblock Manifold estimation and singular deconvolution under hausdorff loss.
\newblock {\em Annals of Statistics 40}, 2 (2012).

\bibitem{Dasgupta2}
{\sc Dasgupta, S., and Freund, Y.}
\newblock Random projection trees and low dimensional manifolds.
\newblock In {\em Proceedings of the 40th annual ACM symposium on Theory of
  computing\/} (2008), STOC '08, pp.~537--546.

\bibitem{devroye}
{\sc Devroye, L., Gy\"{o}rfi, L., and Lugosi, G.}
\newblock {\em A probabilistic theory of pattern recognition}.
\newblock Springer, 1996.

\bibitem{bb1}
{\sc Donoho, D.~L.}
\newblock Aide-memoire. high-dimensional data analysis: The curses and
  blessings of dimensionality, 2000.

\bibitem{donoho}
{\sc Donoho, D.~L., and Grimes, C.}
\newblock Hessian eigenmaps: Locally linear embedding techniques for
  high-dimensional data.
\newblock {\em Proceedings of the National Academy of Sciences 100}, 10 (May
  2003), 5591--5596.

\bibitem{Dudley}
{\sc Dudley, R.~M.}
\newblock {\em Uniform central limit theorems}, vol.~63 of {\em Cambridge
  Studies in Advanced Mathematics}.
\newblock Cambridge University Press, Cambridge, 1999.

\bibitem{federer_paper}
{\sc Federer, H.}
\newblock Curvature measures.
\newblock {\em Transactions of the American Mathematical Society 93\/} (1959).

\bibitem{Feff_ext04}
{\sc Fefferman, C.}
\newblock Interpolation and extrapolation of smooth functions by linear
  operators.
\newblock {\em Rev. Mat. Iberoamericana 21}, 1 (2005), 313--348.

\bibitem{Feff_jets2}
{\sc Fefferman, C.}
\newblock The $c^m$ norm of a function with prescribed jets ii.
\newblock {\em Rev. Mat. Iberoamericana 25}, 1 (2009), 275--421.

\bibitem{hastie}
{\sc Hastie, T.~J., and Stuetzle, W.}
\newblock Principal curves.
\newblock {\em Journal of the American Statistical Association 84\/} (1989),
  502--516.

\bibitem{JL}
{\sc Johnson, W., and Lindenstrauss, J.}
\newblock Extensions of lipschitz mappings into a hilbert space.
\newblock {\em Contemporary Mathematics 26\/} (1984), 419--441.

\bibitem{Kamb93}
{\sc Kambhatla, A., and Leen, T.~K.}
\newblock Fast non-linear dimension reduction.
\newblock In {\em In IEEE International Conference on Neural Networks\/}
  (1993), IEEE, pp.~1213--1218.

\bibitem{kegl}
{\sc K{\'e}gl, B., Krzyzak, A., Linder, T., and Zeger, K.}
\newblock Learning and design of principal curves.
\newblock {\em IEEE Transactions on Pattern Analysis and Machine Intelligence
  22\/} (2000), 281--297.

\bibitem{bb2}
{\sc Ledoux, M.}
\newblock {\em The Concentration of Measure Phenomenon}, vol.~87.
\newblock Math. Surveys and Monographs, AMS, 2001.

\bibitem{Lerman}
{\sc Lerman, G., and Zhang, T.}
\newblock Probabilistic recovery of multiple subspaces in point clouds by
  geometric $\ell_p$ minimization.
\newblock {\em Annals of Statistics 39}, 5 (2011), 2686--2715.

\bibitem{MaFu}
{\sc Ma, Y., and Fu, Y.}

\bibitem{ee}
{\sc Martinetz, T., and Schulten, K.}
\newblock Topology representing networks.
\newblock {\em Neural Netw. 7}, 3 (Mar. 1994), 507--522.

\bibitem{Massart}
{\sc Massart, P.}
\newblock Some applications of concentration inequalities to statistics.
\newblock {\em Annales de la faculté des sciences de Toulouse 9}, 2 (2000),
  245--303.

\bibitem{MaurerPontil}
{\sc Maurer, A., and Pontil, M.}
\newblock K -dimensional coding schemes in hilbert spaces.
\newblock {\em IEEE Transactions on Information Theory 56}, 11 (2010),
  5839--5846.

\bibitem{narasimhan}
{\sc Narasimhan, R.}
\newblock {\em Lectures on Topics in Analysis}.
\newblock Tata Institute of Fundamental Research, Bombay, 1965.

\bibitem{NarMit}
{\sc Narayanan, H., and Mitter, S.}
\newblock On the sample complexity of testing the manifold hypothesis.
\newblock In {\em NIPS\/} (2010).

\bibitem{NarNiy}
{\sc Narayanan, H., and Niyogi, P.}
\newblock On the sample complexity of learning smooth cuts on a manifold.
\newblock In {\em Proc. of the 22nd Annual Conference on Learning Theory
  (COLT)\/} (June 2009).

\bibitem{NSW}
{\sc Niyogi, P., Smale, S., and Weinberger, S.}
\newblock Finding the homology of submanifolds with high confidence from random
  samples.
\newblock {\em Discrete {\&} Computational Geometry 39}, 1-3 (2008), 419--441.

\bibitem{Marina}
{\sc Perrault-Joncas, D., and Meila, M.}
\newblock Metric learning and manifolds: Preserving the intrinsic geometry.
\newblock {\em submitted\/}.

\bibitem{ff}
{\sc Polito, M., and Perona, P.}
\newblock Grouping and dimensionality reduction by locally linear embedding.
\newblock In {\em Advances in Neural Information Processing Systems 14\/}
  (2001), MIT Press, pp.~1255--1262.

\bibitem{LLE}
{\sc Roweis, S.~T., and Saul, L.~K.}
\newblock Nonlinear dimensionality reduction by locally linear embedding.
\newblock {\em Science 290\/} (2000), 2323--2326.

\bibitem{Rudelson_Vershynin}
{\sc Rudelson, M., and Vershynin, R.}
\newblock Combinatorics of random processes and sections of convex bodies.
\newblock {\em Annals of Mathematics 164\/} (2006), 603--648.

\bibitem{PrincipalManifolds}
{\sc Smola, A.~J., and Williamson, R.~C.}
\newblock Regularized principal manifolds.
\newblock In {\em In Computational Learning Theory: 4th European Conference\/}
  (2001), Springer, pp.~214--229.

\bibitem{SridharanSrebro}
{\sc Sridharan, K., and Srebro, N.}
\newblock Note on refined dudley integral covering number bound.

\bibitem{ISOMAP}
{\sc Tenenbaum, J.~B., Silva, V., and Langford, J.~C.}
\newblock {A Global Geometric Framework for Nonlinear Dimensionality
  Reduction}.
\newblock {\em Science 290}, 5500 (2000), 2319--2323.

\bibitem{gg}
{\sc Tenenbaum, J.~B., Silva, V.~d., and Langford, J.~C.}
\newblock A global geometric framework for nonlinear dimensionality reduction.
\newblock 2319--2323.

\bibitem{vaidya}
{\sc Vaidya, P.~M.}
\newblock A new algorithm for minimizing convex functions over convex sets.
\newblock {\em Mathematical Programming 73}, 3 (1996), 291--341.

\bibitem{ii}
{\sc Vapnik, V.}
\newblock {\em Estimation of Dependences Based on Empirical Data: Springer
  Series in Statistics (Springer Series in Statistics)}.
\newblock Springer-Verlag New York, Inc., Secaucus, NJ, USA, 1982.

\bibitem{MaximumVariance}
{\sc Weinberger, K.~Q., and Saul, L.~K.}
\newblock Unsupervised learning of image manifolds by semidefinite programming.
\newblock {\em Int. J. Comput. Vision 70}, 1 (2006), 77--90.

\bibitem{hh}
{\sc Zhang, Z., and Zha, H.}
\newblock Principal manifolds and nonlinear dimension reduction via local
  tangent space alignment.
\newblock {\em SIAM Journal of Scientific Computing 26\/} (2002), 313--338.

\end{thebibliography}

\appendix
\section{Proof of Lemma~\ref{lem:fat_to_gen}}
\begin{definition}[Rademacher Complexity]
Given a class $\F$ of functions $f:X \ra \R$ a measure $\mu$ supported on $X$, and a natural number $n \in \N$, and an $n-$tuple of points $(x_1, \dots x_n)$, where each $x_i \in X$ we define the empirical Rademacher complexity $R_n(\F, x)$
as follows.
Let $\sigma = (\sigma_1, \dots, \sigma_n)$ be a vector of $n$ independent Rademacher (\ie unbiased $\{-1, 1\}-$valued) random variables.
Then,
$$R_n(\F, x) := \E_{\sigma}\frac{1}{n}\left[\sup_{f\in \F}\left(\sum_{i=1}^n \sigma_i f(x_i)\right)\right].$$
\end{definition}

\begin{proof}
We will use Rademacher complexities to bound the sample complexity from above. We know (see Theorem $3.2$, \cite{BousquetBoucheronLugosi})
that for all $\de >0$,
\beq \lab{eq:fat_to_gen1}\p\left[\sup_{f \in \F} \bigg |\E_\mu f - \E_{\mu_s} f\bigg | \leq 2 R_s (\F, x) + \sqrt{\frac{2\log (2/\de)}{s}}\right] \geq 1- \de. \eeq
Using a ``chaining argument"
the following Claim is proved below.
\begin{claim}\lab{cl:chaining} \beq \lab{eq:fat_to_gen2} R_s(\F, x) \leq \eps + 12  \int_{\frac{\eps}{4}}^\infty \sqrt{\frac{\ln N(\eta, \F, \LL_2(\mu_s))}{s}} d\eta. \eeq\end{claim}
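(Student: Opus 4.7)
The claim is the standard Dudley-entropy-integral bound on the empirical Rademacher complexity, and I would prove it by a dyadic chaining argument truncated at scale $\eps/4$. The main tool is the Massart finite-class inequality: for any finite family $\mathcal{G}$ of functions on $\{x_1,\dots,x_s\}$,
$$\E_\sigma \sup_{g\in\mathcal{G}} \frac{1}{s}\sum_{i=1}^s \sigma_i g(x_i) \le \max_{g\in\mathcal{G}}\|g\|_{\LL_2(\mu_s)}\sqrt{\frac{2\ln|\mathcal{G}|}{s}},$$
which follows from the sub-Gaussianity of $\frac{1}{s}\sum_i\sigma_i g(x_i)$ with variance proxy $\|g\|_{\LL_2(\mu_s)}^2/s$.

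\textbf{Chaining.} Fix $\eta_0 := \sup_{f\in\F}\|f\|_{\LL_2(\mu_s)}$ (which we may assume finite; otherwise the integral diverges and there is nothing to prove) and set $\eta_j := 2^{-j}\eta_0$. For each $j\ge 0$ pick a minimal $\eta_j$-net $T_j\subset\F$ with respect to $\LL_2(\mu_s)$, so that $|T_j|=N(\eta_j,\F,\LL_2(\mu_s))$, and let $\pi_j:\F\to T_j$ be a nearest-point projection, with $T_0$ chosen as a single arbitrary function $f_0$. Choose $J$ to be the smallest integer with $\eta_J\le\eps/4$. For every $f\in\F$ write the telescoping identity
$$f \;=\; f_0 \;+\; \sum_{j=1}^{J}\bigl(\pi_j(f)-\pi_{j-1}(f)\bigr) \;+\; \bigl(f-\pi_J(f)\bigr).$$
Taking $\E_\sigma\frac1s\sum_i\sigma_i(\cdot)(x_i)$ and then the supremum over $f$, the $f_0$ term contributes zero (its sign can be absorbed into the $\sigma_i$), the final term is bounded by $\sup_{f}\|f-\pi_J(f)\|_{\LL_2(\mu_s)}\le\eta_J\le\eps/4$ via Cauchy--Schwarz, and each intermediate term is handled by Massart on the finite family of increments $\{\pi_j(f)-\pi_{j-1}(f):f\in\F\}$, which has cardinality at most $N(\eta_j)N(\eta_{j-1})\le N(\eta_j)^2$ and whose elements have $\LL_2(\mu_s)$-norm at most $\eta_j+\eta_{j-1}=3\eta_j$.

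\textbf{Sum-to-integral conversion.} Combining these estimates gives
$$R_s(\F,x)\;\le\;\eps\;+\;\sum_{j=1}^{J} 3\eta_j\sqrt{\frac{4\ln N(\eta_j,\F,\LL_2(\mu_s))}{s}}\;\le\;\eps\;+\;6\sum_{j=1}^{J}\eta_j\sqrt{\frac{\ln N(\eta_j,\F,\LL_2(\mu_s))}{s}}.$$
Since $\eta_j=2(\eta_j-\eta_{j+1})$ and $N(\cdot,\F,\LL_2(\mu_s))$ is non-increasing in its first argument, each summand is at most $12(\eta_j-\eta_{j+1})\sqrt{\ln N(\eta,\F,\LL_2(\mu_s))/s}$ for every $\eta\in[\eta_{j+1},\eta_j]$, so the whole sum is dominated by $12\int_{\eta_{J+1}}^{\eta_0}\sqrt{\ln N(\eta,\F,\LL_2(\mu_s))/s}\,d\eta$, which in turn is at most $12\int_{\eps/4}^\infty\sqrt{\ln N(\eta,\F,\LL_2(\mu_s))/s}\,d\eta$ (the integrand vanishes above $\eta_0$). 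This yields the stated bound.

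\textbf{Main obstacle.} Nothing here is genuinely delicate; the one point that requires care is the treatment of the residual supremum $\E_\sigma\sup_f\frac1s\sum_i\sigma_i(f-\pi_J(f))(x_i)$, which cannot be controlled by Massart because the family $\{f-\pi_J(f):f\in\F\}$ need not be finite. I would handle it by the pointwise Cauchy--Schwarz estimate $|\frac1s\sum_i\sigma_i g(x_i)|\le\|g\|_{\LL_2(\mu_s)}$ used above, which is lossy but absorbed into the additive $\eps$ term and is exactly why one truncates the chain at the $\eps/4$ scale rather than pushing it to zero.
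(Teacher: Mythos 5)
Your proposal follows exactly the same dyadic chaining route as the paper (Massart's finite-class lemma on the increment classes, Cauchy--Schwarz on the residual, then a sum-to-integral comparison), and almost every step is fine: the normalization $\|\sigma\|_{\LL_2(\mu_s)}=1$ in the residual bound, the bound $3\eta_j$ on the increment norms, the cardinality bound $N(\eta_j,\F,\LL_2(\mu_s))^2$, and the form of Massart's inequality are all correct. The one genuine problem is the last inequality, and it is caused by where you truncate the chain. You take $J$ to be the \emph{smallest} integer with $\eta_J\le\eps/4$; the increment sum is then dominated by $12\int_{\eta_{J+1}}^{\eta_0}\sqrt{\ln N(\eta,\F,\LL_2(\mu_s))/s}\,d\eta$, and $\eta_{J+1}=\eta_J/2$ lies in $(\eps/16,\eps/8]$, i.e.\ strictly below $\eps/4$. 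Since the integrand is nonnegative, $\int_{\eta_{J+1}}^{\eta_0}$ is at \emph{least}, not at most, $\int_{\eps/4}^{\eta_0}$, so the closing assertion ``which in turn is at most $12\int_{\eps/4}^\infty$'' goes the wrong way. What your argument actually proves is $R_s(\F,x)\le\eps+12\int_{\eps/16}^\infty\sqrt{\ln N(\eta,\F,\LL_2(\mu_s))/s}\,d\eta$, and the missing piece $\int_{\eta_{J+1}}^{\eps/4}$ cannot be absorbed into the additive $\eps$, because nothing in the proof controls the covering numbers just below the scale $\eps/4$.

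The repair is one line and is precisely the paper's choice: truncate at the \emph{coarsest} admissible scale rather than the finest, i.e.\ pick $J$ with $\eps/2\le\eta_J<\eps$. Your Cauchy--Schwarz step still bounds the residual by $\|f-\pi_J(f)\|_{\LL_2(\mu_s)}\le\eta_J<\eps$, while now $\eta_{J+1}=\eta_J/2\ge\eps/4$, so the sum-to-integral comparison terminates at $\eps/4$ as claimed. (For the way the claim is used in Lemma~\ref{lem:fat_to_gen}, where the lower limit is $c\eps$ for an unspecified controlled constant, your weaker constant $\eps/16$ would also suffice; but as written it does not prove the claim as stated.)
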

When $\eps$ is taken to equal $0$, the above is known as Dudley's entropy integral \cite{Dudley}.

A result of Rudelson and Vershynin (Theorem 6.1, page 35 \cite{Rudelson_Vershynin}) tells us that the integral in (\ref{eq:fat_to_gen2}) can be bounded from above using an integral involving the square root of the fat-shattering dimension (or in their terminology, combinatorial dimension.) The precise relation that they prove is
\beq\lab{eq:fat_to_gen3} \int_\eps^\infty \sqrt{\ln N(\eta, \F, \LL_2(\mu_s))} d\eta \leq C  \int_\eps^\infty \sqrt{\fat_{c\eta}(\F)} d\eta, \eeq for universal constants $c$ and $C$.

From Equations (\ref{eq:fat_to_gen1}), (\ref{eq:fat_to_gen2}) and (\ref{eq:fat_to_gen3}), we see that
if $$s \geq \frac{C}{\eps^2}\left(\left(\int_{{c\eps}}^\infty \sqrt{\fat_\gamma(\F) }d\gamma\right)^2 + \log 1/\de\right) ,$$
then,
$$\p\left[\sup_{f \in \F} \bigg| \E_{\mu_s}  f(x_i) - \E_\mu  f\bigg| \geq \eps\right] \leq 1 - \de.$$
\end{proof}
\section{Proof of Claim~\ref{cl:chaining}}
We begin by stating the finite class lemma of Massart (\cite{Massart}, Lemma 5.2).
\begin{lemma}
Let $X$ be a finite subset of $B(0, r) \subseteq \R^n$ and let $\sigma_1, \dots, \sigma_n$ be $i.i.d$ unbiased $\{-1, 1\}$ random variables. Then, we have
\beqs \E_\sigma\left[\sup_{x \in X} \frac{1}{n}\sum_{i=1}^n \sigma_i x_i\right] \leq \frac{r \sqrt{2 \ln |X|}}{n}.\eeqs
\end{lemma}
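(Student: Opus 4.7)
The plan is to follow Massart's classical Chernoff/Laplace-transform argument. Fix a parameter $\lambda > 0$ to be optimized later. By Jensen's inequality applied to the convex function $\exp(\lambda \cdot)$, followed by a crude union bound replacing $\sup$ by $\sum$ (valid since the exponential is positive), I would write
\beqs \lambda\, \E_\sigma\Bigl[\sup_{x \in X}\sum_{i=1}^n \sigma_i x_i\Bigr] \;\le\; \ln \E_\sigma \exp\!\Bigl(\lambda \sup_{x \in X}\sum_{i=1}^n \sigma_i x_i\Bigr) \;\le\; \ln \sum_{x \in X} \E_\sigma \exp\!\Bigl(\lambda \sum_{i=1}^n \sigma_i x_i\Bigr). \eeqs

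Next I would bound each moment-generating function on the right. By independence of the $\sigma_i$, $\E_\sigma \exp(\lambda \sum_i \sigma_i x_i) = \prod_{i=1}^n \E[\exp(\lambda \sigma_i x_i)]$, and for each $i$ convexity of $t \mapsto e^{\lambda x_i t}$ on $[-1,1]$ together with $\E[\sigma_i]=0$ gives the sub-Gaussian estimate $\E[\exp(\lambda \sigma_i x_i)] \le \cosh(\lambda x_i) \le \exp(\lambda^2 x_i^2/2)$. Multiplying over $i$ yields $\E_\sigma \exp(\lambda \sum_i \sigma_i x_i) \le \exp(\lambda^2 \|x\|^2/2) \le \exp(\lambda^2 r^2/2)$, where the last inequality uses $X \subseteq B(0,r)$. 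Substituting this uniform bound into the previous display, the sum over $X$ collapses to a factor $|X|$, giving
\beqs \E_\sigma\Bigl[\sup_{x \in X} \sum_{i=1}^n \sigma_i x_i\Bigr] \;\le\; \frac{\ln |X|}{\lambda} \,+\, \frac{\lambda r^2}{2}. \eeqs

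Finally I would optimize the right-hand side over $\lambda > 0$: the minimum is attained at $\lambda = \sqrt{2\ln|X|}/r$ and equals $r\sqrt{2\ln|X|}$. Dividing by $n$ produces the stated bound $r\sqrt{2\ln|X|}/n$. There is no genuine obstacle in this argument — it is the standard Massart finite class lemma, and the only step requiring any care is the Rademacher MGF estimate, which is a one-line convexity/Taylor computation.
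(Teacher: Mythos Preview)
Your argument is correct and is precisely the standard Chernoff/Laplace-transform proof of Massart's finite class lemma. The paper does not supply its own proof of this statement; it simply cites Massart \cite{Massart}, Lemma 5.2, so there is nothing to compare against beyond noting that your write-up matches the classical derivation.
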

We now move on to prove Claim~\ref{cl:chaining}. This claim is closely related to Dudley's integral formula, but appears to have been stated for the first time by Sridharan-Srebro \cite{SridharanSrebro}.  We have furnished a proof following Sridharan-Srebro \cite{SridharanSrebro}.
For a function class $\FF \subseteq \RR^\mathcal{X}$ and points $x_1, \dots, x_s \in \mathcal{X}$
\beq  R_s(\F, x) \leq  \eps + 12 \int_{\frac{\eps}{4}}^\infty \sqrt{\frac{\ln N(\eta, \F, \LL_2(\mu_s))}{s}} d\eta. \eeq

\begin{proof} Without loss of generality, we assume that $0 \in \FF$; if not, we choose some function $f \in \FF$ and translate $\FF$ by $-f$.
Let $M = \sup_{f \in \FF} \|f\|_{L_2(P_n)}$, which we assume is finite. For $i\geq 1$, choose $\alpha_i = M2^{-i}$ and let $T_i$ be a  $\alpha_i$-net of $\FF$ with respect to the metric derived from $L_2(\mu_s)$.  Here $\mu_s$ is the probability measure that is uniformly distributed on the $s$ points $x_1, \dots, x_s$. For each $f \in \FF$, and $i$, pick an  $\hat{f}_i \in T_i$  such that $f_i$ is an $\alpha_i-$approximation of $f$, \ie $\|f - f_i\|_{L_2(\mu_s)} \leq \alpha_i$. We use chaining to write
\beq f = f - \hat{f}_N + \sum_{j=1}^N(\hat{f}_j - \hat{f}_{j-1}),\eeq
where $\hat{f}_0 = 0$. Now, choose $N$ such that $\frac{eps}{2} \leq M2^{-N} < \eps$,
\begin{eqnarray} \hat{R}_s(\FF) &  = & \E\left[\sup_{f\in \FF}\frac{1}{s}\sum_{i=1}^s \sigma_i \left(f(x_i) - \hat{f}_N(x_i) + \sum_{j=1}^N(\hat{f}_j(x_i) - \hat{f}_{j-1}(x_i))\right)\right]\\
& \leq & \E\left[\sup_{f\in\FF} \frac{1}{s}\sum_{i=1}^s \sigma_i(f(x_i) - \hat{f}_N(x_i))\right] + \E\left[\sup_{f\in \FF}\frac{1}{s}\sum_{i=1}^s \sigma_i(\hat{f}_j(x_i) - \hat{f}_{j-1}(x_i))\right]\\
& \leq &  \E\left[\sup_{f\in\FF} \langle \sigma,  f - \hat{f}_N\rangle_{L_2(\mu_s)})\right] +  \sum_{j=1}^N\E\left[\sup_{f\in \FF}\frac{1}{s}\sum_{i=1}^s \sigma_i(\hat{f}_j(x_i) - \hat{f}_{j-1}(x_i))\right].\lab{eq:mas1}
\end{eqnarray}
We use Cauchy-Schwartz on the first term to give
\begin{eqnarray}
 \E\left[\sup_{f\in\FF} \langle \sigma,  f - \hat{f}_N\rangle_{L_2(\mu_s)})\right]  & \leq &
 \E\left[\sup_{f\in\FF} \|\sigma\|_{L_2(\mu_s)} \| f - \hat{f}_N\|_{L_2(\mu_s)})\right] \\
& \leq & \eps.\lab{eq:mas2}
\end{eqnarray}

 Note that
\begin{eqnarray}
\|\hat{f}_j - \hat{f}_{j-1}\|_{L_2(\mu_s)} \leq \|\hat{f}_j - f - (\hat{f}_{j-1} - f)\|_{L_2(\mu_s)}& \leq & \alpha_j + \alpha_{j-1}\\
& \leq & 3 \alpha_j.
\end{eqnarray}

We use Massart's Lemma to bound the second term,
\begin{eqnarray}
  \E\left[\sup_{f\in \FF}\frac{1}{s}\sum_{i=1}^s \sigma_i(\hat{f}_j(x_i) - \hat{f}_{j-1}(x_i))\right] & = &
 \E\left[\sup_{f\in\FF} \langle \sigma,  (\hat{f}_j- \hat{f}_{j-1})\rangle_{L_2(\mu_s)}\right]\\ & \leq &
\frac{3 \alpha_j \sqrt{2\ln(|T_j|\cdot |T_{j-1}|)}}{s}\\
& \leq & \frac{6\alpha_j\sqrt{\ln(|T_j|)}}{s}.\lab{eq:mas3}
\end{eqnarray}

Now, from equations (\ref{eq:mas1}),  (\ref{eq:mas2}) and  (\ref{eq:mas3}),

\begin{eqnarray}
 \hat{R}_s(\FF) & \leq & \eps + 6 \sum_{j=1}^N\alpha_j \sqrt{\frac{\ln N(\alpha_j, \FF, L_2(\mu))}{s}}\\
& \leq & \eps + 12\sum_{j=1}^N(\alpha_j - \alpha_{j+1})\sqrt{\frac{\ln N(\alpha_j, \FF, L_2(\mu_s))}{s}}\\
& \leq & \eps +  12\int_{\alpha_{N+1}}^{\alpha_0} \sqrt{\frac{\ln N(\a, \FF, L_2(\mu_s))}{s}}d\alpha\\
& \leq & \eps + 12 \int_{\frac{\eps}{4}}^\infty \sqrt{\frac{\ln N(\a, \FF, L_2(\mu_s))}{s}}d\alpha.
\end{eqnarray}

\end{proof}

\end{document}